\documentclass[lettersize,journal]{IEEEtran}

\usepackage[caption=false,font=normalsize,labelfont=sf,textfont=sf]{subfig}
\usepackage{textcomp}
\usepackage{stfloats}
\usepackage{verbatim}
\usepackage{graphicx}
\usepackage{cite}
\usepackage{array}
\usepackage{cite}
\usepackage{amsmath,amsthm,amssymb,amsfonts}

\usepackage{xpatch}
\makeatletter
\xpatchcmd{\@thm}{\thm@headpunct{.}}{\thm@headpunct{}}{}{}
\makeatother

\usepackage{bm}
\usepackage[T1]{fontenc}

\usepackage{algorithm,algorithmic}
\usepackage{graphicx}
\usepackage{textcomp}
\usepackage{xcolor}

\usepackage[short]{optidef}

\usepackage{caption}

\usepackage{aliascnt}

\usepackage{booktabs}  
\usepackage{tabularx}  
\newcolumntype{Y}{>{\centering\arraybackslash}X}                 
\newcolumntype{L}{>{\centering\arraybackslash}p{0.045\textwidth}}
\newcolumntype{W}{>{\centering\arraybackslash}p{0.053\textwidth}}
\newcolumntype{S}{>{\centering\arraybackslash}p{0.048\textwidth}}
\renewcommand{\tabcolsep}{3pt}

\newtheorem{theorem}{Theorem}
\newtheorem{assumption}{Assumption}

\newtheorem{lemma}{Lemma}

\newtheorem{definition}{Definition}
\newtheorem{remark}{Remark}

\DeclareMathOperator{\col}{col}

\DeclareMathOperator{\diag}{diag}

\def\bsg{{\mathbf{g}}}

\def\bsq{{\mathbf{q}}}

\def\bsv{{\mathbf{v}}}

\def\bsx{{\mathbf{x}}}
\def\bsy{{\mathbf{y}}}

\def\bsE{{\mathbf{E}}}
\def\bsF{{\mathbf{F}}}

\def\bsH{{\mathbf{H}}}

\def\bsK{{\mathbf{K}}}
\def\bsL{{\mathbf{L}}}

\def\bsM{{\mathbf{M}}}

\usepackage{fancyhdr}
\allowdisplaybreaks

\usepackage{colortbl}

\definecolor{LightGray}{gray}{0.9}
\definecolor{LightCyan}{rgb}{0.4863    0.6157    0.5922}
\definecolor{LightGreen}{rgb}{0.8824    0.8000    0.6941}
\definecolor{LightOrange}{rgb}{0.8314    0.7294    0.6784}

\usepackage{multirow}
\usepackage{array}
\newcolumntype{M}[1]{>{\centering\arraybackslash}m{#1}}
\newcolumntype{N}{@{}m{0pt}@{}}

\usepackage{fancyhdr}
\usepackage[T1]{fontenc}     
\usepackage[scaled]{helvet}

\usepackage[hyphens]{url}

\usepackage[
    colorlinks=false,                          
    pdfborderstyle={/S/0/0 4},                
    linkbordercolor={1 0 0},                  
    citebordercolor={0 1 0},                   
    urlbordercolor={0 0 1},                    
    breaklinks=true                            
]{hyperref}

\usepackage{enumitem}

\begin{document}

\title{\fontsize{24pt}{28pt}\selectfont Unified Communication Compression\\ Beyond Global Error Bounds for Distributed Nonconvex Optimization}

\author{Haonan Wang, Minghui Liwang, \IEEEmembership{Senior Member, IEEE}, Yiguang Hong, \IEEEmembership{Fellow, IEEE},\\ Karl H. Johansson, \IEEEmembership{Fellow, IEEE}, 
and  Xinlei Yi, \IEEEmembership{Member, IEEE}        % <-this % stops a space
\thanks{This work was supported in part by the National Natural Science Foundation of China under Grants 62503365, 62573319, 62271424, and 62088101.
(Corresponding author: Xinlei Yi.)}
\thanks{H. Wang, M. Liwang, Y. Hong, and X. Yi are with the Department of Control Science and Engineering,
 College of Electronics and Information Engineering, Tongji University,
 Shanghai 201804, China.
 M. Liwang,  Y. Hong, and X. Yi are also with the State Key Laboratory of Autonomous Intelligent Unmanned Systems,
  and Frontiers Science Center for Intelligent Autonomous Systems, Ministry of Education, 
  and the Shanghai Institute of Intelligent Science and Technology, Tongji University, Shanghai 200092, China
%  the Shanghai Institute of Intelligent Science and Technology, State Key Laboratory of Autonomous Intelligent Unmanned Systems, 
% and Frontiers Science Center for Intelligent Autonomous Systems, Ministry of Education,
%  Beijing 100816, China 
 (e-mail: hnwang@tongji.edu.cn; minghuiliwang@tongji.edu.cn; yghong@iss.ac.cn; xinleiyi@tongji.edu.cn).}
\thanks{K. H. Johansson is with the Division of Decision and Control Systems, 
School of Electrical Engineering and Computer Science, KTH Royal Institute 
of Technology, and is also affiliated with Digital Futures, 10044 Stockholm, 
Sweden (e-mail: kallej@kth.se).}}

% The paper headers
% \markboth{IEEE Journal of Selected Topics in Signal Processing, submitted for review}%
% {Shell \MakeLowercase{\textit{et al.}}: A Sample Article Using IEEEtran.cls for IEEE Journals}

% \IEEEpubid{0000--0000/00\$00.00~\copyright~2021 IEEE}
% Remember, if you use this you must call \IEEEpubidadjcol in the second
% column for its text to clear the IEEEpubid mark.

\maketitle

\begin{abstract}
% Communication compression has become a powerful tool for alleviating communication burdens in distributed optimization, 
% where the cost function is distributed across agents and minimized through their communication and collaboration.
In this paper, we propose a unified  compression algorithm
for  distributed nonconvex opitmization with both
%  that applies to two general compressor classes,
%  namely
the locally- and globally-bounded communication compressors, including 
% In particular, the locally-bounded class covers 
1-bit compressors, saturating quantizers, 
and
 the globally-bounded
%  class accommodates
  compressors with both relative and absolute compression errors, as well as additional arbitrary bounded noise.
% Together, these two classes form the most general compressor family in the literature,  to the best of our knowledge.
We provide a rigorous convergence analysis in nonconvex settings and establish linear convergence under the Polyak--\L{}ojasiewicz (P--L) condition.
Notably, we establish an $\mathcal{O}(1/\sqrt{T})$ convergence rate
% the first convergence guarantees 
for the locally-bounded class in the distributed nonconvex setting,
matching that achieved by the centralized algorithms with 1-bit compressors,
%  with an $\mathcal{O}(1/\sqrt{T})$ rate matches the centralized 1-bit special case,
  where $T$ denotes the total number of iterations.
Moreover, one initial uncompressed communication round further yields an order-wise improvement to $\mathcal{O}(1/T^{2/3})$.
For the P--L setting and the globally-bounded class, we recover state-of-the-art convergence rates.
% For the P--L setting and the globally-bounded class, our results recover state-of-the-art rates and match the uncompressed counterparts.
% Numerical experiments validate the theoretical results.
\end{abstract}

\begin{IEEEkeywords}
Unified compression, distributed nonconvex optimization, 1-bit, saturation, bounded noise
\end{IEEEkeywords}

\section{Introduction}

Advances in communication and digital technologies have driven the development of distributed optimization over networked systems, with broad applications in large-scale learning, signal processing, and control \cite{qian2022distributed,li2020federated,Yang_Survey_2019}.
% Advances in communication and digital technologies have driven the development of distributed optimization over networked systems, with broad applications in large-scale learning and control \cite{Nedic_DistributedControl_2018,Yang_Survey_2019}.
% Communication constraints constitute a significant concern in practical distributed optimization, especially in large neural network training (\cite{tang2024fusionllm}), real-time decision systems (\cite{xu2025communication}), and resource-limited edge devices (\cite{li2021talk}).
% Consider $n$ agents communicating over a communication network and collaboratively solving the following optimization problem:
Such systems typically consist of $n$ agents that communicate and collaborate to solve the following optimization problem:
\begin{align}\label{prob}
    \min_{x\in \mathbb{R}^d} f(x)=\frac{1}{n}\sum_{i=1}^nf_i(x),
  \end{align}
where $x$ is the optimization variable that denotes, for example, the global model parameter in learning tasks, and
$f_i$ (possibly nonconvex) is a private local cost function held by agent $i$.
To solve~\eqref{prob}, numerious algorithms have been developed with
% , providing rigorous
 performance guarantees 
% in terms of convergence and convergence rates,
 from convex \cite{Nedic_Distributed_2009,chen2012diffusion,Shi_EXTRA_2015}
 to nonconvex settings \cite{di2016next,Scutari_PartI_2016}.

\begin{figure}[t]
\centering
  \includegraphics[width=.47\textwidth]{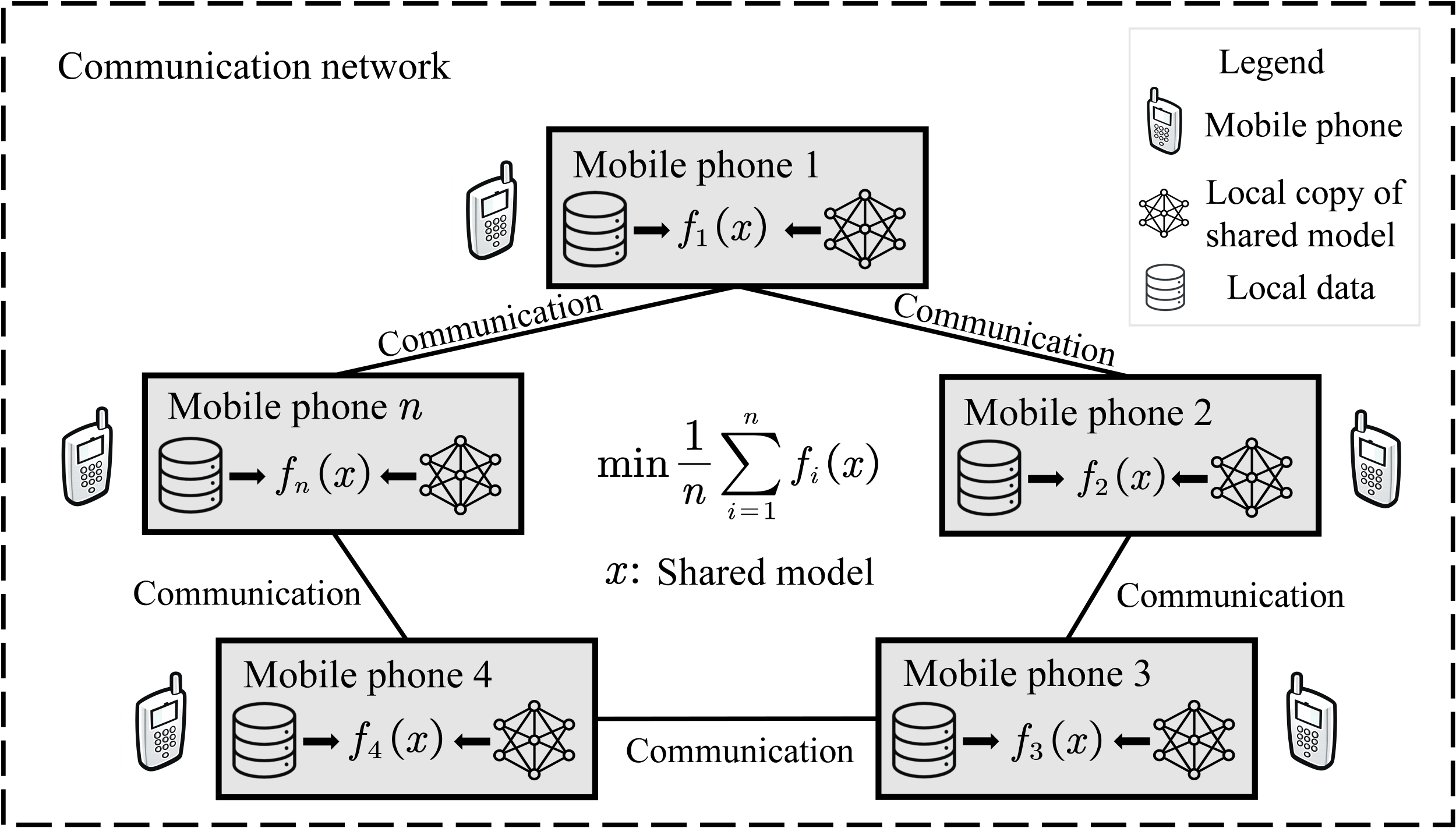}
\caption{A motivating example of communication-constrained distributed optimization in edge device applications.}
  \label{fig:edge_ai}
\end{figure}
% However, communication constraint remains a significant practical concern in distributed optimization. 
% % For large neural network training, the 
% % A typical example is 
% In distributed training of large neural networks, the high dimensionality of model parameters and the frequent transmission of updates make communication particularly costly (\cite{minaee2024large}). 
% Real-time decision systems, such as industrial automation, robotics, and autonomous driving, often require low-latency communication (\cite{parker2016multiple}). 
% On resource-limited edge devices used in embedded AI applications, communication is often constrained by limited bandwidth (\cite{zhu2020one}).
% In resource-limited edge devices, including embedded AI platforms such as smart sensors, wearable devices, and low-power robotic systems, communication is often constrained by limited bandwidth (\cite{zhu2020one}). 
However, communication constraint remains a significant practical concern, 
especially in large neural network training \cite{minaee2024large}, real-time decision systems \cite{parker2016multiple}, 
and resource-limited edge devices \cite{zhu2020one}.
As a motivating example, consider the edge device application of next-word prediction for mobile keyboards such as Gboard over a network of mobile phones \cite{hard2018federated}, as illustrated in Fig.~\ref{fig:edge_ai}. 
In this setting, agent $i$ corresponds to phone $i$, and the text generated daily through messages, chats, and emails forms the private local data stored on that phone. 
These local data determine the local cost function $f_i(x)$, while the phones collaboratively optimize a shared prediction model $x$ through neighbor-to-neighbor communication.
% As a motivating example, consider embedded AI applications over edge devices, as illustrated in Fig.~\ref{fig:edge_ai}.
%  Here, the agents can be interpreted as edge devices, each equipped with private local data, an associated local cost function, and a local learning model. 
% Their goal is to collaboratively train a shared model for on-device intelligence through neighbor-to-neighbor communication.
% As a motivating example, consider embedded AI applications over edge devices, as illustrated in Fig.~\ref{fig:edge_ai}. 
% Here, the agents can be interpreted as edge devices, each equipped with private local data, an associated local cost function, and a local learning model. 
% Their goal is to collaboratively train a shared model for on-device intelligence through neighbor-to-neighbor communication.
This example illustrates a common challenge in distributed optimization, where communication bandwidth can be severely limited and communication may become a performance bottleneck.
% As a motivating example, consider embedded AI over edge devices, as illustrated in Fig.~\ref{fig:edge_ai}. Here, the agents represent edge devices with private local data, associated local cost functions, and local learning models, which communicate with neighbors to train a global model.
% This example illustrates a common challenge in distributed optimization, where communication bandwidth can be severely limited and communication may become a performance bottleneck.
To reduce communication overhead, a broad family of communication-efficient methods has been developed, 
including compression \cite{Rabbat_Quantized_2005}, event-triggered communication \cite{Lemmon_Event_2010}, delay-tolerant designs \cite{NIPS2011_f0e52b27}, and asynchronous updates \cite{srivastava2011distributed}.
Among them, compression is a widely used approach due to its simplicity and the finite-precision nature of digital hardware.
In practice, high-precision communication is costly and perfectly accurate communication is impossible.
This motivates the use of a compressor $\mathcal{C}:\mathbb{R}^d\to\mathbb{R}^d$ that reduces the number of communicated bits by mapping a vector to its compressed representation.
Accordingly, one major line of research has focused on developing different forms of compressors.
Representative examples studied in distributed convex optimization include quantizers~\cite{yi2014quantized},
 unbiased compressors~\cite{alistarh2017qsgd}, and contractive compressors~\cite{koloskova2019decentralized}, 
which were later extended to nonconvex settings \cite{tang2018communication}.

 Most existing works consider compressors with a global compression property (hereafter, global compressors),
where the compression error $\|\mathcal{C}(x)-x\|$ admits a global error bound for all $x$ in $\mathbb{R}^d$.
In contrast, few works investigate compressors that satisfy only local properties (hereafter, local compressors).
A typical example is the 1-bit compressor, which transmits only one bit per coordinate and thus conveys extremely limited information.
Theoretical analysis for such aggressive compression is usually carried out in convex settings and often require relatively strong assumptions to ensure convergence.
For example, Zhang et al.~\cite{zhang2023innovation} and Yi et al.~\cite{Yi_CommunicationCompression_2023} established  convergence under strong convexity and the Polyak--\L{}ojasiewicz (P--L) conditions, respectively.
Although Yi and Hong \cite{yi2014quantized} considered convex optimization, they established convergence without an explicit rate.
In nonconvex optimization, 1-bit methods are developed under a centralized server paradigm~\cite{zhu2020one,fan20221}, where the consensus problem does not arise due to a shared initialization and the server-broadcast gradient update.
They achieve an $\mathcal{O}(1/\sqrt{T})$ convergence rate and rely on relatively strong assumptions such as Lipschitz continuity and homogeneous problem settings, where $T$ denotes the total number of iterations.
Another representative local compressor is the saturating quantizer. However, many works neglect saturation, effectively rendering it a global compressor \cite{alistarh2017qsgd,Yi_CommunicationCompression_2023}.
Theoretical guarantees under a given or arbitrary saturation level are relatively few
and often rely on strong convexity or the P--L condition \cite{magnusson2020maintaining,xu2024quantized}.
In the convex setting, Saha et al. \cite{saha2021decentralized} established  convergence with probability proportional to the saturation level and a rate of $\mathcal{O}(T^{-(1/2-\theta)})$ for some $\theta\in(0,1/2)$.

Given that local compressors have been less explored, designing a unified algorithm that handles both local and global compressors is even more challenging.
Beyond works on saturating quantizers that also cover the unsaturated case,
to the best of our knowledge, only Yi et al. \cite{Yi_CommunicationCompression_2023} provided a unified algorithm applicable to 
	local and global compressors with absolute compression error, albeit under the P--L condition.
For global compressors, there have been efforts toward unifying compression and algorithmic design.
Richt{\'a}rik et al. \cite{richtarik2021ef21} proposed an algorithm that works with both unbiased and contractive compressors. 
Liao et al. \cite{liao2022compressed} futher considered a more general class of compressors that unifies unbiased, contractive, and certain biased non-contractive compressors.
% \cite{liao2022compressed} further considers a more general compression model, which unifies unbiased and contractive compressors and additionally accommodates certain biased non-contractive types. 
More recently, Michelusi et al.~\cite{michelusi2022finite}, Nassif et al.~\cite{10976577}, and Liao et al.~\cite{liao2024robust}  progressively 
	moved toward a unified class of compressors that accommodates both relative and absolute compression errors.

	% As discussed above, the theory of compressors with local compression properties remains largely undeveloped,
% and unified treatment of local and global compressors is even more limited.
% In particular, theoretical guarantees are still lacking for
% (i) 1-bit compression in nonconvex problems,
% (ii) arbitrary saturated quantization under nonconvexity, 
% (iii) unified handling of local and global compressors in convex and nonconvex settings, and
% (iv) a unified framework covering global-relative, global-absolute, and locally-bounded compressors.
% These gaps motivate this paper.
As discussed above, theoretical results for local compressors remain limited,
and a unified treatment of local and global compressors is even less developed.
In particular, theoretical guarantees are still lacking for
(i) several representative local compressors in nonconvex settings, including 1-bit compressors and saturating quantizers,
(ii) a unified treatment of local and global compressors in nonconvex settings, and
(iii) a unified algorithm applicable to local compressors, and to global compressors with both relative and absolute compression errors.
% (i) 1-bit compressors in nonconvex settings,
% (ii) saturating quantizers in nonconvex settings, 
% (iii) unified handling of local and global compressors in general  nonconvex settings, and
% (iv) a unified framework applicable to local compressors, and to global compressors with relative and absolute compression error.
These gaps motivate this paper.

\subsection{Main Contributions}
% This work addresses the theoretical gaps identified above by proposing the a unified
% algorithmic framework that accommodates highly general locally-bounded and globally-bounded
% compressors, and establishes convergence in nonconvex distributed optimization.
% Our contributions are summarized as follows:
This paper studies distributed nonconvex optimization with communication compression 
and addresses the theoretical gaps identified above by proposing a unified
algorithm that accommodates both local and global compressors.
%  and establishes convergence in nonconvex setting.
Our contributions are summarized as follows.
	\begin{enumerate}[label=(\roman*)]
		\item 
		We simultaneously consider two highly general classes of compressors, the locally-bounded class (Definition~\ref{ass:lcoal}) and the globally-bounded class (Definition~\ref{ass:global}).
		In particular, the novel locally-bounded class covers 1-bit compressors and saturating quantizers.
		The globally-bounded class accommodates compressors with both relative and absolute compression errors.
		% , as well as additional arbitrary bounded noise.
		It allows commonly used global compressors with only relative or absolute errors to tolerate additional arbitrary bounded noise (Lemma~\ref{lemma:noise}), 
		and remain within this class under bounded-error composition (Lemma~\ref{lemma:composition}).
		Both the locally- and globally-bounded classes are, to the best of our knowledge, the most general in their respective local and global senses, 
		and together they form the most general compressor family in the distributed optimization literature.
		\item 
		We propose a unified  compression algorithm (Algorithm~\ref{nonconvex:algorithm-pdgd}) that applies to both the locally- and globally-bounded compressors.
		To the best of our knowledge, this is the first distributed optimization algorithm that simultaneously handles local compressors, and global compressors with both relative and absolute compression errors.
		% To the best of our knowledge, this is the first distributed optimization algorithm that simultaneously handles global relative error, global absolute error, and local compressors.
		In comparison, Yi et al. \cite{Yi_CommunicationCompression_2023} considered local and global compressors with only absolute compression error under the P--L condition, 
			while our algorithm accommodates broader classes of compressors in the nonconvex setting.
		This unification exploits the structural parallels between the locally- and globally-bounded compressors,
		 enabling a unified treatment of several key technical challenges.
		\item
		For the locally-bounded compressors, we establish convergence in the nonconvex setting 
		and linear convergence under the P--L condition.
		% To the best of our knowledge, this is the first convergence guarantee for local compressors in the nonconvex setting.
		To the best of our knowledge, this is the first convergence guarantee for general local compressors. 
		 In contrast to distributed convex methods with only 1-bit compressors or saturating quantizers \cite{yi2014quantized,saha2021decentralized}, 
		 we extend the first analysis to general local compressors and distributed nonconvex settings.
		Notably, the achieved $\mathcal{O}(1/\sqrt{T})$ convergence rate matches that of centralized algorithms with 1-bit compressors~\cite{zhu2020one,fan20221}.
		% To the best of our knowledge, this is the first convergence guarantee for general lcoal compressors,
		% with an $\mathcal{O}(1/\sqrt{T})$ nonconvex convergence rate
		% matching that achieved by centralized algorithms with 1-bit compressors~(\cite{zhu2020one,fan20221}).
		% In contrast to distributed convex methods that consider only 1-bit compressors or saturated quantizers (\cite{yi2014quantized,saha2021decentralized}), 
		% we extend the first analysis to the nonconvex setting and general local compressors.
		Moreover, with one initial uncompressed communication round, the rate improves to $\mathcal{O}(1/T^{2/3})$, yielding an order-wise improvement over the existing literature.
		These results are summarized in Theorems~\ref{nonconvex:thm-R1}--\ref{nonconvex:thm-liner}.
		\item
		For the globally-bounded compressors, we obtain an $\mathcal{O}(1/T)$ convergence rate in the nonconvex setting and a linear rate under the P--L condition, 
		matching the state-of-the-art guarantees in \cite{liao2024robust}.
		%  for global compressors with both relative and absolute compression errors.	
		% Moreover, compared with \cite{liao2024robust}, our method is more communication- and memory-efficient by  transmitting one fewer variable per iteration and using two fewer auxiliary variables.
		Moreover, our method is more communication- and memory-efficient than the method in \cite{liao2024robust}, 
		since it transmits only one compressed $d$-dimensional variable per iteration (one less than theirs)
		and uses three $d$-dimensional auxiliary variables (two less than theirs).
		These results are summarized in Theorems~\ref{nonconvex:thm-sm_quantization} and \ref{nonconvex:thm-ft_quantization}, 
		and Theorems~\ref{nonconvex:thm-R1}--\ref{nonconvex:thm-ft_quantization} together demonstrate the unification of our algorithm across the locally- and globally-bounded compressors.
	\end{enumerate}
% \begin{enumerate}[label=(\roman*)]
% \item 
% We introduce a novel locally-bounded compressor which unifies 1-bit compressor and arbitrary-level saturation quantizers. 
% We further consider a globally-bounded compressor that simultaneously covers both relative and absolute error. 	
% Each is the most general in its respective class, to the best of our knowledge.
% % Together, they constitute the most general compressor family, to the best of our knowledge.
% \item 
% We present the first algorithm that handles global-relative, global-absolute, and locally-bounded compressors in a unified framework, with rigorous convergence guarantees under convex settings.
% \item 
% For locally-bounded compressors, we establish the first
% convergence guarantees for nonconvex distributed optimization, matching centralized methods with
% $\mathcal{O}\Big(\tfrac{n^{1/4}\tilde d}{\sqrt{T}}\Big)$. 
% Moreover, linear convergence is achieved under the P--L condition.
% \item 
% For locally-bounded compressors (including 1-bit and arbitrary saturation level), if agents transmit exact information only in the first communication round, the rate futher improves to $\mathcal{O}\Big(\frac{n^{\frac{1}{3}}\tilde{d}^\frac{2}{3}}{T^{\frac{2}{3}}}\Big)$.
% \item
% For globally-bounded compressors, we achieve 
% $\mathcal{O}(1/T)$ nonconvex convergence and linear convergence under P--L condition, matching the state-of-the-art guarantees.

% \end{enumerate}

\subsection{Organization and Notations}
	The rest of this paper is organized as follows.
	Section~\ref{zerosg:sec-preliminary} formulates the problem and introduces the locally- and globally-bounded compressors. 
	Then Section~\ref{section:algo}  presents the proposed unified  compression algorithm, 
	and Section~\ref{zerosg:sec-analysis} provides the convergence analysis.
	%  Section~\ref{zerosg:sec-main-random} states the main results,
	%   and Section~\ref{zerosg:sec-analysis} provides the convergence analysis.
	  Section~\ref{zerosg:sec-simulation} gives numerical simulations.
	% In Section~\ref{zerosg:sec-preliminary}, we formulate the problem and introduce the locally- and globally-bounded compressors.
	% Section~\ref{section:algo} presents the proposed unified  compression algorithm.
	% We then state the main results in Section~\ref{zerosg:sec-main-random} and provide the convergence analysis in Section~\ref{zerosg:sec-analysis}.
	% Numerical experiments are reported in Section~\ref{zerosg:sec-simulation}.
	 Finally, Section~\ref{zerosg:sec-conclusion} concludes the paper.
%  Due to space limitations, all proofs are provided in the supplemental material.

	{\bf Notations:} 
	For any positive integer $n$, denote $[n]=\{1,\ldots,n\}$.
	Let $\|\cdot\|_p$ denote the $\ell_p$-norm, and write $\|\cdot\|=\|\cdot\|_2$.
	$\col(x_1,\ldots,x_k)$ stacks $x_1,\ldots,x_k$ with $x_i\in\mathbb{R}^{d_i}$, and $\diag(t_1,\ldots,t_n)$ is the diagonal matrix with diagonal entries $t_i$.
	${\bf 0}_n$, ${\bf 1}_n$, and ${\bf I}_n$ are the $n$-dimensional all-zero vector, all-one vector, and the $n\times n$ identity matrix, respectively.
	% ${\bf 1}_n$ and ${\bf I}_n$ are the $n$-dimensional all-one vector and the $n\times n$ identity matrix, respectively.
	$\mathrm{Unif}(\cdot)$ denotes the uniform distribution.
	%  and sign$(\cdot)$ denotes the sign function. 
	For a differentiable function $f$, $\nabla f$ denotes its gradient.
	$\rho(\cdot)$ denotes the spectral radius, $\rho_2(\cdot)$ the smallest positive eigenvalue, and $A\otimes B$ the Kronecker product.
	% $A\otimes B$ denotes the Kronecker product.
	For a positive semideﬁnite matrix $A$, define $\|x\|_A=\sqrt{\langle x,Ax\rangle}$.
	The subscripts $i$ and $k$ index the agent and iteration, respectively.
	Let $x_{i,k}\in\mathbb{R}^d$ denote agent~$i$'s local iterate for solving~\eqref{prob} at iteration~$k$,
	 and $v_{i,k}\in\mathbb{R}^d$ the associated dual variable.
	Denote $\bsx_k=\col(x_{1,k},\ldots,x_{n,k})$, $\bsv_k=\col(v_{1,k},\ldots,v_{n,k})$, $\bar{x}_k=\tfrac{1}{n}({\bm 1}_n^\top\otimes{\bf I}_p)\bsx_k$, $\bar{\bsx}_k={\bm 1}_n\otimes\bar{x}_k$,
	$\tilde{f}(\bsx_k)=\sum_{i=1}^{n}f_i(x_{i,k})$, $g_{i,k}=\nabla f_i(x_{i,k})$, $\bsg_k=\nabla\tilde{f}(\bsx_k)$, $\bsg_k^0=\nabla\tilde{f}(\bar{\bsx}_k)$,
	$\bsH=\tfrac{1}{n}{\bm 1}_n{\bm 1}_n^\top\otimes{\bf I}_p$ and $\bar{\bsg}_k^0=\bsH\bsg_k^0={\bm 1}_n\otimes\nabla f(\bar{x}_k)$. 
	Without ambiguity, write $\mathcal{C}(\bsx_k)=\col(\mathcal{C}(x_{1,k}),\ldots,\mathcal{C}(x_{n,k}))$.
	We use the norm equivalence: for any $x\in\mathbb{R}^d$, $\|x\|_p \le \hat{d}\|x\|$ and $\|x\| \le \tilde{d}\|x\|_p$, where $\hat{d}=d^{\frac{1}{p}-\frac{1}{2}}$, $\tilde{d}=1$ for $p\in[1,2]$, and $\hat{d}=1$, $\tilde{d}=d^{\frac{1}{2}-\frac{1}{p}}$ for $p>2$.

\section{Problem Formulation}\label{zerosg:sec-preliminary}

% In this section, we introduce the communication compression methods
% % , the communication network, and the standard assumptions on the cost functions.
% with some standard assumptions.

To solve problem~\eqref{prob}, agents must exchange information with each other, and the presence of communication constraint motivates the use of compression.
% Specifically, we consider two general compressor classes, namely, the locally-bounded compressors (Definition~\ref{ass:lcoal}) and the globally-bounded compressors (Definition~\ref{ass:global}).
% Together they form a general compressor family.

\subsection{Locally- and Globally-Bounded Compressors}

We consider two general compressor classes, namely, the locally-bounded compressors (Definition~\ref{ass:lcoal}) and the globally-bounded compressors (Definition~\ref{ass:global}).
Together they form a general compressor family.

The proposed locally-bounded class guarantees compression accuracy only when the input lies within a local region,
thereby allowing compressors such as 1-bit compressors and saturating quantizers \cite{zhang2023innovation,Yi_CommunicationCompression_2023} that are not covered by standard global error bounds.
In contrast, the globally-bounded class impose error bounds for all inputs and have been more widely studied and used in practice.
Most existing global compressors consider either relative or absolute compression error \cite{alistarh2017qsgd,tang2018communication,koloskova2019decentralized,richtarik2021ef21,zhang2023innovation}, 
 whereas Definition~\ref{ass:global} unifies both types of errors \cite{liao2024robust}.

% We now present the locally-bounded compressor,
% which ensures acceptable compression accuracy only when the input lies within a local region.
\begin{definition}\label{ass:lcoal}
  The compressor $\mathcal{C}:\mathbb{R}^d\mapsto\mathbb{R}^d$ satisfies
  \begin{align}
       &\Big\|\frac{\mathcal{C}(x)}{r}-x\Big\|_p  \le C(1-\delta),
      ~\forall x\in\{x:\|x\|_p\le C\}, \label{nonconvex:ass:compression_non1b}
  \end{align}
where $p \ge 1$, $r>0$, $C>0$ specifies the local region, and $\delta \in (0,1]$ measures the contraction.
  \end{definition}
  \begin{definition}\label{ass:global}
	The compressor $\mathcal{C}:\mathbb{R}^d\mapsto\mathbb{R}^d$ satisfies
	\begin{align}\label{nonconvex:ass:compression_equ_scaling}
		\mathbb{E}_{\mathcal{C}}\Big[\Big\|\frac{\mathcal{C}(x)}{r}-x\Big\|^2\Big]\le (1-\delta)\|x\|^2 + C,~\forall x\in\mathbb{R}^d,
	\end{align}
	where $r>0$, $C>0$ specifies the absolute compression error, $\delta\in(0,1]$ measures the contraction of the relative compression error, and $\mathbb{E}_{\mathcal{C}}[\cdot]$ denotes the expectation over the internal randomness of the compressor $\mathcal{C}$.
\end{definition}
\begin{remark}
	In the local sense, Definition~\ref{ass:lcoal} is the most general form of compressors in the literature, to the best of our knowledge.
	When \(C=1\) and \(r=1\), it reduces to the lcoal compressors with absolute compression error studied in \cite{zhang2023innovation,Yi_CommunicationCompression_2023}. 
	Moreover, Definition~\ref{ass:lcoal} is weaker than many existing global compressor, since it only requires an error guarantee on the local region \(\{x:\|x\|_p\le C\}\).
	% When \(C\) is allowed to vary over all positive values,
	% % When \(C\) is allowed to vary over \((0,\infty)\),
	% % When Definition~\ref{ass:lcoal} holds for any \(C>0\),
	% Definition~\ref{ass:lcoal} reduces to deterministic global compressors with relative compression error as in~\cite{Yi_CommunicationCompression_2023}.
	% If one removes the domain restriction and sets \(r=1\), Definition~\ref{ass:lcoal} reduces to deterministic global compressors with absolute compression error as in~\cite{Yi_CommunicationCompression_2023}.
\end{remark}
\begin{remark}
In the global sense, Definition~\ref{ass:global} is the most general form of compressors in the literature, to the best of our knowledge.
When \(C=0\), it reduces to global compressors with relative compression error as in~\cite{liao2022compressed},
while setting \(\delta=1\) and \(r=1\) yields global compressors with absolute compression error as in~\cite{Yi_CommunicationCompression_2023}.
Compared with the locally-bounded class, this globally-bounded class
 has the distinctive feature of accommodating stochasticity and arbitrary additional bounded noise, 
which can be absorbed into the constant $C$ in \eqref{nonconvex:ass:compression_equ_scaling}.
% Since global compressors with only relative or absolute compression error are widely used in practice,
%  we interpret \(C\) in~\eqref{nonconvex:ass:compression_equ_scaling} as an additional bounded noise term imposed on such compressors,
%  which may arise from practical sources such as communication noise and finite-precision quantization.
% This viewpoint brings a broader set of practical compressors into our compressor family.
% Moreover, composing a global compressor with relative compression error and one with absolute compression error, in either order, also preserves Definition~\ref{ass:global}.
% This compositional preservation continues to hold
%   in the presence of an additional bounded noise term, 
%  irrespective of whether it is applied to the inner compressor, the outer compressor, or both.
%  Such compositions can 
% dramatically increase the variety of compressors
% and reduce communication costs.
\end{remark}

Next, we provide several examples that satisfy Definition~\ref{ass:lcoal},
including
% This class includes 
the 1-bit compressors and saturating quantizers, both of which transmit relatively limited information.
\begin{align*} 
% &\hspace{-0.6em}(\text{1-bit compressor:}~C^\prime>0) \\
&\hspace{-2.45em}(\text{1-bit compressor}) \\
% &\hspace{-0.6em}(\text{1-bit compressor:}~p=\infty,~r=1,~C>0,~\delta\in(0,1/2])  \\
&~[\mathcal{C}_1(x)]_j=
\begin{cases}
+\frac{C^\prime}{2}, & x_j  \ge 0,\\[2mm]
-\frac{C^\prime}{2}, & x_j  < 0,
\end{cases}
~\forall j\in[d],
~~b_v=d,
% \\
% 	& \text{where}~j\in[d]~\text{indexes the coordinates}, \xi_j\in[-1,1] \text{models}\\
% 	& \text{a} \text{sign-flip} \text{noise on the} j\text{-th} \text{coordinate, and} b_v \text{denotes the number of transmitted bits per vector}.
\end{align*} 
% $j\in[d]$ indexes the coordinates, 
% where $\xi_1\in\mathbb{R}^d$ denotes a sign-flip noise with $\xi_{1,j}\in[-C(1/2-\delta),\,C(1/2-\delta)]$ on the $j$-th coordinate, and 
where $C'>0$ is the quantization level, and $b_v$ denotes the number of transmitted bits per vector.
The 1-bit compressors satisfy Definition~\ref{ass:lcoal} with $p=\infty$, $r=1$, $C=C^\prime$, and $\delta\in(0,1/2]$.
% $\xi_{1,j}\in[-C\big(\tfrac{1}{2}-\delta\big),C\big(\tfrac{1}{2}-\delta\big)]$ models a sign-flip perturbation on the $j$th coordinate, and $b_v$ denotes the number of transmitted bits per vector.
\begin{align*} 
&\hspace{-1.19em}(\text{Saturating quantizer})\\
% &(\text{arbitra19ry saturating quantizers:}~\Delta>0,~C^\prime>0)\\
	% &(\text{arbitrary saturating quantizers:}\hspace{0.1em}~p=\infty,~r=1,~C>0,\\
% &~\delta\in(0,1]) \\
% &(\text{arbitrary saturating quantizers:}\\
% &\hspace{9.5em}p=\infty,~r=1,~C>0,~\delta\in(0,1]) \\
% &[\mathcal{C}_2(x)]_j =
% \Delta\,\min\Big\{ 
% \max\big\{\Big\lfloor \frac{x_j}{\Delta}+\tfrac12+\xi_j\Big\rfloor,\lfloor \frac{-C}{\Delta}\rfloor\big\}, 
% \lfloor \frac{C}{\Delta}\rfloor\Big\},\\
&\mathcal{C}_2(x) = \Delta\min\bigg\{ \max\Big\{\Big\lfloor \frac{x}{\Delta}+\frac{{\bf 1}_d}{2} \Big\rfloor,\Big\lfloor\frac{-C^\prime}{\Delta}\Big\rfloor\Big\}, 
\Big\lfloor \frac{C^\prime}{\Delta}\Big\rfloor\bigg\},\\
%  & [\mathcal{C}_2(x)]_j= 
%  \begin{cases}
% \Delta\lfloor \frac{C}{\Delta} \rfloor, & \frac{x_j}{\Delta}+\tfrac12+\xi_j>\frac{C}{\Delta},\\[2mm]
% \Delta\lfloor\frac{x_j}{\Delta}+\tfrac12+\xi_j\rfloor, & \frac{x_j}{\Delta}+\tfrac12+\xi_j \in[-\frac{C}{\Delta},\frac{C}{\Delta}], \\[2mm]
% \Delta\lfloor -\frac{C}{\Delta}\rfloor, & \frac{x_j}{\Delta}+\tfrac12+\xi_j<-\frac{C}{\Delta},
% \end{cases}\\
&b_v = d\Big\lceil \log_2\Big(\Big\lfloor \frac{C^\prime}{\Delta}\Big\rfloor+ \Big\lceil \frac{C^\prime}{\Delta}\Big\rceil+1\Big)\Big\rceil,
% ~\forall j\in[d],
\end{align*}
where $C'>0$ is the saturation level, $\Delta\in(0,2C)$ is the quantization stepsize, 
% where $\Delta>0$ denotes the quantization interval,
 with $\lfloor\cdot\rfloor$ and $\lceil\cdot\rceil$ denoting the floor and ceiling operators, respectively.
% $b_v$ denotes the number of transmitted bits per vector,
Note that the saturating quantizers satisfy Definition~\ref{ass:lcoal} with $p=\infty$, $r=1$, $C=C^\prime$, and $\delta\in(0,1-\Delta/(2C^\prime)]$,
but not the local compressors with absolute compression error considered in \cite{zhang2023innovation,Yi_CommunicationCompression_2023}. 
% and $\xi_2\in\mathbb{R}^d$ is a noise satisfying $\|\xi_2\|_\infty \le C(1-\delta)/\Delta-1/2$ when $\Delta\ge C(1-\delta)$ and $\|\xi_2\|_\infty < \lfloor C(1-\delta)/\Delta\rfloor+1$ when $\Delta< C(1-\delta)$.
% When $\Delta=2C(1-\delta)$, $\mathcal{C}_2(x)$ reduces to another form of $1$-bit compresspr.

Definition~\ref{ass:lcoal} also encompasses many global compressors, as it only requires the compression
%  property to hold 
 within a local region.
% , and imposing an additional saturation (clipping) range---often due to communication constranit---preserves this assumption.
\begin{align*} 
&\hspace{-4.47em}(\text{Top-$k$}~\text{compressor})\\
	% &(\text{Top-$k$}~\text{compressor:}\hspace{0.1em}~p=2,\hspace{0.1em}~r=1,\hspace{0.1em}~C>0,\hspace{0.1em}~\delta=k/p)\\
% &(\text{Saturated}~\hspace{0.1em} \text{Top-$k$}~\hspace{0.1em} \text{compressor:}\hspace{0.1em}~p=2,\hspace{0.1em}~r=1,\hspace{0.1em}~C>0,\hspace{0.1em}~\delta\\
% &\hspace{0.1em})\\
% &(\text{Saturated Top-$k$ compressor:}\\
% &\hspace{10.8em} p=2,~r=1,~C>0,~\delta=k/p)\\
 &\hspace{1.0em}\mathcal{C}_3(x) =\sum\nolimits_{j=1}^{k}[x]_{t_j}e_{t_j}. 
 ~b_v=kb_1.\\
% \end{align*}
% where the index $t_j$ refers to one of the $j$-th largest-magnitude components of $x$, and $b_1=64$ corresponds to 64-bit floating-point representation, ensuring exact precision.
% \begin{align*}
&\hspace{-4.47em}(\text{Norm-sign}~\text{compressor})\\
% &(\text{Norm-sign}\hspace{0.1em} ~\text{compressor:}\hspace{0.1em}~p=2,\hspace{0.1em}~r=d/2,\hspace{0.1em}~C>0,~\delta=1/d^2\\
% &\hspace{0.1em}>0,~\delta=1/d^2~\text{and}~p=\infty,~r=1,~C>0,~\delta=1/2)\\
%  &\hspace{0.15em}~~~~~\mathcal{C}_4(x)=\frac{\|x\|_\infty}{2}\text{sign}(x),~\forall x\in\{x\in\mathbb{R}^d:\|x\|_p\le C\},\\
%  &\hspace{0.15em}~~~~~b_v=kb_1.
&\hspace{1.0em}\mathcal{C}_4(x)=\frac{\|x\|_\infty}{2}\text{sign}(x),~b_v=d + b_1.
\end{align*}
Here, $k$ is the number of selected coordinates,
$t_j$ indexes the $j$-th largest magnitude entry of $x$ and $[x]_{t_j}$ denotes the corresponding value,
$b_1=32$ corresponds to 32-bit floating-point representation ensuring exact precision, 
and $\text{sign}(\cdot)$ is applied elementwise.
% The Top-$k$ compressors satisfy Definition~\ref{ass:lcoal} with $p=2$, $r=1$, $C>0$, and $\delta=k/d$. 
% The norm-sign compressors satisfy Definition~\ref{ass:lcoal} with either $p=2$, $r=d/2$, $C>0$, and $\delta=1/d^2$, or $p=\infty$, $r=1$, $C>0$, and $\delta=1/2$.
The top-$k$ compressors satisfy Definition~\ref{ass:lcoal} with $p=2$, $r=1$, $C>0$, and $\delta=k/d$, while the norm-sign compressors satisfy Definition~\ref{ass:lcoal} with $p=\infty$, $r=1$, $C>0$, and $\delta=1/2$.
% but they do not fit the local compressors with absolute compression error in \cite{zhang2023innovation,Yi_CommunicationCompression_2023} when $p=2$.

We now turn to the globally-bounded compressors. 
Since global compressors with only relative or absolute compression error are widely used in practice,
 we interpret \(C\) in~\eqref{nonconvex:ass:compression_equ_scaling} as an additional bounded noise term imposed on such compressors,
 which may arise from practical sources such as communication noise and finite-precision quantization.
This viewpoint brings a broader set of practical compressors into our compressor family.
Moreover, composing a global compressor with relative compression error and one with absolute compression error, in either order, also preserves Definition~\ref{ass:global}.
This compositional preservation continues to hold
  in the presence of an additional bounded noise term, 
 irrespective of whether it is applied to the inner compressor, the outer compressor, or both.
 Such compositions can 
dramatically increase the variety of compressors
and reduce communication costs.

\begin{lemma}\label{lemma:noise}
	Suppose that a bounded noise $\xi$ satisfies $\|\xi\|\le C_\xi$ almost surely, 
	and define the perturbed compressors $\tilde{\mathcal{C}}_r(x)=\mathcal{C}_r(x)+\xi$ and $\tilde{\mathcal{C}}_a(x)=\mathcal{C}_a(x)+\xi$. 
	Here, $\mathcal{C}_r,\mathcal{C}_a:\mathbb{R}^d\to\mathbb{R}^d$ are global compressors with relative and absolute compression errors, respectively, and satisfy
	\begin{align}
		\mathbb{E}_{\mathcal{C}_{r}}\Big[\Big\|\frac{\mathcal{C}_r(x)}{r_r}-x\Big\|^2\Big]&\le (1-\delta_r)\|x\|^2,~\forall x\in\mathbb{R}^d, \label{ass:relative}\\
		\mathbb{E}_{\mathcal{C}_{a}}\Big[\Big\|\frac{\mathcal{C}_a(x)}{r_a}-x\Big\|^2\Big]&\le C_a,~\forall x\in\mathbb{R}^d, \label{ass:absolute}
	\end{align}
where $r_r,r_a>0$, $\delta_r\in(0,1]$, and $C_a>0$; 
$\mathbb{E}_{\mathcal{C}_{r}}[\cdot]$ and $\mathbb{E}_{\mathcal{C}_{a}}[\cdot]$ 
denote the expectations over the internal randomness of $\mathcal{C}_r$ and $\mathcal{C}_a$, respectively, 
with 
these two kinds of randomness being independent.
Then $\tilde{\mathcal{C}}_r(x)$ satisfies Definition~\ref{ass:global} with
\begin{align*}
	r=r_r,~
C=C_{\tilde{r}}=(2-\delta_r)C_\xi^2/(\delta_r r_r^2),~
\delta=\delta_r/2,
\end{align*}
and $\tilde{\mathcal{C}}_a(x)$ satisfies Definition~\ref{ass:global} with 
\begin{align*}
	r=r_a,~C=C_{\tilde{a}}=2C_a+2C_\xi^2/r_a^2,~\delta=1.
\end{align*}
\end{lemma}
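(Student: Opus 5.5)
The plan is to expand the perturbed error and split it with Young's inequality: $\frac{\tilde{\mathcal{C}}(x)}{r}-x = \big(\frac{\mathcal{C}(x)}{r}-x\big) + \frac{\xi}{r}$. Then take expectation over all internal randomness, meaning that of $\mathcal{C}_r$ or $\mathcal{C}_a$ together with $\xi$, and regard this joint randomness as the internal randomness $\mathbb{E}_{\mathcal{C}}$ of the perturbed compressor. Since $\|\xi\|\le C_\xi$ holds almost surely, the noise term is bounded pointwise by $C_\xi^2/r^2$. This bound needs no independence between $\xi$ and the compressor, and no cross-term ever has to be computed in expectation.

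\textbf{Relative case.} For any $\alpha>0$, Young's inequality gives
\begin{align*}
\Big\|\frac{\tilde{\mathcal{C}}_r(x)}{r_r}-x\Big\|^2 \le (1+\alpha)\Big\|\frac{\mathcal{C}_r(x)}{r_r}-x\Big\|^2 + \Big(1+\frac{1}{\alpha}\Big)\frac{C_\xi^2}{r_r^2}.
\end{align*}
Taking expectations and applying \eqref{ass:relative}, the right-hand side is at most $(1+\alpha)(1-\delta_r)\|x\|^2 + (1+1/\alpha)C_\xi^2/r_r^2$.

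When $\delta_r<1$, I choose $\alpha=\frac{\delta_r}{2(1-\delta_r)}$. This makes $(1+\alpha)(1-\delta_r)=1-\delta_r/2$ and $1+1/\alpha = \frac{2-\delta_r}{\delta_r}$, which are exactly $\delta=\delta_r/2$ and $C=C_{\tilde r}$.

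When $\delta_r=1$, this choice of $\alpha$ is undefined, so I handle it separately. By \eqref{ass:relative}, $\mathcal{C}_r(x)/r_r=x$ almost surely, so the error equals $\|\xi\|^2/r_r^2\le C_\xi^2/r_r^2$. This is at most $(1-\tfrac12)\|x\|^2 + C_{\tilde r}$, since $C_{\tilde r}=C_\xi^2/r_r^2$ when $\delta_r=1$.

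\textbf{Absolute case.} Use $\|a+b\|^2\le 2\|a\|^2+2\|b\|^2$ together with \eqref{ass:absolute}. This gives the bound $2C_a+2C_\xi^2/r_a^2 = 0\cdot\|x\|^2 + C_{\tilde a}$, i.e. Definition~\ref{ass:global} with $\delta=1$ and $r=r_a$.

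The only delicate step is choosing $\alpha$ in the relative case so that the contraction degrades to exactly $\delta_r/2$, and treating the boundary case $\delta_r=1$ separately. Everything else is routine.
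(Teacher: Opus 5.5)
Your proof is correct and matches the paper's argument: it uses the same Young's inequality with $c_1=\delta_r/(2-2\delta_r)$ (your $\alpha$) in the relative case, and $\|a+b\|^2\le 2\|a\|^2+2\|b\|^2$ in the absolute case. Your separate treatment of $\delta_r=1$, where $c_1$ is undefined, fills a small boundary gap that the paper leaves implicit.
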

\begin{proof}
	See Appendix \ref{appendix:bn_co:1}.
\end{proof}
\begin{lemma}\label{lemma:composition}
With $\tilde{\mathcal{C}}_r$ and $\tilde{\mathcal{C}}_a$ defined as in Lemma~\ref{lemma:noise},
the composition $\tilde{\mathcal{C}}_r(\frac{\tilde{\mathcal{C}}_a(x)}{r_a})$ satisfies Definition~\ref{ass:global} with
\begin{align*}
	r\hspace{-0.1em}=\hspace{-0.1em}r_r,~C\hspace{-0.1em}=\hspace{-0.1em}\frac{(4\hspace{-0.1em}-\hspace{-0.1em}\delta_r)C_{\tilde{r}}}{4\hspace{-0.1em}-\hspace{-0.1em}2\delta_r} + \frac{(4\hspace{-0.1em}-\hspace{-0.1em}\delta_r)(12\hspace{-0.1em}-\hspace{-0.1em}\delta_r)C_{\tilde{a}}}{4\delta_r},~\delta\hspace{-0.1em}=\hspace{-0.1em}\frac{\delta_r}{8},
	% r=r_r,~C=(2-\delta_r)(6-\delta_r)\tilde{d}^{2}C_a/(2\delta_r),~\delta=\delta_r/8,
\end{align*}
and $\tilde{\mathcal{C}}_a(\tilde{\mathcal{C}}_r(x))$ satisfies Definition~\ref{ass:global} with
\begin{align*}
	r=r_rr_a,~C=\frac{(4-\delta_r)C_{\tilde{r}}}{4-2\delta_r}+\frac{(4-\delta_r)C_{\tilde{a}}}{\delta_rr_r^2},~\delta=\frac{\delta_r}{4}.
\end{align*}
\end{lemma}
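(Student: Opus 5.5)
The plan is to reduce both compositions to the two one-step bounds of Lemma~\ref{lemma:noise} and glue them together with Young's inequality $\|a+b\|^2\le(1+\eta)\|a\|^2+(1+1/\eta)\|b\|^2$, choosing $\eta$ to reproduce the stated constants. By Lemma~\ref{lemma:noise}, the two ingredients are
\begin{align*}
\mathbb{E}\Big[\Big\|\frac{\tilde{\mathcal{C}}_r(y)}{r_r}-y\Big\|^2\Big]&\le \Big(1-\frac{\delta_r}{2}\Big)\|y\|^2+C_{\tilde r},\\
\mathbb{E}\Big[\Big\|\frac{\tilde{\mathcal{C}}_a(y)}{r_a}-y\Big\|^2\Big]&\le C_{\tilde a},
\end{align*}
valid for every $y\in\mathbb{R}^d$. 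Because the randomness of $\mathcal{C}_r$ and $\mathcal{C}_a$ (and of the noise) is independent, I will apply each bound conditionally on the output of the inner compressor and then take total expectation via the tower property. This is legitimate because both bounds hold for all inputs, including random ones.

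\emph{Second composition} (the easier one). Set $z=\tilde{\mathcal{C}}_r(x)$ and split
\[
\frac{\tilde{\mathcal{C}}_a(z)}{r_rr_a}-x=\frac{1}{r_r}\Big(\frac{\tilde{\mathcal{C}}_a(z)}{r_a}-z\Big)+\Big(\frac{z}{r_r}-x\Big).
\]
Apply Young's inequality with weight $1+\eta$ on the second term and $1+1/\eta$ on the first, where $\eta=\delta_r/(4-2\delta_r)$. Conditioning on $z$, the first term contributes $(1+1/\eta)C_{\tilde a}/r_r^2=(4-\delta_r)C_{\tilde a}/(\delta_r r_r^2)$. The second term contributes $(1+\eta)\big[(1-\delta_r/2)\|x\|^2+C_{\tilde r}\big]$. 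Since $(1+\eta)(1-\delta_r/2)=\frac{4-\delta_r}{4-2\delta_r}\cdot\frac{2-\delta_r}{2}=1-\delta_r/4$, this gives exactly $r=r_rr_a$, $\delta=\delta_r/4$, and the stated $C$.

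\emph{First composition.} Set $y=\tilde{\mathcal{C}}_a(x)/r_a$ and split
\[
\frac{\tilde{\mathcal{C}}_r(y)}{r_r}-x=\Big(\frac{\tilde{\mathcal{C}}_r(y)}{r_r}-y\Big)+(y-x).
\]
With the same $\eta$, the first piece yields $(1-\delta_r/4)\,\mathbb{E}\|y\|^2+\frac{4-\delta_r}{4-2\delta_r}C_{\tilde r}$ after conditioning on $y$. The second piece yields $(1+1/\eta)C_{\tilde a}=\frac{4-\delta_r}{\delta_r}C_{\tilde a}$.

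The main obstacle is the extra $\|y\|^2$ rather than $\|x\|^2$. I will handle it by a second Young step, $\|y\|^2\le(1+\eta')\|x\|^2+(1+1/\eta')\|y-x\|^2$, with $\eta'$ chosen so that $(1-\delta_r/4)(1+\eta')\le 1-\delta_r/8$. The choice $\eta'=\delta_r/(8-2\delta_r)$ achieves this, and then $1+1/\eta'=(8-\delta_r)/\delta_r$.

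Collecting the $C_{\tilde a}$ terms gives the coefficient
\[
\frac{4-\delta_r}{\delta_r}+\Big(1-\frac{\delta_r}{4}\Big)\frac{8-\delta_r}{\delta_r}=\frac{4-\delta_r}{4\delta_r}\,(4+8-\delta_r)=\frac{(4-\delta_r)(12-\delta_r)}{4\delta_r},
\]
which matches the statement. The final bookkeeping step is to check that this arithmetic closes with $\delta=\delta_r/8$ and $r=r_r$.
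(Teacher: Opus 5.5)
Your proposal is correct and follows the paper's proof essentially step for step. The paper uses the same Young weights $c_2=\delta_r/(4-2\delta_r)$ and $c_3=\delta_r/(8-2\delta_r)$, conditions on the inner compressor's output via independence, and in the first composition re-splits $\|y\|^2=\|(y-x)+x\|^2$ exactly as you do. Your arithmetic also checks out: $(1+c_2)(1-\delta_r/2)=1-\delta_r/4$, $(1+c_3)(1-\delta_r/4)=1-\delta_r/8$, and the $C_{\tilde a}$ coefficient equals $\frac{(4-\delta_r)(12-\delta_r)}{4\delta_r}$.
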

\begin{proof}
	See Appendix \ref{appendix:bn_co:2}.
\end{proof}

Based on Lemmas~\ref{lemma:noise} and~\ref{lemma:composition}, we present several examples that satisfy Definition~\ref{ass:global} but not Definition~\ref{ass:lcoal}. 
The associated parameters in Definition~\ref{ass:global} can be derived from Lemmas~\ref{lemma:noise} and~\ref{lemma:composition}, and are omitted for brevity.
Note that $\mathcal{C}_5$, $\mathcal{C}_6$, $\mathcal{C}_7$, and $\mathcal{C}_9$ do not fall into the class of global compressors with relative or absolute compression error only.
% For compressors with only relative or absolute compression error, adding a bounded noise $\xi_i$ with $\|\xi_i\|^2\le C_0$ for $i=6$--$9$ still preserves Definition~\ref{ass:global}.
\begin{align*} 
	&\hspace{0.em}(\text{Unbiased}~ \text{$k$-bit}~ \text{quantizer}~ \text{with}~ \text{bounded}~ \text{noise})\\
	&\hspace{3.5em}\mathcal{C}_5(x)=\frac{\|x\|_\infty}{2^{k-1}}\text{sign}(x)\circ\hspace{-0.15em}
	\Big\lfloor\frac{2^{k-1}\left|{x}\right|}{\|x\|_\infty}\hspace{-0.06em}+\hspace{-0.05em}\zeta_5\Big\rfloor \hspace{-0.05em}+\hspace{-0.em} \xi_5,\\
	&\hspace{3.5em}b_v=(k + 1)d + b_1.\\
	&\hspace{0.0em}(\text{Rand-$k$ compressor with bounded noise})\\
	&\hspace{3.5em} \mathcal{C}_6(x)=\sum\nolimits_{j=1}^{k}[x]_{r_j}e_{r_j} + \xi_6,\hspace{0.1em}~b_v=kb_1.\\
	&\hspace{0.0em}(\text{Scalarization compressor with bounded noise})\\
	&\hspace{3.5em}\mathcal{C}_{7}(x)=\psi_k\hspace{0.1em}(\hspace{0.1em}\psi_k^\top x\hspace{0.1em}) + \xi_{7},~\hspace{0.1em}b_v\hspace{0.1em}=\hspace{0.1em}b_1.\\
	&\hspace{0.0em}(\text{Standard uniform quantizer with bounded noise})\\
	&\hspace{1em}\mathcal{C}_{8}(x)=\Delta\Big\lfloor \frac{x}{\Delta}\hspace{-0.2em}+\hspace{-0.2em}\frac{{\bf 1}_d}{2}\Big\rfloor 
		\hspace{-0.1em}+\hspace{-0.1em} \xi_{8},\hspace{0.2em}b_v = d\,\Big\lceil\hspace{-0.1em}\log_2\!\big(2\Big\lfloor\hspace{-0.2em} \frac{\|x\|_\infty}{\Delta}\hspace{-0.2em}\Big\rfloor\hspace{-0.1em}+\hspace{-0.11em}1\big)\hspace{-0.1em}\Big\rceil.\\
	&\hspace{0.0em}(\text{Composition of Unbiased $k$-bit and standard uniform quant-}\\
	&~\text{izer, both with bounded noise)}\\
	&\hspace{3.5em}\mathcal{C}_{9}(x)=C_5(C_8(x)),~\hspace{0.1em}\mathcal{C}_{10}(x)=C_8(C_5(x)).
\end{align*}
Here, $\xi_i$ for $i=5,\dots,8$ denote bounded noise terms with $\|\xi_i\|\le C_{\xi_i}$ almost surely.
$\zeta_{5}\sim\mathrm{Unif}([0,1]^d)$ is a user-generated random vector used to ensure unbiasedness.
$\circ$ and $|\cdot|$ denote the Hadamard product and absolute value, respectively.
%  $r_j$ denotes a randomly selected non-repetitive coordinate index.
The periodic direction $\psi_k$ is known to all agents in advance, 
% so each round only transmits the scalar $\psi_k^\top x$. 
so only the scalar $\psi_k^\top x$ is transmitted at each round.
$\psi_k$ lies on the Stiefel manifold and satisfies $\mathbb{E}[\psi_k\psi_k^\top]=I_d/d$. 
The noise-free version of $\mathcal{C}_{7}$ was given in~\cite{Wang_Scalarized_2025}.
% Let $\hat{C}_5(\cdot)$ and $\hat{C}_8(\cdot)$ denote the corresponding compressors without additive bounded noise.

 \subsection{Communication Network and Cost Functions}

This subsection introduces the communication network and  several standard assumptions on the cost functions.

The communication network among agents is represented by a graph $\mathcal{G}=(\mathcal{V},\mathcal{E},A)$.
Here $\mathcal{V}=[n]$ is the agent set, $\mathcal{E}\subseteq \mathcal{V}\times\mathcal{V}$ is the edge set, and
$A=[A_{ij}]$ is the (weighted) adjacency matrix.
$A_{ij}>0$ if $(j,i)\in\mathcal{E}$, meaning that agent $i$ can receive information from agent $j$, and $A_{ij}=0$ otherwise.
The neighbor set of node $i$ is $\mathcal{N}_i=\{j\in\mathcal{V}:(j,i)\in\mathcal{E}\}$.
Let $D=\mathrm{diag}(d_1,\dots,d_n)$ with $d_i=\sum_{j=1}^n A_{ij}$ and define the Laplacian $L=D-A$.
The graph is undirected if $(i,j)\in\mathcal{E}$ implies $(j,i)\in\mathcal{E}$, in which case $A_{ij}=A_{ji}$.
It is connected if any two agents are linked by a path, i.e., a finite sequence of edges connecting them.
  \begin{assumption}\label{nonconvex:ass:graph}
    The underlying communication network is modeled by an undirected and connected graph $\mathcal G$.
  \end{assumption}

  Next, we present the standard assumptions on the cost functions.
%    used throughout the paper.
  \begin{assumption}\label{nonconvex:ass:fiu}
    Each  local cost function $f_i(x)$ is smooth with constant $\ell>0$, i.e., 
    \begin{align}\label{nonconvex:smooth}
      \|\nabla f_i(x)-\nabla f_i(y)\|\le \ell\|x-y\|,~\forall x,y\in \mathbb{R}^d.
    \end{align}

  \end{assumption}
  \begin{assumption}\label{nonconvex:ass:optset}
    The optimal value of the optimization problem~\eqref{prob} is finite, i.e.,
        $f^\ast = \inf_{x\in\mathbb{R}^d} f(x) > -\infty.$
  \end{assumption}

  \begin{assumption}\label{nonconvex:ass:known_lowerbound}
    A lower bound $f_{\text{low}}$ of the optimal value $f^\ast$ is known, where $f_{\text{low}}\le f^\ast$.
  \end{assumption}
  \begin{remark}
	Assumption~\ref{nonconvex:ass:known_lowerbound} is easily satisfied in a wide range of applications.	
		For instance, most machine learning loss functions (e.g., squared, logistic, and hinge losses) are nonnegative, so one can simply take $f_{\mathrm{low}}=0$.
		Similarly, in engineering and control applications, cost functions are frequently quadratic, which admit an explicit lower bound.
		More generally, even if the cost function can take negative values, a non-tight but valid lower bound is sufficient.
% Assumption~\ref{nonconvex:ass:known_lowerbound} is introduced only to derive an explicit theoretical choice of the algorithm parameters.
Assumption~\ref{nonconvex:ass:known_lowerbound} is introduced only for the locally-bounded case to derive explicit theoretical choices of the algorithm parameters.
Without this assumption, the main convergence results (Theorems~\ref{nonconvex:thm-R1} and~\ref{nonconvex:cor-R1} in Section~\ref{zerosg:sec-analysis}) can still be established, albeit without explicit parameter specifications.
In practice, however, the algorithm can be implemented without knowledge of $f_{\mathrm{low}}$ by tuning the parameters empirically.
% Assumption~\ref{nonconvex:ass:known_lowerbound} is introduced only for the locally-bounded case to derive an explicit theoretical choice of the algorithm parameters.
% All main convergence results (Theorems~\ref{nonconvex:thm-R1} and \ref{nonconvex:cor-R1} in Section \ref{zerosg:sec-analysis}) remain valid without this assumption.
% In practice, one can implement the algorithm without $f_{\mathrm{low}}$ and tune the parameters empirically.
  \end{remark}

	\begin{assumption}\label{nonconvex:ass:fil}
	The global cost function $f$ satisfies the Polyak--\L{}ojasiewicz (P--L) condition with constant $\nu>0$, i.e.,
	\begin{align}\label{nonconvex:equ:plc}
		\frac{1}{2}\|\nabla f(x)\|^2 \ge \nu \big(f(x)-f^\ast\big), 
		~\forall x\in \mathbb{R}^d.
	\end{align}
	\end{assumption}

  \begin{remark}
	The P--L condition relaxes strong convexity and does not imply convexity.
	% , in contrast to the convexity-based assumption in \cite{yi2014quantized}.
% It ensures that every stationary point is globally optimal and allows faster convergence rates.
% It is used to establish faster convergence rates than those in the nonconvex setting,
% and is known to hold for a broad class of nonconvex problems,
Moreover, a broad class of nonconvex problems is known to satisfy the P--L condition,
notably the loss functions of wide neural networks~\cite{liu2022loss}.
% It is used to establish faster convergence rates than in the nonconvex setting.
	% It should be highlighted that P--L condition does not imply convexity, unlike the convexity-based assumptions in \cite{yi2014quantized}.
	% It can be viewed as a nonconvex analogue of strong convexity, and it guarantees that every stationary point is globally optimal and allows faster convergence.
	% The P--L condition is known to hold for many nonconvex problems, including the loss of wide neural networks~\cite{liu2022loss}.
  \end{remark}

	Since Definitions~\ref{ass:lcoal} and~\ref{ass:global} capture the general local and global compressors, respectively,
	 together they form the most general compressor family in the distributed optimization literature.
	Let us solve
	 the distributed optimization problem~\eqref{prob} with the locally- and globally-bounded compressors to reduce communication costs.
	 While existing works typically treat local and global cases separately,  
	 we provide a unified framework
	  and develop a single algorithm that applies to both.

\section{Unified  Compression Algorithm} \label{section:algo}

In this section, we propose a unified  compression algorithm that accommodates both the locally- and globally-bounded compressors.
Then we highlight the key design that exploits the structural parallels between them,
 allowing a unified treatment of several key technical challenges.

\subsection{Algorithm Design}

To solve problem~\eqref{prob} with communication compression, Yi et al. \cite{Yi_CommunicationCompression_2023} proposed the following compressed communication algorithm:
	\begin{subequations}\label{algorithm:Yi}
		\begin{align}
			&\hspace{-0.3em} x_{i,k+1} = x_{i,k}-\alpha\big(\beta\sum\nolimits_{j=1}^{n}L_{ij}\hat{x}_{j,k}+\gamma v_{i,k}+g_{i,k}\big), \label{nonconvex:kia-algo-dc-x-compress}\\
			&\hspace{-0.3em} v_{i,k+1} =v_{i,k}+ \alpha\gamma\sum\nolimits_{j=1}^{n}L_{ij}\hat{x}_{j,k},  \label{nonconvex:kia-algo-dc-v-compress}
		\end{align}
	\end{subequations}
	where $\hat{x}_{j,k}\in\mathbb{R}^d$ is a compression surrogate used in place of $x_{j,k}$, 
	% $g_{i,k}=\nabla f_i(x_{i,k})$ is the local gradient,
	and $\alpha$, $\beta$, and $\gamma$ are positive algorithm parameters with $\alpha$ being the stepsize.
The surrogate $\hat{x}_{j,k}$ admits different updating forms for different compressors.
Specifically, under global compressors with relative compression error, 
one uses
\begin{align*}
	&\hat{x}_{j,k}=a_{j,k}+\mathcal{C}(x_{j,k}-a_{j,k}),\\
	&a_{j,k}=a_{j,k-1}+\omega\,\mathcal{C}(x_{j,k}-a_{j,k-1})
\end{align*}
% $\hat{x}_{j,k}=a_{j,k}+\mathcal{C}(x_{j,k}-a_{j,k})$ with the update
% $a_{j,k}=a_{j,k-1}+\omega\,\mathcal{C}(x_{j,k}-a_{j,k-1})$, 
where $a_{j,k}\in\mathbb{R}^d$ is an auxiliary variable, and $\omega$ is a parameter to regulate the relative compression error.
Under compressors with absolute compression error, a dynamic scaled-difference scheme is adopted as
\begin{align*}
	\hat{x}_{j,k}=\hat{x}_{j,k-1}+s_k\,\mathcal{C}((x_{j,k}-\hat{x}_{j,k-1})/s_k),
\end{align*}
% $\hat{x}_{j,k}=\hat{x}_{j,k-1}+s_k\,\mathcal{C}((x_{j,k}-\hat{x}_{j,k-1})/s_k)$, 
where
$s_k$ is a diminishing scaling factor to regulate the absolute compression error.

	To accommodate more general compressors, our main departure from~\cite{Yi_CommunicationCompression_2023} 
	lies in bringing the updating forms above
	%  several techniques
	  into a unified framework 
	%   to construct 
	 and constructing 
	a new compression surrogate $\hat{x}_{j,k}$.
	% , together with time-varying parameters $\alpha$, $\beta$, and $\gamma$.	
	Specifically, $\hat{x}_{i,k}$ is updated as
	% we approximate $x_{i,k}$ by the compression surrogate $\hat{x}_{i,k}$ updated as
	\begin{align}
		&\hat{x}_{i,k}=\hat{x}_{i,k-1}+\omega s_k q_{i,k}, \label{nonconvex:kia-algo-dc-a}\\
		&q_{i,k}=\mathcal{C}((x_{i,k}-\hat{x}_{i,k-1})/s_k), \label{nonconvex:kia-algo-dc-q}
	\end{align}
	where $q_{i,k}\in\mathbb{R}^d$ is the compressed variable for communication, 
	with $\omega\in (0,1/r]$ being a fixed parameter and $s_k$ a diminishing scaling factor 
	to  regulate the relative and absolute compression error, respectively.

	Note that having $\sum_{j=1}^n L_{ij}\hat{x}_{j,k}$ in \eqref{algorithm:Yi}
	% $\sum_{j=1}^n L_{ij}\hat{x}_{j,k}$ 
	% in \eqref{algorithm:Yi} implies transmitting $\hat{x}_{i,k}$, which 
	does not require sending $\hat{x}_{i,k-1}$ at every iteration in addition to the compressed variable $q_{i,k}$.
	Instead, each agent maintains an auxiliary variable
	\begin{align} \label{nonconvex:kia-algo-dc-b}
		y_{i,k}=y_{i,k-1}+\omega s_k\sum\nolimits_{j=1}^{n}L_{ij}q_{j,k}.
	\end{align} 
	By induction, it directly follows that 
	$y_{i,k}=\sum_{j=1}^{n}L_{ij}\hat{x}_{j,k}$.
	\begin{algorithm}[!tb]
      %\caption{Distributed Algorithm with Stochastic Compressed Communication}
      \caption{Unified  Compression Algorithm}
      \label{nonconvex:algorithm-pdgd}
      \begin{algorithmic}[1]
        \STATE Input: positive parameters  $\alpha$, $\beta$, $\gamma$ and $\omega$; positive sequence $\{ s_{k}\}$. 
        \STATE Initialize: $ x_{i,0}\in\mathbb{R}^d$ and $v_{i,0}=\hat{x}_{i,-1}=y_{i,-1}={\bf 0}_d$.
		% , and $q_{i,0}=\mathcal{C}((x_{i,0}-\hat{x}_{i,-1})/s_0),~\forall i\in[n]$.
        \FOR{$k=0,1,\dots$}
        \FOR{$i=1,\dots,n$ \textbf{in parallel}} 
		% \STATE \textbf{Compression}: \eqref{nonconvex:kia-algo-dc-q}			
		\STATE Compression $q_{i,k}$ via~\eqref{nonconvex:kia-algo-dc-q}.
		\begin{subequations}\label{zerosg:algorithm-random-pd} 
        %   \begin{align}		  
        %     q_{i,k}&=\mathcal{C}((x_{i,k}-\hat{x}_{i,k-1})/s_{k}). \label{nonconvex:kia-algo-dc-q}
        %   \end{align} \vspace{-1.15em} 
        % \STATE  \textbf{Communication}: Send $q_{i,k}$ to $\mathcal{N}_i$ and receive $q_{j,k}$ from $j\in\mathcal{N}_i$.
		\STATE  Send $q_{i,k}$ to $\mathcal{N}_i$ and receive $q_{j,k}$ from $j\in\mathcal{N}_i$.
        % \STATE \textbf{Stochastic zeroth-order gradient}:\\
        %  (i) Sample $\xi_{i,k}$ from the distribution $\mathcal{D}_i$;\\
        %  (ii) Sample $\zeta_{i,k} \in \mathbb{S}^{p}$ from the uniform distribution;\\
        % (iii) Sample $F_i(x_{i,k},\xi_{i,k})$, $F_i(x_{i,k}+\mu _{i,k}\zeta_{i,k},\xi_{i,k})$;\\
        %  (iv) Compute $g^z_{i,k}$ using \eqref{dbco:gradient:model2-st}.
        % \STATE \textbf{Update auxiliary variable}:
		\STATE Update auxiliary variable $\hat{x}_{i,k}$ and $y_{i,k}$ via \eqref{nonconvex:kia-algo-dc-a} and \eqref{nonconvex:kia-algo-dc-b}, respectively.
        %   \begin{align}
        %     \hat{x}_{i,k}&=\hat{x}_{i,k-1}+\omega s_{k}q_{i,k}, \label{nonconvex:kia-algo-dc-a}\\
        %     y_{i,k}&=y_{i,k-1} + \omega s_k\sum_{j=1}^{n}L_{ij}q_{j,k}. \label{nonconvex:kia-algo-dc-b}
        %   \end{align} \vspace{-1.15em} 
        \STATE Update primal and dual variables
        % \vspace{-0.45em} 				
        \begin{align}
            x_{i,k+1} &= x_{i,k}-\alpha(\beta y_{i,k} + \gamma v_{i,k}+g_{i,k} 	), \label{nonconvex:kia-algo-dc-x}\\
            v_{i,k+1} &=v_{i,k}+ \alpha\gamma y_{i,k}.  \label{nonconvex:kia-algo-dc-v}
          \end{align} %\vspace{-1.15em} 
        % \vspace{-0.5em} 
        %   \begin{align}
        %     q_{i,k+1}&=\mathcal{C}((x_{i,k+1}-\hat{x}_{i,k})/s_{k+1}). \label{nonconvex:kia-algo-dc-q}
        %   \end{align} %\vspace{-1.15em} 
        \end{subequations}
        \ENDFOR
        \ENDFOR
        \STATE  Output: $\{x_{i,k}\}$.
      \end{algorithmic}
    \end{algorithm}
By combining \eqref{algorithm:Yi}--\eqref{nonconvex:kia-algo-dc-b},
%  with the time-varying parameters $\alpha$, $\beta$, and $\gamma$, 
 we propose the unified  compression algorithm (Algorithm~\ref{nonconvex:algorithm-pdgd}).

Next, we explain why the same algorithm applies to both the locally- and globally-bounded compressors.
Despite their differences, the two classes share 
% key structures 
the same updating framework \eqref{nonconvex:kia-algo-dc-a} and \eqref{nonconvex:kia-algo-dc-q}
that enables a unified treatment of several technical challenges.
We highlight two key ingredients: contraction handling and absolute error handling.

\subsection{Unified Contraction and Absolute Error Handling}
% 	In both Definition~\ref{ass:lcoal} and \ref{ass:global}, the constant terms $C(1-\delta)$ and $C$ can be viewed as absolute compression error term.
% This motivates a dynamic scaling scheme with a diminishing scaling factor $s_k$, which approximates $x_{i,k}$ by the rescaled compressed signal
% $x_{i,k} \approx s_k\mathcal{C}(x_{i,k}/s_k)$.
% Under this scheme, absolute compression error term is eventually weighted by $s_k$ and driven to zero.
 
% However, the absolute error in the locally-bounded setting can only be controlled when the compressed signal remains in the local region, i.e., $\|x_{i,k}/s_k\|_p\le C$.
% Moreover, the globally-bounded setting includes an additional relative error term.
% To address these issues, we introduce two techniques for unified contraction handling.
% To ensure that the compressed signal stays within the prescribed local region in the locally-bounded setting and to control the relative error term in the globally-bounded setting, we introduce two techniques: 
The locally-bounded compressors are valid only when the input remains in the local region, 
and the globally-bounded compressors include an additional relative error term.
To address these challenges, we introduce two techniques for unified contraction handling.
	\begin{enumerate}[label=(\roman*)]
\item We use a dynamic scaled-difference scheme in \eqref{nonconvex:kia-algo-dc-q}, where the difference $x_{i,k}-\hat{x}_{i,k-1}$ is scaled by $s_k$ and then compressed.
Since $\hat{x}_{i,k-1}$ tracks $x_{i,k}$, the difference remains small.
As a result, in the locally-bounded case, this scheme helps keep the input within the local region, i.e., $\|\hspace{-0.025em}(x_{i,k}\hspace{-0.025em}-\hspace{-0.025em}\hat{x}_{i,k-1})\hspace{-0.05em}/\hspace{-0.15em}s_k\hspace{-0.05em}\|_p\hspace{-0.015em}$$\le\hspace{-0.2em}C$.
In the globally-bounded case, the difference turns the relative error term into $\|x_{i\hspace{-0.05em},k}\hspace{-0.05em}-\hspace{-0.05em}\hat{x}_{i\hspace{-0.05em},k-1}\|\hspace{-0.05em}^2$\hspace{-0.05em}, which vanishes as $\hat{x}_{i\hspace{-0.05em},k\hspace{-0.05em}-\hspace{-0.05em}1}$ approaches $x_{i\hspace{-0.05em},k}$.

\item We use the parameter $\omega\in(0,1/r]$ to exploit the $(1-\delta)$-contraction property in both the locally- and globally-bounded cases,
% of the compressor in both cases, 
thereby driving $\|x_{i,k}-\hat{x}_{i,k-1}\|$ to zero.
The locally-bounded case is more delicate: since $s_k$ is diminishing, it is challenging to ensure that the scaled difference always stays in the local region, i.e.,
$\|x_{i,k}-\hat{x}_{i,k-1}\|_p \le C s_k,~\forall k$.
To this end, we combine the contraction property with a mathematical induction argument to establish this guarantee.
\end{enumerate}
In addition, Definitions~\ref{ass:lcoal} and~\ref{ass:global} involve constant terms $C(1-\delta)$ and $C$, respectively, 
 both of which can be viewed as the absolute compression errors.
% To handle the absolute error, 
% the diminishing $s_k$ in the scaled-difference scheme plays a key role by driving its contribution to zero.
Our unified absolute error handling relies on the diminishing scaling factor $s_k$ 
in the scaled-difference scheme to drive their contribution to zero.

Let us explain the unified contraction and absolute error handling. Note that we
approximate $x_{i,k}$ by the compression surrogate $\hat{x}_{i,k}$ updated via \eqref{nonconvex:kia-algo-dc-a} and~\eqref{nonconvex:kia-algo-dc-q}.
Under the locally-bounded compressors (Definition~\ref{ass:lcoal}), if $\|(x_{i,k}-\hat{x}_{i,k-1})/s_k\|_p\le C$, 
then the Cauchy--Schwarz inequality and $\omega\in(0,1/r]$ gives
\begin{align} 
	&\|x_{i,k}-\hat{x}_{i,k}\|_p^2 \nonumber\\
	&=\|x_{i,k}-\hat{x}_{i,k-1}-\omega s_k\mathcal{C}((x_{i,k}-\hat{x}_{i,k-1})/s_k)\|_p^2\nonumber\\
	&=s_k^2\Big\|\omega r\Big(\frac{x_{i,k}-\hat{x}_{i,k-1}}{s_k}-\frac{\mathcal{C}((x_{i,k}-\hat{x}_{i,k-1})/s_k)}{r}\Big) \nonumber\\
		&\quad + (1-\omega r)\frac{x_{i,k}-\hat{x}_{i,k-1}}{s_k}\Big\|_p^2 \nonumber\\
	% &\le \omega rs_k^2 \Big\|\frac{x_{i,k}-\hat{x}_{i,k-1}}{s_k}-\frac{\mathcal{C}((x_{i,k}-\hat{x}_{i,k-1})/s_k)}{r}\Big\|_p^2 \nonumber\\
	&\le s_k^2\omega rC^2(1-\delta)^2
		+ (1-\omega r)\|x_{i,k}-\hat{x}_{i,k-1}\|_p^2 \nonumber\\
	&
	\le \big(1-\omega r(2\delta-\delta^2)\big)C^2s_k^2. \label{local:contraction}
\end{align}
% \begin{subequations}
% 	\begin{align} 
% 	&\|x_{i,k}-\hat{x}_{i,k}\|_p^2 \nonumber\\
% 	&=\|x_{i,k}-\hat{x}_{i,k-1}-\omega s_k\mathcal{C}((x_{i,k}-\hat{x}_{i,k-1})/s_k)\|_p^2\nonumber\\
% 	&=s_k^2\Big\|\omega r\Big(\frac{x_{i,k}-\hat{x}_{i,k-1}}{s_k}-\frac{\mathcal{C}((x_{i,k}-\hat{x}_{i,k-1})/s_k)}{r}\Big) \nonumber\\
% 		&\quad + (1-\omega r)\frac{x_{i,k}-\hat{x}_{i,k-1}}{s_k}\Big\|_p^2 \nonumber\\
% 	% &\le \omega rs_k^2 \Big\|\frac{x_{i,k}-\hat{x}_{i,k-1}}{s_k}-\frac{\mathcal{C}((x_{i,k}-\hat{x}_{i,k-1})/s_k)}{r}\Big\|_p^2 \nonumber\\
% 	&\le s_k^2\omega rC^2(1-\delta)^2
% 		+ (1-\omega r)\|x_{i,k}-\hat{x}_{i,k-1}\|_p^2 \label{local:contraction:a}\\
% 	&
% 	\le \big(1-\omega r(2\delta-\delta^2)\big)C^2s_k^2. \label{local:contraction}
% \end{align}
% \end{subequations}
Similarly, under the globally-bounded compressors (Definition~\ref{ass:global}), we have
\begin{align} \label{global:contraction}
	\hspace{-0.3em}\|x_{i,k}\hspace{-0.1em}-\hspace{-0.1em}\hat{x}_{i,k}\|^2\hspace{-0.1em}&\le\hspace{-0.1em} \omega r(1\hspace{-0.1em}-\hspace{-0.1em}\delta)\|x_{i,k}\hspace{-0.1em}-\hspace{-0.1em}\hat{x}_{i,k-1}\|^2 \hspace{-0.1em}+\hspace{-0.1em} \omega rCs_k^2 \nonumber\\
	&\quad + (1\hspace{-0.1em}-\hspace{-0.1em}\omega r)\|x_{i,k}\hspace{-0.1em}-\hspace{-0.1em}\hat{x}_{i,k-1}\|^2       \nonumber\\
	&\hspace{-0.1em}\le\hspace{-0.1em} (1\hspace{-0.1em}-\hspace{-0.1em}\omega r \delta)\|x_{i,k}\hspace{-0.1em}-\hspace{-0.1em}\hat{x}_{i,k-1}\|^2 \hspace{-0.1em}+\hspace{-0.1em} \omega rCs_k^2.
\end{align}
Both $(1-\omega r(2\delta-\delta^2))$ and $(1-\omega r\delta)$ are strictly smaller than $1$ for $\omega\in (0,1/r]$ and $\delta\in(0,1]$, 
revealing the crucial contraction property of the compression error in~\eqref{local:contraction} and~\eqref{global:contraction}.
Meanwhile, a diminishing $s_k$ suppresses the absolute compression error terms $(1-\omega r(2\delta-\delta^2))C^2s_k^2$ and $\omega rCs_k^2$.
In summary, the dynamic scaled-difference scheme and the parameter $\omega$ enable the unified contraction and absolute error handling, 
yielding a single algorithm applicable to both the locally- and globally-bounded compressors.

\section{Convergence Analysis}\label{zerosg:sec-analysis}

% Building on the designed algorithm and the insights developed in Section~\ref{section:algo}, we now present the convergence analysis and the main results.
% Subsection~\ref{subsec:Lyapunov} develops a preliminary Lyapunov analysis for both the locally- and globally-bounded cases.
% The locally-bounded case is analyzed in Subsection~\ref{section:proof:local}, and its main results are presented in Subsection~\ref{subsec:mainre:local}.
% The globally-bounded case is analyzed in Subsection~\ref{section:proof:global}, and its main results are presented in Subsection~\ref{subsec:mainre:global}.

Building on the designed algorithm and the insights developed in Section~\ref{section:algo}, we now present the convergence analysis and the main results.
Subsection~\ref{subsec:Lyapunov} provides a preliminary analysis for both the locally- and globally-bounded cases.
Subsections~\ref{section:proof:local} and~\ref{subsec:mainre:local} are devoted to the locally-bounded case, where the former presents the analysis and the latter states the main results.
Subsections~\ref{section:proof:global} and~\ref{subsec:mainre:global} are devoted to the globally-bounded case, where the former presents the analysis and the latter states the main results.
% Due to space limitations, the proofs not included in the appendix are provided in the arXiv version,
% where the constants used in this section are also collected, including the $\kappa$, $\varepsilon$, $\epsilon$, $\psi$, and $\varphi$ families with arbitrary subscripts, superscripts, or arguments.
For readability, the constants used in this section are collected in the appendix, including the $\kappa$, $\varepsilon$, $\epsilon$, $\psi$, and $\varphi$ families with arbitrary subscripts, superscripts, or arguments.

\subsection{Lyapunov Functions} \label{subsec:Lyapunov}
% To begin, we construct distinct Lyapunov functions for the two cases,
To begin, we construct different Lyapunov functions for the two cases and provide a preliminary analysis:
%  This subsection provides a preliminary one-step analysis of $\mathcal{L}_{1,k}$ and the associated compression error. 
% We first construct different Lyapunov functions for the two cases to establish convergence.	
\begin{align*}
	\mathcal{L}_{1,k}= \sum\nolimits_{i=1}^{4}e_{i,k},~
	\mathcal{L}_{2,k}= \mathcal{L}_{1,k} +  e_{5,k}.
	\end{align*}
	Inspired by~\cite{Yi_CommunicationCompression_2023}, the Lyapunov component terms are defined as follows. 
	\begin{align*}
	&{\rm (Consensus~error)}  &e_{1,k}&\hspace{-0.05em}=\hspace{-0.05em}\tfrac{1}{2}\|\bsx_{k}\|^2_{\bsE}
									\hspace{-0.05em}=\hspace{-0.05em}\hspace{-0.1em}\sum\limits_{i=1}^{n}\hspace{-0.1em}\|x_{i,k}-\bar{x}_k\|^2\hspace{-0.25em},\\
	&{\rm (Dual~term)}        &e_{2,k}&\hspace{-0.05em}=\hspace{-0.05em}\tfrac{1}{2}\Big\|\bsv_{k}+\tfrac{1}{\gamma}\bsg_{k}^0\Big\|^2_{\frac{\beta+\gamma}{\gamma}\bsF},\\
	&{\rm (Cross~term)}       &e_{3,k}&\hspace{-0.05em}=\hspace{-0.05em}\bsx_{k}^\top\bsE\bsF\Big(\bsv_{k}+\tfrac{1}{\gamma}\bsg_{k}^0\Big),\\
	&{\rm (Optimality~gap)}       &e_{4,k}&\hspace{-0.05em}=\hspace{-0.05em}nf(\bar{x}_k)-nf^*,\\
	&{\rm (Compression~error)}\hspace{-0.07em}&e_{5,k}&\hspace{-0.05em}=\hspace{-0.05em}\|\bsx_{k}-\hat{\bsx}_{k}\|^2.
	\end{align*}
	Here, $\hat{\bsx}_k=\col(\hat{x}_{1,k},\dots,\hat{x}_{n,k})$, $\bsE=E\otimes{\bf I}_p$ and $\bsF=F\otimes{\bf I}_p$. 
$E={\bf I}_n-\tfrac{1}{n}{\bm1}_n{\bm1}_n^{\top}$, 
% is the Laplacian of the complete graph,
and $F=[q\ Q]\diag(\lambda_{n+1}^{-1},\Lambda_1^{-1})[q\ Q]^\top$ is motivated by the eigendecomposition of the Laplacian matrix $L$,
where $L=[q\ Q]\diag(0,\Lambda_1)[q\ Q]^\top$, $\Lambda_1=\diag(\lambda_2,\ldots,\lambda_n)$ with $0<\lambda_2\le\cdots\le\lambda_n$, $q={\bm1}_n/\sqrt{n}$, and $\lambda_{n+1}\in[\lambda_2,\lambda_n]$.

	% In Section~\ref{section:proof:local}, we first use mathematical induction to ensure that the iterates remain in the local region, and then establish convergence via a Lyapunov analysis.
	% In Section~\ref{section:proof:global}, we construct a different Lyapunov function and directly prove convergence.

	\begin{lemma} \label{Lemma: Lyapunov Difference}
	Under Assumptions~\ref{nonconvex:ass:graph}--\ref{nonconvex:ass:optset}, take $\alpha<\hat{\kappa}_0$, $\beta=\tau_1\gamma$, $\gamma>\kappa_2$, and $\tau_1\ge \kappa_1$,
	% Under Assumptions~\ref{nonconvex:ass:graph}--\ref{nonconvex:ass:optset}, take $\alpha<\kappa_0$, $\beta=\tau_1\gamma$, $\gamma>\kappa_2$, and $\tau_1\ge \kappa_1$,
	where $\kappa_0$,~$\kappa_1$, and $\kappa_2$ are some positive constants.
			Let $\{x_{i,k}\}$ be generated by Algorithm~\ref{nonconvex:algorithm-pdgd}.
			Then 		
	\begin{align}
		% &\mathcal{L}_{1,k+1}	\le (1-\alpha\varepsilon_4)\mathcal{L}_{1,k} + \alpha\psi_1\|\bar{\bsg}_{k}^0\|^2 \nonumber\\
		% 	&\quad + \alpha n\tilde{d}^2\psi_2\max_{i\in[n]}\|x_{i,k}-\hat{x}_{i,k}\|^2_p, \label{nonconvex:vkLya4_determin}\\
		&\mathcal{L}_{1,k+1}\le \mathcal{L}_{1,k} - \frac{\alpha}{4}\|\bar{\bsg}_{k}^0\|^2
			 +  \alpha n\tilde{d}^2\psi_2\max_{i\in[n]}\|x_{i,k}-\hat{x}_{i,k}\|^2_p \nonumber\\
			 &\quad -\alpha\varepsilon_3\big(\|\bsx_{k}\|^2_{\bsE} + \big\|\bsv_{k}+\tfrac{1}{\gamma}\bsg_{k}^0\big\|^2_{\bsF}\big), \label{L1:iteration}\\
		&\max_{i\in[n]}\|x_{i,k+1}-\hat{x}_{i,k}\|^2_p \nonumber\\
			&\le(1+\varepsilon_5+\alpha^2 n\tilde{d}^2\psi_3)
			\max_{i\in[n]}\|x_{i,k}-\hat{x}_{i,k}\|_p^2+\alpha^2\psi_4\mathcal{L}_{1,k}. \label{c:iteration}
	\end{align}	
	% where $\psi_2$--$\psi_4$, $\varepsilon_3$ and $\varepsilon_5$ are given in Appendix~\ref{appendix:lemmas}. 
	\end{lemma}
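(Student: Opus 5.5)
The plan is to rewrite Algorithm~\ref{nonconvex:algorithm-pdgd} in compact form as an inexact version of the primal--dual scheme \eqref{algorithm:Yi}. Since $y_{i,k}=\sum_j L_{ij}\hat{x}_{j,k}$, stacking gives
\begin{align*}
\bsx_{k+1}&=\bsx_k-\alpha\big(\beta\bsL\bsx_k+\gamma\bsv_k+\bsg_k\big)+\alpha\beta\bsL(\bsx_k-\hat{\bsx}_k),\\
\bsv_{k+1}&=\bsv_k+\alpha\gamma\bsL\bsx_k-\alpha\gamma\bsL(\bsx_k-\hat{\bsx}_k),
\end{align*}
with $\bsL=L\otimes{\bf I}_p$. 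This is the uncompressed dynamics plus a perturbation driven by $\bsx_k-\hat{\bsx}_k$. We have $({\bf 1}_n^\top\otimes{\bf I}_p)\bsv_k=0$ because $v_{i,0}=0$ and ${\bf 1}^\top L=0$. Hence $\bar{x}_{k+1}=\bar{x}_k-\frac{\alpha}{n}\sum_i g_{i,k}$, and this average update is unaffected by compression.

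For \eqref{L1:iteration}, follow the Lyapunov analysis of \cite{Yi_CommunicationCompression_2023} term by term.
\begin{enumerate}[label=(\alph*)]
\item Expand $e_{1,k+1}$, $e_{2,k+1}$ and $e_{3,k+1}$ using the identities $\bsE\bsL=\bsL\bsE=\bsL$, $\bsF\bsL=\bsL\bsF=\bsE$ (which follow from the eigendecomposition defining $F$) and $\bsH\bsg_k^0=\bar{\bsg}_k^0$.
\item Handle $\bsg_{k+1}^0-\bsg_k^0$ with Assumption~\ref{nonconvex:ass:fiu}: it is bounded by $\ell\sqrt n\|\bar x_{k+1}-\bar x_k\|$.
\item For $e_{4,k+1}$, apply the descent lemma to $f$ at $\bar{x}_k$. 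Write $\frac1n\sum_i g_{i,k}=\nabla f(\bar{x}_k)+\frac1n\sum_i(g_{i,k}-\nabla f_i(\bar{x}_k))$ and bound the mismatch by $\ell^2\|\bsx_k\|_{\bsE}^2/n$. This yields $-\frac{\alpha}{4}\|\bar{\bsg}_k^0\|^2$ plus consensus terms.
\item In the combined inequality, the choice $\beta=\tau_1\gamma$ with $\tau_1\ge\kappa_1$ and $\gamma>\kappa_2$ makes the coefficients of $\|\bsx_k\|_{\bsE}^2$ and $\|\bsv_k+\frac1\gamma\bsg_k^0\|_{\bsF}^2$ negative, of order $-\alpha\varepsilon_3$. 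Then $\alpha<\hat\kappa_0$ absorbs all $\alpha^2$ terms.
\item Every perturbation cross-term, e.g.\ $\alpha\beta\langle\bsx_k,\bsE\bsL(\bsx_k-\hat{\bsx}_k)\rangle$, is split by Young's inequality into a small multiple of the good terms plus $\alpha\,c\,\|\bsx_k-\hat\bsx_k\|^2$. The norm equivalence then gives $\|\bsx_k-\hat\bsx_k\|^2\le n\tilde d^2\max_i\|x_{i,k}-\hat x_{i,k}\|_p^2$, which produces the $\psi_2$ term.
\end{enumerate}

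For \eqref{c:iteration}, write $x_{i,k+1}-\hat{x}_{i,k}=(x_{i,k}-\hat{x}_{i,k})+(x_{i,k+1}-x_{i,k})$. Apply $(a+b)^2\le(1+\varepsilon_5)a^2+(1+1/\varepsilon_5)b^2$ in the $\ell_p$ norm, then $\|\cdot\|_p\le\hat d\|\cdot\|$ on the increment. The increment equals $-\alpha(\beta\bsL\bsx_k+\gamma\bsv_k+\bsg_k)+\alpha\beta\bsL(\bsx_k-\hat\bsx_k)$. The compression part gives the $\alpha^2n\tilde d^2\psi_3$ coefficient. For the rest:
\begin{itemize}
\item use $\bsL\bsx_k=\bsL\bsE\bsx_k$;
\item write $\gamma\bsv_k+\bsg_k=\gamma(\bsv_k+\frac1\gamma\bsg_k^0)+(\bsg_k-\bsg_k^0)+\bar{\bsg}_k^0-\bar{\bsg}_k^0$, and use that $\bsv_k+\frac1\gamma\bsg^0_k$ minus its average lies in the range of $\bsE$, controlled by its $\bsF$-norm;
\item bound $\|\bar{\bsg}_k^0\|^2\le 2\ell n e_{4,k}$, which holds by $\ell$-smoothness and Assumption~\ref{nonconvex:ass:optset};
\item bound the remaining quadratic forms via the positive definiteness of $\mathcal{L}_{1,k}$.
\end{itemize}
Together these give the $\alpha^2\psi_4\mathcal{L}_{1,k}$ term.

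The main obstacle is step (d): showing that, under these parameter constraints, $\mathcal{L}_{1,k}$ is equivalent to $\|\bsx_k\|^2_{\bsE}+\|\bsv_k+\frac1\gamma\bsg_k^0\|^2_{\bsF}+e_{4,k}$ despite the indefinite cross term $e_{3,k}$. This follows from $\frac{\beta+\gamma}{\gamma}=\tau_1+1$ large and Young's inequality. The other delicate point is bookkeeping the many cross terms, including the gradient mismatch between $\bsg_{k+1}^0$ and $\bsg_k^0$, so that a uniform negative margin $\varepsilon_3$ survives.
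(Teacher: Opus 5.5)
Your proposal is correct and follows essentially the same route as the paper. Both use term-by-term bounds on $e_{1,k}$--$e_{4,k}$ in the style of \cite{Yi_CommunicationCompression_2023}, sign conditions on the resulting coefficients under $\beta=\tau_1\gamma$, $\gamma>\kappa_2$, $\alpha<\hat{\kappa}_0$, and norm equivalence for the $\psi_2$ term. For \eqref{c:iteration} both use the same $(1+\varepsilon_5)/(1+\varepsilon_5^{-1})$ split, a bound on $\|\bsx_{k+1}-\bsx_k\|^2$, and the lower bound $\mathcal{L}_{1,k}\ge\varepsilon_1\big(\|\bsx_k\|_{\bsE}^2+\|\bsv_k+\tfrac{1}{\gamma}\bsg_k^0\|_{\bsF}^2\big)$. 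The main difference in part (i) is that the paper keeps $\hat{\bsx}_k$ in the expansions, so the $\hat{\bsx}_k^\top\bsE(\cdot)$ cross terms cancel exactly across $e_{1,k}$--$e_{3,k}$ instead of being Young-split one by one; this only affects constants.

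In part (ii), your detour through $e_{4,k}$ for the consensus component is unnecessary. Since $F\ge\rho^{-1}(L){\bf I}_n$ is positive definite on that direction as well, the paper uses $\|\gamma\bsv_k+\bsg_k^0\|^2\le\gamma^2\rho(L)\|\bsv_k+\tfrac{1}{\gamma}\bsg_k^0\|_{\bsF}^2$ directly. If you keep the detour, note that $\|\bar{\bsg}_k^0\|^2=n\|\nabla f(\bar x_k)\|^2\le 2\ell e_{4,k}$. Your extra factor $n$ is therefore a slip: the bound is still valid, but it inflates $\psi_4$ by a factor of $n$.
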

	\begin{proof}
		See Appendix~\ref{appendix:lemmas:c}.
		% The proof is similar to that of 
		% \cite[Eqs.~(85) and~(87)]{Yi_CommunicationCompression_2023}. 
		% Since the derivation is lengthy, we collect the constants used in the proof in
		% Appendix~\ref{appendix:lemmas:c} and provide the full proof in
		% Appendix~\ref{appendix:lemmas}.
	\end{proof}
	% \begin{proof}
	% 	The derivation follows from bounding the Lyapunov components $e_{1,k}$--$e_{4,k}$ separately.		% The proof analyzes the Lyapunov components $e_{1,k}$--$e_{4,k}$ separately.
	% 	We note that the term $\max_{i\in[n]}\|x_{i,k}-\hat{x}_{i,k}\|_p^2$ in~\eqref{L1:iteration} arises from converting an $\ell_2$-norm bound to an $\ell_p$-norm bound.
	% 	The detailed proof is deferred to Appendix~\ref{appendix:lemmas}.
	% \end{proof} \vspace{-0.5em}

	Lemma~\ref{Lemma: Lyapunov Difference} provides a preliminary one-step bound for the Lyapunov function $\mathcal{L}_{1,k}$.
	The compression error $\|x_{i,k}-\hat{x}_{i,k}\|_p^2$ in~\eqref{L1:iteration} depends on $\|x_{i,k}-\hat{x}_{i,k-1}\|_p^2$ for both the locally- and globally-bounded compressors, as shown in~\eqref{local:contraction} and~\eqref{global:contraction}.
	Therefore,  we further analyze the one-step difference of $\|x_{i,k}-\hat{x}_{i,k-1}\|_p^2$ in~\eqref{c:iteration}.

\subsection{Local Region Guarantee} \label{section:proof:local}

This subsection analyzes the convergence under the locally-bounded compressors, where the key step is to show by mathematical induction that the compressed input stays in the local region,
 i.e., 	
\begin{align} \label{ass:induction}
	 \|x_{i,k}-\hat{x}_{i,k-1}\|_p \le C s_k.
\end{align}
% and that $s_k\to 0$.
We first analyze the special case under the P--L condition, and then extend to the nonconvex setting.
% In this section, we analyze the locally-bounded case, where the compression error can be controlled only within the local region.
% In this section, we analyze the locally-bounded case using two key supporting lemmas.
% We introduce two key supporting lemmas.
%  Lemma~\ref{Lemma: Lyapunov Difference} provides a preliminary one-step analysis of $\mathcal{L}_{1,k}$ and the associated compression error, 
% while Lemma~\ref{Lemma:s0} ensures that the transmitted signal stays in the local region,
% which is crucial for controlling the compression error.	

The P--L setting generalizes Theorem~7 in~\cite{Yi_CommunicationCompression_2023}, 
where their compressor is a special case of our locally-bounded class with $C=1$ and $r=1$. 
Under the P--L condition and by definition, $\mathcal{L}_{1,k}$ can be upper bounded by 
		$\|\bar{\bsg}_{k}^0\|^2+\|\bsx_{k}\|_{\bsE}^2+\|\bsv_{k}+\frac{1}{\gamma}\bsg_{k}^0\|_{\bsF}^2$.
		Substituting this relation, the induction hypothesis~\eqref{ass:induction}, and~\eqref{local:contraction} 
		into Lemma~\ref{Lemma: Lyapunov Difference} yields the desired results.
	% two symmetric contraction inequalities.	
	\begin{lemma} \label{Lemma: PL}
	% Assume~\eqref{ass:induction} holds at iteration $k$.
	%  Under Assumptions~\ref{ass:lcoal}, \ref{nonconvex:ass:graph}--\ref{nonconvex:ass:optset} and \ref{nonconvex:ass:fil}, 
	Under Definition~\ref{ass:lcoal} and Assumptions~\ref{nonconvex:ass:graph}--\ref{nonconvex:ass:optset} and~\ref{nonconvex:ass:fil},
	take $\alpha\in(0,\tilde{\kappa}_0),~\beta=\tau_1\gamma,~\gamma>\kappa_2,~\omega\in(0,1/r],$ and $\tau_1>\kappa_1$,
	 where $\tilde{\kappa}_0$ is a positive constant.
	Let $\{x_{i,k}\}$ be generated by Algorithm~\ref{nonconvex:algorithm-pdgd}. 
	If \eqref{ass:induction} holds at iteration $k$, then
	% Suppose Assumptions~\ref{nonconvex:ass:graph}--\ref{nonconvex:ass:optset} and \ref{nonconvex:ass:fil} holds, 
	% $\|x_{i,k}-\hat{x}_{i,k-1}\|_p \le C s_k$,
	% $\alpha<\kappa_0$, $\beta=\tau_1\gamma$, $\gamma>\kappa_2$, and $\tau_1\ge \kappa_1$.
	% 		Let $\{x_{i,k}\}$ be generated by Algorithm~\ref{nonconvex:algorithm-pdgd}.
	% 		Then 		
	\begin{align}
		% &\mathcal{L}_{1,k+1}	\le (1-\alpha\varepsilon_4)\mathcal{L}_{1,k} + \alpha\psi_1\|\bar{\bsg}_{k}^0\|^2 \nonumber\\
		% 	&\quad + \alpha n\tilde{d}^2\psi_2\max_{i\in[n]}\|x_{i,k}-\hat{x}_{i,k}\|^2_p, \label{nonconvex:vkLya4_determin}\\
		&\mathcal{L}_{1,k+1}\le (1-\alpha\varepsilon_6)\mathcal{L}_{1,k} + \alpha n\tilde{d}^2(1-2\varepsilon_5)\psi_2C^2s_k^2, \label{L1:iteration:induction}\\
		&\max_{i\in[n]}\|x_{i,k+1}\hspace{-0.1em}-\hspace{-0.05em}\hat{x}_{i,k}\|^2_p \le 
		\hspace{-0.1em}(1\hspace{-0.1em}-\hspace{-0.1em}\varepsilon_7)C^2s_k^2\hspace{-0.1em}+\hspace{-0.05em}\alpha^2\psi_4\mathcal{L}_{1,k}.\hspace{-0.1em} \label{c:iteration:induction}
	\end{align}	
	% where $\psi_2$--$\psi_4$, $\varepsilon_3$ and $\varepsilon_5$ are given in Appendix~\ref{appendix:lemmas}. 
	\end{lemma}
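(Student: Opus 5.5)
The plan is to start from the two one-step bounds of Lemma~\ref{Lemma: Lyapunov Difference}, namely \eqref{L1:iteration} and \eqref{c:iteration}. In both, we replace the compression error term $\max_{i}\|x_{i,k}-\hat{x}_{i,k}\|_p^2$ using the induction hypothesis \eqref{ass:induction} together with the local contraction estimate \eqref{local:contraction}. Since \eqref{ass:induction} says $\|(x_{i,k}-\hat{x}_{i,k-1})/s_k\|_p\le C$, Definition~\ref{ass:lcoal} applies to the compressor input. With $\omega\in(0,1/r]$, \eqref{local:contraction} then gives, uniformly in $i$,
\begin{align*}
\max_{i\in[n]}\|x_{i,k}-\hat{x}_{i,k}\|_p^2\le \big(1-\omega r(2\delta-\delta^2)\big)C^2s_k^2 .
\end{align*}
I would fix the constant $\varepsilon_5$ (appearing in \eqref{c:iteration}) small enough, say via its definition in terms of $\omega r(2\delta-\delta^2)$, so that $1-\omega r(2\delta-\delta^2)\le 1-2\varepsilon_5$. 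Plugging this into \eqref{L1:iteration} produces exactly the additive term $\alpha n\tilde d^2(1-2\varepsilon_5)\psi_2C^2s_k^2$ of \eqref{L1:iteration:induction}.

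For the contraction part of \eqref{L1:iteration:induction}, I would show that the negative terms in \eqref{L1:iteration} dominate a multiple of $\mathcal{L}_{1,k}$ under the P--L condition. I would bound each component in turn.
\begin{itemize}
\item $e_{1,k}=\tfrac12\|\bsx_k\|_{\bsE}^2$ holds by definition.
\item $e_{2,k}\le \tfrac{\beta+\gamma}{2\gamma}\|\bsv_k+\tfrac1\gamma\bsg_k^0\|_{\bsF}^2=\tfrac{\tau_1+1}{2}\|\cdot\|_{\bsF}^2$.
\item $e_{3,k}$ is handled by Cauchy--Schwarz in the $\bsF$-weighted inner product, using $\|\bsE\bsF^{1/2}\|\le \rho(F)^{1/2}\le\lambda_2^{-1/2}$. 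This gives $e_{3,k}\le \tfrac12\rho(F)\|\bsx_k\|_{\bsE}^2+\tfrac12\|\bsv_k+\tfrac1\gamma\bsg_k^0\|_{\bsF}^2$.
\item $e_{4,k}=n(f(\bar x_k)-f^*)\le \tfrac{n}{2\nu}\|\nabla f(\bar x_k)\|^2=\tfrac1{2\nu}\|\bar{\bsg}_k^0\|^2$ by Assumption~\ref{nonconvex:ass:fil}.
\end{itemize}
Together these give $\mathcal{L}_{1,k}\le c\big(\|\bar{\bsg}_k^0\|^2+\|\bsx_k\|_{\bsE}^2+\|\bsv_k+\tfrac1\gamma\bsg_k^0\|_{\bsF}^2\big)$ with $c$ depending on $\nu,\tau_1,\lambda_2$. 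Hence $-\tfrac{\alpha}{4}\|\bar{\bsg}_k^0\|^2-\alpha\varepsilon_3(\cdots)\le -\alpha\varepsilon_6\mathcal{L}_{1,k}$ with $\varepsilon_6=\min\{1/4,\varepsilon_3\}/c$. Restricting $\alpha<\tilde\kappa_0\le\hat\kappa_0$ (and $\tilde\kappa_0\le 1/\varepsilon_6$ so that $1-\alpha\varepsilon_6\in(0,1)$) makes Lemma~\ref{Lemma: Lyapunov Difference} applicable and gives \eqref{L1:iteration:induction}.

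For \eqref{c:iteration:induction}, I would insert the same bound into \eqref{c:iteration}. This gives
\begin{align*}
\max_{i\in[n]}\|x_{i,k+1}-\hat x_{i,k}\|_p^2\le (1+\varepsilon_5+\alpha^2n\tilde d^2\psi_3)(1-2\varepsilon_5)C^2s_k^2+\alpha^2\psi_4\mathcal{L}_{1,k}.
\end{align*}
The main obstacle is making the coefficient $(1+\varepsilon_5+\alpha^2n\tilde d^2\psi_3)(1-2\varepsilon_5)$ strictly less than one. I would take $\alpha^2 n\tilde d^2\psi_3\le \varepsilon_5/2$ as a further constraint folded into $\tilde\kappa_0$. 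Then the coefficient is at most $(1+\tfrac32\varepsilon_5)(1-2\varepsilon_5)\le 1-\tfrac12\varepsilon_5-3\varepsilon_5^2\le 1-\varepsilon_7$, with $\varepsilon_7=\varepsilon_5/2$. I expect the care to lie in choosing $\varepsilon_5$ in Lemma~\ref{Lemma: Lyapunov Difference}, which comes from a Young-inequality split of $x_{i,k+1}-\hat x_{i,k}=(x_{i,k}-\hat x_{i,k})+(x_{i,k+1}-x_{i,k})$. It must be tied to $\omega r(2\delta-\delta^2)$ so that it is compatible with the contraction factor. One should also check that $\psi_2,\psi_3,\psi_4$ do not depend on $\alpha$ in a way that breaks these smallness requirements.
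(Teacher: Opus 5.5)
Your proposal is correct and follows essentially the same route as the paper: substitute \eqref{local:contraction} (the paper fixes $\varepsilon_5=\omega r(\delta-\delta^2/2)$, so $1-\omega r(2\delta-\delta^2)=1-2\varepsilon_5$ exactly) into \eqref{L1:iteration} and \eqref{c:iteration}, use the P--L condition to turn the descent terms into $-\alpha\varepsilon_6\mathcal{L}_{1,k}$, and restrict $\alpha$ so that $(1+\varepsilon_5+\alpha^2n\tilde{d}^2\psi_3)(1-2\varepsilon_5)<1$. The differences are cosmetic: the paper applies P--L as $\|\bar{\bsg}_k^0\|^2\ge 2n\nu(f(\bar{x}_k)-f^\ast)$ together with the bound $\mathcal{L}_{1,k}\le\varepsilon_2\hat{\mathcal{L}}_{1,k}$ from the proof of Lemma~\ref{Lemma: Lyapunov Difference}, giving $\varepsilon_6=\min\{\nu/2,\varepsilon_3\}/\varepsilon_2$, where you bound $\mathcal{L}_{1,k}$ component by component; and it imposes $\alpha<\kappa_6/(\sqrt{n}\tilde{d})$ where you impose $\alpha^2n\tilde{d}^2\psi_3\le\varepsilon_5/2$.
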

	\begin{proof} 
				% The detailed proof is provided in
				See Appendix~\ref{appendix:lemma:localpl}.
\end{proof}
	% \begin{proof}
	% 	See Appendix~\ref{appendix:lemma:localpl}.
	% \end{proof}
	Note that the symmetric contraction structure in~\eqref{L1:iteration:induction} and~\eqref{c:iteration:induction} 
	enables the induction and implies 
	linear convergence of $\mathcal{L}_{1,k}$ and $\max_{i\in[n]}\hspace{-0.05em}\|x_{i,k}\hspace{-0.1em}-\hspace{-0.05em}\hat{x}_{i,k-1}\|_p^2$, 
	and hence the algorithm. 
		% \begin{proof}
	% 	The derivation follows from bounding the Lyapunov components $e_{1,k}$--$e_{4,k}$ separately.		% The proof analyzes the Lyapunov components $e_{1,k}$--$e_{4,k}$ separately.
	% 	We note that the term $\max_{i\in[n]}\|x_{i,k}-\hat{x}_{i,k}\|_p^2$ in~\eqref{L1:iteration} arises from converting an $\ell_2$-norm bound to an $\ell_p$-norm bound.
	% 	The detailed proof is deferred to Appendix~\ref{appendix:lemmas}.
	% \end{proof} \vspace{-0.5em}

	However, without the P--L condition, a contraction structure of $\mathcal{L}_{1,k}$ may not follow directly from~\eqref{L1:iteration}.
	Instead, it reduces to 
	\begin{align}\label{accumulate}
		\mathcal{L}_{1,k}\le \mathcal{L}_{1,0} 
			+  \alpha n\tilde{d}^2\psi_2\sum\nolimits_{t=0}^{k-1}\max_{i\in[n]}\|x_{i,t}-\hat{x}_{i,t}\|^2_p, 
	\end{align} 
	where we assume that the induction hypothesis~\eqref{ass:induction} holds for all $t\in[0,k]$.
	Thus, previous compression errors accumulate in $\mathcal{L}_{1,k}$ and 
	% can
	 amplify the next compressed input $\max_{i\in[n]}\|x_{i,k+1}-\hat{x}_{i,k}\|_p^2$ in~\eqref{c:iteration}.
% These difficulties makes the induction nontrivial in the nonconvex setting.
These difficulties preclude a direct induction in the nonconvex setting, 
	where existing analysis does not establish convergence for local compressors.
% To address this issue, we design a special stepsize to regulate the accumulation of previous compression errors, 
% % To address this issue, we combine mathematical induction
% % 	with a special stepsize design to regulate the accumulation of previous compression errors, 
% 	which appear 
% 	% as $\alpha n\tilde{d}^2\psi_2\sum\nolimits_{t=1}^{k}\max_{i\in[n]}\|x_{i,k}-\hat{x}_{i,k}\|_p^2$ in~\eqref{accumulate} 
% 	% and $\alpha^2\psi_4\mathcal{L}_{1,k}$ in~\eqref{c:iteration}.
% 	in the last terms of both \eqref{accumulate}  and  \eqref{c:iteration}.
To address this issue, we design a special stepsize $\alpha$ for the last terms of both \eqref{accumulate} and \eqref{c:iteration}
 to regulate the accumulation of previous compression errors.
Moreover, \eqref{local:contraction} and \eqref{c:iteration}  still yield a contraction structure for the compressed input $\|x_{i,k}-\hat{x}_{i,k-1}\|_p^2$ in~\eqref{c:iteration:induction}.
% Finally, with a special stepsize design, combining~\eqref{accumulate} and~\eqref{c:iteration:induction} closes the induction and yields the convergence rate.
Finally, combining~\eqref{accumulate} and~\eqref{c:iteration:induction} completes the induction 
for $\|x_{i,k}-\hat{x}_{i,k-1}\|_p \le Cs_k$. 
	% with the specially constructed scaling factor $s_k$.

	\begin{lemma} \label{Lemma:s0}
	Under Definition~\ref{ass:lcoal} and Assumptions~\ref{nonconvex:ass:graph}--\ref{nonconvex:ass:known_lowerbound},  
    let $\{x_{i,k}\}$ be generated by Algorithm~\ref{nonconvex:algorithm-pdgd} with
    \begin{align}
		&\hspace{0.1em}\alpha\in(0,\tilde{\kappa}^{\prime}_0(T)),~\beta=\tau_1\gamma,~\gamma>\kappa_2,~\omega\in(0,1/r]\nonumber\\
		&~~~~s_0\ge \max_{i\in[n]}\|x_{i,0}\|_p/C,~\tau_1\ge \kappa_1,~T>\tilde{\kappa}_3, 
		% &\hspace{0.1em}\beta=\tau_1\gamma,~\gamma>\kappa_2,~\alpha\in(0,\kappa_6(T)),~\omega\in(0,1/r]\nonumber\\
		% &~~~~s_0\ge \max_{i\in[n]}\|x_{i,0}\|_p^2/C,~\tau_1\ge \kappa_1,~T>\kappa_T, 
    \end{align}
	where $\tilde{\kappa}_3$ and $\tilde{\kappa}^{\prime}_0(T)$ 
	% $\kappa_T$ and $\kappa_6(T)$
	 are some positive constants. Then, there exists a selectable sequence $\{s_k\}$ such that 
	% where $\kappa_T$ and $\kappa_6(T)$ are positive constants given in Appendix~\ref{appendix:lemma2}. Then, there exists a selectable sequence $\{s_k\}$ such that 
	the compressed input remains bounded, i.e.,
	% , and moreover,
		\begin{align}
			% ~~\max_{i\in[n]}\|x_{i,k}-\hat{x}_{i,k-1}\|_p^2 \le s_0^2, 
			\max_{i\in[n]}\|x_{i,k}\hspace{-0.1em}-\hspace{-0.05em}\hat{x}_{i,k-1}\|_p^2 
			\le C^2 s_k^2 \le C^2 s_0^2,
			~\forall\,k \in [0,T]\hspace{-0.1em}.\hspace{-0.2em}
			\label{ineq:skbound} 
		\end{align} 
	Furthermore, take
		\begin{align}
				&~~~\alpha=\frac{1}{n^{\frac{1}{4}}\tilde{d}\sqrt{T}},~\beta=\tau_1\gamma,~\gamma>\kappa_2,~\omega\in(0,\frac{1}{r}],\nonumber\\
			&~~\quad \tau_1\ge \kappa_1,~T>\tilde{\kappa}_3,~s_0\ge \max_{i\in[n]}\|x_{i,0}\|_p/C, \nonumber\\
			&~~~~~s_k^2=(1-\varepsilon_8)s_{k-1}^2 + \tilde{\kappa}_4(T),~\forall k\in[1,T], \label{thm1:set}
			% &~~~\alpha=\frac{1}{n^{\frac{1}{4}}\tilde{d}\sqrt{T}},~\beta=\tau_1\gamma,~\gamma>\kappa_2,~\omega\in(0,\frac{1}{r}],\nonumber\\
			% &~~\quad \tau_1\ge \kappa_1,~T>\kappa_T,~s_0\ge \max_{i\in[n]}\|x_{i,0}\|_p/C, \nonumber\\
			% &~~~~~s_k^2=(1-\varepsilon_8)s_{k-1}^2 + \kappa_s(T),~\forall k\in[1,T], \label{thm1:set}
		\end{align}
	where
	%  $\kappa_1$ and $\kappa_2$ are given in Appendix~\ref{appendix:lemmas}, and $\kappa_T$,
	$\varepsilon_8\in(0,1)$ and $\tilde{\kappa}_4(T)$
	% $\varepsilon_8$ and $\kappa_s(T)$ 
	are some positive constants.
	% $\varepsilon_8$ and $\kappa_s(T)$ are positive constants given in Appendix~\ref{appendix:lemma2}.
	% the same setting as 
	% Theorem~\ref{nonconvex:thm-R1}, 
	Then $s_k$ converges to zero, and it holds that
		% \begin{align}
		% 	s_k^2 = (1-\varepsilon_8)s_{k-1}^2 + \kappa_s(T),
		% 	\quad \forall k\in[1,T].
		% \end{align}
		% Then 
		\begin{align} \label{bound:sk}
		&\max_{i\in[n]}\|x_{i,k}-\hat{x}_{i,k-1}\|_p^2 \le C^2s_k^2 \le 
		(1-\varepsilon_8)^k C^2s_0^2 \nonumber\\
		&+ \frac{\psi_4\mathcal{L}_{1,0}}{\varepsilon_8n} \frac{n^{\frac{1}{2}}}{\tilde{d}^2T} 
			+ \frac{(1-2\varepsilon_5)\psi_2\psi_4}{\varepsilon_8 }C^2s_0^2  \frac{n^{\frac{1}{4}}}{\tilde{d}\sqrt{T}}.
	\end{align}
	\end{lemma}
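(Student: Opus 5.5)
We prove \eqref{ineq:skbound} by strong induction on $k\in[0,T]$, with the hypothesis that \eqref{ass:induction} holds for all $t\le k$.

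\textbf{Base case.} Since $\hat{x}_{i,-1}={\bf 0}_d$, the choice $s_0\ge \max_i\|x_{i,0}\|_p/C$ gives $\|x_{i,0}-\hat{x}_{i,-1}\|_p\le Cs_0$.

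\textbf{Inductive step.} Suppose \eqref{ass:induction} holds for all $t\in[0,k]$. Then \eqref{local:contraction} applies at every such $t$ and gives
\[
\max_i\|x_{i,t}-\hat{x}_{i,t}\|_p^2\le (1-\omega r(2\delta-\delta^2))C^2s_t^2 .
\]
Substituting into \eqref{L1:iteration}, dropping the nonpositive terms, and telescoping yields \eqref{accumulate}:
\[
\mathcal{L}_{1,k}\le \mathcal{L}_{1,0}+\alpha n\tilde d^2\psi_2(1-2\varepsilon_5)C^2\sum_{t=0}^{k}s_t^2\le \mathcal{L}_{1,0}+\alpha n\tilde d^2\psi_2(1-2\varepsilon_5)C^2 (k+1)s_0^2 .
\]
Here we use the monotonicity $s_t\le s_0$, which we carry along as part of the induction. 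Plugging this bound, together with the contraction bound on $\max_i\|x_{i,k}-\hat x_{i,k}\|_p^2$, into \eqref{c:iteration} reproduces the shape of \eqref{c:iteration:induction}:
\[
\max_i\|x_{i,k+1}-\hat x_{i,k}\|_p^2\le (1-\varepsilon_7)C^2s_k^2+\alpha^2\psi_4\big(\mathcal{L}_{1,0}+\alpha n\tilde d^2\psi_2(1-2\varepsilon_5)C^2Ts_0^2\big).
\]
To get $(1-\varepsilon_7)$ we must absorb the factor $(1+\varepsilon_5+\alpha^2n\tilde d^2\psi_3)$ into the contraction. This works because $\varepsilon_5$ is chosen small relative to $\omega r(2\delta-\delta^2)$ and $\alpha<\tilde\kappa'_0(T)$.

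We then define $s_{k+1}$ by
\[
C^2s_{k+1}^2=(1-\varepsilon_8)C^2s_k^2+\alpha^2\psi_4\big(\mathcal{L}_{1,0}+\alpha Tn\tilde d^2\psi_2(1-2\varepsilon_5)C^2s_0^2\big), \qquad 0<\varepsilon_8\le\varepsilon_7 ,
\]
so that $\tilde\kappa_4(T)$ is this additive term divided by $C^2$. This closes $\|x_{i,k+1}-\hat x_{i,k}\|_p\le Cs_{k+1}$.

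\textbf{Monotonicity.} It remains to show $s_{k+1}\le s_0$. It suffices that $\tilde\kappa_4(T)\le\varepsilon_8 s_0^2$, since then the recursion keeps $s_k^2\le s_0^2$ inductively. This inequality is exactly what fixes $\tilde\kappa'_0(T)$ and $\tilde\kappa_3$. With $\alpha^2 T$ and $\alpha^3T$ both small, the required bound reduces to
\[
\alpha^2\psi_4\mathcal{L}_{1,0}\le \tfrac{\varepsilon_8}{2}C^2s_0^2, \qquad \alpha^3T\,\psi_4 n\tilde d^2\psi_2\le\tfrac{\varepsilon_8}{2}.
\]
$\mathcal{L}_{1,0}$ must be bounded by a known quantity, and this is where Assumption~\ref{nonconvex:ass:known_lowerbound} enters: $e_{4,0}\le n(f(\bar x_0)-f_{\rm low})$. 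This gives explicit admissible thresholds.

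\textbf{Second part.} Set $\alpha=1/(n^{1/4}\tilde d\sqrt T)$. Then $\alpha^3Tn\tilde d^2=n^{1/4}/(\tilde d\sqrt T)$ and $\alpha^2=n^{-1/2}\tilde d^{-2}T^{-1}$, so both smallness conditions hold once $T>\tilde\kappa_3$; hence the first part applies. Unrolling the linear recursion,
\[
s_k^2=(1-\varepsilon_8)^ks_0^2+\tilde\kappa_4(T)\sum_{j<k}(1-\varepsilon_8)^j\le (1-\varepsilon_8)^ks_0^2+\tilde\kappa_4(T)/\varepsilon_8 .
\]
Inserting the explicit $\tilde\kappa_4(T)$ with this $\alpha$ gives exactly the two terms $\frac{\psi_4\mathcal{L}_{1,0}}{\varepsilon_8 n}\frac{n^{1/2}}{\tilde d^2T}$ and $\frac{(1-2\varepsilon_5)\psi_2\psi_4}{\varepsilon_8}C^2s_0^2\frac{n^{1/4}}{\tilde d\sqrt T}$ in \eqref{bound:sk}. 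These vanish as $T\to\infty$, which is the sense in which $s_k\to0$.

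\textbf{Main obstacle.} The hardest step is the inductive one. The error accumulated in $\mathcal{L}_{1,k}$ grows linearly in $k$ and feeds back into the next compressed input. It must be tamed uniformly over $k\le T$ by the $T$-dependent stepsize, through the coupled conditions on $\alpha^2$ and $\alpha^3T$, while still leaving a strict contraction $\varepsilon_8>0$ after absorbing $\varepsilon_5$ and $\alpha^2n\tilde d^2\psi_3$.
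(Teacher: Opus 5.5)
Your proof is correct and follows the paper's argument. You induct on $\max_{i}\|x_{i,k}-\hat{x}_{i,k-1}\|_p\le Cs_k$ together with $s_k\le s_0$, unroll \eqref{L1:iteration} to get the linear-in-$T$ accumulation in $\mathcal{L}_{1,k}$, feed it into \eqref{c:iteration}, secure monotonicity via $\tilde{\kappa}_4(T)\le\varepsilon_8 s_0^2$ (which is exactly what $\kappa_8(T)$ encodes), and unroll the recursion to obtain \eqref{bound:sk}.

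The only difference is presentational. The paper's first part merely bounds the next compressed input by $C^2s_0^2$ to assert that some $s_{\tau+1}\le s_0$ exists, and it reserves the explicit recursion for the second part. You instead use the recursion-defined sequence in both parts.
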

	% \begin{proof}
	% 	See Appendix~\ref{appendix:lemma2}.
	% \end{proof}
	\begin{proof}
		% \mbox{}
	% \par
	\noindent {\bf (i)}
			% We first show that, with suitable parameters and a scaling sequence $\{s_k\}$,
			% %  satisfying $ \max_{i\in[n]} \|x_{i,k}-\hat{x}_{i,k-1}\|_p^2/C \le s_k^2  \le s_0^2$, 
			% % the compression error $\max_{i\in[n]}\|x_{i,k}-\hat{x}_{i,k-1}\|_p^2$ 
			% the compressed signal $\max_{i\in[n]}\|x_{i,k}-\hat{x}_{i,k-1}\|_p^2$ remains bounded.
			% %  by $C^2s_0^2$. 
			% % Consequently, such a sequence always exists—one may simply take $s_k \equiv s_0$, which ensures boundedness.
			% Using mathematical induction, we prove that there exists a sequence $\{s_k\}$ such that
			% $
			% \max_{i\in[n]} \|x_{i,k}-\hat{x}_{i,k-1}\|_p^2 \le C^2s_k^2 \le C^2s_0^2
			% $
			% for all $k \in [0,T]$.  
			We first show by induction that, with suitable parameters, there exists a scaling sequence $\{s_k\}$ satisfying
			\[
			\max_{i\in[n]}\|x_{i,k}-\hat{x}_{i,k-1}\|_p^2 \le C^2 s_k^2 \le C^2 s_0^2,\quad \forall k\in[0,T],
			\]
			which guarantees bounded inputs to the compressors.

			For $k=0$, the inequality holds trivially.  
			Suppose that the statement holds for $k = 0,1,\ldots,\tau~(\tau\le T-1)$, meaning that there exist $s_0,\ldots,s_\tau$ such that
			\begin{align} \label{assume:induction}
				\hspace{-0.5em}\max_{i\in[n]}\|x_{i,k}-\hat{x}_{i,k-1}\|_p^2 \le C^2s_k^2 \le C^2s_0^2,~\forall k\in[0,\tau].
			\end{align}
			% Here, $s_k$ denotes an arbitrary nonnegative number satisfying the above inequality.
			We next show that the inequality also holds for $k = \tau+1$ by properly choosing $s_{\tau+1}$.

			% Based on the induction hypothesis, we have
			Based on the induction hypothesis and following the same arguments as in~\eqref{local:contraction}, we have
			\begin{align}\label{c:1}
				&\|x_{i,k}-\hat{x}_{i,k}\|_p^2 
				% \nonumber\\
				% 	&
				% =\|x_{i,\tau}-\hat{x}_{i,\tau-1}-\omega s_\tau\mathcal{C}((x_{i,\tau}-\hat{x}_{i,\tau-1})/s_\tau)\|_p^2\nonumber\\
				% &=\Big\|\omega rs_\tau\Big(\frac{x_{i,\tau}-\hat{x}_{i,\tau-1}}{s_\tau}-\frac{\mathcal{C}((x_{i,\tau}-\hat{x}_{i,\tau-1})/s_\tau)}{r}\Big) \nonumber\\
				% 	&\quad + (1-\omega r)(x_{i,\tau}-\hat{x}_{i,\tau-1})\Big\|_p^2 \nonumber\\
				% &\le \omega rs_\tau^2\Big\|\frac{x_{i,\tau}-\hat{x}_{i,\tau-1}}{s_\tau}-\frac{\mathcal{C}((x_{i,\tau}-\hat{x}_{i,\tau-1})/s_\tau)}{r}\Big\|_p^2 \nonumber\\
				% 	&\quad + (1-\omega r)\|x_{i,\tau}-\hat{x}_{i,\tau-1}\|_p^2 \nonumber\\
				\le (1-2\varepsilon_5)C^2s_k^2
					\le (1-2\varepsilon_5)C^2s_0^2, \nonumber\\
					&\hspace{15em} \forall k\in[0,\tau].
			\end{align}
			% where the first equality holds due to the Minkowski inequality and the Cauchy--Schwarz inequality;
			% and the last inequality holds due to \eqref{nonconvex:ass:compression_non1b} and \eqref{assume:induction}.
			Building on this, unrolling the inequality \eqref{L1:iteration} gives
			\begin{align} \label{bound:L2_induction}
				&\mathcal{L}_{1,\tau} 
				\le \mathcal{L}_{1,0}  + (1-2\varepsilon_5)\psi_2n\tilde{d}^2C^2s_0^2\alpha (\tau-1). 
			\end{align}

			Substituting \eqref{c:1} and \eqref{bound:L2_induction}  into \eqref{c:iteration} leads to
			\begin{align}
				&\max_{i\in[n]}\|x_{i,\tau+1}-\hat{x}_{i,\tau}\|^2_p \nonumber\\
					&\le(1+\varepsilon_5+\alpha^2 n\tilde{d}^2\psi_3)(1-2\varepsilon_5)C^2s_0^2  \nonumber\\
					&\quad + \alpha^2\psi_4\big(\mathcal{L}_{1,0}  + (1-2\varepsilon_5)\psi_2n\tilde{d}^2C^2s_0^2\alpha (\tau-1) \big) \nonumber\\
				&\le\big(1-(\varepsilon_5+2\varepsilon_5^2)+(1-2\varepsilon_5)\psi_3 n\tilde{d}^2\alpha^2\big)C^2s_0^2  \nonumber\\
					&\quad	+ \psi_4\mathcal{L}_{1,0}\alpha^2 
						+ (1-2\varepsilon_5)\psi_2\psi_4n\tilde{d}^2C^2s_0^2 \alpha^3T, 
			\end{align}
			where the last inequality holds due to $\tau-1\le T$.

			Noting $\alpha<\tilde{\kappa}^{\prime}_0(T)\le\kappa_{8}(T)$, we have
			\begin{align} 
				&\max_{i\in[n]}\|x_{i,\tau+1}-\hat{x}_{i,\tau}\|^2_p\le (1-\varepsilon_8)C^2s_0^2 \nonumber\\
				&\quad	+ \psi_4\mathcal{L}_{1,0}\alpha^2 
						+ (1-2\varepsilon_5)\psi_2\psi_4n\tilde{d}^2C^2s_0^2 \alpha^3T \label{mid:s0}\\
				&\le (1-\varepsilon_8)C^2s_0^2 + \varepsilon_8 C^2s_0^2 \nonumber\\
				&\le C^2s_0^2. \label{sk:iteration:0}
			\end{align}
			Therefore, there exists a number $s_{\tau+1}$ satisfying
			$
			\max_{i\in[n]}\|x_{i,\tau+1}-\hat{x}_{i,\tau}\|_p^2 
			\le C^2s_{\tau+1}^2 \le C^2s_0^2,
			$
			which completes the induction.

	\noindent {\bf (ii)} 
	% To further control the compression error, we show that the sequence $\{s_k\}$ constructed in Theorem~\ref{nonconvex:thm-R1} converges to zero, and  prove by mathematical induction that 
	% $\max_{i\in[n]}\|x_{i,k}-\hat{x}_{i,k-1}\|_p^2 \le C^2 s_k^2$ 
	% for all $k\in[0,T]$.
	To further control the compression error, we prove by mathematical induction that the sequence $\{s_k\}$ constructed in Theorem~\ref{nonconvex:thm-R1} converges to zero, and  
	$\max_{i\in[n]}\|x_{i,k}-\hat{x}_{i,k-1}\|_p^2 \le C^2 s_k^2$ 
	for all $k\in[0,T]$.

	We first show that the sequence $\{s_k\}$ remains bounded by its initial value $s_0^2$. 
	Given $\alpha=1/(n^{\frac{1}{4}}\tilde{d}\sqrt{T})$ and $T>\tilde{\kappa}_3$, 
	we have $\alpha<\kappa_{8}(T)$ and 
	\begin{align}
		\tilde{\kappa}_4(T)&= \psi_4\mathcal{L}_{1,0}  \alpha^2/C^2   
		+  (1-2\varepsilon_5)\psi_2\psi_4n\tilde{d}^2s_0^2 \alpha^3T \nonumber\\
		&\le \varepsilon_8s_0^2.
	\end{align}
	Together with the definition of $s_k$, we know that
	\begin{align} \label{matters}
		s_{k+1}^2 &=  (1-\varepsilon_8)s_k^2 + \tilde{\kappa}_4(T) \nonumber\\
		&\le (1-\varepsilon_8)s_k^2 + \varepsilon_8 s_0^2 \le s_0^2, 
		~\forall k\in[0,T].
	\end{align}
			Next, we prove by mathematical induction that the constructed sequence $s_k^2$ satisfies $\max_{i\in[n]}\|x_{i,k} - \hat{x}_{i,k-1}\|^2_p \le C^2s_k^2$.
			For $k=0$, the inequality holds trivially.
			Suppose that the statement holds for $k = 0,1,\ldots,\tau$, namely,
			\begin{align} \label{assume:induction:k}
				\max_{i\in[n]}\|x_{i,k}-\hat{x}_{i,k-1}\|_p^2 \le C^2s_k^2.
			\end{align}
			% We next show that the inequality also holds for $k = \tau+1$.
			% Similar to the proof of \eqref{sk:iteration:0}, and 
			% \begin{align}
			% 	s_{k+1}^2 \le (1-\varepsilon_8)s_k^2 + \varepsilon_8s_0^2,
			% \end{align}
			% which implies that $s_k^2\le s_0^2$ for all $k\in[0,T]$.
			For $k = \tau+1$, similar to the proof of \eqref{mid:s0}, we have
			\begin{align} \label{bound:sk00}
				&\max_{i\in[n]}\|x_{i,\tau+1}-\hat{x}_{i,\tau}\|^2_p 
						\le \big(1-\varepsilon_8\big)C^2s_\tau^2 \nonumber\\
					&\quad	+ \psi_4\mathcal{L}_{1,0}\alpha^2 
						+ (1-2\varepsilon_5)\psi_2\psi_4n\tilde{d}^2C^2s_0^2 \alpha^3T \nonumber\\
				&=C^2\big((1-\varepsilon_8)s_\tau^2 + \tilde{\kappa}_4(T) \big) =C^2s_{\tau+1}^2,
			% 	&s_{k+1}^2\ge \big(1-(\varepsilon_5+2\varepsilon_5^2)+(1-2\varepsilon_5) n\tilde{d}^2\psi_3\alpha^2\big)s_k^2 + \nonumber\\
			% 		&\quad +\psi_4\mathcal{L}_{1,0}\alpha^2
			% 			+ \frac{2\ell\psi_1\psi_4}{\varepsilon_4} \mathcal{L}_{1,0} \alpha^2 
			% 			+ \frac{ \psi_2\psi_4}{\varepsilon_4}n\tilde{d}^2s_0^2 \alpha^2 \nonumber\\
			% 		&\quad
			% 			+ \frac{2\ell\psi_1\psi_4}{\varepsilon_4} n\tilde{d}^2 s_0^2 \alpha^3 T.
			\end{align}
			which completes the induction.

			Finally, by unrolling the recursion of $s_k^2$ in~\eqref{thm1:set}, together with~\eqref{bound:sk00} and $\alpha=1/(n^{\frac{1}{4}}\tilde{d}\sqrt{T})$, we obtain
			\begin{align} 
				&\max_{i\in[n]}\|x_{i,k}-\hat{x}_{i,k-1}\|_p^2/C^2\le s_k^2 \nonumber\\
				&\le \big(1-\varepsilon_8\big)^k s_0^2 \hspace{-0.05em}	
					+ \hspace{-0.05em} \frac{\psi_4\mathcal{L}_{1,0}}{C^2\varepsilon_8n} n\alpha^2 \hspace{-0.15em}+\hspace{-0.18em} \frac{(1-2\varepsilon_5)\psi_2\psi_4}{\varepsilon_8 } n\tilde{d}^2 s_0^2 \alpha^3 T 
						 \label{bound:sk0} \\
				&=\big(1-\varepsilon_8\big)^k s_0^2 + \frac{\psi_4\mathcal{L}_{1,0}}{C^2\varepsilon_8n} \frac{n^{\frac{1}{2}}}{\tilde{d}^2T} 
					+ \frac{(1-2\varepsilon_5)\psi_2\psi_4}{\varepsilon_8 }s_0^2  \frac{n^{\frac{1}{4}}}{\tilde{d}\sqrt{T}}, \label{bound:sk:max}
			\end{align}
			which implies \eqref{bound:sk}.
	\end{proof}
	\begin{remark}
		% It should be highlighted that
	Here we specially design the stepsize $\alpha$ and the scaling factor $s_k$ in \eqref{thm1:set} to 
	regulate the compression error in \eqref{sk:iteration:0}, \eqref{matters}, and~\eqref{bound:sk00}.
This keeps the compression input in the local region 
and guarantees an $\mathcal{O}(1/\sqrt{T})$ decay of the compression error via~\eqref{bound:sk} and \eqref{local:contraction}.
Then summing~\eqref{L1:iteration} over $k\in[0,T-1]$ yields the convergence rate in terms of the stationarity measure $\|\bar{\bsg}_{k}^0\|^2$.
% This ensures that the compressed input stays in the local region, 
		% and further establishes that the compression error decays at the rate $\mathcal{O}(1/\sqrt{T})$ using \eqref{local:contraction}.
		% By combining Lemmas~\ref{Lemma: Lyapunov Difference} and~\ref{Lemma:s0}, we can establish the convergence of the Lyapunov function $\mathcal{L}_{1,k}$ and derive the desired convergence guarantees under locally-bounded compressors.
	\end{remark}
	% \begin{proof}
	% 	It should be highlighted that we specially design the stepsize $\alpha$ and the scaling sequence $\{s_k\}$ to control the compression error,
	% 	and use mathematical induction based on Lemma~\ref{Lemma: Lyapunov Difference} to ensure that the transmitted signal always stays in the local region.
	% 	The detailed proof is deferred to Appendix~\ref{appendix:lemma2}.
	% \end{proof} 

% Lemma~\ref{Lemma:s0} ensures that the transmitted signal stays in the local region, 
% and further establishes that the compression error decays at the rate $\mathcal{O}(1/\sqrt{T})$.
% By combining Lemmas~\ref{Lemma: Lyapunov Difference} and~\ref{Lemma:s0}, we can establish the convergence of the Lyapunov function $\mathcal{L}_{1,k}$ and derive the desired convergence guarantees under locally-bounded compressors.

	\subsection{Locally-Bounded Case} \label{subsec:mainre:local}

	This subsection gives the main results under the locally-bounded compressors.
	%  which include 1-bit compressors and saturating quantizers. 	
	We start with the nonconvex setting, where Theorem~\ref{nonconvex:thm-R1} provides the first convergence guarantee for local compressors.
	% , including the locally-bounded class considered here.	% Under locally-bounded compressors, we start with the nonconvex case.

	\begin{theorem}\label{nonconvex:thm-R1}
	Under Definition~\ref{ass:lcoal} and Assumptions~\ref{nonconvex:ass:graph}--\ref{nonconvex:ass:known_lowerbound},
	let $\{x_{i,k}\}$ be generated by Algorithm~\ref{nonconvex:algorithm-pdgd} with the same parameter settings as in~\eqref{thm1:set}.
	Then
	% Suppose Assumptions~\ref{ass:lcoal} and~\ref{nonconvex:ass:graph}--\ref{nonconvex:ass:known_lowerbound} hold.
	% Let $\{x_{i,k}\}$ be generated by Algorithm~\ref{nonconvex:algorithm-pdgd} with the same parameter settings as in~\eqref{thm1:set}.
	% Then
	% 	\begin{align}
	% 		&~~~\alpha=\frac{1}{n^{\frac{1}{4}}\tilde{d}\sqrt{T}},~\beta=\tau_1\gamma,~\gamma>\kappa_2,~\omega\in(0,\frac{1}{r}],\nonumber\\
	% 		&~~\quad \tau_1\ge \kappa_1,~T>\kappa_T,~s_0\ge \max_{i\in[n]}\|x_{i,0}\|_p/C, \nonumber\\
	% 		&~~~~~s_k^2=(1-\varepsilon_8)s_{k-1}^2 + \kappa_s(T),~\forall k\in[1,T], \label{thm1:set}
	% 	\end{align}
	% where $\kappa_1$ and $\kappa_2$ are given in Appendix~\ref{appendix:lemmas}, and $\kappa_T$, $\kappa_s(T)$ and $\varepsilon_8$ are given in Appendix~\ref{appendix:lemma2}.
	% where $\kappa_1$ and $\kappa_2$ are positive constants given in \ref{appendix:lemmas},$~\kappa_T,~\kappa_s(T),~\varepsilon_8$ are positive constants given in Appendix~\ref{appendix:lemma2}.
		\begin{subequations}
		\begin{align}
			&\frac{1}{T}\sum_{k=0}^{T-1}\Big(\|\nabla f(\bar{x}_k)\|^2+\frac{1}{n}\sum_{i=1}^{n}\|x_{i,k}-\bar{x}_k\|^2\Big)=\mathcal{O}(\frac{n^{\tfrac{1}{4}}\tilde{d}}{\sqrt{T}}),\label{nonconvex:thm-sm-equ1}\\
			&f(\bar{x}_{T})-f^\ast=\mathcal{O}(1).\label{nonconvex:thm-sm-equ2}
		\end{align}
		\end{subequations}
		% where $\bar{x}_k=\frac{1}{n}\sum_{i=1}^{n}x_{i,k}$.
	\end{theorem}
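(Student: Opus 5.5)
The plan is to sum the one-step Lyapunov inequality \eqref{L1:iteration} of Lemma~\ref{Lemma: Lyapunov Difference} over $k\in[0,T-1]$ and control the accumulated compression error with Lemma~\ref{Lemma:s0}. Since the parameters \eqref{thm1:set} satisfy the hypotheses of Lemma~\ref{Lemma:s0}, the induction there gives $\max_{i}\|x_{i,k}-\hat{x}_{i,k-1}\|_p\le Cs_k$ for all $k\in[0,T]$. Then, by \eqref{local:contraction} (equivalently \eqref{c:1}), we get $\max_i\|x_{i,k}-\hat{x}_{i,k}\|_p^2\le(1-2\varepsilon_5)C^2s_k^2$. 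Substituting this into \eqref{L1:iteration} and telescoping, using $\mathcal{L}_{1,T}\ge 0$, we obtain
\begin{align*}
\frac{\alpha}{4}\sum_{k=0}^{T-1}\|\bar{\bsg}_k^0\|^2+\alpha\varepsilon_3\sum_{k=0}^{T-1}\|\bsx_k\|_{\bsE}^2
\le \mathcal{L}_{1,0}+\alpha n\tilde{d}^2\psi_2(1-2\varepsilon_5)C^2\sum_{k=0}^{T-1}s_k^2 .
\end{align*}
Here $\mathcal{L}_{1,T}\ge0$ requires checking nonnegativity of $e_{2}+e_{3}$ together with $e_{1}$. I will take this from the choice $\tau_1\ge\kappa_1$, as in \cite{Yi_CommunicationCompression_2023}; alternatively, a positive-definiteness bound on $\mathcal{L}_{1,k}$ is presumably part of the proof of Lemma~\ref{Lemma: Lyapunov Difference}. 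The nonnegativity of $e_{4,k}$ follows from Assumption~\ref{nonconvex:ass:optset}.

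Next, I bound $\sum_k s_k^2$ using the explicit bound \eqref{bound:sk} with $\alpha=1/(n^{1/4}\tilde{d}\sqrt{T})$. Summing gives
\[
\sum_{k=0}^{T-1}s_k^2\le \frac{s_0^2}{\varepsilon_8}+\mathcal{O}\Big(\frac{n^{1/2}}{\tilde{d}^2}\Big)+\mathcal{O}\big(n^{1/4}\tilde{d}^{-1}\sqrt{T}\big).
\]
Dividing the telescoped inequality by $\alpha T/4$, the initial term becomes $4\mathcal{L}_{1,0}/(\alpha T)=4\mathcal{L}_{1,0}n^{1/4}\tilde{d}/\sqrt{T}$. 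The error term becomes $\mathcal{O}(n\tilde{d}^2\cdot\frac{1}{T}\sum_k s_k^2)$. The dominant $\sqrt{T}$ piece contributes $\mathcal{O}(n^{5/4}\tilde{d}/\sqrt{T})$, and after normalizing by $n$ this yields $\mathcal{O}(n^{1/4}\tilde{d}/\sqrt{T})$. Note that $\|\bar{\bsg}_k^0\|^2=n\|\nabla f(\bar{x}_k)\|^2$ and $\|\bsx_k\|_{\bsE}^2=\sum_i\|x_{i,k}-\bar{x}_k\|^2$, so dividing by $n$ gives exactly the averaged quantity in \eqref{nonconvex:thm-sm-equ1}. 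I will have to track the $n$-dependence of $\mathcal{L}_{1,0}$, which is $\mathcal{O}(n)$ since $\bsv_0=0$ and $e_{4,0}=n(f(\bar{x}_0)-f^*)$. This gives the stated $\mathcal{O}(n^{1/4}\tilde{d}/\sqrt{T})$.

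For \eqref{nonconvex:thm-sm-equ2}, I drop the negative terms in the same telescoped inequality to get
\[
nf(\bar{x}_T)-nf^*\le\mathcal{L}_{1,T}\le \mathcal{L}_{1,0}+\alpha n\tilde{d}^2\psi_2C^2\sum_k s_k^2=\mathcal{O}(n),
\]
because $\alpha\sqrt{T}$ is $T$-independent. Dividing by $n$ gives the bound, using $e_1,e_2+e_3\ge0$.

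The main obstacle is the bookkeeping of $\sum_k s_k^2$. The $\tilde{\kappa}_4(T)$ drift means $s_k^2$ decays only to $\mathcal{O}(\alpha^3T)=\mathcal{O}(1/\sqrt{T})$ rather than to zero. I must confirm that this floor, multiplied by $\alpha T$ accumulated steps, is still $\mathcal{O}(\sqrt{T})\cdot\alpha=\mathcal{O}(1)$ relative to $\mathcal{L}_{1,0}$. This is precisely what the cubic scaling $\alpha^3T$ and the choice $\alpha\propto T^{-1/2}$ achieve, and verifying that the $n,\tilde{d}$ powers line up is the delicate part.
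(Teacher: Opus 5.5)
Your proof of \eqref{nonconvex:thm-sm-equ1} is the paper's argument. You telescope \eqref{L1:iteration}, control the compression error through \eqref{c:1} and \eqref{bound:sk}, divide by $\alpha T\min\{\varepsilon_3,1/4\}$, and use $\mathcal{L}_{1,0}=\mathcal{O}(n)$. The leftover $\mathcal{O}(n^{1/2}/T)+\mathcal{O}(\tilde{d}^2/T)$ terms are dominated by $n^{1/4}\tilde{d}/\sqrt{T}$ once $T>\tilde{\kappa}_3$; you leave this implicit.

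For \eqref{nonconvex:thm-sm-equ2} you take a slightly different but valid route. You read $e_{4,T}\le\mathcal{L}_{1,T}\le\mathcal{L}_{1,0}+\mathcal{O}(ns_0^2)+\dots$ directly off the same telescoped inequality. The paper instead sums the $e_4$-recursion \eqref{nonconvex:v4k}, drops the $\|\bar{\bsg}_k\|^2$ term using $\alpha\le 1/(2\ell)$, and inserts the consensus-error bound from part (a). Your version is more direct and does not need $\alpha\le 1/(2\ell)$ separately.

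One correction concerns your remark ``using $e_1,\ e_2+e_3\ge 0$''. What you actually need is nonnegativity of $e_1+e_2+e_3$ jointly; $e_2+e_3$ alone can be negative when $\|\bsx_k\|_{\bsE}$ dominates. The joint fact is exactly \eqref{hatx:le} in the proof of Lemma~\ref{Lemma: Lyapunov Difference}: $\mathcal{L}_{1,k}\ge\varepsilon_1\hat{\mathcal{L}}_{1,k}$, where $\varepsilon_1=(\tau_1\rho_2(L)-1)/(2\tau_1\rho_2(L))>0$ because $\tau_1\ge\kappa_1=4/\rho_2(L)$. This inequality gives both $\mathcal{L}_{1,T}\ge 0$ for part (a) and $e_{4,T}\le\mathcal{L}_{1,T}$ for part (b).
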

	
\begin{proof} 
	See Appendix~\ref{appendix:thm1}.
	% Under the setting of Theorem~\ref{nonconvex:thm-R1}, 
	% all conditions of Lemmas~\ref{Lemma: Lyapunov Difference} and~\ref{Lemma:s0} are satisfied.
	% Lemma~\ref{Lemma:s0} first shows that the compressed input remains within the local region and gives the compression error bound~\eqref{bound:sk}.
	% Substituting~\eqref{bound:sk} into the one-step difference of the Lyapunov function $\mathcal{L}_{1,k}$ in~\eqref{L1:iteration} 
	% and summing over $k\in[0,T-1]$ yields~\eqref{nonconvex:thm-sm-equ1}.
	% The bound~\eqref{nonconvex:thm-sm-equ2} follows similarly from the one-step difference of $e_{4,k}$.
\end{proof}
	% \begin{proof}
	% 	See Appendix~\ref{appendix:thm1}
	% \end{proof}
	Note that the explicit expressions for $\tilde{\kappa}_3$ and $\tilde{\kappa}_4(T)$ 
	% $\kappa_T$ and $\kappa_s(T)$ 
	used in the parameter settings~\eqref{thm1:set}
	depend on the optimality gap $f(\bar{x}_0)-f^\ast$,
	% We note that deriving explicit expressions for $\kappa_T$ and $\kappa_s(T)$ involves the optimality gap $f(\bar{x}_0)-f^\ast$,
	 for which we use a known lower bound $f_{\mathrm{low}}\le f^\ast$ in Assumption~\ref{nonconvex:ass:known_lowerbound}.
	This assumption is mild and holds in many applications as remarked after Assumption~\ref{nonconvex:ass:known_lowerbound}. 
	% For example, most machine learning loss functions are nonnegative and thus satisfy it naturally.
	% It is introduced only for an explicit theoretical parameter choice, while in practice the parameters can be tuned empirically.

\begin{remark}
To the best of our knowledge, this is the first compressed optimization algorithm that establishes convergence guarantees for general local compressors. %in the nonconvex setting.
% In particular, it is the first such guarantees for 1-bit compressors and for saturating quantizers.
% % Compared with~\cite{zhang2023innovation,Yi_CommunicationCompression_2023}, their compressor assumption is a special case of ours with $C=1$ and $r=1$, and their convergence results are derived only under strong convexity or the P--L condition.
% Compared with~\cite{zhang2023innovation,Yi_CommunicationCompression_2023}, their compressor assumption is a special case of ours with $C=1$ and $r=1$, and their convergence results are established only under strong convexity or the P--L condition.
	Compared with existing 1-bit methods for nonconvex optimization~\cite{zhu2020one,fan20221}, which focus on the centralized setting, our result provides the first extension to the distributed setting and general local compressors, and matches their $\mathcal{O}(1/\sqrt{T})$ convergence rate.
Compared with existing results for distributed convex optimization,
Yi and Hong \cite{yi2014quantized} studied 1-bit compressors and established convergence without an explicit rate,
 while Saha et al. \cite{saha2021decentralized} considered saturating quantizers, established convergence in probability, and derived a slower theoretical rate of $\mathcal{O}(T^{-(1/2-\theta)})$ for some $\theta\in(0,1/2)$.
% \cite{yi2014quantized} studies 1-bit methods and establishes asymptotic convergence without an explicit rate.
% Regarding saturating quantizers, \cite{saha2021decentralized} studies distributed convex optimization, establishes convergence in probability, and attains a slower theoretical rate $\mathcal{O}(T^{-(1/2-\theta)})$ for some $\theta\in(0,1/2)$.
	% Regarding saturating quantizers, \cite{saha2021decentralized} considers distributed convex problems, proves convergence in probability, and attains a slower order rate of $\mathcal{O}(T^{-(1/2-\theta)})$ for some $\theta\in(0,1/2)$.
	%  whereas most other guarantees rely on strong convexity or the P--L condition \cite{magnusson2020maintaining,xiong2022quantized,xu2024quantized}.
	% 	% Moreover, existing distributed finite-bit compressed methods~(\cite{magnusson2020maintaining,rikos2024distributed}) are analyzed only for strongly convex problems.
\end{remark}

Next, we strengthen the previous result by using one initial uncompressed (exact) communication round, which yields a faster convergence.
% Next, we strengthen the previous result by requiring the first round to use uncompressed (exact) communication, 
% which leads to a faster convergence.
\begin{theorem}\label{nonconvex:cor-R1}
Under Definition~\ref{ass:lcoal} and Assumptions~\ref{nonconvex:ass:graph}--\ref{nonconvex:ass:known_lowerbound},
let $\{x_{i,k}\}$ be generated by Algorithm~\ref{nonconvex:algorithm-pdgd} with the initial inter-agent communication replaced by an uncompressed round, 
and set 
	% Under the same conditions as Theorem~\ref{nonconvex:thm-R1},
	% suppose that the first inter-agent communication in Algorithm~\ref{nonconvex:algorithm-pdgd} is uncompressed,
	% which is equivalent to setting
	% \begin{align}
	% 	\hat{x}_{i,-1} = x_{i,0},~y_{i,-1}=\sum\nolimits_{j=1}^{n} L_{ij} x_{j,0}.  
	% \end{align}
	% Set the algorithm parameters as
	\begin{align} \label{cor1:set}
		&~~~~\alpha=\frac{\tau_0}{n^{\frac{1}{3}}\tilde{d}^{\frac{2}{3}}T^{\frac{1}{3}}},~\beta=\tau_1\gamma,~\gamma>\kappa_2, 
			~\omega\in(0,\frac{1}{r}], \nonumber\\
		&~T>\kappa_3,~s_0^2=\tau_4n\alpha^2,~\tau_0\le\kappa_0,~\tau_1\ge\kappa_1,\tau_4\ge \kappa_4, \nonumber\\
		&~~~~~~~s_k^2=(1-\varepsilon_8)s_{k-1}^2 + \tilde{\kappa}_4(T),~\forall k\in[1,T],
		% &~~~~\alpha=\frac{\tau_\alpha}{n^{\frac{1}{3}}\tilde{d}^{\frac{2}{3}}T^{\frac{1}{3}}},~\beta=\tau_1\gamma,~\gamma>\kappa_2, 
		% 	~\omega\in(0,\frac{1}{r}], \nonumber\\
		% &~T>\tilde{\kappa}_T,~s_0^2=\tau_sn\alpha^2,~\tau_1\ge\kappa_1,~\tau_\alpha\le\kappa_\alpha,\tau_s\ge \tilde{\kappa}_s, \nonumber\\
		% &~~~~~~~s_k^2=(1-\varepsilon_8)s_{k-1}^2 + \kappa_s(T),~\forall k\in[1,T],
	\end{align}
	where $\kappa_0$, $\kappa_3$, and $\kappa_4$
	% $\kappa_\alpha$, $\tilde{\kappa}_s$ and $\tilde{\kappa}_T$ 
	are some positive constants.
	Then
	\begin{subequations}\label{nonconvex:cor-sm-eq}
	\begin{align}
		&\frac{1}{T}\sum_{k=0}^{T-1}\Big(\|\nabla f(\bar{x}_k)\|^2+\frac{1}{n}\sum_{i=1}^{n}\|x_{i,k}-\bar{x}_k\|^2\Big) = \mathcal{O}\Big(\frac{n^{\frac{1}{3}}\tilde{d}^\frac{2}{3}}{T^{\frac{2}{3}}}\Big), \label{nonconvex:cor-sm-eq1}\\
		&f(\bar{x}_{T})-f^\ast = \mathcal{O}(1). \label{nonconvex:cor-sm-eq2}
	\end{align}
	\end{subequations}
	\end{theorem}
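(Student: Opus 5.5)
The plan mirrors the proof of Theorem~\ref{nonconvex:thm-R1}, with two changes. The uncompressed first round is used to start the compression error at zero, and the stepsize and scaling are then retuned.

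\textbf{Effect of the exact first round.} An uncompressed initial round is equivalent to setting $\hat{x}_{i,-1}=x_{i,0}$ and $y_{i,-1}=\sum_{j}L_{ij}x_{j,0}$. Hence $\|x_{i,0}-\hat{x}_{i,-1}\|_p=0\le Cs_0$ for any $s_0>0$. The lower bound $s_0\ge\max_i\|x_{i,0}\|_p/C$ from \eqref{thm1:set} is therefore no longer needed. This lets us choose $s_0^2=\tau_4 n\alpha^2$, which is small, of order $T^{-2/3}$.

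\textbf{Induction step.} I would redo the induction in Lemma~\ref{Lemma:s0} with the new $\alpha$ and $s_0$.
\begin{itemize}
\item Assume $\max_i\|x_{i,k}-\hat{x}_{i,k-1}\|_p^2\le C^2s_k^2\le C^2s_0^2$ for $k\le\tau$.
\item Then \eqref{c:1} gives $\max_i\|x_{i,k}-\hat{x}_{i,k}\|_p^2\le(1-2\varepsilon_5)C^2s_0^2$.
\item Unrolling \eqref{L1:iteration} gives $\mathcal{L}_{1,\tau}\le\mathcal{L}_{1,0}+(1-2\varepsilon_5)\psi_2 n\tilde d^2C^2s_0^2\alpha T$.
\item Inserting this into \eqref{c:iteration} bounds the next input by
\[
(1-\varepsilon_8)C^2s_\tau^2+\psi_4\mathcal{L}_{1,0}\alpha^2+(1-2\varepsilon_5)\psi_2\psi_4 n\tilde d^2C^2s_0^2\alpha^3T .
\]
\end{itemize}
To keep $s_k^2\le s_0^2$ we need $\tilde\kappa_4(T)\le\varepsilon_8 s_0^2$, and this is where the constants are fixed.
\begin{itemize}
\item The first term requires $\psi_4\mathcal{L}_{1,0}\alpha^2/C^2\le\varepsilon_8\tau_4 n\alpha^2/2$. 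This holds once $\tau_4\ge\kappa_4$, where $\kappa_4$ depends on $\mathcal{L}_{1,0}$. Bounding $\mathcal{L}_{1,0}$ requires $f(\bar x_0)-f^\ast\le f(\bar x_0)-f_{\rm low}$, which uses Assumption~\ref{nonconvex:ass:known_lowerbound}.
\item The second term requires $\psi_2\psi_4 n\tilde d^2\alpha^3T\le\varepsilon_8/2$. With $\alpha=\tau_0/(n^{1/3}\tilde d^{2/3}T^{1/3})$ we get $n\tilde d^2\alpha^3T=\tau_0^3$. So the condition holds for $\tau_0\le\kappa_0$.
\item $T>\kappa_3$ ensures $\alpha$ lies below the thresholds $\hat\kappa_0$ and $\kappa_8(T)$ of Lemmas~\ref{Lemma: Lyapunov Difference} and~\ref{Lemma:s0}.
\end{itemize}
With this, the induction closes exactly as in \eqref{matters}--\eqref{bound:sk00}. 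Unrolling the recursion then gives
\[
s_k^2\le(1-\varepsilon_8)^k s_0^2+\tilde\kappa_4(T)/\varepsilon_8=\mathcal{O}(n\alpha^2).
\]

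\textbf{Rate.} Sum \eqref{L1:iteration} over $k\in[0,T-1]$, drop the negative $\varepsilon_3$ terms except for $\|\bsx_k\|_{\bsE}^2$, and use $\mathcal{L}_{1,T}\ge0$. This yields
\[
\frac{1}{T}\sum_{k}\Big(\|\bar{\bsg}^0_k\|^2+\|\bsx_k\|_{\bsE}^2\Big)\lesssim\frac{\mathcal{L}_{1,0}}{\alpha T}+n\tilde d^2\psi_2C^2\max_k s_k^2 .
\]
Since $\|\bar{\bsg}_k^0\|^2=n\|\nabla f(\bar x_k)\|^2$, dividing by $n$ gives the left side of \eqref{nonconvex:cor-sm-eq1} up to constants.
\begin{itemize}
\item The first term on the right, divided by $n$, is $\mathcal{O}(1/(n\alpha T))=\mathcal{O}\big(\tilde d^{2/3}/(n^{2/3}T^{2/3})\big)$.
\item The second term, divided by $n$, is $\mathcal{O}(\tilde d^2 n\alpha^2)=\mathcal{O}\big(n^{1/3}\tilde d^{2/3}/T^{2/3}\big)$.
\end{itemize}
These combine to \eqref{nonconvex:cor-sm-eq1}. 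The geometric part $(1-\varepsilon_8)^k s_0^2$ sums to $s_0^2/\varepsilon_8$ and only contributes $\mathcal{O}(n\alpha^2/T)$, which is of lower order.

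For \eqref{nonconvex:cor-sm-eq2}, note that $e_{4,T}\le\mathcal{L}_{1,T}$. By \eqref{accumulate},
\[
\mathcal{L}_{1,T}\le\mathcal{L}_{1,0}+\alpha T\,n\tilde d^2\psi_2C^2s_0^2 = \mathcal{L}_{1,0}+\mathcal{O}(n\tau_0^3),
\]
since $\alpha T\,n\tilde d^2 s_0^2=\tau_4 n^2\tilde d^2\alpha^3T=\tau_4 n\tau_0^3$. Dividing by $n$ gives $f(\bar x_T)-f^\ast=\mathcal{O}(1)$.

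\textbf{Main obstacle.} The delicate step is closing the induction with the small $s_0$. The requirement $\tilde\kappa_4(T)\le\varepsilon_8s_0^2$ has to hold uniformly in $T$. This is possible only because $s_0^2$ and $\alpha^2$ have the same scaling in $T$. The cubic term $\alpha^3T$ must be $\mathcal{O}(1)$, and this is exactly what forces the exponent $T^{-1/3}$ in $\alpha$. It also dictates the ordering of constants: first fix $\tau_0$ small, then $\tau_4$ large, and finally $T$ large.
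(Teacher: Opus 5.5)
Your proposal is correct and follows essentially the paper's route: model the exact first round as $\hat{x}_{i,-1}=x_{i,0}$, rerun the induction of Lemma~\ref{Lemma:s0} with $s_0^2=\tau_4 n\alpha^2$ (where $\tau_4\ge\kappa_4$ and $\tau_0\le\kappa_0$ are precisely what give $\tilde{\kappa}_4(T)\le\varepsilon_8 s_0^2$), and then sum \eqref{L1:iteration}. The one small deviation is in \eqref{nonconvex:cor-sm-eq2}: you bound $e_{4,T}\le\mathcal{L}_{1,T}$ via \eqref{accumulate}, which is valid since $e_{1,k}+e_{2,k}+e_{3,k}\ge 0$ by the estimate behind \eqref{hatx:le}, whereas the paper sums \eqref{nonconvex:v4k}; this is equally fine, and note also that with $\mathcal{L}_{1,0}=\mathcal{O}(n)$ your first rate term is $\mathcal{O}(1/(\alpha T))$ rather than $\mathcal{O}(1/(n\alpha T))$, which is still of the stated order.
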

	\begin{proof}
		See Appendix \ref{appendix:cor1}.
		% The locally-bounded class is sensitive to the first-round compression error.
		% As discussed in the previous subsection, this error accumulates through~\eqref{accumulate} and~\eqref{c:iteration}, 
		% and dominates the final bound as reflected in the $s_0$-dependent last term on the right-hand side of~\eqref{bound:sk}.
		% % Thus, using one initial uncompressed communication round to drive this error to zero can improve the convergence rate.
		% 	The initial uncompressed communication round drives this error to zero and gives the refined bound~\eqref{bound:sk}.
		% The rest follows as in Theorem~\ref{nonconvex:thm-R1}.
	\end{proof}
	% 	\begin{proof}
	% 	See Appendix~\ref{appendix:cor1}.
	% \end{proof}

		Note that deriving explicit expressions for $\tilde{\kappa}_4(T)$ and $\kappa_4$
		% $\kappa_s(T)$ and $\tilde{\kappa}_s$ 
		involves the optimality gap $f(\bar{x}_0)-f^\ast$, 
		for which we use a known lower bound $f_{\mathrm{low}}\le f^\ast$ in Assumption~\ref{nonconvex:ass:known_lowerbound}.
		\begin{remark}
			The locally-bounded class is sensitive to the first-round compression error.
			As discussed in the previous subsection, this error accumulates through~\eqref{accumulate} and~\eqref{c:iteration}, 
			and dominates the final bound as reflected in the $s_0$-dependent last term on the right-hand side of~\eqref{bound:sk}.
			Thus, using one initial uncompressed communication round to drive this error to zero can improve the convergence rate.
			This initialization can be avoided simply by initializing all agents with the same vector, i.e., $x_{i,0}=x_0$ for all $i\in[n]$, where $x_0$ is arbitrary in $\mathbb{R}^d$.
		\end{remark}
		\begin{remark}
		% The locally-bounded setting is sensitive to the first-round compression error, which can propagate through the subsequent iterations and dominate the final bound. Thus, an uncompressed first round that drives this error close to zero can improve the convergence rate.
		% To the best of our knowledge, the
		The obtained $\mathcal{O}(1/T^{2/3})$ convergence rate is the fastest provably available 
		for 
		% distributed optimization under 
		local compressors.
		% and this is also the first to consider acceleration via an uncompressed communication round at the first iteration.
		This rate exhibits an order-wise improvement over the existing literature from $\mathcal{O}(1/\sqrt{T})$ to $\mathcal{O}(1/T^{2/3})$,
		 and is even theoretically faster than the centralized results in the 1-bit special case \cite{zhu2020one,fan20221}.
		% This initialization can be avoided simply by initializing all agents with the same vector, i.e., $x_{i,0}=x_0$ for all $i\in[n]$, where $x_0$ is arbitrary in $\mathbb{R}^d$.
		%  albeit at the cost of one initial uncompressed communication round.
		% The uncompressed communication in the first round can be avoided simply by coordinate initialization, i.e., $x_{i,0}=x_0, \forall i\in[n]$, where $x_0$ is an arbitrary vector in $\mathbb{R}^d$.
	\end{remark}
% 	\begin{remark}
% 		From the parameter setting, we have 
% 		$s_0 = \mathcal{O}\!\big(\tfrac{1}{\tilde{d}^2}\big)$ 
% 		and, when $p=\infty$, equivalently 
% 		$s_0 = \mathcal{O}\!\big(\tfrac{1}{d}\big)$. 
% For the expression $\mathcal{C}(0/s_0)$, 
% it suffices that the smallest positive number 
% distinguishable from zero be smaller than $s_0 = \mathcal{O}(1/d)$,
% which is easily satisfied with standard 64-bit floating-point precision.
% 		In IEEE~754 double precision, 
% 		the machine epsilon $\varepsilon_{\mathrm{mach}} = 2^{-52} \!\approx\! 2.22\times10^{-16}$ 
% 		bounds the \emph{relative} rounding error of floating-point operations:
% 		$|\hat{x}-x| \le \varepsilon_{\mathrm{mach}} |x|$, 
% 		where $x$ is the exact real value and $\hat{x}$ its floating-point representation.  
% 		In addition, the smallest positive representable number is 
% 		$x_{\min} = 2^{-1074} \!\approx\! 4.94\times10^{-324}$, 
% 		which is many orders of magnitude below $1/d$ 
% 		even for large-scale models with $d = 10^9$--$10^{12}$.
% 	Hence, both the representability and relative-accuracy conditions 
% 	for $\mathcal{C}(0/s_0)$ are easily satisfied, 
% 	ensuring that the proposed algorithm is numerically valid. 
% 	\end{remark}

	When the global cost function further satisfies the P--L condition, linear convergence follows.

  \begin{theorem}\label{nonconvex:thm-liner}
    Under Definition~\ref{ass:lcoal} and Assumptions~\ref{nonconvex:ass:graph}--\ref{nonconvex:ass:optset} and \ref{nonconvex:ass:fil}, 
    let $\{x_{i,k}\}$ be generated by Algorithm~\ref{nonconvex:algorithm-pdgd} with
 \begin{align}
        &~\hspace{0.3em}\alpha\in(0,\kappa_0^\prime),~\beta=\tau_1\gamma,~\gamma>\kappa_2,~\omega\in(0,1/r],~\tau_1>\kappa_1, \nonumber\\
		&s_0\ge\max\Big\{\sqrt{\frac{\kappa_\nu}{n\tilde{d}^2\psi_5C^2}},~\frac{\max_{i\in[n]}\|x_{i,0}\|_p}{C}\Big\},~s_k=s_0\epsilon^k,
		% &~\hspace{0.1em}\alpha\in(0,\tilde{\kappa}_0^\prime),~\beta=\tau_1\gamma,~\gamma>\kappa_2,~\omega\in(0,1/r],~\tau_1>\kappa_1, \nonumber\\
		% &s_0\ge\max\{\sqrt{\frac{\kappa_\nu}{n\tilde{d}^2\psi_5C^2}},~\frac{\max_{i\in[n]}\|x_{i,0}\|_p}{C}\},~s_k=s_0\epsilon^k,
    \end{align}
  where $\kappa_0^\prime$,~$\psi_5$,~$\kappa_\nu$,  and $\epsilon\in(0,1)$
%   $\tilde{\kappa}_0^\prime$ ,$\kappa_\nu$, $\psi_5$ and $\epsilon\in(0,1)$ 
  are some positive constants. 
  Then  
    \begin{align}\label{nonconvex:thm-ft-equ1_determin}
      f(\bar{x}_k)-f^\ast + \frac{1}{n}\sum\nolimits_{i=1}^{n}\|x_{i,k}-\bar{x}_k\|^2
      =\mathcal{O}(\epsilon^{2k}).
    \end{align}
  \end{theorem}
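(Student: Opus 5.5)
The plan is a joint induction over $k$ on two statements. The first is the local-region guarantee \eqref{ass:induction}, i.e., $\max_{i\in[n]}\|x_{i,k}-\hat{x}_{i,k-1}\|_p\le Cs_k$ with $s_k=s_0\epsilon^k$. The second is a geometric bound of the form $\mathcal{L}_{1,k}\le \Gamma\, s_k^2$ for a constant $\Gamma$ to be fixed. Both statements are driven by Lemma~\ref{Lemma: PL}, whose hypotheses hold for $\alpha<\kappa_0'\le\tilde{\kappa}_0$.

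\emph{Base case.} The initialization $\hat{x}_{i,-1}={\bf 0}_d$ and $s_0\ge\max_{i\in[n]}\|x_{i,0}\|_p/C$ give \eqref{ass:induction} at $k=0$. For the Lyapunov part, the requirement $s_0^2\ge \kappa_\nu/(n\tilde{d}^2\psi_5C^2)$ is exactly what allows $\mathcal{L}_{1,0}\le \Gamma s_0^2$ with $\Gamma:=n\tilde{d}^2\psi_5C^2$. Here $\kappa_\nu$ should be read as an a priori bound on $\mathcal{L}_{1,0}$; under the P--L condition it is expressible through $f(\bar x_0)-f^\ast$, the initial consensus error and $\|\nabla f(\bar{x}_0)\|^2$.

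\emph{Inductive step.} Assume both statements hold at step $k$. Inequality \eqref{L1:iteration:induction} gives
\[
\mathcal{L}_{1,k+1}\le \big[(1-\alpha\varepsilon_6)\Gamma+\alpha n\tilde{d}^2(1-2\varepsilon_5)\psi_2C^2\big]s_k^2 .
\]
Inequality \eqref{c:iteration:induction} gives
\[
\max_{i\in[n]}\|x_{i,k+1}-\hat{x}_{i,k}\|_p^2\le \big[(1-\varepsilon_7)C^2+\alpha^2\psi_4\Gamma\big]s_k^2 .
\]
Both brackets must be at most $\Gamma\epsilon^2$ and $C^2\epsilon^2$, respectively. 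For the first, choose $\psi_5$ so large that $(1-2\varepsilon_5)\psi_2\le \tfrac{\varepsilon_6}{2}\psi_5$; the first bracket is then at most $(1-\alpha\varepsilon_6/2)\Gamma$. For the second, shrink $\kappa_0'$ so that $\alpha^2\psi_4 n\tilde{d}^2\psi_5\le \varepsilon_7/2$; the second bracket is then at most $(1-\varepsilon_7/2)C^2$. Finally, take
\[
\epsilon^2=\max\{1-\alpha\varepsilon_6/2,\;1-\varepsilon_7/2\}<1 .
\]
This closes the induction for all $k$.

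\emph{Conclusion.} The bound $\mathcal{L}_{1,k}\le\Gamma s_0^2\epsilon^{2k}$ controls the optimality gap through $e_{4,k}=n(f(\bar x_k)-f^\ast)$. It controls the consensus error through $e_{1,k}$, provided $\mathcal{L}_{1,k}$ dominates $e_{1,k}+e_{4,k}$ up to constants. That domination holds because $e_{3,k}$ can be absorbed by $e_{1,k}$ and $e_{2,k}$ via Young's inequality when $\tau_1\ge\kappa_1$, as in the setup of Lemma~\ref{Lemma: Lyapunov Difference}. Dividing by $n$ yields \eqref{nonconvex:thm-ft-equ1_determin}.

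\emph{Main obstacle.} The delicate step is the coupling: $\Gamma$ appears multiplied by $\alpha^2\psi_4$ in the compression recursion, while $\Gamma$ must be large relative to $\psi_2C^2$ from the Lyapunov recursion. The argument works only because the cross-coupling is $\mathcal{O}(\alpha^2)$ in one direction and $\mathcal{O}(\alpha)$ in the other. Their product is therefore $\mathcal{O}(\alpha^3)$ against diagonal contractions of orders $\alpha\varepsilon_6$ and $\varepsilon_7$. This is a small-gain condition, and I would verify it explicitly when fixing $\psi_5$ before $\kappa_0'$.
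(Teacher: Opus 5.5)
Your proposal is correct and follows essentially the same route as the paper: a joint induction on $\mathcal{L}_{1,k}\le n\tilde{d}^2\psi_5C^2s_k^2$ and $\max_{i\in[n]}\|x_{i,k}-\hat{x}_{i,k-1}\|_p^2\le C^2s_k^2$, with both steps closed by Lemma~\ref{Lemma: PL} and the conclusion drawn from $\mathcal{L}_{1,k}\ge\varepsilon_1\hat{\mathcal{L}}_{1,k}$ in \eqref{hatx:le}. The differences are only in constant bookkeeping: the paper takes $\psi_5>(1-2\varepsilon_5)\psi_2/\varepsilon_6$, folds the smallness of $\alpha$ into $\kappa_6'$, and allows any $\epsilon\in(\max\{\kappa_9,\kappa_{10}\},1)$, whereas you use explicit factor-$1/2$ margins and a specific $\epsilon$.
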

%   \begin{proof}
% 		See Appendix~\ref{appendix:thm2}. 
%   \end{proof}

  	\begin{proof}
		See Appendix \ref{appendix:thm2}.
		% The proof follows from Lemma~\ref{Lemma: PL} by induction, 
		% with details provided in the supplemental material.
	\end{proof}

	Note that Assumption~\ref{nonconvex:ass:known_lowerbound} is not needed under the P--L condition,
	since the optimality gap $f(\bar{x}_0)-f^\ast$ can be bounded by $\|\nabla f(\bar{x}_0)\|^2$ via~\eqref{nonconvex:equ:plc}, 
	making the choice of $s_0$ computable.
	% \begin{remark}
	% 	In the case where the P--{\L} constant $\nu$ is known in advance, 
	% 	Assumption~\ref{nonconvex:ass:known_lowerbound} becomes unnecessary. 
	% 	Choosing 
	% 	$s_0 \ge \max\{\sqrt{\frac{\kappa_4(\nu)}{n\tilde{d}^2\psi_5C^2}},~\max_{i\in[n]}\|x_{i,0}\|_p/C\}$ 
	% 	yields the same $\mathcal{O}(\epsilon^{2k})$ linear convergence rate as in 
	% 	Theorem~\ref{nonconvex:thm-liner}, 
	% 	where $\kappa_4(\nu)$ is a positive constant depending on $\nu$ 
	% 	as defined in Appendix~\ref{appendix:thm2}.
	% \end{remark}
	\begin{remark}
	Under the P--L condition, we achieve linear convergence as in the uncompressed counterpart.
	%  while allowing the locally-bounded compressors.
% Notably, under the P--L condition, our linear convergence rate matches the uncompressed counterpart, while using highly compressed communication via locally-bounded compressors.		
% It is also comparable to existing linearly convergent compressed distributed optimization methods, while being established under a more general compression framework.
Compared with~\cite{zhang2023innovation,Yi_CommunicationCompression_2023}, which also established  linear convergence, the compressor therein is a special case of ours with $C=1$ and $r=1$.
Moreover, Zhang et al. \cite{zhang2023innovation} assumed strong convexity, which is stronger than the P--L condition used here.
Linear convergence for 1-bit compressors and saturating quantizers is also available under strong convexity or the P--L condition~\cite{magnusson2020maintaining,xu2024quantized}, 
 and these compressors are covered by our locally-bounded compressors.
% In particular, the compressor adopted in~\cite{Yi_CommunicationCompression_2023} is covered as a special case of our model by setting $C=1$ and $r=1$.
		% Moreover,~\cite{zhang2023innovation} can be viewed as another special case; however, its linear convergence is derived under strong convexity, which is stronger than the P--L condition.
% For finite-bit compressors,~\cite{magnusson2020maintaining,rikos2024distributed} also assume strong convexity to establish linear convergence.
% In addition,~\cite{rikos2024distributed} does not guarantee exact convergence.

% We note that several other compressors with global compression properties can also achieve linear convergence, which are covered by our globally-bounded compression model and will be discussed later.
	\end{remark}

\subsection{Contraction Guarantee} \label{section:proof:global}

This subsection analyzes the convergence under the globally-bounded compressors,
which incurs both relative and absolute compression errors.
The key step is to incorporate the compression error $e_{5,k}=\|\bsx_{k}-\hat{\bsx}_{k}\|^2$ into the Lyapunov function $\mathcal{L}_{2,k}$ and establish its contraction.
We first bound the one-step difference of $e_{5,k}$ in Lemma~\ref{lemma:5} and that of $\mathcal{L}_{2,k}$ in Lemma~\ref{lemma:L2}.
% The key step is to consider another Lyapunov function $\mathcal{L}_{2,k}$ and establish its contraction.
% Specifically, compared with $\mathcal{L}_{1,k}$, $\mathcal{L}_{2,k}$ includes an additional term
% $e_{5,k}=\|\bsx_{k}-\hat{\bsx}_{k}\|^2$ that captures the compression error.
% We first bound the one-step difference of $e_{5,k}$ in Lemma~\ref{lemma:5}, and then establish a one-step contraction of $\mathcal{L}_{2,k}$ in Lemma~\ref{lemma:L2}.

	\begin{lemma} \label{lemma:5}	
		Under Definition~\ref{ass:global} and Assumptions~\ref{nonconvex:ass:graph} and \ref{nonconvex:ass:fiu},
		%  take $\alpha<\hat{\kappa}_0$, $\beta=\tau_1\gamma$, $\gamma>\kappa_2$, $\omega\in(0,1/r]$, and $\tau_1\ge \kappa_1$.
		let $\{x_{i,k}\}$ be generated by Algorithm~\ref{nonconvex:algorithm-pdgd} with $\omega\in(0,1/r]$.
		Then
		\begin{align}
			&\mathbb{E}_{\mathcal{C}}[\|\bsx_{k+1}-\hat{\bsx}_{k+1}\|^2]\le\mathbb{E}_{\mathcal{C}}\Big[(1-\varepsilon_{11})\|\bsx_{k}-\hat{\bsx}_{k-1}\|^2 \nonumber\\
			&\quad + n\omega r C s_k^2 
				+\|\bsx_k\|^2_{4\varepsilon_{10}\alpha^2(\beta^2\rho^2(L)+\ell^2)\bsE}\nonumber\\
			&\quad+\Big\|\bm{v}_k+\frac{1}{\gamma}\bsg_k^0\Big\|^2_{4\varepsilon_{10}\alpha^2\gamma^2\rho(L)\bsF}\Big].
			\label{nonconvex:xminush}
		\end{align}
		% where $\varepsilon_9$  and $\varepsilon_{11}$ is given in Appendix~\ref{appendix:lemma3}.
	\end{lemma}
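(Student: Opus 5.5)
The plan is to apply the one-step contraction \eqref{global:contraction} twice: once at iteration $k+1$ and once at iteration $k$. In between, I bridge $x_{k+1}-\hat{x}_k$ to $x_k-\hat{x}_k$ through the primal update \eqref{nonconvex:kia-algo-dc-x}.

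\textbf{Step 1.} First, I take the conditional expectation over the compressor randomness at step $k+1$ in \eqref{global:contraction}, and sum over $i$. Since $\omega\in(0,1/r]$, this gives
\begin{align*}
\mathbb{E}_{\mathcal{C}}\|\bsx_{k+1}-\hat{\bsx}_{k+1}\|^2\le (1-\omega r\delta)\|\bsx_{k+1}-\hat{\bsx}_{k}\|^2+n\omega rCs_{k+1}^2 .
\end{align*}
I will use $s_{k+1}\le s_k$ to replace $s_{k+1}^2$ by $s_k^2$. The sequence $\{s_k\}$ is diminishing in every parameter choice of the paper; if needed, this is stated as a standing convention.

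\textbf{Step 2.} Next, I write $\bsx_{k+1}-\hat{\bsx}_k=(\bsx_k-\hat{\bsx}_k)+(\bsx_{k+1}-\bsx_k)$ and apply Young's inequality with a free parameter $\eta>0$:
\begin{align*}
\|\bsx_{k+1}-\hat{\bsx}_k\|^2\le(1+\eta)\|\bsx_k-\hat{\bsx}_k\|^2+(1+\eta^{-1})\|\bsx_{k+1}-\bsx_k\|^2 .
\end{align*}
By \eqref{nonconvex:kia-algo-dc-x} and $y_{i,k}=\sum_jL_{ij}\hat{x}_{j,k}$, we have $\bsx_{k+1}-\bsx_k=-\alpha(\beta\bsL\hat{\bsx}_k+\gamma\bsv_k+\bsg_k)$, where $\bsL=L\otimes{\bf I}_p$. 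I split this into four pieces:
\begin{align*}
\beta\bsL\bsx_k,\qquad -\beta\bsL(\bsx_k-\hat{\bsx}_k),\qquad \gamma\big(\bsv_k+\tfrac{1}{\gamma}\bsg_k^0\big),\qquad \bsg_k-\bsg_k^0 .
\end{align*}
I then bound $\|\cdot\|^2$ of their sum by $4$ times the sum of squares, using the following facts:
\begin{itemize}
\item $\|\bsL\bsx_k\|^2\le\rho^2(L)\|\bsx_k\|_{\bsE}^2$, since $\bsL=\bsL\bsE$.
\item $\|\bsg_k-\bsg_k^0\|^2\le\ell^2\|\bsx_k\|_{\bsE}^2$, by Assumption~\ref{nonconvex:ass:fiu}.
\item $\|z\|^2\le\rho(L)\|z\|_{\bsF}^2$ for every $z$. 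This holds because $F$ is positive definite with smallest eigenvalue $\lambda_n^{-1}=1/\rho(L)$, using $\lambda_{n+1}\le\lambda_n$. This is why the consensus component of $\bsg_k^0$ needs no separate term.
\end{itemize}
Together these give
\begin{align*}
\|\bsx_{k+1}-\bsx_k\|^2\le 4\alpha^2\Big[(\beta^2\rho^2(L)+\ell^2)\|\bsx_k\|_{\bsE}^2+\gamma^2\rho(L)\big\|\bsv_k+\tfrac{1}{\gamma}\bsg_k^0\big\|_{\bsF}^2+\beta^2\rho^2(L)\|\bsx_k-\hat{\bsx}_k\|^2\Big].
\end{align*}

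\textbf{Step 3.} Now I collect the coefficient of $\|\bsx_k-\hat{\bsx}_k\|^2$, namely $(1+\eta)+4(1+\eta^{-1})\alpha^2\beta^2\rho^2(L)$. I apply \eqref{global:contraction} once more at step $k$ in expectation, which gives $\mathbb{E}\|\bsx_k-\hat{\bsx}_k\|^2\le(1-\omega r\delta)\|\bsx_k-\hat{\bsx}_{k-1}\|^2+n\omega rCs_k^2$. I choose $\eta=\omega r\delta/2$, so that $(1-\omega r\delta)(1+\eta)\le 1-\omega r\delta/2$. I then define $\varepsilon_{10}=(1-\omega r\delta)(1+\eta^{-1})$ as the multiplier carried by the $\alpha^2$ terms, and $\varepsilon_{11}$ as one minus the resulting product coefficient on $\|\bsx_k-\hat{\bsx}_{k-1}\|^2$. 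The constant $C$-terms from the two contraction steps combine into a single $n\omega rCs_k^2$ after absorbing the factor $(1-\omega r\delta)(1+\eta)\le1$. In doing so I use $s_{k+1}\le s_k$ together with the slack provided by the factor $(1-\omega r\delta)$ multiplying the $s_{k+1}$ term; if the bookkeeping does not close, I will accept a harmless factor of $2$ and absorb it into the constant $C$.

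\textbf{Main obstacle.} The delicate point is keeping $\varepsilon_{11}>0$. The extra term $4(1+\eta^{-1})\alpha^2\beta^2\rho^2(L)(1-\omega r\delta)$ from routing $\beta\bsL\hat{\bsx}_k$ through $\bsx_k-\hat{\bsx}_k$ must stay below $\omega r\delta/2$. This requires $\alpha\beta$ to be small, which is guaranteed under the step-size conditions $\alpha<\hat{\kappa}_0$ and $\beta=\tau_1\gamma$ used throughout. Accordingly, I will define $\varepsilon_{11}$ explicitly in terms of $\alpha\beta\rho(L)$ and verify its positivity under those conditions. The rest of the argument is routine Young and Cauchy--Schwarz bookkeeping.
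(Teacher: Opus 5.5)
Your Steps 1 and 2 are exactly the paper's proof. Both use the contraction \eqref{global:contraction} at step $k+1$, with $s_{k+1}^2$ bounded by $s_k^2$. Both use Young's inequality with parameter $\varepsilon_9=\omega r\delta/2$, so that $\varepsilon_{10}=(1-2\varepsilon_9)(1+\varepsilon_9^{-1})$. Both use the four-term bound \eqref{nonconvex:xkoneminusx} on $\|\bsx_{k+1}-\bsx_k\|^2$. Collecting the two $\|\bsx_k-\hat{\bsx}_k\|^2$ terms already gives the coefficient $(1-2\varepsilon_9)(1+\varepsilon_9)+4\varepsilon_{10}\alpha^2\beta^2\rho^2(L)=1-\varepsilon_{11}$, and the paper stops there.

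The $\hat{\bsx}_{k-1}$ in the displayed statement is a typo for $\hat{\bsx}_k$, for three reasons:
the paper's Young step produces $\|\bsx_k-\hat{\bsx}_k\|^2$;
merging it with the $\beta^2\rho^2(L)\|\bsx_k-\hat{\bsx}_k\|^2$ term of \eqref{nonconvex:xkoneminusx} into one coefficient $1-\varepsilon_{11}$ only makes sense for the same quantity;
and Lemma~\ref{lemma:L2} uses the result as $-\varepsilon_{11}\|\bsx_k-\hat{\bsx}_k\|^2$, i.e., as a recursion for $e_{5,k}$.

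Your Step 3, added to reach the literal $\hat{\bsx}_{k-1}$, is where the proposal breaks. A second application of \eqref{global:contraction} creates a second absolute-error term. What you actually get is $(1-\varepsilon_{11})(1-2\varepsilon_9)\|\bsx_k-\hat{\bsx}_{k-1}\|^2+(1-\varepsilon_{11})n\omega rCs_k^2+n\omega rCs_{k+1}^2+\cdots$. The two constants merge into $n\omega rCs_k^2$ only if $s_{k+1}^2\le\varepsilon_{11}s_k^2$. The factor $(1-\omega r\delta)(1+\eta)\le1$ does not help. Inflating $C$ by a factor $2$ proves a different inequality, since $C$ is fixed by Definition~\ref{ass:global}.

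No bookkeeping can close this, because the literal version is false. Take $f_i\equiv0$, $x_{i,0}=0$, $\omega=1/r$, and $\mathcal{C}(x)=r\big((1-\sqrt{1-\delta})x+\sigma e\big)$, where $\sigma$ is a fresh Rademacher sign and $\|e\|^2=C$. This compressor satisfies Definition~\ref{ass:global} with equality. At $k=0$ the right-hand side equals $nCs_0^2$. On the other hand, $\hat{\bsx}_0=s_0(\sigma_0\otimes e)$ and $\bsx_1-\hat{\bsx}_0=-(\mathbf{I}+\alpha\beta\bsL)\hat{\bsx}_0$. Hence $\mathbb{E}\|\bsx_1-\hat{\bsx}_1\|^2=(1-\delta)\mathbb{E}\|\bsx_1-\hat{\bsx}_0\|^2+nCs_1^2\ge(1-\delta)nCs_0^2+nCs_1^2$. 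This exceeds $nCs_0^2$ whenever $s_1^2>\delta s_0^2$, for example $s_k=s_0\varepsilon^k$ with $\varepsilon^2>\delta$. So drop Step 3 and state the bound with $\|\bsx_k-\hat{\bsx}_k\|^2$.

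Your ``main obstacle'' is also misplaced. The sign of $\varepsilon_{11}$ plays no role in this lemma. Moreover, $\alpha<\hat{\kappa}_0$ would not ensure $\varepsilon_{11}>0$, because $\hat{\kappa}_0$ does not depend on $\delta$. The needed smallness of $\alpha$ enters only in Lemma~\ref{lemma:L2}, through $\hat{\kappa}^{\prime}_0$.
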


	\begin{proof}
		
	% We denote the following constant: 
	% \begin{align*}
	% 	&\varepsilon_9=\omega r\delta/2,\\
	% 	&\varepsilon_{10}=(1-2\varepsilon_9)(1+\varepsilon_9^{-1}),\\
	% 	&\varepsilon_{11}=\varepsilon_9 + 2\varepsilon_9^2 - 4\varepsilon_{10}\alpha^2\beta^2\rho^2(L).
	% \end{align*}	
		Denote $\varepsilon_9=\omega r\delta/2$.
		% To analyze the one-step difference of $\mathcal{L}_{2,k}$, we study its additional component term $\|\bsx_{k}-\hat{\bsx}_{k}\|^2$.	
		From \eqref{global:contraction}, the Cauchy--Schwarz inequality gives
			\begin{align}
			&\mathbb{E}_{\mathcal{C}}[\|\bsx_{k+1}-\hat{\bsx}_{k+1}\|^2] 
				=\sum\nolimits_{i=1}^{n} \mathbb{E}_{\mathcal{C}}[\|x_{i,k+1}-\hat{x}_{i,k+1}\|^2] \nonumber\\
			&\le (1-2\varepsilon_9)\mathbb{E}_{\mathcal{C}}[\|\bsx_{k+1}-\hat{\bsx}_{k}\|^2] + n\omega rCs_k^2 \nonumber\\
			&= (1-2\varepsilon_9)\mathbb{E}_{\mathcal{C}}[\|\bsx_{k+1}-\bsx_{k}+\bsx_{k}-\hat{\bsx}_{k}\|^2] + n\omega rCs_k^2 \nonumber\\
			&\le \varepsilon_{10}\mathbb{E}_{\mathcal{C}}[\|\bsx_{k+1}-\bsx_{k}\|^2] \nonumber\\
				& \quad + (1-\varepsilon_9-2\varepsilon_9^2)\mathbb{E}_{\mathcal{C}}[\|\bsx_{k}-\hat{\bsx}_{k-1}\|^2] +  n\omega rCs_k^2.
				\label{nonconvex:xminush_compress}
		\end{align}

		For the first term on the right-hand side of \eqref{nonconvex:xminush_compress}, we have
		\begin{align}\label{nonconvex:xkoneminusx}
			&\|\bsx_{k+1}-\bsx_k\|^2 = \alpha^2\|\beta\bsL\hat{\bsx}_k+\gamma\bm{v}_k+\bsg_k\|^2\nonumber\\
			&=\alpha^2\|\beta\bsL(\hat{\bsx}_k-\bsx_k)+\beta\bsL\bsx_k+\gamma\bm{v}_k
			+\bsg_k^0+\bsg_k-\bsg_k^0\|^2\nonumber\\
			&\le4\alpha^2(\beta^2\|\bsx_k-\hat{\bsx}_k\|^2_{\bsL^2}+\beta^2\|\bsx_k\|^2_{\bsL^2} \nonumber\\
				&\quad + \|\gamma\bm{v}_k +\bsg_k^0\|^2+\|\bsg_k-\bsg_k^0\|^2)\nonumber\\
			&\le4\alpha^2\Big(\beta^2\rho^2(L)\|\bsx_k-\hat{\bsx}_k\|^2+\|\bsx_k\|^2_{(\beta^2\rho^2(L)+\ell^2)\bsE} + \nonumber\\
				&\quad \Big\|\bm{v}_k+\frac{1}{\gamma}\bsg_k^0\Big\|^2_{\gamma^2\rho(L)\bsF}\Big),
		\end{align}
		where the first equality holds due to \eqref{nonconvex:kia-algo-dc-x} and $y_{i,k}=\sum_{j=1}^{n}L_{ij}\hat{x}_{j,k}$; the first inequality holds due to the Cauchy--Schwarz inequality; 
		and the second inequality holds due to the matrix inequalities 
		$L\le\rho(L)E$ and $\rho^{-1}(L){\bf I}_n\le F$ in~\cite{Yi_CommunicationCompression_2023}, 
		together with Assumption~\ref{nonconvex:ass:fiu}.

		Then substituting \eqref{nonconvex:xkoneminusx} into \eqref{nonconvex:xminush_compress} yields \eqref{nonconvex:xminush}.
	\end{proof}

	% \begin{proof}
	% 	See Appendix~\ref{appendix:lemma3}.
	% 	% The claim follows directly from~\eqref{global:contraction}.
	% 	% % Under globally-bounded compressors, the compression error can be bounded directly using~\eqref{global:contraction}.
	% 	% The detailed proof is deferred to Appendix~\ref{appendix:lemma3}.
	% \end{proof}

	\begin{remark}
		The coefficient $1-\varepsilon_{11}$ strictly smaller than $1$ 
		% on the right-hand side of~\eqref{nonconvex:xminush} 
		is key to driving the compression error $e_{5,k}$ to zero.
		% The coefficient $1-\varepsilon_{11}$ strictly smaller than $1$ on the right-hand side of~\eqref{nonconvex:xminush} is key to driving the compression error $e_{5,k}$ to zero.
		% Moreover, the absolute compression error of the globally-bounded compressors appears in~\eqref{nonconvex:xminush} through the term $n\omega r C s_k^2$ and can be controlled by the diminishing scaling factor $s_k$.
		Compared with previous work~\cite{Yi_CommunicationCompression_2023}, which considered global compressors with only relative compression error, i.e., the special case $C=0$ in Definition~\ref{ass:global}, we additionally handle absolute errors via a diminishing scaling factor $s_k$.
		Specifically, the absolute compression error of the globally-bounded compressors appears in~\eqref{nonconvex:xminush} through the term $n\omega r C s_k^2$ and is controlled by the decay of $s_k$.
	\end{remark}

		Combining Lemmas~\ref{lemma:5} and~\ref{Lemma: Lyapunov Difference} yields
		the one-step difference of $\mathcal{L}_{2,k}$.
	\begin{lemma} \label{lemma:L2}	
		Under Definition~\ref{ass:global} and Assumptions~\ref{nonconvex:ass:graph}--\ref{nonconvex:ass:optset},
		take $\alpha\in(0,\hat{\kappa}^{\prime}_0)$, $\beta=\tau_1\gamma$, $~\gamma>\kappa_2$, $\omega\in(0,1]$, and $\tau_1\ge \kappa_1$,
		where $\hat{\kappa}^{\prime}_0$ is a constant.
		% where $\hat{\kappa}_0$ is a positive constant.
		% where $\hat{\kappa}_0$ is a positive constant given in Appendix~\ref{appendix:lemma3}.
		Let $\{x_{i,k}\}$ be generated by Algorithm~\ref{nonconvex:algorithm-pdgd}.
		Then 		
		\begin{align}
			&\mathbb{E}_{\mathcal{C}}[\mathcal{L}_{2,k+1}]\le \mathbb{E}_{\mathcal{C}}\Big[\mathcal{L}_{2,k} 
			-\alpha\varepsilon_3^{\prime}\big(\|\bsx_{k}\|^2_{\bsE} + \big\|\bsv_{k}+\tfrac{1}{\gamma}\bsg_{k}^0\big\|^2_{\bsF}\big) \nonumber\\
			% &\quad - \frac{\alpha}{4}\|\bar{\bsg}_{k}^0\|^2 - (2\varepsilon_{12} - \alpha\varphi_7- \alpha^2\varphi_8^{\prime})\|\bsx_{k}-\hat{\bsx}_{k}\|^2 \nonumber\\
			% &\quad + n\omega r C s_k^2 \Big].\label{L2:iteration}
			&\hspace{-0.1em}-\hspace{-0.1em} \frac{\alpha}{4}\|\bar{\bsg}_{k}^0\|^2 \hspace{-0.1em}-\hspace{-0.1em} (2\varepsilon_{12} \hspace{-0.1em}-\hspace{-0.1em} \alpha\varphi_7\hspace{-0.1em}-\hspace{-0.1em} \alpha^2\varphi_8^{\prime})\|\bsx_{k}\hspace{-0.1em}-\hspace{-0.1em}\hat{\bsx}_{k}\|^2
			\hspace{-0.1em}+\hspace{-0.1em} n\omega r C s_k^2 \Big]\hspace{-0.1em}.\label{L2:iteration}
		\end{align}
		% where $\varphi_7$ is given in Appendix~\ref{appendix:lemmas} and $\varepsilon_3^{\prime}$ and $\varphi_8^{\prime}$ are given in Appendix~\ref{appendix:lemma4}.
	\end{lemma}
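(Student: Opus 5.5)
The plan is to add the one-step bound \eqref{L1:iteration} for $\mathcal{L}_{1,k}$ from Lemma~\ref{Lemma: Lyapunov Difference} to the one-step bound \eqref{nonconvex:xminush} for $e_{5,k}$ from Lemma~\ref{lemma:5}. The key obstacle is an index mismatch.

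\textbf{Step 1: convert the $p$-norm error term.} Lemma~\ref{Lemma: Lyapunov Difference} only needs Assumptions~\ref{nonconvex:ass:graph}--\ref{nonconvex:ass:optset}, so \eqref{L1:iteration} holds pathwise for each realization of the compressor. Taking $\mathbb{E}_{\mathcal{C}}$ keeps it valid. In the globally-bounded case it is natural to work in the $2$-norm. I would therefore reopen the step in the proof of \eqref{L1:iteration} where $\max_i\|x_{i,k}-\hat x_{i,k}\|_p^2$ is introduced. There it arises from bounding $\|\bsx_k-\hat{\bsx}_k\|^2\le n\tilde d^2\max_i\|x_{i,k}-\hat x_{i,k}\|_p^2$. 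Stopping one step earlier replaces the term by $\alpha\varphi_7\|\bsx_k-\hat{\bsx}_k\|^2$ for a constant $\varphi_7$ comparable to $\psi_2$. This is the source of the $\alpha\varphi_7$ in \eqref{L2:iteration}.

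\textbf{Step 2: shift indices in the $e_5$ recursion.} Lemma~\ref{lemma:5} bounds $e_{5,k+1}$ in terms of $\|\bsx_k-\hat{\bsx}_{k-1}\|^2$, not $e_{5,k}$, so the indices must be aligned. I would apply \eqref{global:contraction} once more at time $k$, in the reverse direction. Using the decomposition behind \eqref{global:contraction}, namely $x_{i,k}-\hat x_{i,k}=(1-\omega r)(x_{i,k}-\hat x_{i,k-1})+\omega r(\cdot)$, I would bound $\|\bsx_k-\hat{\bsx}_k\|^2\le(1-2\varepsilon_9)\|\bsx_k-\hat{\bsx}_{k-1}\|^2+n\omega rCs_k^2$ in expectation.

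The cleaner route is to avoid this reversal and redo the derivation of \eqref{nonconvex:xminush_compress} directly in terms of $e_{5,k}$:
\begin{align*}
\mathbb{E}_{\mathcal{C}}[e_{5,k+1}]\le(1-2\varepsilon_9)\big(1+\varepsilon_9\big)\,\mathbb{E}_{\mathcal{C}}[e_{5,k}]+\varepsilon_{10}\,\mathbb{E}_{\mathcal{C}}\|\bsx_{k+1}-\bsx_k\|^2+n\omega rCs_{k+1}^2 .
\end{align*}
This follows from Young's inequality on $\bsx_{k+1}-\hat{\bsx}_k=(\bsx_{k+1}-\bsx_k)+(\bsx_k-\hat{\bsx}_k)$ together with \eqref{global:contraction} at step $k+1$. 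Since $(1-2\varepsilon_9)(1+\varepsilon_9)=1-\varepsilon_9-2\varepsilon_9^2$, I would set $2\varepsilon_{12}:=\varepsilon_9+2\varepsilon_9^2$. Because $s_k$ is nonincreasing, the absolute-error term can be bounded by $n\omega rCs_k^2$.

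\textbf{Step 3: insert the increment bound.} Into the display above I would substitute \eqref{nonconvex:xkoneminusx}. It contributes:
\begin{itemize}
\item $4\varepsilon_{10}\alpha^2\beta^2\rho^2(L)\|\bsx_k-\hat{\bsx}_k\|^2$, which becomes the $\alpha^2\varphi_8'$ term, using $\beta=\tau_1\gamma$ so that $\varphi_8'$ is a constant;
\item $\alpha^2$-weighted terms in $\|\bsx_k\|_{\bsE}^2$ and $\|\bsv_k+\tfrac1\gamma\bsg_k^0\|_{\bsF}^2$.
\end{itemize}

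\textbf{Step 4: absorb and sum.} Adding Steps 1 and 3 gives $\mathcal{L}_{2,k+1}$ on the left. The $\alpha^2$-weighted terms from Step 3 are absorbed into $-\alpha\varepsilon_3(\cdot)$ by choosing $\alpha<\hat\kappa_0'\le\hat\kappa_0$ small. This leaves $\varepsilon_3'=\varepsilon_3-4\varepsilon_{10}\alpha\max\{\beta^2\rho^2(L)+\ell^2,\gamma^2\rho(L)\}>0$. The $-\tfrac{\alpha}{4}\|\bar\bsg_k^0\|^2$ term carries over unchanged from \eqref{L1:iteration}, and the coefficient of $e_{5,k}$ collects exactly as $-(2\varepsilon_{12}-\alpha\varphi_7-\alpha^2\varphi_8')$.

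The main obstacle is Step 2. Lemma~\ref{lemma:5} is stated with the lagged quantity $\|\bsx_k-\hat{\bsx}_{k-1}\|^2$, so obtaining a clean contraction in $e_{5,k}$ itself requires re-deriving the Young's inequality split with the correct index. A second point to watch is that the constants $\varepsilon_3'$ and $\hat\kappa_0'$ stay positive uniformly in $\gamma$.
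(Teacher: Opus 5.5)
Your plan is correct and is essentially the paper's proof, which adds the component bounds \eqref{nonconvex:v1k}--\eqref{nonconvex:v4k} (equivalently, the pre-conversion bound \eqref{nonconvex:vkLya_xhat_determin:L2}) to the $e_5$ recursion of Lemma~\ref{lemma:5}; that recursion's proof in fact produces $\|\bsx_k-\hat{\bsx}_k\|^2$, exactly as in your ``cleaner route'', so the $\hat{\bsx}_{k-1}$ in its statement is an index slip, as is $s_k$ in place of $s_{k+1}$ (covered by monotonicity, as you note). One bookkeeping correction: the pre-conversion error coefficient is $\alpha\psi_2=\alpha\varphi_7+\alpha^2\varphi_8$, and $\alpha^2\varphi_8'$ is $\alpha^2\varphi_8$ plus your $4\varepsilon_{10}\alpha^2\beta^2\rho^2(L)$, while absorbing the extra $\alpha^2$ terms separately into the $\bsE$- and $\bsF$-coefficients (rather than through a $\max$) recovers the paper's $\varepsilon_3'=\min\{\varphi_1-\alpha\varphi_2',\,\alpha\varphi_3-\alpha\varphi_4'\}$.
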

	\begin{proof}
		See Appendix~\ref{appendix:lemma4}.
		% The detailed proof is provided in the supplemental material.
	\end{proof}
	% \begin{proof}
	% 	See Appendix~\ref{appendix:lemma4}.
	% 	% Combining Lemmas~\ref{lemma:5} and~\ref{Lemma: Lyapunov Difference} directly yields the result.
	% 	% % Based on Lemma~\ref{lemma:5}, the proof follows by analogy with~\eqref{L1:iteration}.
	% 	% The detailed proof is deferred to Appendix~\ref{appendix:lemma4}.
	% \end{proof}

% Lemma~\ref{lemma:L2} provides a bound on the one-step difference of the Lyapunov function $\mathcal{L}_{2,k}$, showing a clear descent since the absolute error can be controlled by the diminishing sequence $\{s_k\}$.
	Lemma~\ref{lemma:L2} bounds the one-step difference of the Lyapunov function $\mathcal{L}_{2,k}$.
	By choosing a proper stepsize $\alpha$ so that all quadratic terms in~\eqref{L2:iteration} have negative coefficients, and using an exponentially diminishing $s_k$ to control the absolute compression error,
	$\mathcal{L}_{2,k}$ satisfies a descent inequality up to a vanishing term and hence converges.
	This further yields the desired convergence guarantees of the algorithm under the globally-bounded compressors.

%   \subsection{\hspace{-0.23em}Convergence under Globally-Bounded Compressors}
  \subsection{Globally-Bounded Case} \label{subsec:mainre:global}
	This subsection gives the main results under the globally-bounded compressors.
		% which accommodate both relative and absolute compression errors
		% as well as randomness and arbitrary bounded noise.
		% have been more extensively studied and used, and they
		% have distinctive advantages of accommodating randomness and arbitrary bounded noise compared with the locally-bounded compressors.
		% whereas they require a global compression error bound.
% 	Global compressors have been more extensively studied and used, 
% 	where one typically assumes either relative or absolute compression error.
% 	By contrast, we consider a class of globally-bounded compressors that unifies these two cases.
% 	% Compared with locally-bounded compressors, the globally-bounded model is slightly stronger, yet it has distinctive advantages in that it can accommodate randomness and arbitrary bounded noise.
% but they require a global compression error bound.
% This class can yield a modest improvement in convergence behavior.
% This enhanced model leads to a moderately improved convergence behavior compared with the locally-bounded case.
	We start with the nonconvex setting.
	% , where the resulting convergence behavior is modestly improved compared with the locally-bounded case.
  	% Under globally-bounded compressors, we start with the nonconvex case.

  \begin{theorem}\label{nonconvex:thm-sm_quantization}
    Under Definition~\ref{ass:global} and Assumptions~\ref{nonconvex:ass:graph}--\ref{nonconvex:ass:optset},
    let $\{x_{i,k}\}$ be generated by Algorithm~\ref{nonconvex:algorithm-pdgd} with
    \begin{align}
          &\quad~~~~~\alpha\in(0,\hat{\kappa}^{\prime}_0),~\beta=\tau_1\gamma,~\gamma>\kappa_2,~\omega\in(0,1/r],\nonumber\\
		&s_0\ge \max_{i\in[n]}\|x_{i,0}\|_p,~s_k=s_0\varepsilon^k,~\tau_1\ge\kappa_1,~\varepsilon\in(0,1).
		% &\quad~~~~~\alpha\in(0,\hat{\kappa}_0),~\beta=\tau_1\gamma,~\gamma>\kappa_2,~\omega\in(0,1/r],\nonumber\\
		% &s_0\ge \max_{i\in[n]}\|x_{i,0}\|_p^2,~s_k=s_0\varepsilon^k,~\tau_1\ge\kappa_1,~\varepsilon\in(0,1).
    \end{align}	
    % where $\hat{\kappa}_0$ is given in Appendix~\ref{appendix:lemma4}. 
	Then 
    \begin{subequations}
      \begin{align}
        &\frac{1}{T}\sum_{k=0}^{T-1}\mathbb{E}_{\mathcal{C}}\Big[\|\nabla f(\bar{x}_k)\|^2+\frac{1}{n}\sum_{i=1}^{n}\|x_{i,k}-\bar{x}_k\|^2\Big]=\mathcal{O}\Big(\frac{1}{T}\Big),\label{nonconvex:thm-sm-equ1_quantization}\\
        &\mathbb{E}_{\mathcal{C}}[f(\bar{x}_{T})-f^\ast]=\mathcal{O}(1).\label{nonconvex:thm-sm-equ2_quantization}
      \end{align}
    \end{subequations}
  \end{theorem}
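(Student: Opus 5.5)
The plan is to telescope the one-step inequality of Lemma~\ref{lemma:L2}. The parameter choices in the theorem already meet its hypotheses. Since $\alpha<\hat{\kappa}^{\prime}_0$, I would first argue (this is how $\hat{\kappa}^{\prime}_0$ should be defined) that $2\varepsilon_{12}-\alpha\varphi_7-\alpha^2\varphi_8^{\prime}\ge 0$ and $\varepsilon_3^{\prime}>0$. The compression-error term in \eqref{L2:iteration} can then be dropped, leaving
\begin{align*}
\mathbb{E}_{\mathcal{C}}[\mathcal{L}_{2,k+1}]\le \mathbb{E}_{\mathcal{C}}\Big[\mathcal{L}_{2,k}-\tfrac{\alpha}{4}\|\bar{\bsg}_k^0\|^2-\alpha\varepsilon_3^{\prime}\|\bsx_k\|_{\bsE}^2\Big]+n\omega rCs_0^2\varepsilon^{2k}.
\end{align*}
Summing over $k=0,\dots,T-1$ and using $\mathcal{L}_{2,T}\ge0$ gives
\begin{align*}
\sum_{k=0}^{T-1}\mathbb{E}_{\mathcal{C}}\big[\tfrac{\alpha}{4}\|\bar{\bsg}_k^0\|^2+\alpha\varepsilon_3^{\prime}\|\bsx_k\|^2_{\bsE}\big]\le \mathcal{L}_{2,0}+\frac{n\omega rCs_0^2}{1-\varepsilon^2}.
\end{align*}
The right-hand side is independent of $T$ because the $s_k^2$ are geometrically summable.

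Next I need $\mathcal{L}_{2,T}\ge 0$ and a finite $\mathcal{L}_{2,0}$. Here $e_{1},e_{2},e_{5}\ge0$ and $e_4\ge0$ by Assumption~\ref{nonconvex:ass:optset}. The cross term $e_3$ can be absorbed by Young's inequality into $e_1+e_2$, with $\tau_1\ge\kappa_1$ making $\beta/\gamma$ large enough, as in \cite{Yi_CommunicationCompression_2023}. I would invoke the same fact already implicit in Lemma~\ref{Lemma: Lyapunov Difference}: $\mathcal{L}_{1,k}\ge c(\|\bsx_k\|_{\bsE}^2+\|\bsv_k+\frac1\gamma\bsg_k^0\|^2_{\bsF})+e_{4,k}\ge0$. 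At $k=0$ we have $\hat{\bsx}_{-1}=0$ and $v_{i,0}=0$, so $\mathcal{L}_{2,0}$ is an explicit finite quantity in $\bsx_0$, $\nabla\tilde f(\bar{\bsx}_0)$ and $f(\bar x_0)-f^*$. The compression term is $e_{5,0}=\|\bsx_0-\hat\bsx_0\|^2$, which by \eqref{global:contraction} is at most $\|\bsx_0\|^2+n\omega rCs_0^2$ and hence finite. The condition $s_0\ge\max_i\|x_{i,0}\|_p$ only serves to keep this quantity of order $s_0^2$. Since $\|\bar{\bsg}_k^0\|^2=n\|\nabla f(\bar x_k)\|^2$ and $\|\bsx_k\|^2_{\bsE}=\sum_i\|x_{i,k}-\bar x_k\|^2$, dividing by $\alpha nT\min\{1/4,\varepsilon_3^{\prime}\}$ yields \eqref{nonconvex:thm-sm-equ1_quantization}.

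For \eqref{nonconvex:thm-sm-equ2_quantization}, the same telescoped inequality without the negative terms gives $\mathbb{E}_{\mathcal{C}}[\mathcal{L}_{2,T}]\le \mathcal{L}_{2,0}+n\omega rCs_0^2/(1-\varepsilon^2)$. Combining this with $e_{4,T}\le \mathcal{L}_{2,T}$ (nonnegativity of the remaining part) and $e_{4,T}=n(f(\bar x_T)-f^*)$ gives an $\mathcal{O}(1)$ bound.

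The main obstacle is the bookkeeping in Lemma~\ref{lemma:L2}, namely that a single threshold $\hat{\kappa}^{\prime}_0$ simultaneously makes all coefficients nonnegative and keeps the $e_3$-absorption valid. A second concern is that expectations over compressor randomness interact correctly with the step-dependent terms. Since $s_k$ is deterministic, the tower property should suffice, provided Lemma~\ref{lemma:L2} is understood conditionally on $\mathcal{F}_k$.
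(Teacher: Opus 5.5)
Your proposal is correct and follows the paper's proof essentially step for step. Both arguments telescope \eqref{L2:iteration} over $k\in[0,T-1]$, drop the nonpositive compression-error and dual terms, use $\mathbb{E}_{\mathcal{C}}[\mathcal{L}_{2,T}]\ge 0$ and $\sum_k s_k^2\le s_0^2/(1-\varepsilon^2)$, divide by $\alpha nT\min\{\varepsilon_3^{\prime},1/4\}$, and obtain the $\mathcal{O}(1)$ bound from $n(f(\bar x_T)-f^\ast)\le\mathcal{L}_{2,T}$. Your explicit expectation bound on $e_{5,0}$ is slightly more careful than the paper's bare remark that $\mathcal{L}_{2,0}=\mathcal{O}(n)$, since $\hat{\bsx}_0$ is random and $\mathbb{E}_{\mathcal{C}}[\mathcal{L}_{2,0}]$ is the quantity that actually appears.
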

  \begin{proof}
	See Appendix~\ref{appendix:thm3}.
  \end{proof}
	\begin{remark}
% Notably,
The obtained $\mathcal{O}(1/T)$ convergence rate 
 matches that of the uncompressed counterpart in distributed nonconvex optimization.
Moreover, our algorithm is more communication- and memory-efficient than existing methods that simultaneously consider relative and absolute compression errors.
	Specifically,
Liao et al. \cite{liao2024robust} maintained five $d$-dimensional auxiliary variables and used two temporary compressed variables for transmission, whereas our method only stores three additional variables $v_{i,k}$, $\hat{x}_{i,k}$, and $y_{i,k}$, and transmits a single temporary compressed variable $q_{i,k}$.
In addition, Michelusi et al. \cite{michelusi2022finite} assumed contractive relative and exponentially diminishing absolute compression errors,
while Nassif et al. \cite{10976577} required strong convexity and guaranteed convergence to a neighborhood of the optimum.
	\end{remark}

  	When the global cost function further satisfies the P--L condition, linear convergence follows.

  \begin{theorem}\label{nonconvex:thm-ft_quantization}
	Under Definition~\ref{ass:global} and Assumptions~\ref{nonconvex:ass:graph}--\ref{nonconvex:ass:optset} and \ref{nonconvex:ass:fil},  
	let $\{x_{i,k}\}$ be generated by Algorithm~\ref{nonconvex:algorithm-pdgd} 
	with the same parameters as in Theorem~\ref{nonconvex:thm-sm_quantization}.  
	Then, with $\hat{\varepsilon}\in(0,1)$,
    \begin{align}\label{nonconvex:thm-ft-equ1_quantization}
      \mathbb{E}_{\mathcal{C}}\Big[f(\bar{x}_k)\hspace{-0.1em}-\hspace{-0.1em}f^\ast \hspace{-0.1em}+\hspace{-0.1em} \frac{1}{n}\sum\nolimits_{i=1}^{n}\|x_{i,k}\hspace{-0.1em}-\hspace{-0.1em}\bar{x}_k\|^2\Big]
      =\mathcal{O}(\hat{\varepsilon}^{k}).
    \end{align}
  \end{theorem}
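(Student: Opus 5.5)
The plan is to start from the one-step inequality of Lemma~\ref{lemma:L2} and turn it into a linear contraction of $\mathbb{E}_{\mathcal{C}}[\mathcal{L}_{2,k}]$ up to an exponentially vanishing forcing term $n\omega rCs_k^2=n\omega rCs_0^2\varepsilon^{2k}$.

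\emph{Step 1: bound each component of $\mathcal{L}_{2,k}$ by the dissipated quantities.}
\begin{itemize}
\item By the P--L condition \eqref{nonconvex:equ:plc}, $e_{4,k}=n(f(\bar{x}_k)-f^\ast)\le \frac{1}{2\nu}\|\bar{\bsg}_k^0\|^2$.
\item Since $\bsF$ is positive definite on the relevant subspace and $\bsE\bsF$ is bounded, Young's inequality gives constants $c_1,c_2>0$ with
\[
e_{1,k}+e_{2,k}+e_{3,k}\le c_1\|\bsx_k\|^2_{\bsE}+c_2\big\|\bsv_k+\tfrac{1}{\gamma}\bsg_k^0\big\|^2_{\bsF}.
\]
This is the same comparison used in the proof of Lemma~\ref{Lemma: PL} in Appendix~\ref{appendix:lemma:localpl}.
\item The term $e_{5,k}=\|\bsx_k-\hat{\bsx}_k\|^2$ appears directly.
\end{itemize}
Hence, once $\alpha$ is small enough that $2\varepsilon_{12}-\alpha\varphi_7-\alpha^2\varphi_8'\ge\varepsilon_{12}$ (possible since $\varepsilon_{12}>0$ and $\alpha<\hat\kappa_0'$), the negative terms of \eqref{L2:iteration} dominate $\alpha\varepsilon_{13}\mathcal{L}_{2,k}$. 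Here
\[
\varepsilon_{13}=\min\Big\{\tfrac{\nu}{2},\ \tfrac{\varepsilon_3'}{\max\{c_1,c_2\}},\ \tfrac{\varepsilon_{12}}{\alpha}\Big\}.
\]
Shrinking $\alpha$ further if needed so that $\alpha\varepsilon_{13}<1$, we obtain
\[
\mathbb{E}_{\mathcal{C}}[\mathcal{L}_{2,k+1}]\le(1-\alpha\varepsilon_{13})\,\mathbb{E}_{\mathcal{C}}[\mathcal{L}_{2,k}]+n\omega rCs_0^2\varepsilon^{2k}.
\]

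\emph{Step 2: unroll the recursion.} Set $\eta=1-\alpha\varepsilon_{13}\in(0,1)$. Then
\[
\mathbb{E}_{\mathcal{C}}[\mathcal{L}_{2,k}]\le \eta^k\mathcal{L}_{2,0}+n\omega rCs_0^2\sum_{t=0}^{k-1}\eta^{k-1-t}\varepsilon^{2t}.
\]
Take $\hat\varepsilon\in(\max\{\eta,\varepsilon^2\},1)$. Each term of the sum is at most $\hat\varepsilon^{k-1}(\eta/\hat\varepsilon)^{k-1-t}$, so the sum is at most $\hat\varepsilon^{k-1}/(1-\max\{\eta,\varepsilon^2\}/\hat\varepsilon)$. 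If $\eta\ne\varepsilon^2$, one can instead sum the geometric series exactly and get $\mathcal{O}(\max\{\eta,\varepsilon^2\}^k)$. Either way, $\mathbb{E}_{\mathcal{C}}[\mathcal{L}_{2,k}]=\mathcal{O}(\hat\varepsilon^k)$.

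\emph{Step 3: translate to the target quantity.} We need a lower bound of the form
\[
\mathcal{L}_{2,k}\ge e_{4,k}+c\,e_{1,k},\qquad c>0.
\]
This holds because $\tau_1\ge\kappa_1$ makes the cross term $e_{3,k}$ absorbable: $|e_{3,k}|\le \frac14\|\bsx_k\|^2_{\bsE}+\|\bsv_k+\frac1\gamma\bsg_k^0\|^2_{\bsF}\cdot(\text{const})$, and the constant is at most the weight $\frac{\beta+\gamma}{2\gamma}$ of $e_{2,k}$ once $\tau_1$ is large. This is the standard positivity argument behind Lemma~\ref{Lemma: Lyapunov Difference}, and I would reuse it. Dividing by $n$ then gives \eqref{nonconvex:thm-ft-equ1_quantization}.

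\emph{Main obstacle.} The delicate point is Step 1, specifically producing the uniform upper bound of $\mathcal{L}_{2,k}$ by the dissipated terms without the constants depending on $\alpha$ badly. In particular, the $e_{5}$ coefficient $\varepsilon_{12}$ is $\alpha$-independent while the others scale with $\alpha$, so the contraction factor must be taken as $1-\alpha\varepsilon_{13}$, with $\varepsilon_{12}\ge\alpha\varepsilon_{13}$ ensured by the smallness of $\alpha$. I would handle this by first fixing $\tau_1$ and $\gamma$, then choosing $\alpha$ small as above.
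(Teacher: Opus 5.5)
Your proof is essentially the paper's. You bound $\mathcal{L}_{2,k}$ above by a multiple of the dissipated quantities: P--L for $e_{4,k}$, the comparison \eqref{hatx2:ge} for $e_{1,k}$--$e_{3,k}$, and $e_{5,k}$ directly. This gives a contraction of $\mathbb{E}_{\mathcal{C}}[\mathcal{L}_{2,k}]$ from Lemma~\ref{lemma:L2} with forcing term $n\omega rCs_0^2\varepsilon^{2k}$, which you unroll and finish with the lower bound \eqref{hatx2:le}. Your threshold $\hat\varepsilon>\max\{1-\alpha\varepsilon_{13},\varepsilon^2\}$ is the correct one; the appendix's $\max\{\alpha\varepsilon_6',\varepsilon^2\}$ should read $1-\alpha\varepsilon_6'$.

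The one slip is that you shrink $\alpha$ below $\hat{\kappa}_0'$, so as written you cover only part of the stated range. The hypothesis $\alpha<\hat{\kappa}_0'$ makes $2\varepsilon_{12}-\alpha\varphi_7-\alpha^2\varphi_8'$ positive, but not $\ge\varepsilon_{12}$. The fix is to put $(2\varepsilon_{12}-\alpha\varphi_7-\alpha^2\varphi_8')/\alpha$ itself into the minimum defining $\varepsilon_{13}$, which is what the paper's $\varepsilon_6'$ does. Then $\alpha\varepsilon_{13}<2\varepsilon_{12}\le 1$ holds automatically because $\omega r\delta\le 1$, and no extra restriction on $\alpha$ is needed.
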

  \begin{proof}
		See Appendix~\ref{appendix:thm4}.
		% The proof is similar to those of Theorems~\ref{nonconvex:thm-liner} and~\ref{nonconvex:thm-sm_quantization},
		% with details provided in the supplemental material.
  \end{proof}
  \begin{remark}
		Under the P--L condition, we achieve linear convergence as in the uncompressed counterpart.
		Similar remarks after Theorem~\ref{nonconvex:thm-sm_quantization} also apply to Theorem~\ref{nonconvex:thm-ft_quantization}, since our method is more communication- and memory-efficient than existing approaches.
  \end{remark}

\section{Simulations}\label{zerosg:sec-simulation}

In this section, we evaluate the proposed unified compression algorithm through three experiments: logistic regression, language modeling, and image classification. 
These experiments demonstrate that the proposed algorithm is applicable to various compressors and improves communication efficiency.

Since no existing method handles the locally-bounded compressors in distributed nonconvex settings, 
whereas RCPP algorithm in \cite{liao2024robust} admits the globally-bounded compressors, 
we compare our algorithm with RCPP as well as our uncompressed counterpart,
and evaluate the proposed algorithm under various compressors.
Specifically, $\mathcal{C}_1$ and $\mathcal{C}_2$ satisfy Definition~\ref{ass:lcoal} with $C=100$, 
	and are tested with and without one initial uncompressed communication round.
Moreover, $\mathcal{C}_3$ and $\mathcal{C}_5$ satisfy Definition~\ref{ass:global}, 
	and are tested in both the noise-free and bounded-noise cases, where the noise bound is set to $100$.
For the remaining compressor parameters, we set $\Delta=1$ for $\mathcal{C}_2$, 
choose $k$ as $10\%$--$30\%$ of the model dimension for $\mathcal{C}_3$, 
and use $4$ bits for $\mathcal{C}_5$.
In the plots, the above configurations are labeled as
``1-bit'', ``1-bit-ini'',
``sat'', ``sat-ini'',
``top-k'', ``top-k-n'',
``4-bit'', and ``4-bit-n''.
For RCPP, we prepend ``R-'' to the corresponding label,
 and our uncompressed counterpart is labeled as ``uncomp''.
% Agents communicate over an Erd\H{o}s--R\'enyi graph with connection probability $0.4$.
All experiments are conducted over a network of $n=10$ agents connected by an Erd\H{o}s--R\'enyi graph with connection probability $0.4$.

\begin{figure*}[!t]
\centering
\subfloat[Optimality gap versus iterations.]{
  \includegraphics[width=.96\columnwidth]{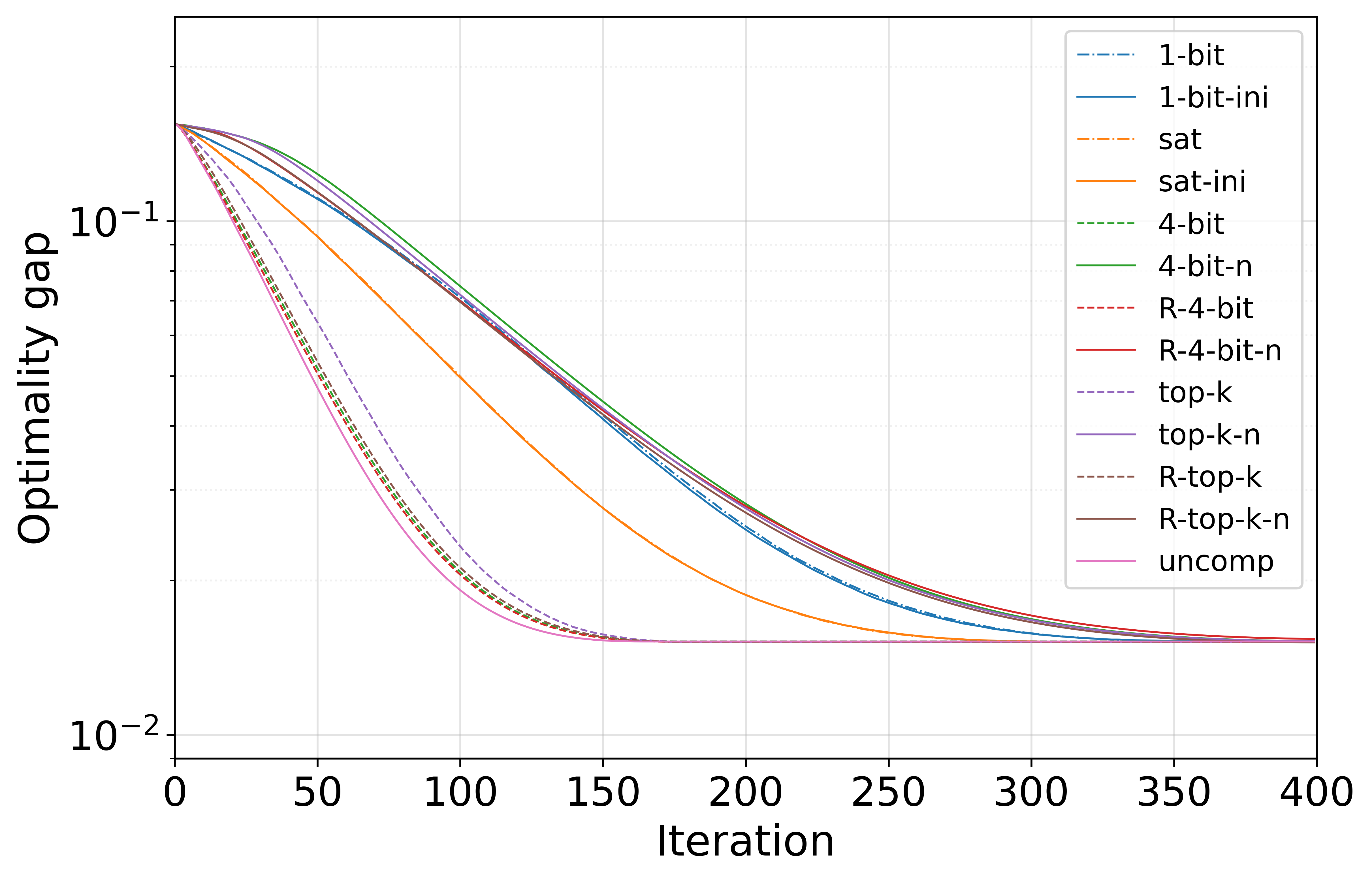}
  \label{logistic:iter}
}
\hfill
\subfloat[Optimality gap versus inter-agent communication bits.]{
  \includegraphics[width=.96\columnwidth]{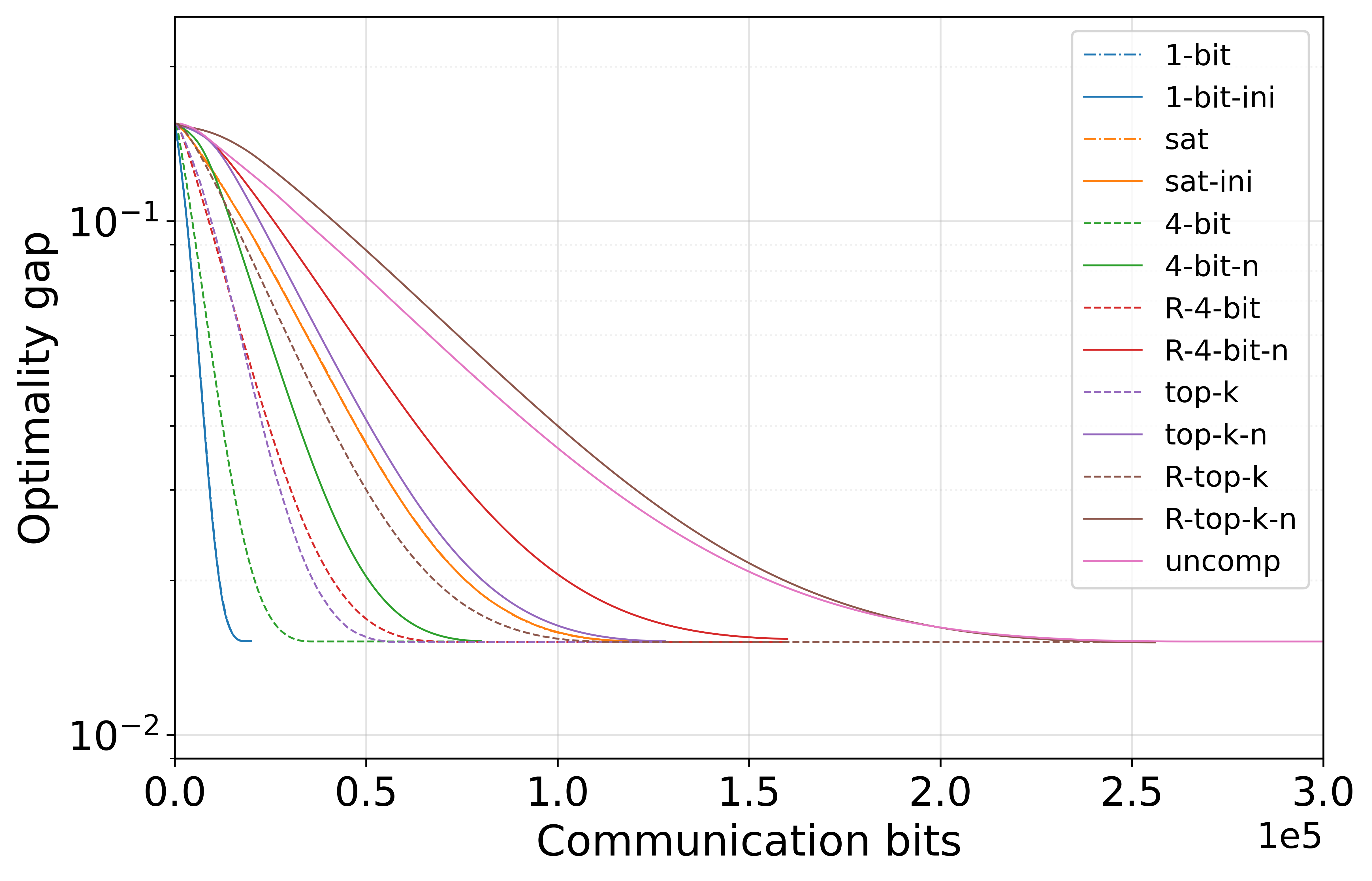}
  \label{logistic:bit}
}
\caption{Optimality gap in logistic regression.}
\label{logistic}
\end{figure*}

\subsection{Logistic Regression}

\begin{figure*}[!h]
\centering
\subfloat[PPL versus iterations.]{
  \includegraphics[width=.47\textwidth]{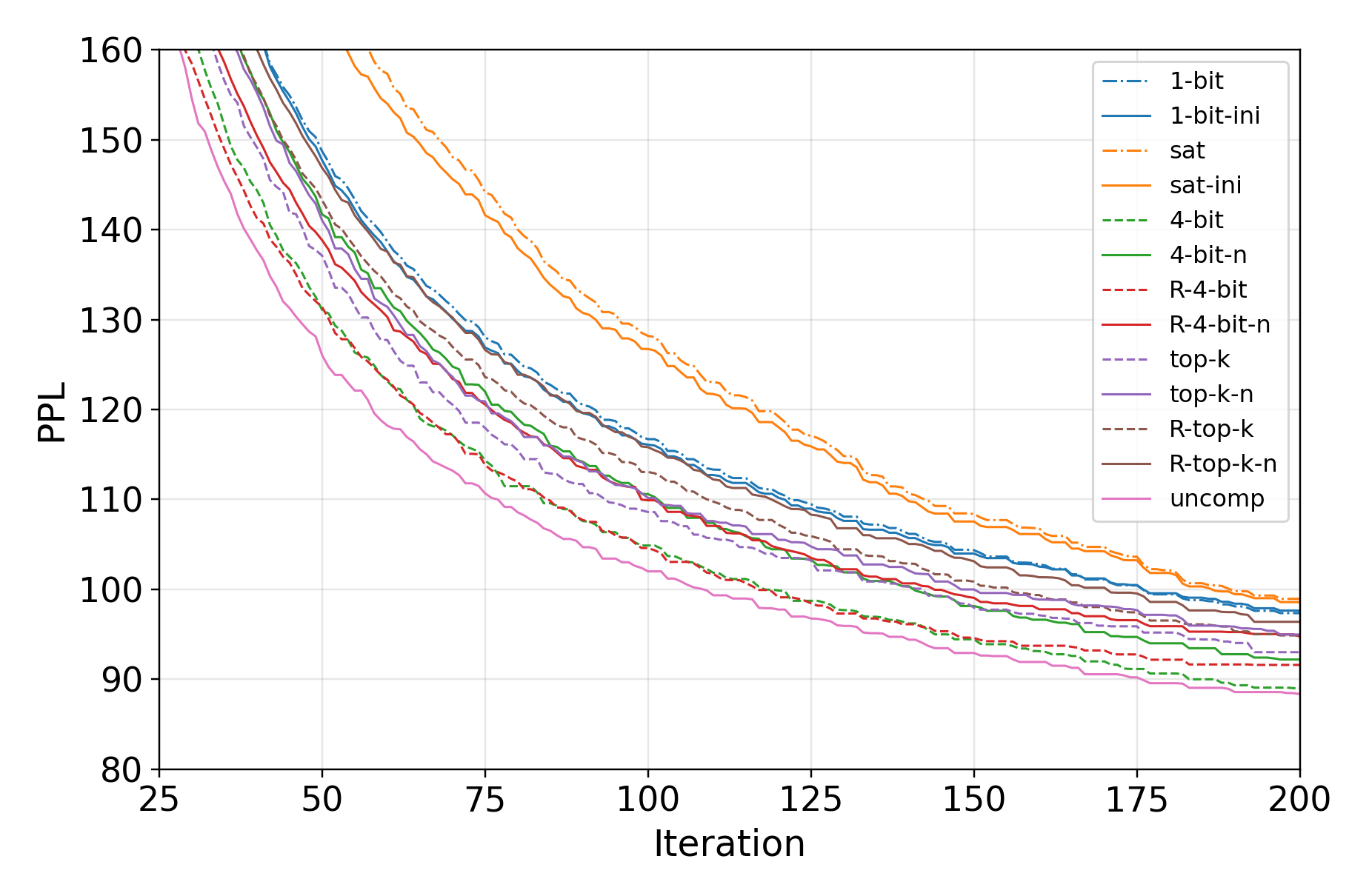}
  \label{ppl:iter}
}
\hfill
\subfloat[PPL versus inter-agent communication bits.]{
  \includegraphics[width=.47\textwidth]{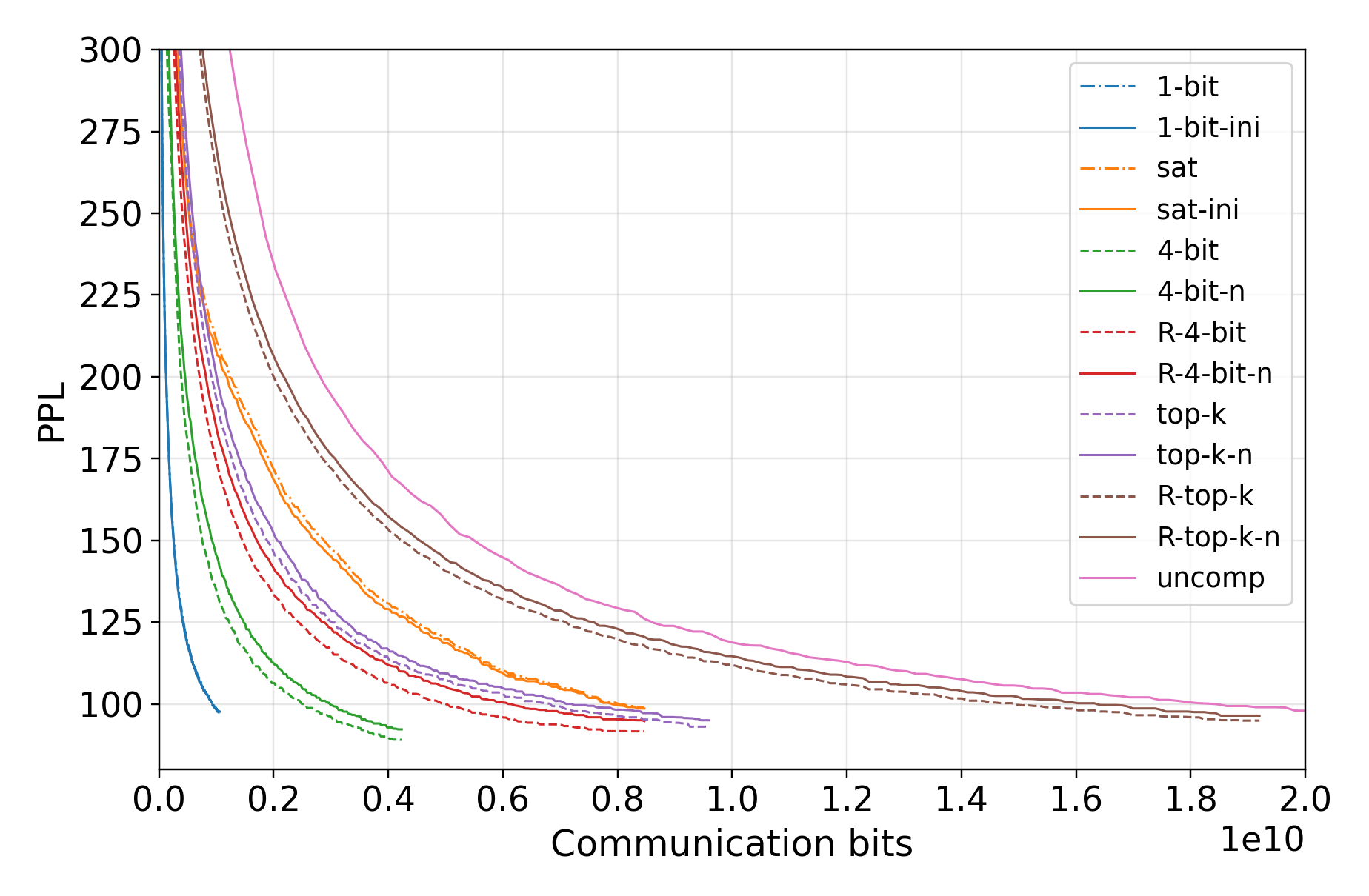}
  \label{ppl:bit}
}
\caption{PPL in LSTM language modeling.}
\label{ppl}
\end{figure*}

	We first construct a synthetic logistic regression problem satisfying the P--L condition, 
	where the local cost function is 
	\[
	f_i(x)
	=
	\frac{1}{m_i}
	\sum\nolimits_{j=1}^{m_i}
	\log\bigl(1+\exp(-z_{ij}h_{ij}^{\top}x)\bigr)
	+\lambda\|x\|^2,
	\]
	where $h_{ij}\in\mathbb{R}^d$ is the feature vector, 
	$z_{ij}\in\{-1,1\}$ is the corresponding label, 
	$m_i=200$, $d=50$, and $\lambda=0.001$.
	For $\mathcal{C}_3$, we set $k=10$.

Fig.~\ref{logistic} shows the evolutions of the optimality gap 
$f(\bar{x}_k)-f^\star$ with respect to iterations and communication bits.
The nearly linear decay on both semilogarithmic plots verifies the linear convergence of the proposed algorithm under the PL condition.
% Moreover, the proposed algorithm works well with various compressors and remains effective in the presence of bounded noise.
% Fig.~\ref{logistic:bit} further shows that compression significantly improves communication efficiency.
This linear convergence behavior is preserved under different compressors and in the presence of bounded noise.

\subsection{LSTM Language Modeling}

Motivated by the edge device application in the introduction, we study distributed training of a long short-term memory (LSTM) language model \cite{zhuang2020adabelief} to emulate next-word prediction over mobile phones.
Experiments are conducted on the Penn Treebank dataset, using half of the training set evenly split among the $n$ agents representing mobile phones.
We set the model dimension to $d=5{,}293{,}200$, the truncated backpropagation through time length to $80$, and $k=1{,}500{,}000$ for $\mathcal{C}_3$.
Performance is evaluated by perplexity (PPL), defined as the exponential of the loss function and computed following \cite{zhuang2020adabelief}.

Fig.~\ref{ppl:iter} shows the evolutions of PPL with respect to the number of iterations.
The proposed algorithm works well with a variety of compressors.
For the locally-bounded compressors,  one initial uncompressed communication round yields a modest speedup.
For the globally-bounded compressors, the proposed algorithm exhibits robustness to the bounded noise.
% In addition, the globally-bounded cases converge slightly faster than the locally-bounded ones, and both are comparable to the uncompressed counterpart.
Fig.~\ref{ppl:bit} demonstrates that compression can greatly improve communication efficiency compared with the uncompressed counterpart,
especially for 1-bit compressors.

\subsection{CNN Image Classification}

Finally, we consider image classification on the full MNIST dataset using a convolutional neural network (CNN) \cite{lecun1998gradient}, where the training set is evenly split among the $n$ agents and the test set is used for evaluation.
The model dimension is $d=436{,}006$.
For $\mathcal{C}_3$, we set $k=40{,}000$.
Performance is evaluated by classification accuracy.

Fig.~\ref{cnn} shows the classification accuracy with respect to the number of iterations and communication bits.
The proposed algorithm achieves comparable accuracy under different compressors, while compression substantially reduces the communication cost.
Fig.~\ref{cnn:C} presents an ablation study on the local region size for the locally-bounded compressors and the bounded-noise level for the globally-bounded compressors, showing that the proposed algorithm is robust to both factors in terms of final classification accuracy.

\begin{figure*}[!t]
\centering
\subfloat[Accuracy versus iterations.]{
\includegraphics[width=.47\textwidth]{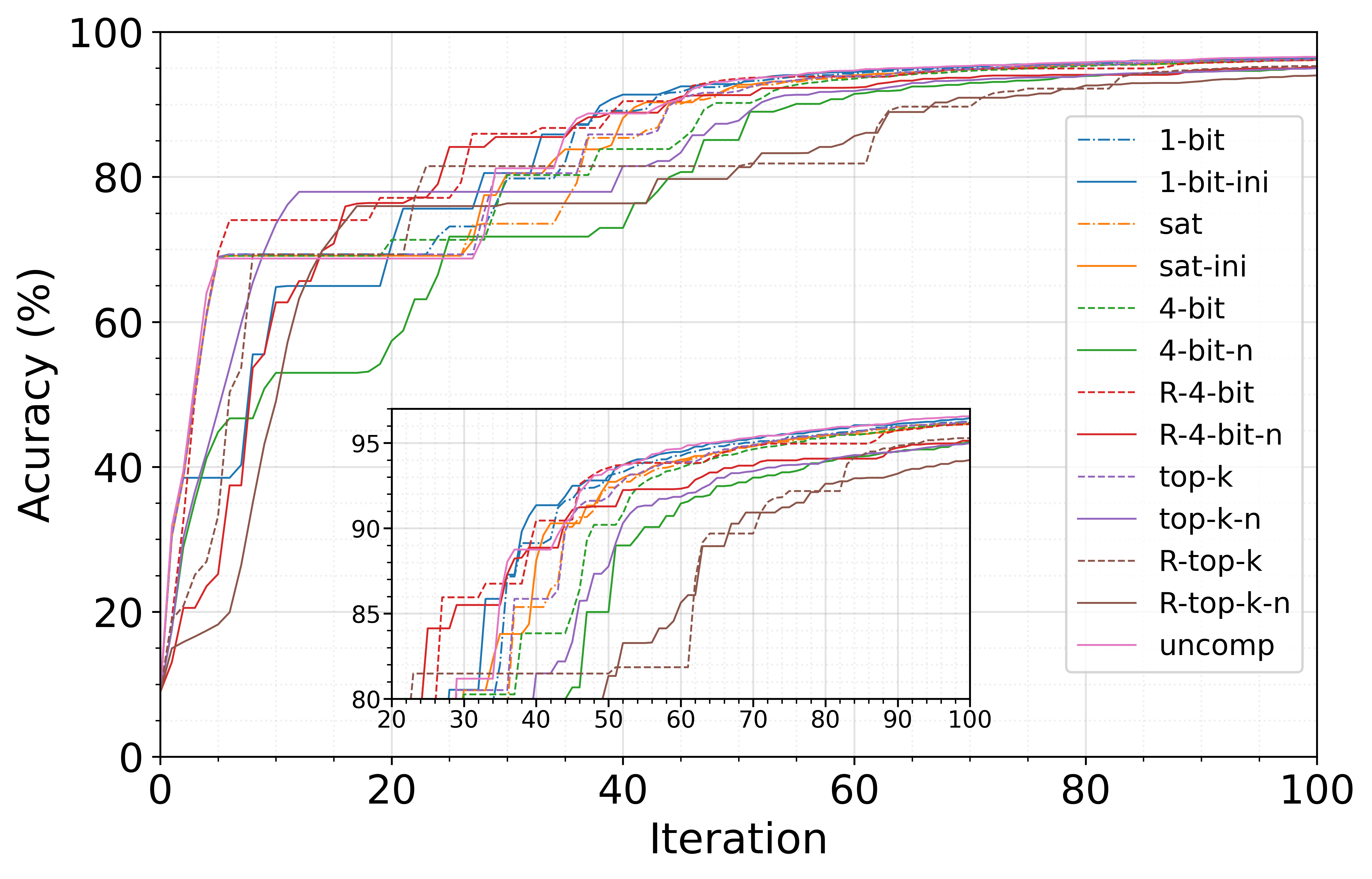}
\label{cnn:iter}
}
\hfill
\subfloat[Accuracy versus communication bits.]{
\includegraphics[width=.47\textwidth]{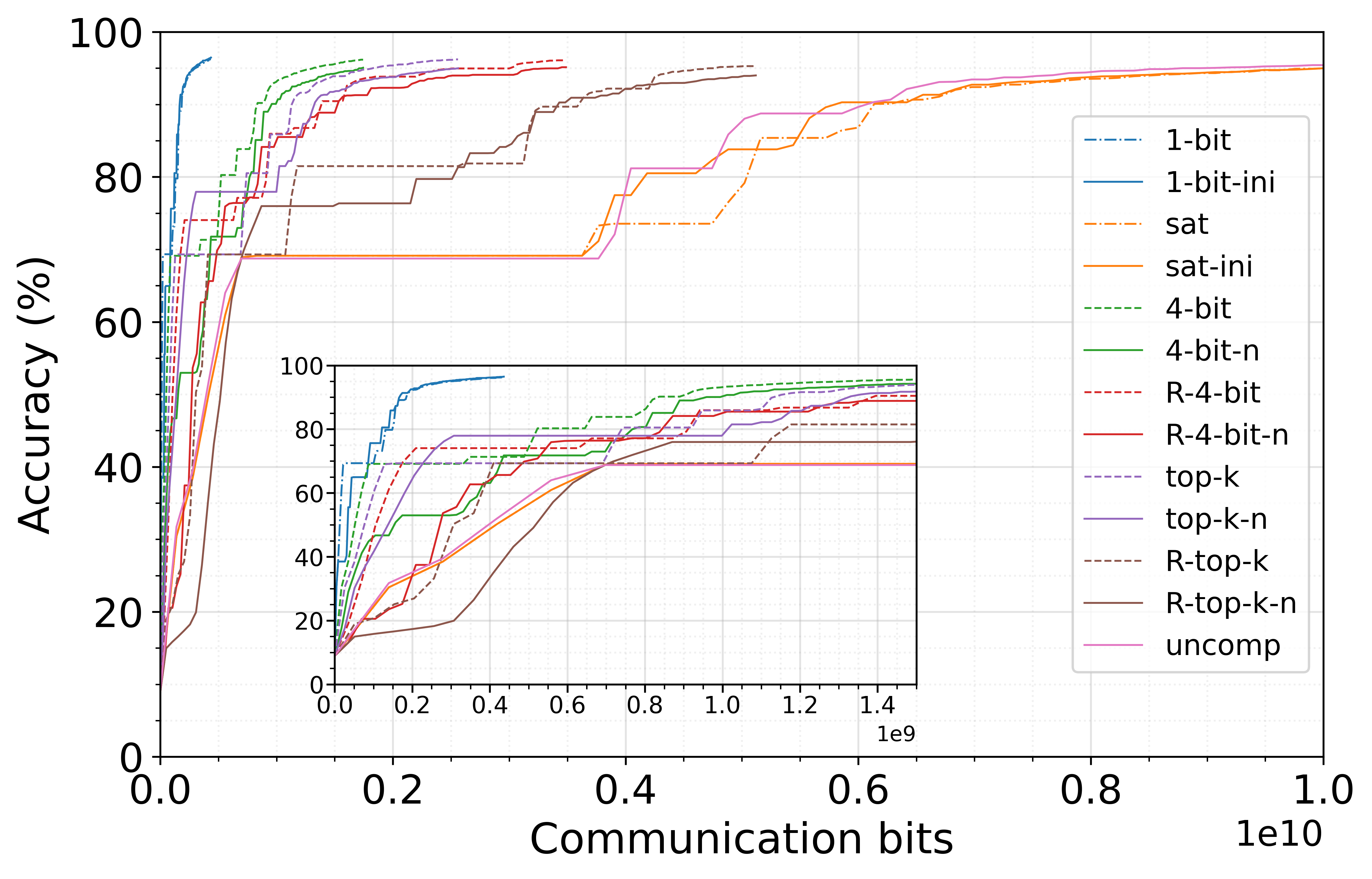}
\label{cnn:bit}
}
\caption{Classification accuracy in CNN image classification.}
\label{cnn}
\end{figure*}
\begin{figure}[!h]
\centering
\includegraphics[width=.45\textwidth]{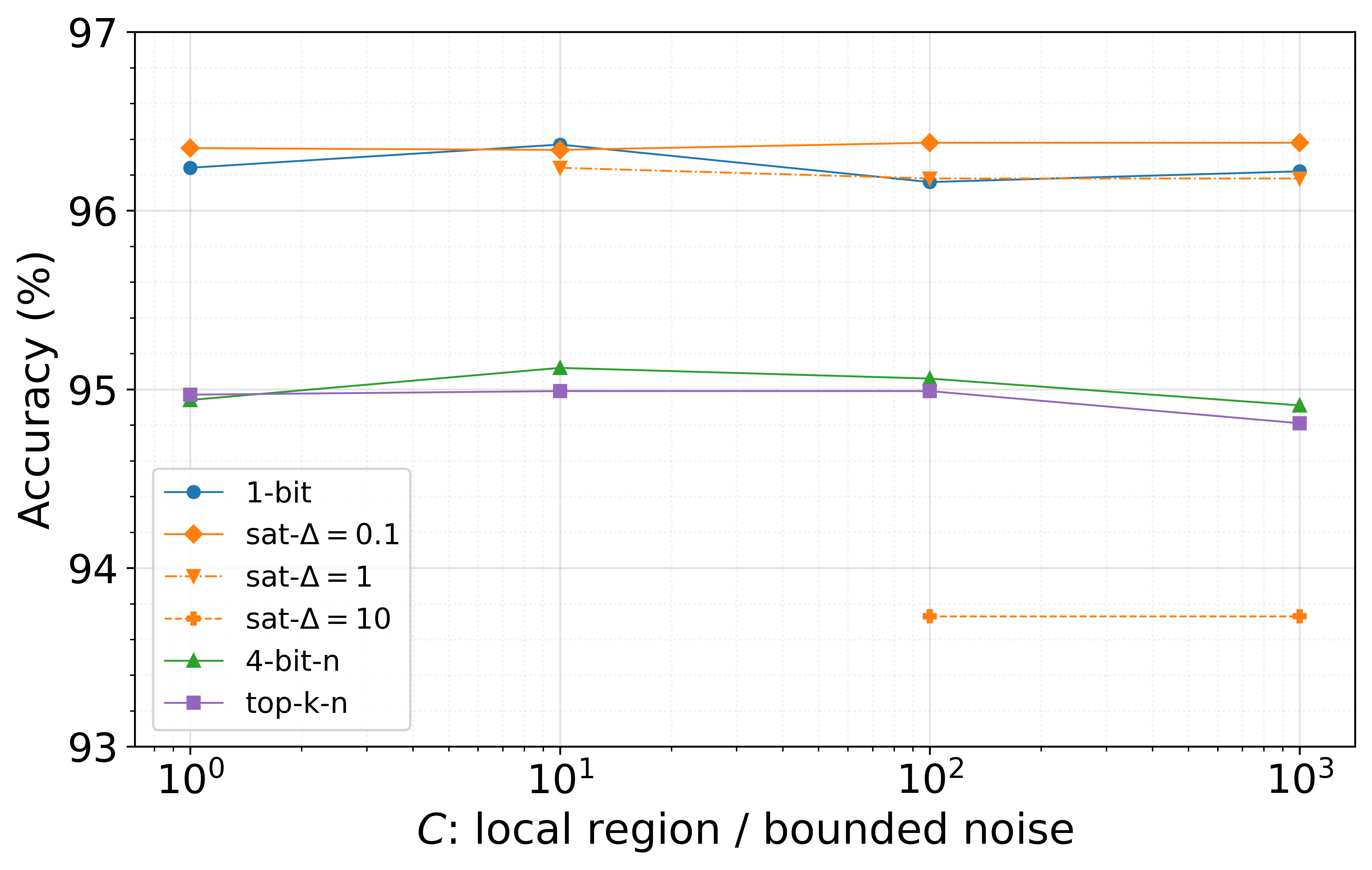}
\caption{Sensitivity to local region size and bounded-noise level in CNN image classification.}
\label{cnn:C}
\end{figure}

\section{Conclusions} \label{zerosg:sec-conclusion}

In this paper, we proposed a unified  compression algorithm for distributed nonconvex optimization to reduce communication overhead.
The proposed algorithm applies to broader classes of compressors than existing methods.
Specifically, in the local sense, we provided the convergence for local compressors in the nonconvex setting, covering low-information compressors such as 1-bit compressors and saturating quantizers.
In the global sense, we accommodated global compressors with both relative and absolute compression errors, as well as additional arbitrary bounded noise,
 thereby covering a broader range of practical compressors.
%  into our framework.
% In particular, it covers the globally-bounded compressors with both relative and absolute compression errors
%  and the locally-bounded compressors including 1-bit compressors and saturating quantizers,
%  and establishes rigorous convergence guarantees in nonconvex settings.
Future work includes time-varying sparse topologies, communication errors, and stochastic-gradient extensions.

\appendix

% \section{Proofs of Lemmas~\ref{lemma:noise} and \ref{lemma:composition}}

\subsection{Proof of Lemma~\ref{lemma:noise}} \label{appendix:bn_co:1}
Denote $c_1=\delta_r/(2-2\delta_r)$. 
For global compressors with relative compression error,
\begin{align}\label{equ:relative}
	&\mathbb{E}_{\mathcal{C}_{r}}\Big[\Big\|\frac{\tilde{\mathcal{C}}_r(x)}{r_r}-x\Big\|^2\Big]
	=\mathbb{E}_{\mathcal{C}_{r}}\Big[\Big\|\frac{\mathcal{C}_r(x)+\xi}{r_r}-x\Big\|^2\Big] \nonumber\\
	&\le\mathbb{E}_{\mathcal{C}_{r}}\Big[(1+c_1)\Big\|\frac{\mathcal{C}_r(x)}{r_r}-x\Big\|^2 
		+ (1+c_1^{-1})\frac{C_\xi^2}{r_r^2}\Big] \nonumber\\
	% &\le\mathbb{E}_{\mathcal{C}_{r}}\Big[\Big(1+\frac{\delta_r}{2(1-\delta_r)}\Big)\Big\|\frac{\mathcal{C}_r(x)}{r_r}-x\Big\|^2 \nonumber\\
	% 	&\quad + \Big(1+\frac{2(1-\delta_r)}{\delta_r}\Big)\frac{C_\xi^2}{r_r^2}\Big] \nonumber\\
	&\le(1-\frac{\delta_r}{2})\|x\|^2 + C_{\tilde{r}}.
	% &\le(1-\frac{\delta_r}{2})\|x\|^2 +  \frac{(2-\delta_r)C_\xi^2}{\delta_r r_r^2},
\end{align}
where the first inequality holds due to the Cauchy-Schwarz inequality and $\|\xi\|\le C_\xi$, and the last inequality holds due to \eqref{ass:relative}.

For global compressors with absolute compression error, 
% setting $r=1$ yields
\begin{align}\label{equ:absolute}
	&\mathbb{E}_{\mathcal{C}_{a}}\Big[\Big\|\frac{\tilde{\mathcal{C}}_a(x)}{r_a}-x\Big\|^2\Big]
	=\mathbb{E}_{\mathcal{C}_{a}}\Big[\Big\|\frac{\mathcal{C}_a(x)+\xi}{r_a}-x\Big\|^2\Big] \nonumber\\
	&\le\mathbb{E}_{\mathcal{C}_{a}}\Big[2\Big\|\frac{\mathcal{C}_a(x)}{r_a}-x\Big\|^2 
		+ 2\frac{C_\xi^2}{r_a^2}\Big] \nonumber\\
	% &\le\mathbb{E}_{\mathcal{C}_{a}}[2\|\mathcal{C}_a(x)-x\|^2 +2C_\xi^2] \nonumber\\
	% &\le\mathbb{E}_{\mathcal{C}_{a}}[2\tilde{d}^2\|\mathcal{C}_a(x)-x\|^2_p +2C_\xi^2] \nonumber\\
	&\le C_{\tilde{a}},
	% &\le2\tilde{d}^2C_a + 2C_\xi^2,
\end{align}
where the first inequality holds due to the Cauchy--Schwarz inequality and $\|\xi\|\le C_\xi$;
% the second inequality holds due to the norm equivalence;
and the last inequality holds due to \eqref{ass:absolute}.

\subsection{Proof of Lemma~\ref{lemma:composition}} \label{appendix:bn_co:2}
Without ambiguity, $\mathbb{E}_{\mathcal{C}}[\cdot]$ is taken only over the internal randomness of $\mathcal{C}_{r}$ and $\mathcal{C}_{a}$, and not over the bounded noise. Equivalently, the results hold uniformly for any bounded noise realization.
Denote $c_2=\delta_r/(4-2\delta_r)$ and $c_3=\delta_r/(8-2\delta_r)$.
 Consider the composition $\tilde{\mathcal{C}}_r(\tilde{\mathcal{C}}_a(x)/r_a)$ with $r=r_r$, 
\begin{align}
	&\mathbb{E}_{\mathcal{C}}\hspace{-0.25em}\Big[\hspace{-0.05em}\Big\|\hspace{-0.1em}\frac{\tilde{\mathcal{C}}_r\hspace{-0.1em}(\tilde{\mathcal{C}}_a\hspace{-0.1em}(x)\hspace{-0.1em}/r_a)}{r_r}\hspace{-0.2em}-\hspace{-0.15em}x\hspace{-0.05em}\Big\|^2\hspace{-0.05em}\Big]
	\hspace{-0.3em}=\hspace{-0.1em}\mathbb{E}_{\mathcal{C}_{a}}\hspace{-0.3em}\Big[\hspace{-0.05em}\mathbb{E}_{\mathcal{C}_{r}}\hspace{-0.3em}\Big[\Big\|\frac{\tilde{\mathcal{C}}_r\hspace{-0.1em}(\tilde{\mathcal{C}}_a(x)\hspace{-0.1em}/r_a)}{r_r}\hspace{-0.2em}-\hspace{-0.15em}x\Big\|^2\hspace{-0.05em}\Big]\hspace{-0.1em}\Big] \nonumber\\
	&=\hspace{-0.1em}\mathbb{E}_{\mathcal{C}_{a}}\hspace{-0.2em}\Big[\mathbb{E}_{\mathcal{C}_{r}}\hspace{-0.2em}\Big[\Big\|\frac{\tilde{\mathcal{C}}_r(\tilde{\mathcal{C}}_a(x)/r_a)}{r_r}-\frac{\tilde{\mathcal{C}}_a(x)}{r_a}+\frac{\tilde{\mathcal{C}}_a(x)}{r_a}-x\Big\|^2\Big]\Big] \nonumber\\
	&\le\hspace{-0.1em}\mathbb{E}_{\mathcal{C}_{a}}\hspace{-0.2em}\Big[\mathbb{E}_{\mathcal{C}_{r}}\hspace{-0.2em}\Big[
		(1+c_2)\Big\|\frac{\tilde{\mathcal{C}}_r(\tilde{\mathcal{C}}_a(x)/r_a)}{r_r}-\frac{\tilde{\mathcal{C}}_a(x)}{r_a}\Big\|^2 \nonumber\\
		&\quad +(1+c_2^{-1})\Big\|\frac{\tilde{\mathcal{C}}_a(x)}{r_a}-x\Big\|^2\Big]\Big] \nonumber\\
	&\le\hspace{-0.1em}\mathbb{E}_{\mathcal{C}_{a}}\hspace{-0.2em}\Big[
		(1-\frac{\delta_r}{4})\Big\|\frac{\tilde{\mathcal{C}}_a(x)}{r_a}-x+x\Big\|^2 + (1+c_2)C_{\tilde{r}} \nonumber\\
		&\quad +(1+c_2^{-1})\Big\|\frac{\tilde{\mathcal{C}}_a(x)}{r_a}-x\Big\|^2\Big] \nonumber\\
	&\le\hspace{-0.1em}\mathbb{E}_{\mathcal{C}_{a}}\hspace{-0.2em}\Big[
		(1+c_3)(1-\frac{\delta_r}{4})\|x\|^2 + \Big((1+c_3^{-1})(1-\frac{\delta_r}{4}) \nonumber\\
		 &\quad +(1+c_2^{-1})\Big)\Big\|\frac{\tilde{\mathcal{C}}_a(x)}{r_a}-x\Big\|^2 + (1+c_2)C_{\tilde{r}}\Big] \nonumber\\
	&\le
		(1-\frac{\delta_r}{8})\|x\|^2 + \Big((1+c_3^{-1})(1-\frac{\delta_r}{4}) \nonumber\\
		&\quad +(1+c_2^{-1})\Big)C_{\tilde{a}} + (1+c_2)C_{\tilde{r}},
	% &\le\mathbb{E}_{\mathcal{C}_{r}}\Big[\Big(1+\frac{\delta_r}{2(1-\delta_r)}\Big)\Big\|\frac{\tilde{\mathcal{C}}_r(x)}{r_r}-x\Big\|^2 \nonumber\\
	% 	&\quad + \Big(1+\frac{2(1-\delta_r)}{\delta_r}\Big)\frac{C_\xi^2}{r_r^2}\Big] \nonumber\\
	% &\le(1-\frac{\delta_r}{8})\|x\|^2 +  \frac{(2-\delta_r)(6-\delta_r)\tilde{d}^2C_a}{2\delta_r},
\end{align}
where the first equality holds due to the independence of $\mathcal{C}_{r}$ and $\mathcal{C}_{a}$;
the first inequality holds due to the Cauchy--Schwarz inequality;
the second inequality holds due to \eqref{equ:relative} and the independence of $\mathcal{C}_{r}$ from both $\mathcal{C}_{a}$ and $x$;
the third inequality holds due to the Cauchy--Schwarz inequality;
and the last inequality holds due to \eqref{equ:absolute} and the independence of $x$ and $\mathcal{C}_{a}$.

Consider the composition $\tilde{\mathcal{C}}_a(\tilde{\mathcal{C}}_r(x))$ with $r=r_rr_a$,
\begin{align}
	&\mathbb{E}_{\mathcal{C}}\Big[\Big\|\frac{\tilde{\mathcal{C}}_a(\tilde{\mathcal{C}}_r(x))}{r_rr_a}\hspace{-0.2em}-\hspace{-0.15em}x\Big\|^2\Big]
		\hspace{-0.3em}=\hspace{-0.1em}\mathbb{E}_{\mathcal{C}_{r}}\hspace{-0.2em}\Big[\mathbb{E}_{\mathcal{C}_{a}}\hspace{-0.2em}\Big[\Big\|\frac{\tilde{\mathcal{C}}_a(\tilde{\mathcal{C}}_r(x))}{r_rr_a}\hspace{-0.2em}-\hspace{-0.15em}x\Big\|^2\Big]\Big] \nonumber\\
	&=\hspace{-0.1em}\mathbb{E}_{\mathcal{C}_{r}}\hspace{-0.2em}\Big[\mathbb{E}_{\mathcal{C}_{a}}\hspace{-0.2em}\Big[
		\Big\|\frac{1}{r_r}\Big(\frac{\tilde{\mathcal{C}}_a(\tilde{\mathcal{C}}_r(x))}{r_a} -\tilde{\mathcal{C}}_r(x) \Big) + \frac{\tilde{\mathcal{C}}_r(x)}{r_r}-x\Big\|^2\Big]\Big] \nonumber\\
	&\le\hspace{-0.1em}\mathbb{E}_{\mathcal{C}_{r}}\hspace{-0.2em}\Big[\mathbb{E}_{\mathcal{C}_{a}}\hspace{-0.2em}\Big[
		\frac{(1+c_2^{-1})}{r_r^2}\Big\|\frac{\tilde{\mathcal{C}}_a(\tilde{\mathcal{C}}_r(x))}{r_a}-\tilde{\mathcal{C}}_r(x)\Big\|^2\Big]\Big]  \nonumber\\
		&\quad +\mathbb{E}_{\mathcal{C}_{r}}\hspace{-0.2em}\Big[(1+c_2)\Big\|\frac{\tilde{\mathcal{C}}_r(x)}{r_r}-x\Big\|^2\Big] \nonumber\\
	&\le (1-\frac{\delta_r}{4})\|x\|^2 + (1+c_2)C_{\tilde{r}} + \frac{(1+c_2^{-1})C_{\tilde{a}}}{r_r^2},
	% 		\nonumber\\
	% &\le\hspace{-0.1em}\mathbb{E}_{\mathcal{C}_{r}}\hspace{-0.2em}\Big[\mathbb{E}_{\mathcal{C}_{a}}\hspace{-0.2em}\Big[
	% 	\frac{(1+c_2^{-1})\tilde{d}^2}{r_r^2}\|\tilde{\mathcal{C}}_a(\tilde{\mathcal{C}}_r(x))-\tilde{\mathcal{C}}_r(x)\|^2_p\Big]\Big] \nonumber\\ 
	% 	&\quad +(1-\frac{\delta_r}{2})\|x\|^2 \nonumber\\
	% &\le(1-\frac{\delta_r}{2})\|x\|^2 +  \frac{(2-\delta_r)\tilde{d}^2C_a}{\delta_r r_r^2},
\end{align}
% \begin{align}
% 	&\mathbb{E}_{\mathcal{C}}\Big[\Big\|\frac{\mathcal{C}_a(\mathcal{C}_r(x))}{r_r}\hspace{-0.2em}-\hspace{-0.15em}x\Big\|^2\Big]
% 		\hspace{-0.3em}=\hspace{-0.1em}\mathbb{E}_{\mathcal{C}_{r}}\hspace{-0.2em}\Big[\mathbb{E}_{\mathcal{C}_{a}}\hspace{-0.2em}\Big[\Big\|\frac{\mathcal{C}_a(\mathcal{C}_r(x))}{r_r}\hspace{-0.2em}-\hspace{-0.15em}x\Big\|^2\Big]\Big] \nonumber\\
% 	&=\hspace{-0.1em}\mathbb{E}_{\mathcal{C}_{r}}\hspace{-0.2em}\Big[\mathbb{E}_{\mathcal{C}_{a}}\hspace{-0.2em}\Big[
% 		\Big\|\frac{\mathcal{C}_a(\mathcal{C}_r(x))-\mathcal{C}_r(x)}{r_r} + \frac{\mathcal{C}_r(x)}{r_r}-x\Big\|^2\Big]\Big] \nonumber\\
% 	&\le\hspace{-0.1em}\mathbb{E}_{\mathcal{C}_{r}}\hspace{-0.2em}\Big[\mathbb{E}_{\mathcal{C}_{a}}\hspace{-0.2em}\Big[
% 		\frac{(1+c_1^{-1})}{r_r^2}\|\mathcal{C}_a(\mathcal{C}_r(x))-\mathcal{C}_a(x)\|^2\Big]\Big]  \nonumber\\
% 		&\quad +\mathbb{E}_{\mathcal{C}_{r}}\hspace{-0.2em}\Big[(1+c_1)\Big\|\frac{\mathcal{C}_r(x)}{r_r}-x\Big\|^2\Big] \nonumber\\
% 	&\le\hspace{-0.1em}\mathbb{E}_{\mathcal{C}_{r}}\hspace{-0.2em}\Big[\mathbb{E}_{\mathcal{C}_{a}}\hspace{-0.2em}\Big[
% 		\frac{(1+c_1^{-1})\tilde{d}^2}{r_r^2}\|\mathcal{C}_a(\mathcal{C}_r(x))-\mathcal{C}_a(x)\|^2_p\Big]\Big] \nonumber\\ 
% 		&\quad +(1-\frac{\delta_r}{2})\|x\|^2 \nonumber\\
% 	&\le(1-\frac{\delta_r}{2})\|x\|^2 +  \frac{(2-\delta_r)\tilde{d}^2C_a}{\delta_r r_r^2},
% \end{align}
where the first equality holds due to the independence of $\mathcal{C}_r$ and $\mathcal{C}_a$;
the first inequality holds due to the Cauchy--Schwarz inequality and the independence of $\mathcal{C}_{a}$ from both $\mathcal{C}_{r}$ and $x$;
and the last inequality holds due to \eqref{equ:absolute} and \eqref{equ:relative}.

\subsection{Technical Preliminaries}\label{zero:app-lemmas}

	\subsubsection{Properties of Graph Matrices}
 \begin{lemma}[Lemma 3,~\cite{Yi_CommunicationCompression_2023}]
  The matrices $L$ and $E$ are positive semi-definite, while $F$ is positive definite. 
  They satisfy
  \begin{subequations}		
		\begin{align}
			&EL=LE=L,\label{nonconvex:KL-L-eq}\\ 
			%&E^2=E,\label{nonconvex:KL-L-eq1}\\
			&0\le\rho_2(L)E\le L\le\rho(L)E,\label{nonconvex:KL-L-eq2}\\
			&FL=LF=E,\label{nonconvex:lemma-eq3}\\
			%&PE=EP=Q,\label{nonconvex:lemma-eq4}\\
			&\rho^{-1}(L){\bf I}_n\leq F \le\rho_2^{-1}(L){\bf I}_n.\label{nonconvex:lemma-eq5}
		\end{align}
	\end{subequations}
 \end{lemma}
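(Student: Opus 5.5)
This is the graph-matrix lemma (Lemma 3 of \cite{Yi_CommunicationCompression_2023}). I would prove it entirely from the spectral decomposition of $L$ under Assumption~\ref{nonconvex:ass:graph}. Since the graph is undirected, $L$ is real symmetric. Since it is connected, $L$ has a simple zero eigenvalue with eigenvector $q={\bf 1}_n/\sqrt{n}$. Hence $L=[q\ Q]\diag(0,\Lambda_1)[q\ Q]^\top$ with $[q\ Q]$ orthogonal and $\Lambda_1=\diag(\lambda_2,\ldots,\lambda_n)$, $0<\lambda_2\le\cdots\le\lambda_n$, so $\rho_2(L)=\lambda_2$ and $\rho(L)=\lambda_n$.

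The key observation is that $E$ and $F$ are diagonal in the same basis. Because $qq^\top+QQ^\top={\bf I}_n$, we have $E={\bf I}_n-qq^\top=QQ^\top=[q\ Q]\diag(0,{\bf I}_{n-1})[q\ Q]^\top$. By definition, $F=[q\ Q]\diag(\lambda_{n+1}^{-1},\Lambda_1^{-1})[q\ Q]^\top$. So $L$, $E$, and $F$ commute, and every claim reduces to a comparison of diagonal entries.

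\begin{itemize}
\item \emph{Definiteness.} $L$ and $E$ are positive semidefinite since their diagonal entries $0,\lambda_i$ and $0,1$ are nonnegative. $F$ is positive definite since $\lambda_{n+1}^{-1}>0$ and $\lambda_i^{-1}>0$.
\item \emph{\eqref{nonconvex:KL-L-eq}.} Multiply the diagonal factors: $\diag(0,{\bf I})\diag(0,\Lambda_1)=\diag(0,\Lambda_1)$ in either order, so $EL=LE=L$.
\item \emph{\eqref{nonconvex:lemma-eq3}.} $\diag(\lambda_{n+1}^{-1},\Lambda_1^{-1})\diag(0,\Lambda_1)=\diag(0,{\bf I})$, so $FL=LF=E$.
\item \emph{\eqref{nonconvex:KL-L-eq2}.} Compare entrywise: $0\le\lambda_2\cdot 0\le 0\le\lambda_n\cdot 0$ in the first entry, and $0\le\lambda_2\le\lambda_i\le\lambda_n$ in the others.
\item \emph{\eqref{nonconvex:lemma-eq5}.} Each diagonal entry of $F$ lies in $[\lambda_n^{-1},\lambda_2^{-1}]$. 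This uses $\lambda_{n+1}\in[\lambda_2,\lambda_n]$ for the first entry and $\lambda_i\in[\lambda_2,\lambda_n]$ for the rest.
\end{itemize}

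There is no real obstacle. The only points needing care are that connectivity makes $\lambda_2>0$ (so $\Lambda_1^{-1}$ exists) and that $\lambda_{n+1}$ was chosen in $[\lambda_2,\lambda_n]$, which is exactly what the bounds on $F$ require. The Kronecker versions with ${\bf I}_p$ then follow immediately, since $\otimes{\bf I}_p$ preserves products and Loewner order.
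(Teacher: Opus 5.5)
Your proof is correct and is essentially the standard argument behind this lemma, which the paper does not reprove but imports from \cite{Yi_CommunicationCompression_2023}. That argument diagonalizes $L$, $E=QQ^\top$, and $F$ simultaneously in the orthonormal basis $[q\ Q]$, so every identity and Loewner inequality reduces to an entrywise comparison of the diagonal values ($0$, $1$, $\lambda_i$, $\lambda_i^{-1}$), with $\lambda_2>0$ coming from connectivity and $\lambda_{n+1}\in[\lambda_2,\lambda_n]$ controlling the first diagonal entry of $F$.
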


\subsubsection{Smoothness}

\begin{lemma}[Lemma~3.4,~\cite{bubeck2015convex}]
Let $f:\mathbb{R}^p$$\to\mathbb{R}$ be an $\ell$-smooth differentiable function with $\ell>0$. 
Then for any $x,y\in\mathbb{R}^p$,
\begin{subequations}\label{nonconvex:lemma:smoothness}
\begin{align}
&|f(y)-f(x)-(y-x)^{\!\top}\nabla f(x)|
\le \tfrac{\ell}{2}\|y-x\|^2, \label{nonconvex:lemma:lipschitz}\\
&\tfrac{1}{2}\|\nabla f(x)\|^2
\le \ell\,(f(x)-f^*). \label{rePL}
\end{align}
\end{subequations}
\end{lemma}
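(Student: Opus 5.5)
The plan is to prove the two inequalities in order. The first follows from the fundamental theorem of calculus along the segment from $x$ to $y$, and the second follows from the first by a single gradient step. Throughout I only use that $\nabla f$ is $\ell$-Lipschitz. This also covers the way the lemma is applied in the paper: the averaged function $f=\frac{1}{n}\sum_{i=1}^n f_i$ is $\ell$-smooth whenever every $f_i$ is (Assumption~\ref{nonconvex:ass:fiu}), by the triangle inequality.

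For \eqref{nonconvex:lemma:lipschitz}, set $\phi(t)=f(x+t(y-x))$ for $t\in[0,1]$. Since $f$ is continuously differentiable, we have
\begin{align*}
f(y)-f(x)-(y-x)^{\top}\nabla f(x)=\int_0^1\big(\nabla f(x+t(y-x))-\nabla f(x)\big)^{\top}(y-x)\,dt .
\end{align*}
Bound the absolute value of the integrand by Cauchy--Schwarz, and then use the Lipschitz property of $\nabla f$ to get $\|\nabla f(x+t(y-x))-\nabla f(x)\|\le \ell t\|y-x\|$. Integrating $\ell t\|y-x\|^2$ over $[0,1]$ gives exactly $\tfrac{\ell}{2}\|y-x\|^2$, which is the claimed two-sided bound.

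For \eqref{rePL}, take the upper half of \eqref{nonconvex:lemma:lipschitz} at the point $y=x-\tfrac{1}{\ell}\nabla f(x)$. This gives
\begin{align*}
f(y)\le f(x)-\tfrac{1}{\ell}\|\nabla f(x)\|^2+\tfrac{1}{2\ell}\|\nabla f(x)\|^2=f(x)-\tfrac{1}{2\ell}\|\nabla f(x)\|^2 .
\end{align*}
Since $f^*\le f(y)$, we get $f^*\le f(x)-\tfrac{1}{2\ell}\|\nabla f(x)\|^2$. Multiplying by $\ell$ and rearranging yields the claim.

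If $f^*=-\infty$, the right-hand side of \eqref{rePL} is $+\infty$ and the inequality is trivial. In the paper's setting $f^*$ is finite anyway, by Assumption~\ref{nonconvex:ass:optset}.

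There is no real obstacle. The only care needed is to take the correct sign in the two-sided bound (the upper bound, applied at the descent point) and to choose the step $1/\ell$, which minimizes the quadratic upper model.
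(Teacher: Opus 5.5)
Your proof is correct and follows the standard argument behind the cited result. The integral representation along the segment, combined with Cauchy--Schwarz and the Lipschitz gradient, gives the two-sided bound; evaluating its upper half at $y=x-\tfrac{1}{\ell}\nabla f(x)$ and using $f^*\le f(y)$ gives the second inequality, which is exactly how Bubeck's treatment (cited by the paper) proceeds.

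One minor point on your side remark: the paper applies the first inequality to the stacked function $\tilde{f}$ on $\mathbb{R}^{nd}$, not to the averaged $f$. This changes nothing, because $\tilde{f}$ is also $\ell$-smooth, since $\|\nabla\tilde{f}(\bsx)-\nabla\tilde{f}(\bsy)\|^2=\sum_{i=1}^n\|\nabla f_i(x_i)-\nabla f_i(y_i)\|^2$.
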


\subsection{Proof of Lemma~\ref{Lemma: Lyapunov Difference}}\label{appendix:lemmas:c}
We denote the following constants:
\begin{align*}
	&\hat{\kappa}_0=\min\Big\{\frac{\varphi_1}{\varphi_2},~\frac{\varphi_3}{\varphi_4},
	~\frac{\varphi_5}{\varphi_6}\Big\},\\
	&\kappa_1=\frac{4}{\rho_2(L)},\\
	&\kappa_2=\max\Big\{2+2\ell^2,~\frac{5}{\rho_2(L)},~\sqrt[3]{\frac{16\ell^2(\kappa_1+1)^2}{\rho_2(L)}},~\frac{2\sqrt{2}\ell}{\rho_2(L)}\Big\},\\
	% &\kappa_7=\min\Big\{\hat{\kappa}_0,~\frac{1}{2\ell}\Big\},\\
	&\psi_1=2(\varphi_5+\alpha\varphi_6),\\
	&\psi_2=\varphi_7+\alpha\varphi_8,\\
	&\psi_3=4(1+\varepsilon_5^{-1})\hat{d}^2\beta^2\rho^2(L),\\
	&\psi_4=4(1+\varepsilon_5^{-1})\hat{d}^2\max\{(\beta^2\rho^2(L)+\ell^2),~\gamma^2\rho(L)\}/\varepsilon_1,\\
	&\varepsilon_1=\frac{\tau_1\rho_2(L)-1}{2\tau_1\rho_2(L)},\\
	&\varepsilon_2=\max\Big\{\frac{1+\tau_1\rho_2(L)}{2},
		~\frac{1+\tau_1}{2}+\frac{1}{2\tau_1\rho_2^2(L)}\Big\},\\
	&\varepsilon_3=\min\{\varphi_1-\alpha\varphi_2,~\alpha\varphi_3-\alpha\varphi_4\},\\
	% &\varepsilon_4=\min\{\varphi_1-\alpha\varphi_2,~\alpha\varphi_3-\alpha\varphi_4,~\frac{1}{4}\},\\
	% &\varepsilon_4=\min\{\varepsilon_3,~\frac{1}{4}\},\\
	&\varepsilon_5=\omega r(\delta-\delta^2/2),\\
	&\varphi_1=\frac{1}{2}\big(\rho_2(L)\beta-(3\gamma+2+2\ell^2)\big),\\
	&\varphi_2= 3\rho^2(L)\beta^2  - \rho_2(L)\beta\gamma + \big(\rho(L)+2\big)\gamma^2
		+ 1 + \frac{5}{2}\ell^2,\\
	% &\varphi_1^\prime=\frac{1}{2}\big(\rho_2(L)\beta-(3\gamma+2+2\ell^2+2\varphi_5\ell^2)\big),\\
	% &\varphi_2^\prime= 3\rho^2(L)\beta^2  - \rho_2(L)\beta\gamma + \big(\rho(L)+2\big)\gamma^2\\
	% 	&\quad + 1 + \frac{5}{2}\ell^2 + 2\varphi_6\ell^2,\\
	% &\varphi_1=\frac{1}{2}\big(\rho_2(L)\beta-(3\gamma+2+3\ell^2)\big),\\
	% &\varphi_2= 3\rho^2(L)\beta^2 - \rho_2(L)\beta\gamma + \big(\rho(L)+2\big)\gamma^2 + 1 + \frac{5}{2}\ell^2,\\
	&\varphi_3=\frac{\gamma}{2}-\frac{5}{2}\rho^{-1}_2(L),\\
	&\varphi_4=2\rho(L)\gamma^2+\frac{1}{2}\rho(L),\\
	% &\varphi_5=\frac{1}{8} + \frac{(\beta+\gamma)^2}{\gamma^5}\rho_2^{-1}(L)\ell^2
	% 	+ \frac{1}{2\gamma^2}\rho_2^{-2}(L)\ell^2,\\
	% &\varphi_6=\frac{(\beta+\gamma)}{2\gamma^3}\rho_2^{-1}(L)\ell^2 + \frac{\rho_2^{-2}(L)}{2}\ell^2 + \frac{\ell^2}{2},\\
	&\varphi_5=\frac{1}{8}-\frac{(\beta+\gamma)^2}{\gamma^5}\rho_2^{-1}(L)\ell^2
		-\frac{1}{2\gamma^2}\rho_2^{-2}(L)\ell^2,\\
	&\varphi_6=\frac{(\beta+\gamma)}{2\gamma^3}\rho_2^{-1}(L)\ell^2 + \frac{1}{2\gamma^2}\rho_2^{-2}(L)\ell^2 + \frac{\rho_2^{-2}(L)}{2}\ell^2  \\
		&\quad + \frac{\ell^2}{2}
		+\frac{\ell}{2},\\
	&\varphi_7=\big( \rho(L) + 1 \big)\gamma + \frac{1}{2}\rho(L)\beta,\\
	&\varphi_8=3\rho^2(L)\beta^2 \hspace{-0.1em}+\hspace{-0.1em} \big(\rho(L) \hspace{-0.1em}-\hspace{-0.1em} 2\rho_2(L) \big)\beta\gamma \hspace{-0.1em}+\hspace{-0.1em} \big(\rho(L) \hspace{-0.1em}+\hspace{-0.1em} 2 \big)\gamma^2 \hspace{-0.1em}+\hspace{-0.1em} 1.
\end{align*}

\subsubsection{Compact Form of the Algorithm}

	Denote $\bsL=L\otimes{\bf I}_p$ and $\bar{\bsg}_k=\bsH\bsg_k$. To simplify the analysis, we first introduce the compact form of the algorithm \eqref{nonconvex:kia-algo-dc-a}--\eqref{zerosg:algorithm-random-pd}: 
	% and present several useful equations. 
	% Without ambiguity, denote $\mathcal{C}(\bsx)=\col(\mathcal{C}(x_1),\dots,\mathcal{C}(x_n))$, then
	\begin{subequations}\label{nonconvex:alg-compress-compact}
		\begin{align}
			&\bsq_{k}=\mathcal{C}((\bsx_{k}-\hat{\bsx}_{k-1})/s_k),\label{nonconvex:kia-algo-dc-compact-q}\\
			&\hat{\bsx}_{k}=\hat{\bsx}_{k-1}+\omega s_k\bsq_k,\label{nonconvex:kia-algo-dc-compact-a}\\
			&\bsy_{k}=\bsy_{k-1}+\omega s_k\bsL\bsq_k,\label{nonconvex:alg-compress-compact-b}\\
			&\bm{x}_{k+1}=\bm{x}_k-\alpha(\beta\bsL\hat{\bsx}_k+\gamma\bm{v}_k + \bsg_k),\label{nonconvex:kia-algo-dc-compact-x}\\
			&\bm{v}_{k+1}=\bm{v}_k+\alpha\gamma\bsL\hat{\bsx}_k.\label{nonconvex:kia-algo-dc-compact-v}
		\end{align}
	\end{subequations}
	Noting $\sum_{i=1}^{n}L_{ij}=0$ and $\sum_{i=1}^{n}v_{i,0}={\bf0}_d$, \eqref{nonconvex:kia-algo-dc-compact-v} implies 
	% it follows that
	\begin{align}
		\bar{v}_k={\bm 0}_d.\label{nonconvex:vkn}
	\end{align}
	Then combining \eqref{nonconvex:vkn} and \eqref{nonconvex:kia-algo-dc-compact-x} yields
	\begin{align}
		&\bar{\bsx}_{k+1}=\bar{\bsx}_{k}-\alpha\bar{\bsg}_k.\label{nonconvex:xbardynamic}
	\end{align}
	From Assumption~\ref{nonconvex:ass:fiu}, one has the smoothness of $\tilde{f}$ and
	\begin{align}
		\|\bsg^0_{k}-\bsg_{k}\|^2\le \ell^2\|\bar{\bsx}_{k}-\bsx_{k}\|^2=\ell^2\|\bsx_{k}\|^2_{\bsE}.\label{nonconvex:gg1}
	\end{align}
	Moreover, $\rho(\bsH)=1$ gives
	\begin{align}
		&\|\bar{\bsg}^0_k-\bar{\bsg}_k\|^2=\|\bsH(\bsg^0_{k}-\bsg_{k})\|^2 \nonumber\\
		&\quad \le\|\bsg^0_{k}-\bsg_{k}\|^2\le \ell^2\|\bsx_k\|^2_{\bsE}.\label{nonconvex:gg2}
	\end{align}
	Similarly, the smoothness of $\tilde{f}$ and \eqref{nonconvex:xbardynamic} gives
	\begin{align}
		&\|\bsg^0_{k+1}-\bsg^0_{k}\|^2\le \ell^2\|\bar{\bsx}_{k+1}-\bar{\bsx}_{k}\|^2=\alpha^2\ell^2\|\bar{\bsg}_k\|^2.\label{nonconvex:gg}
	\end{align}
	
\subsubsection{One-Step Difference of Lyapunov Components}
We denote the following constants:
\begin{align*}
	&\tau_2=\Big(\frac{\beta+\gamma}{2\gamma^3} + \frac{(\beta+\gamma)^2}{\alpha\gamma^5} \Big)
		\rho_2^{-1}(L)+\frac{1}{2},\\
	&\tau_3=(\frac{\alpha+1}{2\alpha\gamma^2}+\frac{1}{2})\rho_2^{-2}(L).
\end{align*}
% We begin by analyzing the one-step difference of the Lyapunov function $\mathcal{L}_{1,k}$.
% To this end, we bound the components $e_{1,k}$--$e_{4,k}$ separately via Lemmas~\ref{Lemma Consensus}--\ref{Lemma Optimality 2}.
% Denote some constants
To analyze $\mathcal{L}_{1,k}$,
 we bound the one-step differences of $e_{1,k}$--$e_{4,k}$ separately in Lemmas~\ref{Lemma Consensus}--\ref{Lemma Optimality 2}.
 For readability, the proofs are provided in Appendix~\ref{imed}.
% We analyze the Lyapunov function $\mathcal{L}_{1,k}$ term by term, i.e., 
% we bound the one-step differences of $e_{1,k}$--$e_{4,k}$ separately in Lemmas~\ref{Lemma Consensus}--\ref{Lemma Optimality 2}.
% Before proceeding, we denote
% \begin{align*}
% 	&\tau_2=\Big(\frac{\beta+\gamma}{2\gamma^3} + \frac{(\beta+\gamma)^2}{\alpha\gamma^5} \Big)
% 		\rho_2^{-1}(L)+\frac{1}{2},\\
% 	&\tau_3=(\frac{\alpha+1}{2\alpha\gamma^2}+\frac{1}{2})\rho_2^{-2}(L).
% \end{align*}

\begin{lemma}\label{Lemma Consensus}
			Suppose Assumptions~\ref{nonconvex:ass:graph}--\ref{nonconvex:ass:fiu} holds. 
			Let $\{x_{i,k}\}$ be the sequence generated by Algorithm~\ref{nonconvex:algorithm-pdgd}.
			Then,
			% Then, the following holds for Algorithm~\ref{nonconvex:algorithm-pdgd}:
			\begin{align}
				&e_{1,k+1}\le e_{1,k}-\|\bsx_k\|^2_{\frac{\alpha\beta}{2}\bsL-\frac{\alpha}{2}\bsE
				-\frac{\alpha}{2}(1+3\alpha)\ell^2\bsE - 3\alpha^2\beta^2\bsL^2}   \nonumber\\
			&\quad-\alpha\gamma\hat{\bsx}^\top_k\bsE\Big(\bm{v}_k+\frac{1}{\gamma}\bsg_k^0\Big) +\Big\|\bm{v}_k+\frac{1}{\gamma}\bsg_k^0\Big\|^2_{\frac{6\alpha^2\gamma^2\rho(L)+\alpha\gamma}{4}\bsF}
			\nonumber\\
			&\quad+ \big(\frac{\alpha}{2}(\beta+2\gamma)\rho(L) + 3\alpha^2\beta^2\rho^2(L)\big)\|\bsx_k-\hat{\bsx}_k\|^2. \hspace{-0.5em}
			% &\quad +\Big\|\bm{v}_k+\frac{1}{\gamma}\bsg_k^0\Big\|^2_{\frac{6\alpha^2\gamma^2\rho(L)+\alpha\gamma}{4}\bsF}. 
			\label{nonconvex:v1k}
		\end{align}
	\end{lemma}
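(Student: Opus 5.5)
We bound $e_{1,k+1}-e_{1,k}$ by expanding the consensus recursion directly. From the compact form \eqref{nonconvex:alg-compress-compact}, $\bsE\bsx_{k+1}=\bsE\bsx_k-\alpha\bsE(\beta\bsL\hat{\bsx}_k+\gamma\bsv_k+\bsg_k)$. Using $\bsE^2=\bsE$ and $\bsE\bsL=\bsL$ from \eqref{nonconvex:KL-L-eq}, we get
\begin{align*}
e_{1,k+1}=e_{1,k}-\alpha\bsx_k^\top\bsE(\beta\bsL\hat{\bsx}_k+\gamma\bsv_k+\bsg_k)+\tfrac{\alpha^2}{2}\|\bsE(\beta\bsL\hat{\bsx}_k+\gamma\bsv_k+\bsg_k)\|^2 .
\end{align*}
(We read $e_{1,k}$ as $\tfrac12\|\bsx_k\|^2_{\bsE}$.) We then split the linear term and the quadratic term separately.

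\textbf{Linear term.} Write $\gamma\bsv_k+\bsg_k=\gamma(\bsv_k+\frac1\gamma\bsg_k^0)+(\bsg_k-\bsg_k^0)$ and $\hat{\bsx}_k=\bsx_k+(\hat{\bsx}_k-\bsx_k)$.
\begin{itemize}
\item The piece $-\alpha\beta\bsx_k^\top\bsL\bsx_k$ gives the term $-\tfrac{\alpha\beta}{2}\|\bsx_k\|^2_{\bsL}$ after reserving half of it.
\item The cross piece $-\alpha\beta\bsx_k^\top\bsL(\hat{\bsx}_k-\bsx_k)$ is handled by Young's inequality in the $\bsL$-weighted form. This yields $\tfrac{\alpha\beta}{2}\|\bsx_k\|^2_{\bsL}+\tfrac{\alpha\beta}{2}\rho(L)\|\bsx_k-\hat{\bsx}_k\|^2$, using $\bsL\le\rho(L)\bsE$ from \eqref{nonconvex:KL-L-eq2}.
\item I first keep a margin on the $\bsL$ part so the coefficient $\tfrac{\alpha\beta}{2}\bsL$ survives. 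Concretely, I absorb the cross term as $\tfrac{\alpha\beta}{2}\rho(L)\|\bsx_k-\hat{\bsx}_k\|^2$ plus a fraction of $\|\bsx_k\|^2_{\bsL}$, adjusting the split so the final coefficient matches \eqref{nonconvex:v1k}.
\item The gradient-mismatch piece $-\alpha\bsx_k^\top\bsE(\bsg_k-\bsg_k^0)$ is bounded by $\tfrac\alpha2\|\bsx_k\|_{\bsE}^2+\tfrac\alpha2\ell^2\|\bsx_k\|^2_{\bsE}$ via \eqref{nonconvex:gg1}.
\item The dual piece is kept exactly but rewritten in terms of $\hat{\bsx}_k$: $-\alpha\gamma\bsx_k^\top\bsE(\bsv_k+\frac1\gamma\bsg_k^0)=-\alpha\gamma\hat{\bsx}_k^\top\bsE(\cdot)-\alpha\gamma(\bsx_k-\hat{\bsx}_k)^\top\bsE(\cdot)$.
\item The last remainder is bounded by Young's inequality with the weight $\bsF$. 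Using $\bsE=\bsL^{1/2}\bsF\bsL^{1/2}$-type relations \eqref{nonconvex:lemma-eq3} and $\bsF\le\rho_2^{-1}\bsI$, it becomes $\alpha\gamma\rho(L)\|\bsx_k-\hat{\bsx}_k\|^2+\tfrac{\alpha\gamma}{4}\|\bsv_k+\frac1\gamma\bsg_k^0\|^2_{\bsF}$. This produces the $\tfrac{\alpha}{2}\cdot2\gamma\rho(L)$ and $\tfrac{\alpha\gamma}{4}\bsF$ terms.
\end{itemize}

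\textbf{Quadratic term.} We decompose it into the four pieces $\beta\bsL(\hat{\bsx}_k-\bsx_k)$, $\beta\bsL\bsx_k$, $\gamma(\bsv_k+\frac1\gamma\bsg_k^0)$ and $\bsg_k-\bsg_k^0$ (projected by $\bsE$). We then use $\|a+b+c+d\|^2\le 3\|a+b\|^2+\dots$, arranged so the constants come out as $3\alpha^2\beta^2\bsL^2$, $3\alpha^2\beta^2\rho^2(L)\|\bsx_k-\hat{\bsx}_k\|^2$, $\tfrac32\alpha^2\ell^2\bsE$ and $\tfrac{6}{4}\alpha^2\gamma^2\rho(L)\bsF$. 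The last of these uses $\|\bsE z\|^2\le\rho(L)\|z\|_{\bsF}^2$ from \eqref{nonconvex:lemma-eq5}, restricted to the range of $\bsE$ via $\bar v_k=0$ and the fact that $\bsE$ annihilates averages.

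\textbf{Collecting and main obstacle.} Collecting all terms gives \eqref{nonconvex:v1k}. The main obstacle is bookkeeping the Young splittings so the exact coefficients in \eqref{nonconvex:v1k} emerge; I would tune the weights after a first pass.
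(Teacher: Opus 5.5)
Your proposal is correct and is essentially the paper's proof. It uses the same exact expansion of $\tfrac12\|\bsx_{k+1}\|_{\bsE}^2$, the same $\tfrac{\alpha\beta}{2}$ Young split of $-\alpha\beta\bsx_k^\top\bsL(\hat{\bsx}_k-\bsx_k)$, and the same peeling off of $-\alpha\gamma\hat{\bsx}_k^\top\bsE(\bsv_k+\tfrac{1}{\gamma}\bsg_k^0)$. Your three-way split of the quadratic term gives the same $\tfrac{3\alpha^2}{2}$ constants the paper gets from pairwise Young on the cross terms, and every coefficient in \eqref{nonconvex:v1k} comes out exactly, so no weight tuning or ``margin'' is needed.

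One slip: bounding $-\alpha\gamma(\bsx_k-\hat{\bsx}_k)^\top\bsE(\bsv_k+\tfrac{1}{\gamma}\bsg_k^0)$ by $\alpha\gamma\rho(L)\|\bsx_k-\hat{\bsx}_k\|^2+\tfrac{\alpha\gamma}{4}\|\bsv_k+\tfrac{1}{\gamma}\bsg_k^0\|_{\bsF}^2$ needs the lower bound $\bsF\ge\rho^{-1}(L){\bf I}$ in \eqref{nonconvex:lemma-eq5}, not $\bsF\le\rho_2^{-1}(L){\bf I}$ or $\bsE=\bsL^{1/2}\bsF\bsL^{1/2}$. Likewise, $\|z\|^2\le\rho(L)\|z\|_{\bsF}^2$ holds for every $z$, so the quadratic term needs no restriction to the range of $\bsE$ via $\bar v_k=0$.
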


	\begin{lemma}\label{Lemma Optimality 1}
		Suppose Assumptions~\ref{nonconvex:ass:graph}--\ref{nonconvex:ass:fiu} holds.
		Let $\{x_{i,k}\}$ be the sequence generated by Algorithm~\ref{nonconvex:algorithm-pdgd}.
		Then,
		% Then, the following holds for Algorithm~\ref{nonconvex:algorithm-pdgd}:% with $\Delta_k=\frac{1}{\gamma}-\frac{1}{\gamma_{k+1}}$, and $\epsilon_5, \epsilon_6, \varepsilon_1$ are given in Appendix~\ref{appendix:lemmas}.:
		\begin{align}
			& e_{2,k+1} 
			\le e_{2,k} 
			+\alpha(\beta+\gamma)\hat{\bsx}^\top_k\bsE\Big(\bm{v}_k+\frac{1}{\gamma}\bsg_k^0\Big)\nonumber\\
			&\quad+ \big(\alpha^2\gamma(\beta+\gamma)\rho(L)+\alpha^2\big)\|\bm{x}_k-\hat{\bm{x}}_k\|^2 \nonumber\\
			&\quad +\Big\|\bm{v}_k+\frac{1}{\gamma}\bsg_{k}^0\Big\|^2_{\frac{\alpha\gamma}{4}\bsF}
				+\|\bm{x}_k\|^2_{\alpha^2\gamma(\beta+\gamma)\bsL + \alpha^2\bsE} \nonumber\\
			&\quad	+ \alpha^2 \tau_2\ell^2\|\bar{\bsg}_k\|^2
				 + \frac{\alpha\beta}{\gamma}\hat{\bm{x}}_k^\top\bsE(\bsg_{k+1}^0-\bsg_{k}^0).\label{nonconvex:v2k}
		\end{align}
	% where $\tau_2$ is given in Appendix~\ref{appendix:lemmas}.
	\end{lemma}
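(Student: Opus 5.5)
We bound $e_{2,k+1}-e_{2,k}$ by expanding the weighted square $e_{2,k}=\tfrac12\|\bsv_k+\tfrac1\gamma\bsg_k^0\|^2_{\frac{\beta+\gamma}{\gamma}\bsF}$ along the dual update~\eqref{nonconvex:kia-algo-dc-compact-v}. Write
\begin{align*}
\bsv_{k+1}+\tfrac1\gamma\bsg_{k+1}^0=\big(\bsv_k+\tfrac1\gamma\bsg_k^0\big)+\alpha\gamma\bsL\hat{\bsx}_k+\tfrac1\gamma(\bsg_{k+1}^0-\bsg_k^0),
\end{align*}
and expand the square with weight $\tfrac{\beta+\gamma}{2\gamma}\bsF$. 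This gives three kinds of terms: a cross term between the old dual variable and the increment, a cross term between the two increments, and the squared norm of the increment.

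\textbf{Linear cross term with $\bsL\hat{\bsx}_k$.} This term equals $\tfrac{\beta+\gamma}{\gamma}\alpha\gamma(\bsv_k+\tfrac1\gamma\bsg_k^0)^\top\bsF\bsL\hat{\bsx}_k$. Using $\bsF\bsL=\bsE$ from~\eqref{nonconvex:lemma-eq3}, it becomes exactly $\alpha(\beta+\gamma)\hat{\bsx}_k^\top\bsE(\bsv_k+\tfrac1\gamma\bsg_k^0)$. We keep it intact, since it cancels against the matching term in Lemma~\ref{Lemma Consensus} once $e_{1,k}$ and $e_{3,k}$ are combined.

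\textbf{Linear cross term with the gradient increment.} This term is $\tfrac{\beta+\gamma}{\gamma^2}(\bsv_k+\tfrac1\gamma\bsg_k^0)^\top\bsF(\bsg_{k+1}^0-\bsg_k^0)$. We bound it by Young's inequality: a piece $\|\bsv_k+\tfrac1\gamma\bsg_k^0\|^2_{\frac{\alpha\gamma}{4}\bsF}$, plus a multiple of $\tfrac{(\beta+\gamma)^2}{\alpha\gamma^5}\|\bsg_{k+1}^0-\bsg_k^0\|^2_{\bsF}$. We then use $\bsF\le\rho_2^{-1}(L)$ from~\eqref{nonconvex:lemma-eq5} and~\eqref{nonconvex:gg}, which give $\|\bsg_{k+1}^0-\bsg_k^0\|^2\le\alpha^2\ell^2\|\bar{\bsg}_k\|^2$. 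This produces the $\tfrac{(\beta+\gamma)^2}{\alpha\gamma^5}\rho_2^{-1}(L)$ part of $\alpha^2\tau_2\ell^2\|\bar{\bsg}_k\|^2$.

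\textbf{Quadratic terms.}
\begin{itemize}
\item The pure increment $\tfrac{\beta+\gamma}{2\gamma}\alpha^2\gamma^2\|\bsL\hat{\bsx}_k\|^2_{\bsF}=\tfrac{\alpha^2\gamma(\beta+\gamma)}{2}\hat{\bsx}_k^\top\bsL\hat{\bsx}_k$ uses $\bsL\bsF\bsL=\bsL$. We split $\hat{\bsx}_k=\bsx_k+(\hat{\bsx}_k-\bsx_k)$ and apply $\|a+b\|^2_{\bsL}\le2\|a\|^2_{\bsL}+2\|b\|^2_{\bsL}$ together with $\bsL\le\rho(L)\mathbf{I}$. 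This yields $\|\bsx_k\|^2_{\alpha^2\gamma(\beta+\gamma)\bsL}+\alpha^2\gamma(\beta+\gamma)\rho(L)\|\bsx_k-\hat{\bsx}_k\|^2$.
\item The squared gradient increment $\tfrac{\beta+\gamma}{2\gamma^3}\|\bsg_{k+1}^0-\bsg_k^0\|^2_{\bsF}$ is bounded as above and gives the $\tfrac{\beta+\gamma}{2\gamma^3}\rho_2^{-1}(L)$ part of $\tau_2$.
\item The mixed term $\tfrac{\beta+\gamma}{\gamma}\alpha\hat{\bsx}_k^\top\bsL\bsF(\bsg_{k+1}^0-\bsg_k^0)=\tfrac{\alpha(\beta+\gamma)}{\gamma}\hat{\bsx}_k^\top\bsE(\bsg_{k+1}^0-\bsg_k^0)$ is split as $\tfrac{\alpha\beta}{\gamma}+\alpha$. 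We keep the $\tfrac{\alpha\beta}{\gamma}$ part as stated. For the $\alpha\,\hat{\bsx}_k^\top\bsE(\bsg_{k+1}^0-\bsg_k^0)$ part we apply Young's inequality, giving $\tfrac{\alpha^2}{2}\|\bsE\hat{\bsx}_k\|^2+\tfrac12\|\bsg_{k+1}^0-\bsg_k^0\|^2$. The first piece splits into $\alpha^2\|\bsx_k\|^2_{\bsE}+\alpha^2\|\bsx_k-\hat{\bsx}_k\|^2$. The second piece is at most $\tfrac{\alpha^2\ell^2}{2}\|\bar{\bsg}_k\|^2$, which supplies the constant $\tfrac12$ in $\tau_2$.
\end{itemize}

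Collecting all terms gives~\eqref{nonconvex:v2k}. The main obstacle is the bookkeeping that makes the Young weights match the stated constants exactly. In particular, the $\tfrac1{\alpha}$ inside $\tau_2$ must absorb the cross term so that only $\tfrac{\alpha\gamma}{4}\bsF$ remains on the dual term. We also need to verify that $\bar{\bsg}_k$, rather than $\bar{\bsg}^0_k$, is what appears via~\eqref{nonconvex:xbardynamic}.
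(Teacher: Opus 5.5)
Your proposal is correct and follows essentially the same route as the paper. You expand the weighted square along the dual update, use $\bsF\bsL=\bsE$ and $\bsL\bsF\bsL=\bsL$, and apply Young's inequality with weights $\frac{\alpha\gamma}{4}\bsF$ and $\frac{(\beta+\gamma)^2}{\alpha\gamma^5}\bsF$. You split the mixed term into its $\alpha$ and $\frac{\alpha\beta}{\gamma}$ parts, decompose $\hat{\bsx}_k=\bsx_k+(\hat{\bsx}_k-\bsx_k)$, and close with \eqref{nonconvex:lemma-eq5}, \eqref{nonconvex:KL-L-eq2}, and \eqref{nonconvex:gg}. This reproduces every stated constant, including all three contributions to $\tau_2$.
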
%=\frac{1}{\epsilon_2}(\alpha-\alpha_{k+1})

	\begin{lemma}\label{Lemma Cross}
		Suppose Assumptions~\ref{nonconvex:ass:graph}--\ref{nonconvex:ass:fiu} holds. 
		Let $\{x_{i,k}\}$ be the sequence generated by Algorithm~\ref{nonconvex:algorithm-pdgd}.
		Then,
		% Then, the following holds for Algorithm~\ref{nonconvex:algorithm-pdgd}:% with $\epsilon_7$ and $\epsilon_8$ are given in Appendix~\ref{appendix:lemmas}:
		\begin{align}
			& e_{3,k+1}
			\le e_{3,k} 
		-\alpha\beta\hat{\bsx}_k^\top\bsE\Big(\bm{v}_k+\frac{1}{\gamma}\bsg_{k}^0\Big)\nonumber\\
		&\quad +\|\bm{x}_k\|^2_{\frac{\alpha(3\gamma+1)}{2}\bsE + 2\alpha^2(\gamma^2\bsE-\beta\gamma\bsL)+\frac{\alpha}{2}(1+2\alpha)\ell^2\bsE} \nonumber\\
		&\quad +\big(\alpha\gamma + 2\alpha^2(\gamma^2-\beta\gamma\rho_2(L))\big)\|\bm{x}_k-\hat{\bm{x}}_k\|^2 \nonumber\\
		&\quad-\frac{\alpha\beta}{\gamma}\hat{\bm{x}}_k^\top
		\bsE(\bsg_{k+1}^0-\bsg_{k}^0)+\Big(\alpha^2\tau_3\ell^2+\frac{\alpha}{8}\Big)\|\bar{\bsg}_k\|^2\nonumber\\
		&\quad-\Big\|\bm{v}_k+\frac{1}{\gamma}\bsg_{k}^0\Big\|^2_{\alpha(\gamma-\frac{5}{2}\rho^{-1}_2(L))\bsF
			-\frac{\alpha^2}{2}(\gamma^2+1)\rho(L)\bsF}.\label{nonconvex:v3k}
			\end{align}
			% where $\tau_3$ is given in Appendix~\ref{appendix:lemmas}.
	\end{lemma}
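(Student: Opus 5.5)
We bound $e_{3,k+1}-e_{3,k}$ by expanding the bilinear form $e_{3,k}=\bsx_k^\top\bsE\bsF(\bsv_k+\tfrac{1}{\gamma}\bsg_k^0)$ along the compact recursion \eqref{nonconvex:alg-compress-compact}. Write $\bsx_{k+1}=\bsx_k-\alpha(\beta\bsL\hat{\bsx}_k+\gamma\bsv_k+\bsg_k)$ and
\[
\bsv_{k+1}+\tfrac{1}{\gamma}\bsg^0_{k+1}=\bsv_k+\tfrac{1}{\gamma}\bsg^0_k+\alpha\gamma\bsL\hat{\bsx}_k+\tfrac{1}{\gamma}(\bsg^0_{k+1}-\bsg^0_k).
\]
Multiplying out the product $\bsx_{k+1}^\top\bsE\bsF(\cdot)$ gives $e_{3,k}$ plus first-order terms in $\alpha$ and second-order terms. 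Throughout we simplify with $\bsE\bsF\bsL=\bsE$ from \eqref{nonconvex:lemma-eq3}, $\bsE\bsL=\bsL$, and the fact that $\bsE$ annihilates $\bar{\bsg}^0_k$.

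The first-order terms are handled as follows.
\begin{enumerate}[label=(\alph*)]
\item The term $-\alpha\beta(\bsL\hat{\bsx}_k)^\top\bsE\bsF(\bsv_k+\tfrac1\gamma\bsg^0_k)$ collapses exactly to $-\alpha\beta\hat{\bsx}_k^\top\bsE(\bsv_k+\tfrac1\gamma\bsg^0_k)$, which is kept as is.
\item The term $-\alpha(\gamma\bsv_k+\bsg_k)^\top\bsE\bsF(\bsv_k+\tfrac1\gamma\bsg_k^0)$ is rewritten as $-\alpha\gamma\|\bsv_k+\tfrac1\gamma\bsg^0_k\|^2_{\bsE\bsF}$ plus a remainder $-\alpha(\bsg_k-\bsg^0_k)^\top\bsE\bsF(\cdot)$. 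Since $\bar v_k=0$ by \eqref{nonconvex:vkn}, $\|\cdot\|^2_{\bsE\bsF}$ differs from $\|\cdot\|^2_{\bsF}$ only through the averaged gradient component. That component is controlled by $\rho_2^{-1}(L)$ and $\|\bar{\bsg}_k\|^2$ terms, which produce the $\tfrac{\alpha}{8}\|\bar{\bsg}_k\|^2$ piece. The remainder is bounded by Young's inequality together with \eqref{nonconvex:gg1}, giving $\tfrac{\alpha}{2}\ell^2\|\bsx_k\|^2_{\bsE}$ plus a multiple of $\rho_2^{-1}(L)\|\cdot\|^2_{\bsF}$. Together these yield the coefficient $\alpha(\gamma-\tfrac52\rho_2^{-1}(L))$ on $\|\bsv_k+\tfrac1\gamma\bsg^0_k\|^2_{\bsF}$.
\item The term $\alpha\gamma\bsx_k^\top\bsE\hat{\bsx}_k$, coming from $\bsx_k^\top\bsE\bsF\bsL\hat{\bsx}_k$, is split via $\hat{\bsx}_k=\bsx_k-(\bsx_k-\hat{\bsx}_k)$. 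Young's inequality then gives $\tfrac{\alpha(3\gamma+1)}{2}\|\bsx_k\|^2_{\bsE}$ and $\alpha\gamma\|\bsx_k-\hat{\bsx}_k\|^2$, with cross-term constants chosen to match the statement.
\item The gradient-difference term $\tfrac1\gamma\bsx_k^\top\bsE\bsF(\bsg^0_{k+1}-\bsg^0_k)$ must be traded for the $\hat{\bsx}_k$ version $-\tfrac{\alpha\beta}{\gamma}\hat{\bsx}_k^\top\bsE(\bsg^0_{k+1}-\bsg^0_k)$ that appears in the statement. This term is designed to cancel against the same term in Lemma~\ref{Lemma Optimality 1}, so it is retained rather than bounded. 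The leftover pieces are bounded using \eqref{nonconvex:gg}, $\|\bsg^0_{k+1}-\bsg^0_k\|^2\le\alpha^2\ell^2\|\bar{\bsg}_k\|^2$, and \eqref{nonconvex:lemma-eq5}, which give the $\alpha^2\tau_3\ell^2\|\bar{\bsg}_k\|^2$ term.
\end{enumerate}

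The second-order cross term is
\[
-\alpha^2(\beta\bsL\hat{\bsx}_k+\gamma\bsv_k+\bsg_k)^\top\bsE\bsF\Big(\alpha^{-1}\cdot\alpha\gamma\bsL\hat{\bsx}_k+\tfrac{1}{\gamma}(\bsg^0_{k+1}-\bsg^0_k)\Big).
\]
The $\beta\gamma\hat{\bsx}_k^\top\bsL\hat{\bsx}_k$ part is negative. After splitting $\hat{\bsx}_k=\bsx_k-(\bsx_k-\hat{\bsx}_k)$ and applying $\bsL\ge\rho_2(L)\bsE$, it produces $-2\alpha^2\beta\gamma\bsL$ in the weight on $\bsx_k$ and $-2\alpha^2\beta\gamma\rho_2(L)$ on $\|\bsx_k-\hat{\bsx}_k\|^2$. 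The $\gamma^2\bsv_k^\top\bsE\hat{\bsx}_k$ and $\bsg_k$ parts are bounded with Young's inequality, producing $2\alpha^2\gamma^2\bsE$, the $\tfrac{\alpha^2}{2}(\gamma^2+1)\rho(L)\bsF$ correction, and $\alpha^2\ell^2\|\bsx_k\|_{\bsE}^2$ from \eqref{nonconvex:gg1}.

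The main obstacle is the bookkeeping in step (d): relating $\bsx_k$ to $\hat{\bsx}_k$ in the gradient-difference term so that its coefficient matches Lemma~\ref{Lemma Optimality 1} exactly, while every other residual is absorbed into the stated constants $\tau_3$ and the $\tfrac{\alpha}{2}(1+2\alpha)\ell^2$ factor. To do this I will first write $\bsx_{k+1}$ explicitly and isolate the $\bsL\hat{\bsx}_k$ component multiplying $\tfrac1\gamma(\bsg^0_{k+1}-\bsg^0_k)$, then bound the other components by Young's inequality with weights tuned to the stated constants.
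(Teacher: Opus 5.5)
There is a genuine gap in how you handle the second-order term $-\alpha^2\beta\gamma\|\hat{\bsx}_k\|^2_{\bsL}$. You claim that substituting $\hat{\bsx}_k=\bsx_k-(\bsx_k-\hat{\bsx}_k)$ into this \emph{negative} term yields $-2\alpha^2\beta\gamma\|\bsx_k\|^2_{\bsL}$ and $-2\alpha^2\beta\gamma\rho_2(L)\|\bsx_k-\hat{\bsx}_k\|^2$. The inequality $\|a+b\|^2_{A}\le 2\|a\|^2_{A}+2\|b\|^2_{A}$ only gives an upper bound when $A\succeq 0$. Applied to $-\|\cdot\|^2_{\bsL}$ it goes the wrong way: take $\hat{\bsx}_k=\bsx_k$, and you would be asserting $-\|\bsx_k\|^2_{\bsL}\le -2\|\bsx_k\|^2_{\bsL}$.

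The $-2\alpha^2\beta\gamma$ coefficients in the statement can only be reached by pooling all quadratics in $\hat{\bsx}_k$ before splitting. The paper does this in three steps.
\begin{itemize}
\item It bounds the first-order cross term as $\alpha\gamma\bsx_k^\top\bsE\hat{\bsx}_k\le\|\bsx_k\|^2_{\frac{\alpha\gamma}{2}\bsE}+\|\hat{\bsx}_k\|^2_{\frac{\alpha\gamma}{2}\bsE}$.
\item It adds the two pieces $\|\hat{\bsx}_k\|^2_{\frac{\alpha^2\gamma^2}{2}\bsE}$ coming from Young on $-\alpha^2\gamma^2\hat{\bsx}_k^\top\bsE(\bsv_k+\frac{1}{\gamma}\bsg_k^0)$ and on $-\alpha^2\gamma(\bsg_k-\bsg_k^0)^\top\bsE\hat{\bsx}_k$. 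This gives one term $\|\hat{\bsx}_k\|^2_{M}$ with $M=\frac{\alpha\gamma}{2}\bsE+\alpha^2(\gamma^2\bsE-\beta\gamma\bsL)$.
\item Only then does it split, $\|\bsx_k+(\hat{\bsx}_k-\bsx_k)\|^2_{M}\le\|\bsx_k\|^2_{2M}+\|\bsx_k-\hat{\bsx}_k\|^2_{2M}$, followed by $\bsL\succeq\rho_2(L)\bsE$ on the error piece.
\end{itemize}
This is valid because $M\succeq 0$ whenever $\alpha(\beta\rho(L)-\gamma)\le\frac12$. That condition is not written in the lemma, but it holds under the standing bound $\alpha<\hat{\kappa}_0$ with $\beta=\tau_1\gamma$, $\tau_1\ge\kappa_1$.

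Your step (c) spends exactly the positive mass $\frac{\alpha\gamma}{2}\|\hat{\bsx}_k\|^2_{\bsE}$ that makes $M$ positive semidefinite, because it splits $\alpha\gamma\bsx_k^\top\bsE\hat{\bsx}_k$ directly into $\bsx_k$ and $\bsx_k-\hat{\bsx}_k$ pieces. What remains, $\alpha^2(\gamma^2\bsE-\beta\gamma\bsL)\preceq\alpha^2\gamma^2(1-\tau_1\rho_2(L))\bsE$, is negative on the range of $\bsE$, so no factor-$2$ split of it is valid. Concretely, with $\hat{\bsx}_k=\bsx_k$ your route would need $\|\bsx_k\|^2_{\gamma^2\bsE-\beta\gamma\bsL}\ge 0$, which fails.

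There are also three smaller bookkeeping issues.
\begin{itemize}
\item In (b), $\|\bsv_k+\frac{1}{\gamma}\bsg_k^0\|^2_{\bsE\bsF}$ differs from the $\bsF$-norm by $\frac{1}{\gamma^2\lambda_{n+1}}\|\bar{\bsg}_k^0\|^2$, because $\bsH\bsF=\lambda_{n+1}^{-1}\bsH$ and $\bar v_k=0$. That is a $\bar{\bsg}_k^0$ term, not a $\bar{\bsg}_k$ term, and it becomes the stated $\frac{\alpha}{8}\|\bar{\bsg}_k\|^2$ only after recombining it with your remainder. The direct route is to let $\bsE$ act on the left factor, $\bsE(\gamma\bsv_k+\bsg_k)=\gamma\bsv_k+\bsg_k-\bar{\bsg}_k$. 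This leaves $-\alpha\gamma\|\cdot\|^2_{\bsF}$, the remainder $-\alpha(\bsg_k-\bsg_k^0)^\top\bsF(\cdot)$, and $\alpha\bar{\bsg}_k^\top\bsF(\cdot)\le\frac{\alpha}{8}\|\bar{\bsg}_k\|^2+2\alpha\|\cdot\|^2_{\bsF^2}$. The constant $\frac52=\frac12+2$ then comes from $\bsF^2\preceq\rho_2^{-1}(L)\bsF$.
\item In (d), nothing is traded: $\frac{1}{\gamma}\bsx_k^\top\bsE\bsF(\bsg^0_{k+1}-\bsg^0_k)$ and $-\frac{\alpha\beta}{\gamma}\hat{\bsx}_k^\top\bsE(\bsg^0_{k+1}-\bsg^0_k)$ are distinct terms of the expansion. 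The latter is kept. The former is bounded by $\frac{\alpha}{2}\|\bsx_k\|^2_{\bsE}+\frac{1}{2\alpha\gamma^2}\|\bsg^0_{k+1}-\bsg^0_k\|^2_{\bsF^2}$, which is the source of the ``$+1$'' in $\frac{\alpha(3\gamma+1)}{2}$ and of the $\frac{1}{2\alpha\gamma^2}$ part of $\tau_3$.
\item Your displayed second-order cross term carries $\frac{\alpha^2}{\gamma}$ instead of $\frac{\alpha}{\gamma}$ on the gradient-difference factor.
\end{itemize}
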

	\begin{lemma}\label{Lemma Optimality 2}
		Suppose Assumptions~\ref{nonconvex:ass:graph}--\ref{nonconvex:ass:fiu} holds . 
		Let $\{x_{i,k}\}$ be the sequence generated by Algorithm~\ref{nonconvex:algorithm-pdgd}.
		Then,
		% Then, the following holds for Algorithm~\ref{nonconvex:algorithm-pdgd}:
		\begin{align}
			&e_{4,k+1}
			\le e_{4,k}-\frac{\alpha}{4}(1-2\alpha \ell)\|\bar{\bsg}_{k}\|^2
		+\|\bsx_k\|^2_{\frac{\alpha}{2}\ell^2\bsE}\nonumber\\
		&\quad-\frac{\alpha}{4}\|\bar{\bsg}_{k}^0\|^2.\label{nonconvex:v4k}
		\end{align}
	\end{lemma}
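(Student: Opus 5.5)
The plan is to expand $e_{4,k+1}=nf(\bar{x}_{k+1})-nf^*$ using the averaged dynamics \eqref{nonconvex:xbardynamic}, $\bar{x}_{k+1}=\bar{x}_k-\alpha\bar{g}_k$, where $\bar{g}_k=\frac1n\sum_i g_{i,k}$ and $\bar{\bsg}_k={\bm 1}_n\otimes\bar{g}_k$. By the descent inequality \eqref{nonconvex:lemma:lipschitz} applied to $f$, which is $\ell$-smooth as an average of $\ell$-smooth $f_i$,
\begin{align*}
nf(\bar{x}_{k+1})\le nf(\bar{x}_k)-\alpha n\nabla f(\bar{x}_k)^\top\bar{g}_k+\frac{n\alpha^2\ell}{2}\|\bar{g}_k\|^2 .
\end{align*}
In stacked notation, $n\nabla f(\bar{x}_k)^\top\bar{g}_k=(\bar{\bsg}_k^0)^\top\bar{\bsg}_k$ and $n\|\bar{g}_k\|^2=\|\bar{\bsg}_k\|^2$. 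So the quadratic term is $\frac{\alpha^2\ell}{2}\|\bar{\bsg}_k\|^2$.

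Next, I handle the cross term with the polarization identity
\begin{align*}
-(\bar{\bsg}_k^0)^\top\bar{\bsg}_k=-\tfrac12\|\bar{\bsg}_k^0\|^2-\tfrac12\|\bar{\bsg}_k\|^2+\tfrac12\|\bar{\bsg}_k^0-\bar{\bsg}_k\|^2 .
\end{align*}
By \eqref{nonconvex:gg2}, the last term is at most $\frac{\ell^2}{2}\|\bsx_k\|_{\bsE}^2$. After multiplying by $\alpha$, the bound reads
\begin{align*}
e_{4,k+1}\le e_{4,k}-\tfrac{\alpha}{2}\|\bar{\bsg}_k^0\|^2-\tfrac{\alpha}{2}(1-\alpha\ell)\|\bar{\bsg}_k\|^2+\|\bsx_k\|^2_{\frac{\alpha}{2}\ell^2\bsE}.
\end{align*}

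This already implies the claimed inequality \eqref{nonconvex:v4k}, by two weakenings:
\begin{itemize}
\item $-\frac{\alpha}{2}\|\bar{\bsg}_k^0\|^2\le-\frac{\alpha}{4}\|\bar{\bsg}_k^0\|^2$;
\item $-\frac{\alpha}{2}(1-\alpha\ell)\le-\frac{\alpha}{4}(1-2\alpha\ell)$, which is equivalent to $\frac{\alpha}{4}\ge 0$ and so always holds.
\end{itemize}
No stepsize restriction is needed. If one prefers the exact stated form, one can instead use the weighted split $-a^\top b\le-\frac14\|a\|^2-\frac14\|b\|^2+\frac12\|a-b\|^2$, which gives the stated coefficients directly.

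There is no real obstacle here. The only point needing care is the scaling bookkeeping between $\bar{x}_k\in\mathbb{R}^d$ and the stacked vectors $\bar{\bsx}_k$, $\bar{\bsg}_k$, in particular the factor $n$ in $e_{4,k}$. That factor is exactly what makes $\|\bar{\bsg}_k^0-\bar{\bsg}_k\|^2\le\ell^2\|\bsx_k\|_{\bsE}^2$ enter with coefficient $\frac{\alpha\ell^2}{2}$.
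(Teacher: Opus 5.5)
Your proof is correct and follows essentially the same route as the paper: it uses the descent inequality along the averaged dynamics \eqref{nonconvex:xbardynamic}, decomposes the cross term $-\alpha(\bar{\bsg}_k^0)^\top\bar{\bsg}_k$, and applies \eqref{nonconvex:gg2} to turn $\|\bar{\bsg}_k^0-\bar{\bsg}_k\|^2$ into $\ell^2\|\bsx_k\|_{\bsE}^2$. The only difference is in the cross term: the paper splits it into two halves and applies Young's inequality to each, which gives exactly your ``weighted split'' with coefficients $\frac14,\frac14,\frac12$. Your exact polarization identity instead gives a slightly sharper intermediate bound, with $-\frac{\alpha}{2}$ coefficients, before you weaken it.
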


\subsubsection{Main Poof}
% In addition to the notations defined in Appendix~\ref{appendix:lemmas},
% 	we also denote the following notations.
	Now it is ready to prove Lemma~\ref{Lemma: Lyapunov Difference}.	
	Denote
	\begin{align*}
	% &\varphi_1=\frac{1}{2}\big(\rho_2(L)\beta-(3\gamma+2+2\ell^2)\big),\\
	% &\varphi_2= 3\rho^2(L)\beta^2  - \rho_2(L)\beta\gamma + \big(\rho(L)+2\big)\gamma^2
	% 	+ 1 + \frac{5}{2}\ell^2,\\
	% % &\varphi_1^\prime=\frac{1}{2}\big(\rho_2(L)\beta-(3\gamma+2+2\ell^2+2\varphi_5\ell^2)\big),\\
	% % &\varphi_2^\prime= 3\rho^2(L)\beta^2  - \rho_2(L)\beta\gamma + \big(\rho(L)+2\big)\gamma^2\\
	% % 	&\quad + 1 + \frac{5}{2}\ell^2 + 2\varphi_6\ell^2,\\
	% % &\varphi_1=\frac{1}{2}\big(\rho_2(L)\beta-(3\gamma+2+3\ell^2)\big),\\
	% % &\varphi_2= 3\rho^2(L)\beta^2 - \rho_2(L)\beta\gamma + \big(\rho(L)+2\big)\gamma^2 + 1 + \frac{5}{2}\ell^2,\\
	% &\varphi_3=\frac{\gamma}{2}-\frac{5}{2}\rho^{-1}_2(L),\\
	% &\varphi_4=2\rho(L)\gamma^2+\frac{1}{2}\rho(L),\\
	% % &\varphi_5=\frac{1}{8} + \frac{(\beta+\gamma)^2}{\gamma^5}\rho_2^{-1}(L)\ell^2
	% % 	+ \frac{1}{2\gamma^2}\rho_2^{-2}(L)\ell^2,\\
	% % &\varphi_6=\frac{(\beta+\gamma)}{2\gamma^3}\rho_2^{-1}(L)\ell^2 + \frac{\rho_2^{-2}(L)}{2}\ell^2 + \frac{\ell^2}{2},\\
	% &\varphi_5=\frac{1}{4}-\frac{(\beta+\gamma)^2}{\gamma^5}\rho_2^{-1}(L)\ell^2
	% 	-\frac{1}{2\gamma^2}\rho_2^{-2}(L)\ell^2,\\
	% &\varphi_6=\frac{(\beta+\gamma)}{2\gamma^3}\rho_2^{-1}(L)\ell^2 + \frac{1}{2\gamma^2}\rho_2^{-2}(L)\ell^2 + \frac{\rho_2^{-2}(L)}{2}\ell^2  \\
	% 	&\quad + \frac{\ell^2}{2}
	% 	+\frac{\ell}{2},\\
	&\bsM_1=\frac{1}{2}\big(\beta\bsL - (3\gamma+2+3\ell^2)\bsE\big),\\
	&\bsM_2=3\beta^2\bsL^2 - (\beta\gamma - \gamma^2)\bsL 
		+ \Big(2\gamma^2 + 1 + \frac{5}{2}\ell^2 \Big)\bsE.
	% &\bsM_2^\prime=3\beta^2\bsL^2 - (\beta\gamma - \gamma^2)\bsL \nonumber\\
	%  	&\quad + \Big(2\gamma^2 + 1 + \frac{5}{2}\ell^2 + 2\varphi_6\ell^2 \Big)\bsE.
		\end{align*}
% Now it is ready to prove Lemma~\ref{Lemma: Lyapunov Difference}.
 
\noindent {\bf (i)}  
This step shows the relation between $\mathcal{L}_{1,k+1}$ and $\mathcal{L}_{1,k}$.
	% \begin{lemma}
	% 	\begin{align}\label{L1:iteration}
	% 		&\mathcal{L}_{1,k+1}\le \mathcal{L}_{1,k} - \frac{\alpha}{4}\|\bar{\bsg}_{k}^0\|^2 + \alpha n\tilde{d}^2\psi_2\max_{i\in[n]}\|x_{i,k}-\hat{x}_{i,k}\|^2_p
	% 	\end{align}
	% \end{lemma}
		% Similar to the proof of \eqref{nonconvex:vkLya4_determin}, by 
		Combining \eqref{nonconvex:v1k}--\eqref{nonconvex:v4k} yields
		\begin{align}\label{nonconvex:vkLya_xhat_determin:L2}
			&\mathcal{L}_{1,k+1}
			\le  \mathcal{L}_{1,k}-\|\bsx_k\|^2_{\frac{\alpha\beta}{2}\bsL-\frac{\alpha}{2}\bsE
					-\frac{\alpha}{2}(1+3\alpha)\ell^2\bsE - 3\alpha^2\beta^2\bsL^2}
					\nonumber\\
				&\quad	+\Big\|\bm{v}_k+\frac{1}{\gamma}\bsg_k^0\Big\|^2_{\frac{6\alpha^2\gamma^2\rho(L)+\alpha\gamma}{4}\bsF}\nonumber\\
				&\quad+ \big(\frac{\alpha}{2}(\beta+2\gamma)\rho(L) + 3\alpha^2\beta^2\rho^2(L)\big)\|\bsx_k-\hat{\bsx}_k\|^2 \nonumber\\
			&\quad + \big(\alpha^2\gamma(\beta+\gamma)\rho(L)+\alpha^2\big)\|\bm{x}_k-\hat{\bm{x}}_k\|^2
				\nonumber\\
				&\quad +\Big\|\bm{v}_k+\frac{1}{\gamma}\bsg_{k}^0\Big\|^2_{\frac{\alpha\gamma}{4}\bsF} + \|\bm{x}_k\|^2_{\alpha^2\gamma(\beta+\gamma)\bsL + \alpha^2\bsE} \nonumber\\
				&\quad + \alpha^2 \tau_2\ell^2\|\bar{\bsg}_k\|^2  + \Big(\alpha^2\tau_3\ell^2+\frac{\alpha}{8}\Big)\|\bar{\bsg}_k\|^2\nonumber\\
			&\quad +\|\bm{x}_k\|^2_{\frac{\alpha(3\gamma+1)}{2}\bsE + 2\alpha^2(\gamma^2\bsE-\beta\gamma\bsL)+\frac{\alpha}{2}(1+2\alpha)\ell^2\bsE} \nonumber\\
				&\quad +\big(\alpha\gamma + 2\alpha^2(\gamma^2-\beta\gamma\rho_2(L))\big)\|\bm{x}_k-\hat{\bm{x}}_k\|^2 \nonumber\\
				&\quad-\Big\|\bm{v}_k+\frac{1}{\gamma}\bsg_{k}^0\Big\|^2_{\alpha(\gamma-\frac{5}{2}\rho^{-1}_2(L))\bsF
					-\frac{\alpha^2}{2}(\gamma^2+1)\rho(L)\bsF} \nonumber\\
			&\quad -\frac{\alpha}{4}(1-2\alpha \ell)\|\bar{\bsg}_{k}\|^2
				+\|\bsx_k\|^2_{\frac{\alpha}{2}\ell^2\bsE} 
				-\frac{\alpha}{4}\|\bar{\bsg}_{k}^0\|^2 \nonumber\\
			&=\mathcal{L}_{1,k}-\|\bsx_k\|^2_{\alpha\bsM_1-\alpha^2\bsM_2} 
				- \Big\|\bm{v}_k+\frac{1}{\gamma}\bsg_k^0\Big\|^2_{\alpha(\varphi_3-\alpha\varphi_4)\bsF} \nonumber\\
				&\quad 	-\alpha(\varphi_5-\alpha\varphi_6)\|\bar{\bsg}_{k}\|^2 + \alpha(\varphi_7+\alpha\varphi_8)\|\bsx_{k}-\hat{\bsx}_{k}\|^2 \nonumber\\
				&\quad 	-\frac{\alpha}{4}\|\bar{\bsg}_{k}^0\|^2 \nonumber\\
			&\le \mathcal{L}_{1,k}-\|\bsx_k\|^2_{\alpha(\varphi_1-\alpha\varphi_2)\bsE}
				- \Big\|\bm{v}_k+\frac{1}{\gamma}\bsg_k^0\Big\|^2_{\alpha(\varphi_3-\alpha\varphi_4)\bsF} \nonumber\\
				&\quad 	-\alpha(\varphi_5-\alpha\varphi_6)\|\bar{\bsg}_{k}\|^2 + \alpha(\varphi_7+\alpha\varphi_8)\|\bsx_{k}-\hat{\bsx}_{k}\|^2 \nonumber\\
				&\quad 	-\frac{\alpha}{4}\|\bar{\bsg}_{k}^0\|^2, 
		\end{align}
		where the last inequality holds due to \eqref{nonconvex:KL-L-eq2}.

		Next, we show that the constants appearing on the right-hand side of \eqref{nonconvex:vkLya_xhat_determin:L2} are positive.
		% From $\beta=\tau_1\gamma$, $\tau_1>\kappa_1\ge\frac{4}{\rho_2(L)}$, and $\gamma>\kappa_2\ge\max\{2+2\ell^2,~\sqrt[3]{\frac{8\ell^2(\kappa_1+1)^2}{\rho_2(L)}},~\frac{2\ell}{\rho_2(L)}\}$, we have
		Recalling $\beta=\tau_1\gamma$, $\tau_1>\kappa_1\ge4/\rho_2(L)$, and $\gamma>\kappa_2\ge 2+2\ell^2$, we have
		\begin{align}\label{nonconvex:beta1}
			&\varphi_1=\frac{1}{2}\big(\rho_2(L)\beta-(3\gamma+2+2\ell^2)\big) \nonumber\\
			&\quad >\frac{1}{2}\rho_2(L)\kappa_1\gamma-\frac{1}{2}(3\gamma+2+2\ell^2)\nonumber\\
			&\quad >\frac{1}{2}\rho_2(L)\kappa_1\gamma-2\gamma \ge0.
		\end{align}
		% From $\beta=\tau_1\gamma$, $\tau_1>\kappa_1\ge\frac{4}{\rho_2(L)}$, and $\gamma>\kappa_2\ge2+3\ell^2$, we have
		% \begin{align}\label{nonconvex:beta1:L2}
		% 	&\varphi_1=\frac{1}{2}\big(\rho_2(L)\beta-(3\gamma+2+3\ell^2)\big) \nonumber\\
		% 	&\quad>\frac{1}{2}\gamma\big( \rho_2(L)\kappa_1-4 \big) \ge0.
		% \end{align}
		Similarly, from $\gamma>\kappa_2\ge5/\rho_2(L)$,
		\begin{align}\label{nonconvex:beta2}
			\varphi_3=\frac{\gamma}{2}-\frac{5}{2}\rho^{-1}_2(L)>0.
		\end{align}
		From $\gamma>\kappa_2\ge\max\{\sqrt[3]{\frac{16\ell^2(\kappa_1+1)^2}{\rho_2(L)}},~\frac{2\sqrt{2}\ell}{\rho_2(L)}\}$,
		\begin{align}
			&\varphi_5=\frac{1}{8}-\frac{(\beta+\gamma)^2}{\gamma^5}\rho_2^{-1}(L)\ell^2
				-\frac{1}{2\gamma^2}\rho_2^{-2}(L)\ell^2 \nonumber\\
			&> \frac{1}{8}-\frac{(\kappa_1+1)^2}{\gamma^3}\rho_2^{-1}(L)\ell^2
				-\frac{1}{2\gamma^2}\rho_2^{-2}(L)\ell^2 > 0.
		\end{align}
		% Given $\alpha=\frac{1}{n^{\frac{1}{4}}\tilde{d}\sqrt{T}}$ and $T>\tilde{\kappa}_3\ge\frac{1}{n^{\frac{1}{2}}\tilde{d}^2\kappa_7^2}$,
		%  and $\kappa_7\le\min\{\frac{\varphi_1}{\varphi_2},\frac{\varphi_3}{\varphi_4},\frac{\varphi_5}{\varphi_6}\}$, 
		Given $\alpha<\hat{\kappa}_0=\min\{\frac{\varphi_1}{\varphi_2},\frac{\varphi_3}{\varphi_4},\frac{\varphi_5}{\varphi_6}\}$,
		\begin{subequations}
			\begin{align}
				&\alpha(\varphi_1-\alpha\varphi_2)>0,\label{nonconvex:vkLya1.1p_quantization}\\
				&\alpha(\varphi_3-\alpha\varphi_4)>0,\label{nonconvex:vkLya1.3p_quantization}\\
				&\alpha(\varphi_5-\alpha\varphi_6)>0,\label{nonconvex:vkLya1.2p_quantization}
			\end{align}
		\end{subequations}
	
		% From Lemma~\ref{nonconvex:lemma:pnorm}, we have
		By norm equivalence, we obtain
		\begin{align}\label{nonconvex:xminusxhat_determin}
			&\|\bsx_{k}-\hat{\bsx}_{k}\|^2=\sum_{i=1}^{n}\|x_{i,k}-\hat{x}_{i,k}\|^2 \nonumber\\
			&\le\sum_{i=1}^{n}\tilde{d}^2\|x_{i,k}-\hat{x}_{i,k}\|^2_p\le n\tilde{d}^2\max_{i\in[n]}\|x_{i,k}-\hat{x}_{i,k}\|^2_p.
		\end{align}		
		Combining \eqref{nonconvex:vkLya_xhat_determin:L2}--\eqref{nonconvex:xminusxhat_determin} gives \eqref{L1:iteration}.

\noindent {\bf (ii)}  
	In~\eqref{L1:iteration}, the compression error $\|x_{i,k}-\hat{x}_{i,k}\|_p^2$ depends on $\|x_{i,k}-\hat{x}_{i,k-1}\|_p^2$ for both the locally- and globally-bounded compressors, as shown in~\eqref{local:contraction} and~\eqref{global:contraction}.
	% Therefore, in~\eqref{c:iteration} we further analyze the one-step difference of $\|x_{i,k}-\hat{x}_{i,k-1}\|_p^2$.
% As shown in~\eqref{local:contraction}, the compression error term $\max_{i\in[n]}\|x_{i,k}-\hat{x}_{i,k}\|_p^2$ in~\eqref{L1:iteration}
% is controllable under the locally-bounded compressors only when the input stays in the local region, i.e., $\max_{i\in[n]}\|x_{i,k}-\hat{x}_{i,k-1}\|_p\le Cs_k$.
Accordingly, we also track the one-step difference of $\max_{i\in[n]}\|x_{i,k}-\hat{x}_{i,k-1}\|_p$. 
% This step aims to establish an upper bound for 
% $\max_{i\in[n]}\|x_{i,k}-\hat{x}_{i,k}\|_p^2$.
% Noting that 
% $\max_{i\in[n]}\|x_{i,k}-\hat{x}_{i,k}\|_p^2
% = \max_{i\in[n]}\|x_{i,k}-\hat{x}_{i,k-1}
% -\omega s_k \mathcal{C}(x_{i,k}-\hat{x}_{i,k-1})\|_p^2$,
% we therefore focus on analyzing the iteration of 
% \(\max_{i\in[n]}\|x_{i,k}-\hat{x}_{i,k-1}\|_p^2\). We have
			\begin{align}\label{nonconvex:xminush_compress_determin}
			&\|x_{i,k+1}-\hat{x}_{i,k}\|^2_p=\|x_{i,k+1}-x_{i,k}+x_{i,k}-\hat{x}_{i,k}\|^2_p\nonumber\\
			&\le(\|x_{i,k+1}-x_{i,k}\|_p+\|x_{i,k}-\hat{x}_{i,k}\|_p)^2\nonumber\\
			&\le(1+\varepsilon_5^{-1})\|x_{i,k+1}-x_{i,k}\|_p^2+(1+\varepsilon_5)\|x_{i,k}-\hat{x}_{i,k}\|_p^2\nonumber\\
			&\le(1+\varepsilon_5^{-1})\hat{d}^2\|x_{i,k+1}-x_{i,k}\|^2+(1+\varepsilon_5)\|x_{i,k}-\hat{x}_{i,k}\|_p^2\nonumber\\
			&\le(1+\varepsilon_5^{-1})\hat{d}^2\|\bsx_{k+1}-\bsx_{k}\|^2+(1+\varepsilon_5)\|x_{i,k}-\hat{x}_{i,k}\|_p^2,
			\end{align}
			% where the first inequality holds due to the Minkowski inequality; the second inequality holds due to the Cauchy--Schwarz inequality and $\varepsilon_5>0$; and the third inequality holds due to Lemma~\ref{nonconvex:lemma:pnorm}.
			where the first inequality holds due to the Minkowski inequality; the second inequality holds due to the Cauchy--Schwarz inequality and $\varepsilon_5>0$; and the third inequality holds due to norm equivalence.
			
			% For the first term on the right-hand side of \eqref{nonconvex:xminush_compress_determin}, we have
			% \begin{align}\label{nonconvex:xkoneminusx}
			% 	&\|\bsx_{k+1}-\bsx_k\|^2 = \alpha^2\|\beta\bsL\hat{\bsx}_k+\gamma\bm{v}_k+\bsg_k\|^2\nonumber\\
			% 	&=\alpha^2\|\beta\bsL(\hat{\bsx}_k-\bsx_k)+\beta\bsL\bsx_k+\gamma\bm{v}_k
			% 	+\bsg_k^0+\bsg_k-\bsg_k^0\|^2\nonumber\\
			% 	&\le4\alpha^2(\beta^2\|\bsx_k-\hat{\bsx}_k\|^2_{\bsL^2}+\beta^2\|\bsx_k\|^2_{\bsL^2} \nonumber\\
			% 		&\quad + \|\gamma\bm{v}_k +\bsg_k^0\|^2+\|\bsg_k-\bsg_k^0\|^2)\nonumber\\
			% 	&\le4\alpha^2\Big(\beta^2\rho^2(L)\|\bsx_k-\hat{\bsx}_k\|^2+\|\bsx_k\|^2_{(\beta^2\rho^2(L)+\ell^2)\bsE} + \nonumber\\
			% 		&\quad \Big\|\bm{v}_k+\frac{1}{\gamma}\bsg_k^0\Big\|^2_{\gamma^2\rho(L)\bsF}\Big),
			% \end{align}
			% where the first equality holds due to \eqref{nonconvex:kia-algo-dc-compact-x}; the first inequality holds due to the Cauchy--Schwarz inequality; and the second inequality holds due to \eqref{nonconvex:KL-L-eq2}, \eqref{nonconvex:lemma-eq5}, and \eqref{nonconvex:gg1}.

			Combining \eqref{nonconvex:xminusxhat_determin}, \eqref{nonconvex:xminush_compress_determin}, and \eqref{nonconvex:xkoneminusx} 
			% and \eqref{nonconvex:xminusxhat_determin}
			% and \eqref{hatx:le} 
			leads to
			\begin{align}\label{nonconvex:xminush_compress_determin2:0}
				&\max_{i\in[n]}\|x_{i,k+1}-\hat{x}_{i,k}\|^2_p \nonumber\\
				&\le(1+\varepsilon_5+\alpha^2 n\tilde{d}^2\psi_3)
				\max_{i\in[n]}\|x_{i,k}-\hat{x}_{i,k}\|_p^2 \nonumber\\
				&\quad +\alpha^2\psi_4\varepsilon_1\Big(\|\bsx_k\|^2_{\bsE}
				+\Big\|\bm{v}_k+\frac{1}{\gamma}\bsg_k^0\Big\|^2_{\bsF}\Big).
				% &\le(1+\varepsilon_5+\alpha^2 n\tilde{d}^2\psi_3)
				% \max_{i\in[n]}\|x_{i,k}-\hat{x}_{i,k}\|_p^2+\alpha^2\psi_4\mathcal{L}_{1,k}.
			\end{align}
			% where the first inequality holds due to \eqref{nonconvex:xkoneminusx}, \eqref{nonconvex:xminusxhat_determin}, and \eqref{nonconvex:xminush_compress_determin}; and the second inequality holds due to \eqref{nonconvex:vkLya3.2_determin}.
			Denote	$\hat{\mathcal{L}}_{1,k}=\|\bsx_k\|^2_{\bsE}+\Big\|\bsv_k+\frac{1}{\gamma}\bsg_k^0\Big\|^2_{\bsF} + \tilde{f}(\bar{\bsx}_{k})-nf^*$,
			then
			\begin{align}
				&\mathcal{L}_{1,k}
				\ge\frac{1}{2}\|\bsx_{k}\|^2_{\bsE}
					+\frac{1}{2}\Big(1+\frac{\beta}{\gamma}\Big)\Big\|\bsv+\frac{1}{\gamma}\bsg^0\Big\|^2_{\bsF} \nonumber\\
					&\quad-\frac{\gamma}{2\beta\rho_2(L)}\|\bsx_{k}\|^2_{\bsE}
					-\frac{\beta}{2\gamma}\Big\|\bsv+\frac{1}{\gamma}\bsg^0\Big\|^2_{\bsF} + \tilde{f}(\bar{\bsx}_{k})-nf^* \nonumber\\
				% &\ge\varepsilon_6\Big(\|\bsx_{k}\|^2_{\bsE}
				% 	+\Big\|\bsv+\frac{1}{\gamma}\bsg^0\Big\|^2_{\bsF}\Big) \nonumber\\
				&\ge\varepsilon_1\hat{\mathcal{L}}_{1,k}\ge0,\label{hatx:le}
			\end{align}
			where the first inequality holds due to the definition of $\mathcal{L}_{1,k}$, 
			the Cauchy--Schwarz inequality, and \eqref{nonconvex:lemma-eq5}; and the last inequality holds due to $0<\varepsilon_2<\frac{1}{2}$. Similarly, one has
			\begin{align}\label{hatx:ge}
				\mathcal{L}_{1,k}\le\varepsilon_2\hat{\mathcal{L}}_{1,k},\quad \varepsilon_2>1.
			\end{align}
			Finally, \eqref{c:iteration} follows form \eqref{nonconvex:xminush_compress_determin2:0} and \eqref{hatx:le}.

\subsection{Proof of Lemma~\ref{Lemma: PL}} \label{appendix:lemma:localpl}

We denote the following constants:
	\begin{align*}
&\tilde{\kappa}_0=\min\Big\{\hat{\kappa}_0,
	~\kappa_5,~\frac{\kappa_6}{\sqrt{n}\tilde{d}}\Big\},\\
&\kappa_5=\begin{cases}
	\text{the smaller positive root of }~1=\\
	\quad \alpha\min\{\varphi_1-\alpha\varphi_2,\;
	\alpha\varphi_3-\alpha\varphi_4\}, & \text{if it exists},\\[4pt]
	+\infty, & \text{otherwise,}
	\end{cases}\\
&\kappa_6=\sqrt{\frac{\varepsilon_5+2\varepsilon_5^2}{(1-2\varepsilon_5)\psi_3}},\\
% &\kappa_\nu=\frac{1}{2}\|\bm{x}_0 \|^2_{\bsK}+\frac{1}{2}\Big\|\bsv_0
% 	+\frac{1}{\beta}\bsg_0^0\Big\|^2_{\frac{\alpha+\beta}{\beta}\bsF} \\
% 	&\quad +\bsx_0^\top\bsK\bsF\Big(\bm{v}_0+\frac{1}{\beta}\bsg_0^0\Big)
% 	+\frac{1}{2\nu}\|\bar{\bsg}_0^0\|^2,\\
% &\psi_5>\frac{ (1-2\varepsilon_5)\psi_2}{\varepsilon_6},\\
% &\epsilon\in(\max\{\kappa_{9},~\kappa_{10}\},1),\\
% &\kappa_{9}=\sqrt{1-\alpha\big(\varepsilon_6- (1-2\varepsilon_5)\psi_2\frac{1}{\psi_5}\big)},\\
% &\kappa_{10}=\sqrt{1-(\varepsilon_5+2\varepsilon_5^2)+\alpha^2n\tilde{d}^2\big((1-2\varepsilon_5)\psi_3+\psi_4\psi_5\big)},\\
&\varepsilon_6=\min\{\nu/2,~\varepsilon_3\}/\varepsilon_2,\\
&\varepsilon_7=1-(\varepsilon_5+2\varepsilon_5^2) 
		+\alpha^2n\tilde{d}^2(1-2\varepsilon_5)\psi_3.
% &\psi_5>\frac{ (1-2\varepsilon_5)\psi_2}{\varepsilon_6}.
\end{align*}

From Assumption~\ref{nonconvex:ass:fil}, we have
\begin{align} \label{pl}
	\|\bar{\bsg}_k^0\|^2=n\|\nabla f(\bar{x}_k)\|^2\ge2n\nu(f(\bar{x}_k)-f^*).
\end{align}
% and  $\mathcal{L}_{1,0}\le\kappa_\nu$.
% Then for $k=0$, the inequalities hold since $s_0\ge\max\{\sqrt{\frac{\kappa_\nu}{n\tilde{d}^2\psi_5C^2}},~\frac{\max_{i\in[n]}\|x_{i,0}\|_p}{C}\}$.
% 	Suppose that the statement holds for $k = 0,1,\ldots,\tau$, 
% 	we next show that the inequality also holds for $k = \tau+1$.
Noting $\alpha<\tilde{\kappa}_0\le\kappa_5$ and $\varepsilon_2>1$,
\begin{align} \label{epsilon:01}
	\hspace{-0.4em} \alpha\varepsilon_6\le \frac{\alpha\varepsilon_3}{\varepsilon_2}=\frac{\alpha\min{\{\varphi_1-\alpha\varphi_2,~\alpha\varphi_3-\alpha\varphi_4}\}}{\varepsilon_2}<1.
\end{align}
% From Assumption~\ref{nonconvex:ass:fil}, we have
% \begin{align} \label{pl}
% 	\|\bar{\bsg}_k^0\|^2=n\|\nabla f(\bar{x}_k)\|^2\ge2\nu(f(\bar{x}_k)-f^*).
% \end{align}
Substituting \eqref{pl}, \eqref{local:contraction} into \eqref{L1:iteration} 
	and using \eqref{hatx:ge}, \eqref{epsilon:01} 
	with the assumption \eqref{ass:induction}, we obtain
\begin{align} \label{induction:cor:1:0}
&\mathcal{L}_{1,k+1} \le \mathcal{L}_{1,k} - \frac{\nu\alpha}{2}n(f(\bar{x}_k)-f^*) \nonumber\\
	&\quad + \alpha n\tilde{d}^2(1-2\varepsilon_5)\psi_2C^2s_k^2 \nonumber\\
	&\quad -\alpha\varepsilon_3\big(\|\bsx_{k}\|^2_{\bsE} + \big\|\bsv_{k}+\tfrac{1}{\gamma}\bsg_{k}^0\big\|^2_{\bsF}\big) \nonumber\\
&\quad \le (1-\alpha\varepsilon_6)\mathcal{L}_{1,k} + \alpha n\tilde{d}^2(1-2\varepsilon_5)\psi_2C^2s_k^2. 
% &\quad \le (1-\alpha\varepsilon_6)n\tilde{d}^2\psi_5C^2s_k^2 + \alpha n\tilde{d}^2(1-2\varepsilon_5)\psi_2C^2s_k^2 \nonumber\\
% &\quad \le \Big(1-\alpha\big(\varepsilon_6- (1-2\varepsilon_5)\psi_2\frac{1}{\psi_5}\big)\Big)\frac{n\tilde{d}^2\psi_5C^2s_{k+1}^2}{\epsilon^2} \nonumber\\
% &\quad \le n\tilde{d}^2\psi_5C^2s_{k+1}^2,
\end{align}
% where the last inequality holds due to $\psi_5>\frac{ (1-2\varepsilon_5)\psi_2}{\varepsilon_6}$ and $\epsilon>\kappa_{9}$.
% Next, we prove the second part of the induction.
Combining \eqref{c:iteration}, \eqref{local:contraction} and \eqref{ass:induction} yields
\begin{align} \label{induction:cor:2:0}
	&\max_{i\in[n]}\|x_{i,k+1}-\hat{x}_{i,k}\|_p^2\nonumber\\
	&\le (1+\varepsilon_5  +\alpha^2 n\tilde{d}^2\psi_3)(1-2\varepsilon_5)C^2s_k^2 
	 +\alpha^2\psi_4\mathcal{L}_{1,k} \nonumber\\
	&=\big(1-(\varepsilon_5+2\varepsilon_5^2) 
		+\alpha^2n\tilde{d}^2(1-2\varepsilon_5)\psi_3)\big)C^2s_{k}^2 \nonumber\\
	&\quad + \alpha^2\psi_4\mathcal{L}_{1,k}, 
\end{align}
which gives \eqref{c:iteration:induction}
 due to $\alpha<\tilde{\kappa}_0\le\kappa_6/(\sqrt{n}\tilde{d})$.

\subsection{Constants Used in Lemma~\ref{Lemma:s0}} \label{appendix:lemma2}

% In addition to those denoted in Appendix\ref{appendix:lemmas}, 
% We  denote the following constants:
\begin{align*}
	% &\tilde{\kappa}^{\prime}_0(T)=\min\Big\{\frac{\varphi_1}{\varphi_2},~\frac{\varphi_3}{\varphi_4},
	% 		~\kappa_6,~\frac{\varphi_1}{\varphi_2},~\frac{\varphi_5}{\varphi_6},~\kappa_{8}(T)\Big\},\\
	&\tilde{\kappa}_3=\max\Big\{\frac{1}{n^{\frac{1}{2}}\tilde{d}^2\kappa_7^2},
		~\frac{(1-2\varepsilon_5)\psi_3}{\varepsilon_8}n^{\frac{1}{2}}, \nonumber\\
		&\quad \frac{2\psi_4\mathcal{L}_{1,0}}{\varepsilon_8C^2s_0^2n}\frac{n^{\frac{1}{2}}}{\tilde{d}^2},
		~\frac{4(1-2\varepsilon_5)^2\psi_2^2\psi_4^2}{\varepsilon_8^2 }\frac{n^{\frac{1}{2}}}{\tilde{d}^2},
		~\frac{\hat{\kappa}_3\tilde{d}^2}{n^\frac{1}{2}}\Big\},\\
	&\hat{\kappa}_3>0,\\
	&\tilde{\kappa}_4(T) =   \psi_4\mathcal{L}_{1,0}  \alpha^2/C^2   
		+  (1-2\varepsilon_5)\psi_2\psi_4n\tilde{d}^2s_0^2 \alpha^3T, \\
	&\tilde{\kappa}^{\prime}_0(T)=\min\Big\{\kappa_7,~\kappa_{8}(T)\Big\},\\
	&\kappa_7=\min\Big\{\hat{\kappa}_0,~1/(2\ell)\Big\},\\
	% &\tilde{\kappa}^{\prime}_0(T)=\min\Big\{\frac{\varphi_1}{\varphi_2},~\frac{\varphi_3}{\varphi_4},
	% 	~\frac{\varphi_5}{\varphi_6},~\frac{1}{2\ell},~\kappa_{8}(T)\Big\},\\
	% &\tilde{\kappa}^{\prime}_0(T)=\min\Big\{\kappa_7,~\kappa_{8}(T)\Big\},\\
	% &\kappa_6=\begin{cases}
	% 	\text{the smaller positive root of }~1=\\
	% 	\quad \alpha\min\{\varphi_1-\alpha\varphi_2,\;
	% 	\alpha\varphi_3-\alpha\varphi_4\}, & \text{if it exists},\\[4pt]
	% 	+\infty, & \text{otherwise,}
	% 	\end{cases}\\
	&\kappa_{8}(T)=\min\Big\{\Big(\frac{\varepsilon_8}{(1-2\varepsilon_5)\psi_3 n\tilde{d}^2}\Big)^{\frac{1}{2}}, \Big(\frac{\varepsilon_8C^2s_0^2}{2\psi_4\mathcal{L}_{1,0} }\Big)^{\frac{1}{2}},\\
				&\quad	\Big(\frac{\varepsilon_8}{2(1-2\varepsilon_5)\psi_2\psi_4 n\tilde{d}^2}\Big)^{1/3}\frac{1}{T^{1/3}}	\Big\},\\
	&\varepsilon_8=(\varepsilon_5 + 2\varepsilon_5^2)/2.
\end{align*}

\subsection{Proof of Theorem~\ref{nonconvex:thm-R1}} \label{appendix:thm1}

			Denote $\varepsilon_4=\min\{\varepsilon_3,~1/4\}$.
			Under the setting of Theorem~\ref{nonconvex:thm-R1}, 
			it is clear that all the conditions in Lemmas~\ref{Lemma: Lyapunov Difference} and \ref{Lemma:s0} are satisﬁed.
			% Finally, we are ready to establish the convergence rate.
			Summing \eqref{L1:iteration} over $k \in [0,T-1]$, combined with \eqref{c:1}, \eqref{bound:sk} and \eqref{thm1:set}, yields	
			\begin{align}
				&\frac{1}{T}\sum_{k=0}^{T-1}\Big(\|\nabla f(\bar{x}_k)\|^2+\frac{1}{n}\sum_{i=1}^{n}\|x_{i,k}-\bar{x}_k\|^2\Big)\nonumber\\
				&=\frac{1}{nT}\sum_{k=0}^{T-1}[\|\bar{\bsg}_{k}^0\|^2 + \|\bsx_k\|_{\bsE}^2]\nonumber\\
				&\le \frac{\mathcal{L}_{1,0}}{\varepsilon_4nT\alpha}
				 	+ \frac{(1-2\varepsilon_5)\psi_2\tilde{d}^2}{\varepsilon_4T}\sum_{k=0}^{T-1}\Big( (1-\varepsilon_8)^k C^2s_0^2 \nonumber\\
					&\quad + \frac{\psi_4\mathcal{L}_{1,0}}{\varepsilon_8n} \frac{n^{\frac{1}{2}}}{\tilde{d}^2T}   	+ \frac{(1-2\varepsilon_5)\psi_2\psi_4}{\varepsilon_8 }C^2s_0^2  \frac{n^{\frac{1}{4}}}{\tilde{d}\sqrt{T}}
							\Big)  \nonumber\\
				&\le \Big(\frac{\mathcal{L}_{1,0}}{\varepsilon_4n} + \frac{(1-2\varepsilon_5)^2\psi_2^2\psi_4}{\varepsilon_4\varepsilon_8}C^2s_0^2\Big)\frac{n^{\frac{1}{4}}\tilde{d}}{\sqrt{T}} \nonumber\\
					&\quad +\mathcal{O}\Big(\frac{n^{\frac{1}{2}}}{T}\Big)
					+ \mathcal{O}\Big(\frac{\tilde{d}^2}{T}\Big), \label{zerosg:thm-sg-sm-equ4p}
			\end{align}
			where the first inequality follows from the definition of $\varepsilon_4$.
			Recalling $\mathcal{L}_{1,0}=\mathcal{O}(n)$, \eqref{zerosg:thm-sg-sm-equ4p} gives \eqref{nonconvex:thm-sm-equ1}.

			Since $\alpha=\frac{1}{n^{1/4}\tilde{d}\sqrt{T}}$ and 
			$T>\tilde{\kappa}_3\ge\frac{1}{n^{1/2}\tilde{d}^2\kappa_7^2}$ imply 
			$\alpha \le \kappa_7 \le \frac{1}{2\ell}$, 
			we obtain $\frac{\alpha}{4}(1-2\alpha\ell)>0$. 
			Summing~\eqref{nonconvex:v4k} over $k \in [0,T-1]$ and using \eqref{zerosg:thm-sg-sm-equ4p} gives
			\begin{align}\label{nonconvex:thm-sm-equ2:0}
				&f(\bar{x}_T)-f^*=\frac{e_{4,T}}{n}\le \frac{e_{4,0}}{n} + \frac{\ell^2}{2}\frac{1}{n^{\frac{1}{4}}\tilde{d}\sqrt{T}}\Big( \big(\frac{\mathcal{L}_{1,0}}{\varepsilon_4n}+ \nonumber\\
				&\quad  \frac{(1-2\varepsilon_5)^2\psi_2^2\psi_4}{\varepsilon_4\varepsilon_8}C^2s_0^2\big)n^{\frac{1}{4}}\tilde{d}\sqrt{T}
					+ \mathcal{O}\big(n^{\frac{1}{2}}\big) + \mathcal{O}\big(\tilde{d}^2\big)\Big),
			\end{align}
			which futher implies \eqref{nonconvex:thm-sm-equ2} since $e_{4,0}=\mathcal{O}(n)$ and $T>\tilde{\kappa}_3=\max\big\{\mathcal{O}\big(\frac{n^{\frac{1}{2}}}{\tilde{d}^2}\big),~\mathcal{O}\big(\frac{\tilde{d}^2}{n^{\frac{1}{2}}}\big)\big\}$.
		
		\subsection{Proof of Theorem~\ref{nonconvex:cor-R1}} \label{appendix:cor1}
			% In addition to the notations defined in Appendix~\ref{appendix:lemmas} and \ref{appendix:thm1},
			We denote the following constants:
			\begin{align*}
				&\kappa_0=\Big(\frac{\varepsilon_8}{2(1-2\varepsilon_5)\psi_2\psi_4 }\Big)^{1/3},\\
				&\kappa_3=\max\Big\{\frac{\tau_0^3}{n\tilde{d}^2\kappa_7^3},
					~\Big(\frac{(1-2\varepsilon_5)\psi_3\tau_0^2}{\varepsilon_8}\Big)^{\frac{3}{2}}n^{\frac{1}{2}}\tilde{d} \Big\},\\
				&\kappa_4= \frac{2\psi_4\mathcal{L}_{1,0} }{C^2\varepsilon_8n}.
			\end{align*}
			% If the first communication round is uncompressed, 
			% i.e., 

			If the first-round compression error is controllable and can approach zero, 
			% $s_0$ is controllable and can approach zero, 
			the overall convergence rate can be further enhanced.
			In the analysis, an uncompressed first communication round can be equivalently viewed as the initialization
			% Define the  sequence $\{s_k\}$ by
			% \begin{align}
			% 	&s_{k+1}^2 = \big(1-\frac{\varepsilon_5+2\varepsilon_5^2}{2}\big)s_k^2  + \psi_4\mathcal{L}_{1,0}  \alpha^2  \nonumber\\
			% 		&+  (1-2\varepsilon_5)\psi_2\psi_4n\tilde{d}^2s_0^2 \alpha^3T,
			% 		~\forall k\in[0,T-1], \label{set:sk:cor}\\
			% 	&s_0=\tau_4n\alpha^2,~\alpha=\frac{\tau_0}{n^{\frac{1}{3}}\tilde{d}^{\frac{2}{3}}T^{\frac{1}{3}}},~\tau_4\le \kappa_4,~\tau_0\le\kappa_0, 		
			% \end{align}
			% For simplicity of analysis, we equivalently treat an uncompressed first communication round as the initialization
			\begin{align}
			\hat{x}_{i,-1}=x_{i,0},\qquad y_{i,-1}=\sum\nolimits_{j=1}^{n}L_{ij}x_{j,0}.
			\end{align}
			Based on the parameter setting $\alpha=\tau_0/(n^{\frac{1}{3}}\tilde{d}^{\frac{2}{3}}T^{\frac{1}{3}})$, $T>\kappa_3$, $\tau_0\le\kappa_0$, $s_0^2=\tau_4 n\alpha^2$ and $\tau_4\ge \kappa_4$, we have $\alpha\in(0,\tilde{\kappa}^{\prime}_0(T))$.
			% Based on the parameter setting $\alpha=\tau_0/(n^{\frac{1}{3}}\tilde{d}^{\frac{2}{3}}T^{\frac{1}{3}})$, $T>\kappa_3$, $\tau_0\le\kappa_0$ and $\tau_4\ge \kappa_4$,
			% we have $\alpha\in(0,\tilde{\kappa}^{\prime}_0(T))$.
			Then, similar to the proof of Lemma~\ref{Lemma:s0}, we know that \eqref{bound:sk0} holds.
			% \begin{align} \label{bound:sk:max:cor}
			% 		&\max_{i\in[n]}\|x_{i,k}-\hat{x}_{i,k-1}\|_p^2\le s_k^2 \nonumber\\
			% 	&\le (1-\varepsilon_8)^k C^2s_0^2  	
			% 		+ \frac{\psi_4\mathcal{L}_{1,0}}{\varepsilon_8n} n\alpha^2 \nonumber\\
			% 		&\quad 	+ \frac{(1-2\varepsilon_5)\psi_2\psi_4}{\varepsilon_8 } n\tilde{d}^2 C^2s_0^2 \alpha^3 T.  
			% \end{align}

			Similar to the proof of \eqref{zerosg:thm-sg-sm-equ4p}, 
			% taking the expectation of \eqref{L1:iteration} 
			% with respect to $\mathcal{F}_T$ and 
			summing \eqref{L1:iteration} over $k \in [0,T-1]$, 
			together with \eqref{c:1}, \eqref{bound:sk0} and \eqref{cor1:set}, yields
			\begin{align} \label{nonconvex:cor-sm-eq1:0}
				&\frac{1}{T}\sum_{k=0}^{T-1}\Big(\|\nabla f(\bar{x}_k)\|^2+\frac{1}{n}\sum_{i=1}^{n}\|x_{i,k}-\bar{x}_k\|^2\Big)\nonumber\\
				&=\frac{1}{nT}\sum_{k=0}^{T-1}[\|\bar{\bsg}_{k}^0\|^2 + \|\bsx_k\|_{\bsE}^2]\nonumber\\
				&\le \frac{\mathcal{L}_{1,0}}{\varepsilon_4nT\alpha}
				 	+ \frac{(1-2\varepsilon_5)\psi_2\tilde{d}^2}{\varepsilon_4T}\sum_{k=0}^{T-1}\Big( (1-\varepsilon_8)^k C^2s_0^2 \nonumber\\
					&\quad + \frac{\psi_4\mathcal{L}_{1,0}}{\varepsilon_8n} n\alpha^2 
					 	+ \frac{(1-2\varepsilon_5)\psi_2\psi_4}{\varepsilon_8 } n\tilde{d}^2 C^2s_0^2 \alpha^3 T  \Big) \nonumber\\
				&\le \Big(\frac{\mathcal{L}_{1,0}}{\varepsilon_4\tau_0 n} 
					+  \frac{(1-2\varepsilon_5)\psi_2\psi_4\tau_0^2\mathcal{L}_{1,0}}{\varepsilon_4\varepsilon_8n}    \nonumber\\
					&  \quad + \frac{C^2(1-2\varepsilon_5)^2\psi_2^2\psi_4\tau_4\tau_0^5}{\varepsilon_4\varepsilon_8}\Big)\frac{n^{\frac{1}{3}}\tilde{d}^\frac{2}{3}}{T^{\frac{2}{3}}} \nonumber\\
					&  \quad + \frac{C^2(1-2\varepsilon_5)\psi_2\tau_4\tau_0^2}{\varepsilon_4\varepsilon_8}\frac{n^{\frac{1}{3}}\tilde{d}^\frac{2}{3}}{T^{\frac{5}{3}}}, 
			\end{align}
			where the last inequality holds due to $\alpha=\frac{\tau_0}{n^{\frac{1}{3}}\tilde{d}^{\frac{2}{3}}T^{\frac{1}{3}}}$ and $s_0^2=\tau_4n\alpha^2$.
			Recalling $\mathcal{L}_{1,0}=\mathcal{O}(n)$, \eqref{nonconvex:cor-sm-eq1:0} yields \eqref{nonconvex:cor-sm-eq1}.

			Similar to the proof of \eqref{nonconvex:thm-sm-equ2:0}, 
			summing~\eqref{nonconvex:v4k} over $k \in [0,T-1]$ and using \eqref{nonconvex:cor-sm-eq1:0} gives
			\begin{align}\label{nonconvex:cor-sm-eq2:0}
				&f(\bar{x}_T)-f^*=\frac{e_{4,T}}{n}\le \frac{e_{4,0}}{n} + \frac{\ell^2\tau_0}{2}\frac{1}{n^{\frac{1}{3}}\tilde{d}^{\frac{2}{3}}T^{\frac{1}{3}}}\Big(  \nonumber\\
				&\quad \mathcal{O}\big(n^{\frac{1}{3}}\tilde{d}^{\frac{2}{3}}T^{\frac{1}{3}}\big) + \mathcal{O}\big(\frac{n^{\frac{1}{3}}\tilde{d}^{\frac{2}{3}}}{T^{\frac{2}{3}}}\big) \Big),
			\end{align}
			which implies \eqref{nonconvex:cor-sm-eq2}.

\subsection{Proof of Theorem~\ref{nonconvex:thm-liner}} \label{appendix:thm2}

	% In addition to the notations defined in Appendix~\ref{appendix:lemmas}--\ref{appendix:cor1},
	% we also denote the following notations.
	We denote the following constants:
	\begin{align*}
&\kappa_0^\prime=\min\Big\{\hat{\kappa}_0,
	~\kappa_5,~\frac{\kappa_6^\prime}{\sqrt{n}\tilde{d}}\Big\},\\
% &\kappa_5=\begin{cases}
% 	\text{the smaller positive root of }~1=\\
% 	\quad \alpha\min\{\varphi_1-\alpha\varphi_2,\;
% 	\alpha\varphi_3-\alpha\varphi_4\}, & \text{if it exists},\\[4pt]
% 	+\infty, & \text{otherwise,}
% 	\end{cases}\\
&\kappa_6^\prime=\sqrt{\frac{\varepsilon_5+2\varepsilon_5^2}{(1-2\varepsilon_5)\psi_3+\psi_4\psi_5}},\\
&\psi_5>\frac{ (1-2\varepsilon_5)\psi_2}{\varepsilon_6},\\
&\kappa_\nu=\frac{1}{2}\|\bm{x}_0 \|^2_{\bsK}+\frac{1}{2}\Big\|\bsv_0
	+\frac{1}{\beta}\bsg_0^0\Big\|^2_{\frac{\alpha+\beta}{\beta}\bsF} \\
	&\quad +\bsx_0^\top\bsK\bsF\Big(\bm{v}_0+\frac{1}{\beta}\bsg_0^0\Big)
	+\frac{1}{2\nu}\|\bar{\bsg}_0^0\|^2,\\
% &\psi_5>\frac{ (1-2\varepsilon_5)\psi_2}{\varepsilon_6},\\
&\epsilon\in(\max\{\kappa_{9},~\kappa_{10}\},1),\\
&\kappa_{9}=\sqrt{1-\alpha\big(\varepsilon_6- (1-2\varepsilon_5)\psi_2\frac{1}{\psi_5}\big)},\\
&\kappa_{10}=\sqrt{1-(\varepsilon_5+2\varepsilon_5^2)+\alpha^2n\tilde{d}^2\big((1-2\varepsilon_5)\psi_3+\psi_4\psi_5\big)}.
% % &\varepsilon_6=\min\{\frac{\nu}{2},~\varphi_1-\alpha\varphi_2,~\alpha\varphi_3-\alpha\varphi_4\}/\varepsilon_2,\\
% &\varepsilon_6=\min\{\nu/2,~\varepsilon_3\}/\varepsilon_2.
\end{align*}

We use mathematical induction to prove
\begin{align}
	&\mathcal{L}_{1,k}\le n\tilde{d}^2\psi_5C^2s^2_k, \label{induction:cor:11}\\
	&\max_{i\in[n]}\|x_{i,k}-\hat{x}_{i,k-1}\|_p^2\le C^2s_k^2. \label{induction:cor:22}
\end{align}
From Assumption~\ref{nonconvex:ass:fil} and \eqref{pl}, we have
% \begin{align} \label{pl}
% 	\|\bar{\bsg}_k^0\|^2=n\|\nabla f(\bar{x}_k)\|^2\ge2n\nu(f(\bar{x}_k)-f^*),
% \end{align}
% and  
$\mathcal{L}_{1,0}\le\kappa_\nu$.
Then for $k=0$, the inequalities hold since $s_0\ge\max\{\sqrt{\frac{\kappa_\nu}{n\tilde{d}^2\psi_5C^2}},~\frac{\max_{i\in[n]}\|x_{i,0}\|_p}{C}\}$.
	Suppose that the statement holds for $k = 0,1,\ldots,\tau$, 
	we next show that the inequality also holds for $k = \tau+1$.
% For $k=0$, \eqref{induction:cor:11} holds due to $\mathcal{L}_{1,k}\le$
% since $s_0\ge\max_{i\in[n]}\|x_{i,0}\|_p/C$, \eqref{induction:cor:22} holds trivially. As for \eqref{induction:cor:11},
% We have
% \begin{align}\label{c:1}
% \|x_{i,k}-\hat{x}_{i,k}\|_p
% &=\|x_{i,k}-\hat{x}_{i,k-1}-s_k\mathcal{C}((x_{i,k}-\hat{x}_{i,k-1})/s_k)\|_p\nonumber\\
% &=s_k\|(x_{i,k}-\hat{x}_{i,k-1})/s_k-\mathcal{C}((x_{i,k}-\hat{x}_{i,k-1})/s_k)\|_p\nonumber\\
% &\le (1-\varphi)s_k,
% \end{align}
% where the first equality holds due to \eqref{nonconvex:kia-algo-dc-xhat_determin} and \eqref{nonconvex:kia-algo-dc-q_determin}; and the inequality holds due to \eqref{nonconvex:Ukandx} and \eqref{nonconvex:ass:compression_equ_determin}.

It is clear that Lemma~\ref{Lemma: PL} applies at iteration $\tau$.
% all the condition in Lemma~\ref{Lemma: PL} are satisfied.
% Noting $\alpha<\tilde{\kappa}_0\le\kappa_5$ and $\varepsilon_2>1$,
% \begin{align} \label{epsilon:01}
% 	\hspace{-0.4em} \alpha\varepsilon_6\le \frac{\alpha\varepsilon_3}{\varepsilon_2}=\frac{\alpha\min{\{\varphi_1-\alpha\varphi_2,~\alpha\varphi_3-\alpha\varphi_4}\}}{\varepsilon_2}<1.
% \end{align}
% From Assumption~\ref{nonconvex:ass:fil}, we have
% \begin{align} \label{pl}
% 	\|\bar{\bsg}_k^0\|^2=n\|\nabla f(\bar{x}_k)\|^2\ge2\nu(f(\bar{x}_k)-f^*).
% \end{align}
% Substituting \eqref{pl}, \eqref{c:1} into \eqref{L1:iteration} and using \eqref{hatx:ge}, \eqref{epsilon:01} with the induction hypothesis, we obtain
Substituting the induction hypothesis into \eqref{L1:iteration:induction} yields 
\begin{align} \label{induction:cor:1}
&\mathcal{L}_{1,\tau+1} 
% \mathcal{L}_{1,\tau} - \frac{\nu\alpha}{2}n(f(\bar{x}_\tau)-f^*) \nonumber\\
% 	&\quad + \alpha n\tilde{d}^2(1-2\varepsilon_5)\psi_2C^2s_\tau^2 \nonumber\\
% 	&\quad -\alpha\varepsilon_3\big(\|\bsx_{\tau}\|^2_{\bsE} + \big\|\bsv_{\tau}+\tfrac{1}{\gamma}\bsg_{\tau}^0\big\|^2_{\bsF}\big) \nonumber\\
% &\quad \le (1-\alpha\varepsilon_6)\mathcal{L}_{1,\tau} + \alpha n\tilde{d}^2(1-2\varepsilon_5)\psi_2C^2s_\tau^2 \nonumber\\
 \le (1-\alpha\varepsilon_6)n\tilde{d}^2\psi_5C^2s_\tau^2 + \alpha n\tilde{d}^2(1-2\varepsilon_5)\psi_2C^2s_\tau^2 \nonumber\\
&\quad \le \Big(1-\alpha\big(\varepsilon_6- (1-2\varepsilon_5)\psi_2\frac{1}{\psi_5}\big)\Big)\frac{n\tilde{d}^2\psi_5C^2s_{\tau+1}^2}{\epsilon^2} \nonumber\\
&\quad \le n\tilde{d}^2\psi_5C^2s_{\tau+1}^2,
\end{align}
where the last inequality holds due to $\psi_5>(1-2\varepsilon_5)\psi_2/\varepsilon_6$ and $\epsilon>\kappa_{9}$.

Next, we prove the second part of the induction.
Substituting the induction hypothesis into \eqref{c:iteration:induction} yields
\begin{align} \label{induction:cor:2}
	&\max_{i\in[n]}\|x_{i,\tau+1}-\hat{x}_{i,\tau}\|_p^2 
	% &\hspace{-0.2em}\le (1+\varepsilon_5  +\alpha^2 n\tilde{d}^2\psi_3)(1-2\varepsilon_5)C^2s_\tau^2 
	%  +\alpha^2\psi_4n\tilde{d}^2\psi_5C^2s_\tau^2  \nonumber\\
	\le \Big(1-(\varepsilon_5+2\varepsilon_5^2) \nonumber\\
		&\quad +\alpha^2n\tilde{d}^2\big((1-2\varepsilon_5)\psi_3+\psi_4\psi_5\big)\Big)\frac{C^2s_{k+1}^2}{\epsilon^2}\nonumber\\
	&\hspace{-0.2em} \le C^2s_{\tau+1}^2,
\end{align}
where the last inequality holds due to $\alpha<\kappa_0^\prime\le\kappa_6^\prime/(\sqrt{n}\tilde{d})$ and $\epsilon>\kappa_{10}$.
Then the combination of \eqref{induction:cor:1} and \eqref{induction:cor:2} completes the induction.

Finally, from  \eqref{induction:cor:11} and \eqref{hatx:le}, we know that 
\begin{align}
	 &f(\bar{x}_k)-f^\ast + \frac{1}{n}\sum_{i=1}^{n}\|x_{i,k}-\bar{x}_k\|^2 \nonumber\\
	 &\le \frac{\hat{\mathcal{L}}_{1,k}}{n}\le \frac{\mathcal{L}_{1,k}}{n\varepsilon_1}
     \le \frac{\psi_5C^2s_0^2}{\varepsilon_1}\tilde{d}^2\epsilon^{2k},
\end{align}
which gives \eqref{nonconvex:thm-ft-equ1_determin}.

\subsection{Constants Used in Lemma~\ref{lemma:5}} \label{appendix:lemma3}
	% We denote the following constant: 
	\begin{align*}
		&\varepsilon_9=\omega r\delta/2,\\
		&\varepsilon_{10}=(1-2\varepsilon_9)(1+\varepsilon_9^{-1}),\\
		&\varepsilon_{11}=\varepsilon_9 + 2\varepsilon_9^2 - 4\varepsilon_{10}\alpha^2\beta^2\rho^2(L).
	\end{align*}	
	% To analyze the one-step difference of $\mathcal{L}_{2,k}$, we study its additional component term $\|\bsx_{k}-\hat{\bsx}_{k}\|^2$.	
	% 	From \eqref{global:contraction},  
	% 		\begin{align}
	% 		&\mathbb{E}_{\mathcal{C}}[\|\bsx_{k+1}-\hat{\bsx}_{k+1}\|^2] 
	% 			=\sum\nolimits_{i=1}^{n} \mathbb{E}_{\mathcal{C}}[\|x_{i,k+1}-\hat{x}_{i,k+1}\|^2] \nonumber\\
	% 		&\le (1-2\varepsilon_9)\mathbb{E}_{\mathcal{C}}[\|\bsx_{k+1}-\hat{\bsx}_{k}\|^2] + n\omega rCs_k^2 \nonumber\\
	% 		&= (1-2\varepsilon_9)\mathbb{E}_{\mathcal{C}}[\|\bsx_{k+1}-\bsx_{k}+\bsx_{k}-\hat{\bsx}_{k}\|^2] + n\omega rCs_k^2 \nonumber\\
	% 		&\le \varepsilon_{10}\mathbb{E}_{\mathcal{C}}[\|\bsx_{k+1}-\bsx_{k}\|^2] \nonumber\\
	% 			& \quad + (1-\varepsilon_9-2\varepsilon_9^2)\mathbb{E}_{\mathcal{C}}[\|\bsx_{k}-\hat{\bsx}_{k-1}\|^2] +  n\omega rCs_k^2.
	% 			\label{nonconvex:xminush_compress}
	% 	\end{align}
	% 	Then substituting \eqref{nonconvex:xkoneminusx} into \eqref{nonconvex:xminush_compress} yields \eqref{nonconvex:xminush}.

\subsection{Proof of Lemma~\ref{lemma:L2}} \label{appendix:lemma4}

	We denote the following notations:
	\begin{align*}
		&\hat{\kappa}^{\prime}_0=\min\Big\{\frac{\varphi_1}{\varphi_2^{\prime}},~\frac{\varphi_3}{\varphi_4^{\prime}},~\frac{\varphi_5}{\varphi_6},
			~\frac{\sqrt{\varphi_7^2+8\varepsilon_{12}\varphi_8^{\prime}}-\varphi_7}{2\varphi_8^{\prime}}
			% ~\kappa_5,\frac{\kappa_6}{\sqrt{n}\tilde{d}}
			\Big\},\\
		&\varphi_2^{\prime}= (3+4\varepsilon_{10})\rho^2(L)\beta^2 - \rho_2(L)\beta\gamma + \big(\rho(L)+2\big)\gamma^2 \nonumber\\
			&\quad  + 1 + (\frac{5}{2}+4\varepsilon_{10})\ell^2,\\
		% &\varphi_2^{\prime}= (7+4\varepsilon_9^{-1})\rho^2(L)\beta^2 - \rho_2(L)\beta\gamma + \big(\rho(L)+2\big)\gamma^2 \nonumber\\
		% 	&\quad  + 1 + (\frac{13}{2}+4\varepsilon_9^{-1})\ell^2 + 2\varphi_6\ell^2,\\
		&\varphi_4^{\prime}=(2+4\varepsilon_{10})\rho(L)\gamma^2  + \frac{1}{2}\rho(L),\\
		&\varphi_8^{\prime}=(3+4\varepsilon_{10})\rho^2(L)\beta^2 + \big(\rho(L) - 2\rho_2(L) \big)\beta\gamma \nonumber\\
			&\quad + \big(\rho(L) + 2 \big)\gamma^2 + 1,\\
		&\varepsilon_{12}=(\varepsilon_9 + 2\varepsilon_9^2)/2,\\
		&\varepsilon_3^{\prime}=\min\{\varphi_1-\alpha\varphi_2^{\prime},~\alpha\varphi_3-\alpha\varphi_4^{\prime}\},\\
		% &\varepsilon_4^{\prime}=\min\{\varphi_1-\alpha\varphi_2^{\prime},~\alpha\varphi_3-\alpha\varphi_4^{\prime},~1/4\},\\
		% &\bsM_2^{\prime}=3\beta^2\bsL^2 - (\beta\gamma - \gamma^2)\bsL + \Big( 4\varepsilon_{10}\rho^2(L)\beta^2  \nonumber\\
		% 	&\quad + 2\gamma^2 + 1 + \frac{5}{2}\ell^2 + 2\varphi_6\ell^2 + 4\varepsilon_{10}\ell^2 \Big)\bsE.\\
		&\bsM_2^{\prime}=3\beta^2\bsL^2 - (\beta\gamma - \gamma^2)\bsL + \Big( 4\varepsilon_{10}\rho^2(L)\beta^2  \nonumber\\
			&\quad + 2\gamma^2 + 1 + \frac{5}{2}\ell^2 +  4\varepsilon_{10}\ell^2 \Big)\bsE.\\
	\end{align*}

		% We show the relation between $\mathcal{L}_{2,k+1}$ and $\mathcal{L}_{2,k}$.
		Combining \eqref{nonconvex:v1k}--\eqref{nonconvex:v4k} and \eqref{nonconvex:xminush} yields
		\begin{align} \label{L2:0}
				&\mathbb{E}_{\mathcal{C}}[\mathcal{L}_{2,k+1}]
			\le  \mathbb{E}_{\mathcal{C}}\Big[\mathcal{L}_{2,k} +\Big\|\bm{v}_k+\frac{1}{\gamma}\bsg_k^0\Big\|^2_{\frac{6\alpha^2\gamma^2\rho(L)+\alpha\gamma}{4}\bsF}
					\nonumber\\
				&\quad	-\|\bsx_k\|^2_{\frac{\alpha\beta}{2}\bsL-\frac{\alpha}{2}\bsE
					-\frac{\alpha}{2}(1+3\alpha)\ell^2\bsE - 3\alpha^2\beta^2\bsL^2}\nonumber\\
				&\quad+ \big(\frac{\alpha}{2}(\beta+2\gamma)\rho(L) + 3\alpha^2\beta^2\rho^2(L)\big)\|\bsx_k-\hat{\bsx}_k\|^2, \nonumber\\
			&\quad + \big(\alpha^2\gamma(\beta+\gamma)\rho(L)+\alpha^2\big)\|\bm{x}_k-\hat{\bm{x}}_k\|^2
				\nonumber\\
				&\quad +\Big\|\bm{v}_k+\frac{1}{\gamma}\bsg_{k}^0\Big\|^2_{\frac{\alpha\gamma}{4}\bsF} + \|\bm{x}_k\|^2_{\alpha^2\gamma(\beta+\gamma)\bsL + \alpha^2\bsE} \nonumber\\
				&\quad + \alpha^2 \tau_2\ell^2\|\bar{\bsg}_k\|^2  + \Big(\alpha^2\tau_3\ell^2+\frac{\alpha}{8}\Big)\|\bar{\bsg}_k\|^2\nonumber\\
			&\quad +\|\bm{x}_k\|^2_{\frac{\alpha(3\gamma+1)}{2}\bsE + 2\alpha^2(\gamma^2\bsE-\beta\gamma\bsL)+\frac{\alpha}{2}(1+2\alpha)\ell^2\bsE} \nonumber\\
				&\quad +\big(\alpha\gamma + 2\alpha^2(\gamma^2-\beta\gamma\rho_2(L))\big)\|\bm{x}_k-\hat{\bm{x}}_k\|^2 \nonumber\\
				&\quad-\Big\|\bm{v}_k+\frac{1}{\gamma}\bsg_{k}^0\Big\|^2_{\alpha(\gamma-\frac{5}{2}\rho^{-1}_2(L))\bsF
					-\frac{\alpha^2}{2}(\gamma^2+1)\rho(L)\bsF}, \nonumber\\
			&\quad -\frac{\alpha}{4}(1-2\alpha \ell)\|\bar{\bsg}_{k}\|^2
				+\|\bsx_k\|^2_{\frac{\alpha}{2}\ell^2\bsE} 
				-\frac{\alpha}{4}\|\bar{\bsg}_{k}^0\|^2, \nonumber\\
			&\quad -\varepsilon_{11}\|\bsx_{k}-\hat{\bsx}_{k}\|^2 + n\omega r C s_k^2 \nonumber\\
				&\quad	+\|\bsx_k\|^2_{4\varepsilon_{10}\alpha^2(\beta^2\rho^2(L)+\ell^2)\bsE}\nonumber\\
				&\quad+\Big\|\bm{v}_k+\frac{1}{\gamma}\bsg_k^0\Big\|^2_{4\varepsilon_{10}\alpha^2\gamma^2\rho(L)\bsF} \Big] \nonumber\\
			&=\mathbb{E}_{\mathcal{C}}\Big[ \mathcal{L}_{2,k}-\|\bsx_k\|^2_{\alpha\bsM_1-\alpha^2\bsM_2^{\prime}} 
				+ n\omega r C s_k^2  \nonumber\\
				&\quad 	- \Big\|\bm{v}_k+\frac{1}{\gamma}\bsg_k^0\Big\|^2_{\alpha(\varphi_3-\alpha\varphi_4^{\prime})\bsF} -\alpha(\varphi_5-\alpha\varphi_6)\|\bar{\bsg}_{k}\|^2  \nonumber\\
				&\quad 	- (2\varepsilon_{12} - \alpha\varphi_7- \alpha^2\varphi_8^{\prime})\|\bsx_{k}-\hat{\bsx}_{k}\|^2 -\frac{\alpha}{4}\|\bar{\bsg}_{k}^0\|^2 \Big]\nonumber\\
			&\le\mathbb{E}_{\mathcal{C}}\Big[ \mathcal{L}_{2,k}-\|\bsx_k\|^2_{\alpha(\varphi_1-\alpha\varphi_2^{\prime})\bsE}+ n\omega r C s_k^2  \nonumber\\
				&\quad 	- \Big\|\bm{v}_k+\frac{1}{\gamma}\bsg_k^0\Big\|^2_{\alpha(\varphi_3-\alpha\varphi_4^{\prime})\bsF} -\alpha(\varphi_5-\alpha\varphi_6)\|\bar{\bsg}_{k}\|^2  \nonumber\\
				&\quad 	- (2\varepsilon_{12} - \alpha\varphi_7- \alpha^2\varphi_8^{\prime})\|\bsx_{k}-\hat{\bsx}_{k}\|^2 -\frac{\alpha}{4}\|\bar{\bsg}_{k}^0\|^2 \Big] , 
		\end{align}
		where the last inequality holds due to \eqref{nonconvex:KL-L-eq2}.

		Next, we show that the constants appearing on the right-hand side of \eqref{L2:0} are positive.

		Given  $\alpha<\hat{\kappa}^{\prime}_0\le\min\{\frac{\varphi_1}{\varphi_2^{\prime}},~\frac{\varphi_3}{\varphi_4^{\prime}},~\frac{\varphi_5}{\varphi_6},
		~\frac{\sqrt{\varphi_7^2+8\varepsilon_{12}\varphi_8^{\prime}}-\varphi_7}{2\varphi_8^{\prime}}\}$,
		% and $\alpha\in(0,\kappa_7)$,
		we have 
		\begin{subequations}
			\begin{align}
				&\alpha(\varphi_1-\alpha\varphi_2^{\prime})>0,\label{nonconvex:vkLya1.1p_thm3}\\
				&\alpha(\varphi_3-\alpha\varphi_4^{\prime})>0,\label{nonconvex:vkLya1.3p_thm3}\\
				&\alpha(\varphi_5-\alpha\varphi_6)>0,\label{nonconvex:vkLya1.2p_thm3}\\
				&2\varepsilon_{12} - \alpha\varphi_7- \alpha^2\varphi_8^{\prime}>0. \label{nonconvex:vkLya1.4p_thm3}
			\end{align}
		\end{subequations}
		Then the combination of \eqref{L2:0}--\eqref{nonconvex:vkLya1.4p_thm3} gives \eqref{L2:iteration}.

\subsection{Proof of Theorem~\ref{nonconvex:thm-sm_quantization}} \label{appendix:thm3}

	Denote $\varepsilon_4^{\prime}=\min\{\varepsilon_3^{\prime},~1/4\}$.
	Under the setting of Theorem~\ref{nonconvex:thm-sm_quantization}, 
			it is clear that all the conditions in Lemmas~\ref{lemma:5} and \ref{lemma:L2} are satisﬁed.
	% Now we are ready to prove Theorem~\ref{nonconvex:thm-sm_quantization}.
	Summing \eqref{L2:iteration} over $k\in[0,T-1]$ yields	
	\begin{align}
		&\frac{1}{T}\sum_{k=0}^{T-1}\mathbb{E}_{\mathcal{C}}\Big[\|\nabla f(\bar{x}_k)\|^2+\frac{1}{n}\sum_{i=1}^{n}\|x_{i,k}-\bar{x}_k\|^2\Big] \nonumber\\
		&=\frac{1}{nT}\sum_{k=0}^{T-1}\mathbb{E}_{\mathcal{C}}[\|\bar{\bsg}_{k}^0\|^2 + \|\bsx_k\|_{\bsE}^2]\nonumber\\
		&\le \frac{1}{\varepsilon_4^{\prime}\alpha nT}\Big(\mathcal{L}_{2,0} + \sum_{k=0}^{T-1}n\omega rCs_0^2\varepsilon^{2k}\Big) \nonumber\\
		&\le \frac{1}{T}\Big(\frac{\mathcal{L}_{2,0}}{\varepsilon_4^{\prime}\alpha n} + \frac{\omega rCs_0^2}{(1-\varepsilon^2)\varepsilon_4^{\prime}\alpha}\Big), \label{thm3:10}\\
		&\mathbb{E}_{\mathcal{C}}[n(f(\bar{x}_T)\hspace{-0.05em}-\hspace{-0.05em}f^*)]\hspace{-0.1em}\le\hspace{-0.1em}\mathbb{E}_{\mathcal{C}}[\mathcal{L}_{2,T}]
			\hspace{-0.1em}\le\hspace{-0.1em} \mathcal{L}_{2,0} \hspace{-0.1em}+\hspace{-0.15em} \frac{n\omega rCs_0^2}{(1-\varepsilon^2)}. \label{thm3:20}
	\end{align}
	Noting $\mathcal{L}_{2,0}=\mathcal{O}(n)$, \eqref{nonconvex:thm-sm-equ1_quantization} follows from \eqref{thm3:10} and \eqref{nonconvex:thm-sm-equ2_quantization} follows from \eqref{thm3:20}.

\subsection{Proof of Theorem~\ref{nonconvex:thm-ft_quantization}}  \label{appendix:thm4}
	% 	In addition to the notations defined in Appendix\ref{appendix:lemmas}--\ref{appendix:thm3},
	% we also denote the following notations.
	We denote the following notations:
	\begin{align*}
	&\hat{\varepsilon}\in(\max\{\alpha\varepsilon_6^{\prime},~\varepsilon^2\},~1),\\
	&\varepsilon_6^{\prime}=\min\Big\{\nu/2,~\varepsilon_3^{\prime},
		~(2\varepsilon_{12} - \alpha\varphi_7- \alpha^2\varphi_8^{\prime})/\alpha\Big\}/\varepsilon_2,\\
	&\hat{\mathcal{L}}_{2,k}=\|\bsx_k\|^2_{\bsE}+\Big\|\bsv_k+\frac{1}{\gamma}\bsg_k^0\Big\|^2_{\bsF} + \tilde{f}(\bar{\bsx}_{k})-nf^* \\
		&\quad + \|\bsx_{k}-\hat{\bsx}_{k}\|^2.
	\end{align*}
	Similar to the proof \eqref{hatx:le} and \eqref{hatx:ge}, we know that
	\begin{subequations}
	\begin{align}
		&\mathcal{L}_{2,k}
		\ge\varepsilon_1\hat{\mathcal{L}}_{2,k}\ge0,\label{hatx2:le}\\
		&\mathcal{L}_{2,k}\le\varepsilon_2\hat{\mathcal{L}}_{2,k}. \label{hatx2:ge}
	\end{align}
	\end{subequations}
	From $\omega\le1/r$ and $\delta\in(0,1]$, we have $\varepsilon_9\le 1/2$ and $2\varepsilon_{12}\le1$.
	Recalling $\varepsilon_2>1$, we get 
	\begin{align} \label{le1}
			\alpha\varepsilon_6^{\prime}\le\frac{2\varepsilon_{12} - \alpha\varphi_7- \alpha^2\varphi_8^{\prime}}{\varepsilon_2}\le\frac{2\varepsilon_{12}}{\varepsilon_2}<1.
	\end{align}
	Based on Assumption~\ref{nonconvex:ass:fil},
	we substitute~\eqref{pl} into~\eqref{L2:iteration} and apply~\eqref{hatx2:ge} and~\eqref{le1} to obtain 
	% substituting \eqref{pl} into \eqref{L2:iteration} 
	% and using \eqref{hatx2:ge} together with \eqref{le1} yield	
	\begin{align}
		&\mathbb{E}_{\mathcal{C}}[n(f(\bar{x}_k)-f^*)]\le\mathbb{E}_{\mathcal{C}}[\mathcal{L}_{2,k}] \nonumber\\
		&\le 
		\mathbb{E}_{\mathcal{C}}\Big[(1-\alpha\varepsilon_6^{\prime})\mathcal{L}_{2,k-1}  + n\omega r C s_{k-1}^2 \Big] \nonumber\\
		&= 
		\mathbb{E}_{\mathcal{C}}\Big[(1-\alpha\varepsilon_6^{\prime})\mathcal{L}_{2,k-1}  + n\omega r C s_0^2\varepsilon^{2k-2} \Big] \nonumber\\
		&\le (1-\alpha\varepsilon_6^{\prime})^k \mathcal{L}_{2,0} \nonumber\\
	&\quad + n\omega r C s_0^2 
	\sum_{t=0}^{k-1}(1-\alpha\varepsilon_6^{\prime})^{k-1-t}\varepsilon^{2t}, 
	\end{align}
  	which gives \eqref{nonconvex:thm-ft-equ1_quantization}.

\subsection{Proofs of Lemmas \ref{Lemma Consensus}--\ref{Lemma Optimality 2}}\label{imed}
% \begin{proof} \vspace{-0.5em}

\noindent {\bf (i)}  This step shows the relation between $e_{1,k+1}$ and $e_{1,k}$. \vspace{-0.5em}
	\begin{align}
		&e_{1,k+1}=\frac{1}{2}\|\bm{x}_{k+1} \|^2_{\bsE}
		=\frac{1}{2}\|\bm{x}_k-\alpha(\beta\bsL\hat{\bm{x}}_k+\gamma\bm{v}_k+\bsg_k) \|^2_{\bsE}\nonumber\\
		&=\frac{1}{2}\|\bm{x}_k\|^2_{\bsE}-\alpha\beta\bsx^\top_k\bsL\hat{\bm{x}}_k
		+\|\hat{\bm{x}}_k\|^2_{\frac{\alpha^2\beta^2}{2}\bsL^2}\nonumber\\
		&\quad-\alpha\gamma(\bsx^\top_k-\alpha\beta\hat{\bm{x}}_k^\top\bsL)\bsE
		\Big(\bm{v}_k+\frac{1}{\gamma}\bsg_k\Big)\nonumber\\
		&\quad+\Big\|\bm{v}_k+\frac{1}{\gamma}\bsg_k\Big\|^2_{\frac{\alpha^2\gamma^2}{2}\bsE}\nonumber\\
		&=\frac{1}{2}\|\bm{x}_k\|^2_{\bsE}-\alpha\beta\bsx^\top_k\bsL(\bm{x}_k+\hat{\bm{x}}_k-\bm{x}_k)
		+\|\hat{\bm{x}}_k\|^2_{\frac{\alpha^2\beta^2}{2}\bsL^2}\nonumber\\
		&\quad-\alpha\gamma(\bsx^\top_k-\alpha\beta\hat{\bm{x}}_k^\top\bsL)\bsE\Big(\bm{v}_k
		+\frac{1}{\gamma}\bsg_k^0
		+\frac{1}{\gamma}\bsg_k-\frac{1}{\gamma}\bsg_k^0\Big)\nonumber\\
		&\quad+\Big\|\bm{v}_k+\frac{1}{\gamma}\bsg_k^0
		+\frac{1}{\gamma}\bsg_k-\frac{1}{\gamma}\bsg_k^0\Big\|^2_{\frac{\alpha^2\gamma^2}{2}\bsE}\nonumber\\
		&\le\frac{1}{2}\|\bm{x}_k\|^2_{\bsE}-\|\bsx_k\|^2_{\alpha\beta\bsL}
		+\|\bsx_k\|^2_{\frac{\alpha\beta}{2}\bsL}+\|\bm{x}_k-\hat{\bm{x}}_k\|^2_{\frac{\alpha\beta}{2}\bsL}
		\nonumber\\
		&\quad+\|\hat{\bm{x}}_k\|^2_{\frac{\alpha^2\beta^2}{2}\bsL^2}
		-\alpha\gamma\bsx^\top_k\bsE\Big(\bm{v}_k+\frac{1}{\gamma}\bsg_k^0\Big)
		+\frac{\alpha}{2}\|\bm{x}_k\|^2_{\bsE}\nonumber\\
		&\quad+\frac{\alpha}{2}\|\bsg_k-\bsg_k^0\|^2+\|\hat{\bm{x}}_k\|^2_{\frac{\alpha^2\beta^2}{2}\bsL^2}
		+\frac{\alpha^2\gamma^2}{2}\Big\|\bm{v}_k+\frac{1}{\gamma}\bsg_k^0\Big\|^2\nonumber\\
		&\quad+\|\hat{\bm{x}}_k\|^2_{\frac{\alpha^2\beta^2}{2}\bsL^2}
		+\frac{\alpha^2}{2}\|\bsg_k-\bsg_k^0\|^2\nonumber\\
		&\quad+\alpha^2\gamma^2\Big\|\bm{v}_k+\frac{1}{\gamma}\bsg_k^0\Big\|^2
		+\alpha^2\|\bsg_k-\bsg_k^0\|^2\nonumber\\
		&=\frac{1}{2}\|\bm{x}_k\|^2_{\bsE}-\|\bsx_k\|^2_{\frac{\alpha\beta}{2}\bsL-\frac{\alpha}{2}\bsE}
		+\|\hat{\bm{x}}_k-\bm{x}_k+\bm{x}_k\|^2_{\frac{3\alpha^2\beta^2}{2}\bsL^2}\nonumber\\
		&\quad+\frac{\alpha}{2}(1+3\alpha)\|\bsg_k-\bsg_k^0\|^2
		+\|\bm{x}_k-\hat{\bm{x}}_k\|^2_{\frac{\alpha\beta}{2}\bsL}\nonumber\\
		&\quad-\alpha\gamma(\hat{\bsx}_k+\bsx_k-\hat{\bsx}_k)^\top\bsE
		\Big(\bm{v}_k+\frac{1}{\gamma}\bsg_k^0\Big)\nonumber\\
		&\quad+\frac{3\alpha^2\gamma^2}{2}\Big\|\bm{v}_k+\frac{1}{\gamma}\bsg_k^0\Big\|^2\nonumber\\
		&\le\frac{1}{2}\|\bm{x}_k\|^2_{\bsE}-\|\bsx_k\|^2_{\frac{\alpha\beta}{2}\bsL-\frac{\alpha}{2}\bsE-3\alpha^2\beta^2\bsL^2}
		% +\|\hat{\bm{x}}_k\|^2_{\frac{3\alpha^2\beta^2}{2}\bsL^2}
		\nonumber\\
		&\quad+\frac{\alpha}{2}(1+3\alpha)\|\bsg_k-\bsg_k^0\|^2 -\alpha\gamma\hat{\bsx}^\top_k\bsE\Big(\bm{v}_k+\frac{1}{\gamma}\bsg_k^0\Big)
		\nonumber\\
		&\quad +\|\bm{x}_k-\hat{\bm{x}}_k\|^2_{\frac{\alpha}{2}(\beta\bsL+2\gamma\rho(L)\bsE)+3\alpha^2\beta^2\bsL^2} \nonumber\\
		&\quad
		+\frac{6\alpha^2\gamma^2+\alpha\gamma\rho^{-1}(L)}{4}\Big\|\bm{v}_k+\frac{1}{\gamma}\bsg_k^0\Big\|^2
		\nonumber\\%\label{nonconvex:v1k-zero}\\
		&\le\frac{1}{2}\|\bm{x}_k\|^2_{\bsE}-\|\bsx_k\|^2_{\frac{\alpha\beta}{2}\bsL-\frac{\alpha}{2}\bsE
			-\frac{\alpha}{2}(1+3\alpha)\ell^2\bsE - 3\alpha^2\beta^2\bsL^2}
			% +\|\hat{\bm{x}}_k\|^2_{\frac{3\alpha^2\beta^2}{2}\bsL^2}
			\nonumber\\
		&\quad-\alpha\gamma\hat{\bsx}^\top_k\bsE\Big(\bm{v}_k+\frac{1}{\gamma}\bsg_k^0\Big)
		+\Big\|\bm{v}_k+\frac{1}{\gamma}\bsg_k^0\Big\|^2_{\frac{6\alpha^2\gamma^2\rho(L)+\alpha\gamma}{4}\bsF}\nonumber\\
		&\quad+ \big(\frac{\alpha}{2}(\beta+2\gamma)\rho(L) + 3\alpha^2\beta^2\rho^2(L)\big)\|\bsx_k-\hat{\bsx}_k\|^2, \hspace{-0.04em}\label{nonconvex:v1k:deterministic}
	\end{align}
	where the second and third equalities hold due to \eqref{nonconvex:kia-algo-dc-compact-x} and \eqref{nonconvex:KL-L-eq}, respectively; 
	the first and second inequalities hold due to the Cauchy--Schwarz inequality and $\rho(\bsE)=1$; 
	and the last  inequality holds due to \eqref{nonconvex:gg1}, \eqref{nonconvex:lemma-eq5}, \eqref{nonconvex:KL-L-eq2} and $\rho(\bsE)=1$.
	% Then, since  $\mathcal{C}_k$ is independent of $\bsx_k$ and $\bsv_k$, taking the expectation with respect to $\mathcal{C}_k$ on both sides of \eqref{nonconvex:v1k:deterministic} yields \eqref{nonconvex:v1k}.

	\noindent {\bf (ii)}  This step shows the relation between $e_{2,k+1}$ and $e_{2,k}$.
	\begin{align}
		&e_{2,k+1}=\frac{1}{2}\Big\|\bsv_{k+1}+\frac{1}{\gamma}\bsg_{k+1}^0\Big\|^2_{\frac{\beta+\gamma}{\gamma}\bsF}\nonumber\\
		&=\frac{1}{2}\Big\|\bm{v}_k+\frac{1}{\gamma}\bsg_{k}^0+\alpha\gamma\bsL\hat{\bm{x}}_k
		+\frac{1}{\gamma}(\bsg_{k+1}^0-\bsg_{k}^0) \Big\|^2_{\frac{\beta+\gamma}{\gamma}\bsF}\nonumber\\
		&=\frac{1}{2}\Big\|\bm{v}_k+\frac{1}{\gamma}\bsg_{k}^0\Big\|^2_{\frac{\beta+\gamma}{\gamma}\bsF}
		+\alpha(\beta+\gamma)\hat{\bsx}^\top_k\bsE\Big(\bm{v}_k+\frac{1}{\gamma}\bsg_k^0\Big)\nonumber\\
		&\quad+\|\hat{\bsx}_k\|^2_{\frac{\alpha^2\gamma}{2}(\beta+\gamma)\bsL}
		+\frac{1}{2\gamma^2}\|\bsg_{k+1}^0-\bsg_{k}^0\|^2_{\frac{\beta+\gamma}{\gamma}\bsF}\nonumber\\
		&\quad+\frac{1}{\gamma}\Big(\bm{v}_k+\frac{1}{\gamma}\bsg_{k}^0
		\Big)^\top\Big(\frac{\beta+\gamma}{\gamma}\bsF\Big)(\bsg_{k+1}^0-\bsg_{k}^0)\nonumber\\
		&\quad+\alpha\hat{\bm{x}}_k^\top\Big(\bsE+\frac{\beta}{\gamma}\bsE\Big)(\bsg_{k+1}^0-\bsg_{k}^0)\nonumber\\
		&\le\frac{1}{2}\Big\|\bm{v}_k+\frac{1}{\gamma}\bsg_{k}^0\Big\|^2_{\frac{\beta+\gamma}{\gamma}\bsF}
		+\alpha(\beta+\gamma)\hat{\bsx}^\top_k\bsE\Big(\bm{v}_k+\frac{1}{\gamma}\bsg_k^0\Big)\nonumber\\
		&\quad+\|\hat{\bsx}_k\|^2_{\frac{\alpha^2\gamma}{2}(\beta+\gamma)\bsL}
		+\|\bsg_{k+1}^0-\bsg_{k}^0\|^2_{\frac{\beta+\gamma}{2\gamma^3}\bsF}\nonumber\\
		&\quad+\Big\|\bm{v}_k+\frac{1}{\gamma}\bsg_{k}^0\Big\|^2_{\frac{\alpha\gamma}{4}\bsF}
		+\|\bsg_{k+1}^0-\bsg_{k}^0\|^2_{\frac{(\beta+\gamma)^2}{\alpha\gamma^5}\bsF}\nonumber\\
		&\quad+\|\hat{\bm{x}}_k\|^2_{\frac{\alpha^2}{2}\bsE}
		+\frac{1}{2}\|\bsg_{k+1}^0-\bsg_{k}^0\|^2
		+\frac{\alpha\beta}{\gamma}\hat{\bm{x}}_k^\top\bsE(\bsg_{k+1}^0-\bsg_{k}^0)\nonumber\\
		&=\frac{1}{2}\Big\|\bm{v}_k+\frac{1}{\gamma}\bsg_{k}^0\Big\|^2_{\frac{\beta+\gamma}{\gamma}\bsF}
		+\alpha(\beta+\gamma)\hat{\bsx}^\top_k\bsE\Big(\bm{v}_k+\frac{1}{\gamma}\bsg_k^0\Big)\nonumber\\
		&\quad+\|\hat{\bm{x}}_k-\bm{x}_k+\bm{x}_k\|^2_{\frac{\alpha^2\gamma}{2}(\beta+\gamma)\bsL+\frac{\alpha^2}{2}\bsE}
		\nonumber\\
		&\quad +\Big\|\bm{v}_k+\frac{1}{\gamma}\bsg_{k}^0\Big\|^2_{\frac{\alpha\gamma}{4}\bsF} +\|\bsg_{k+1}^0-\bsg_{k}^0\|^2_{
			\frac{\beta+\gamma}{2\gamma^3}\bsF + \frac{(\beta+\gamma)^2}{\alpha\gamma^5}\bsF}\nonumber\\%%%%%%%%%
		&\quad+\frac{1}{2}\|\bsg_{k+1}^0-\bsg_{k}^0\|^2
		+\frac{\alpha\beta}{\gamma}\hat{\bm{x}}_k^\top\bsE(\bsg_{k+1}^0-\bsg_{k}^0)\nonumber\\
		&\le\frac{1}{2}\Big\|\bm{v}_k+\frac{1}{\gamma}\bsg_{k}^0\Big\|^2_{\frac{\beta+\gamma}{\gamma}\bsF}
		+\alpha(\beta+\gamma)\hat{\bsx}^\top_k\bsE\Big(\bm{v}_k+\frac{1}{\gamma}\bsg_k^0\Big)\nonumber\\
		&\quad+\|\bm{x}_k - \hat{\bm{x}}_k\|^2_{\alpha^2\gamma(\beta+\gamma)\bsL + \alpha^2\bsE}
		+\Big\|\bm{v}_k+\frac{1}{\gamma}\bsg_{k}^0\Big\|^2_{\frac{\alpha\gamma}{4}\bsF}\nonumber\\
		&\quad+\|\bm{x}_k\|^2_{\alpha^2\gamma(\beta+\gamma)\bsL + \alpha^2\bsE} + \tau_2\|\bsg_{k+1}^0-\bsg_{k}^0\|^2 \nonumber\\
		&\quad +\frac{\alpha\beta}{\gamma}
		\hat{\bm{x}}_k^\top\bsE(\bsg_{k+1}^0-\bsg_{k}^0)
		\nonumber\\%\label{nonconvex:v2k-zero}\\
		&\le\frac{1}{2}\Big\|\bm{v}_k+\frac{1}{\gamma}\bsg_{k}^0\Big\|^2_{\frac{\beta+\gamma}{\gamma}\bsF}
		+\alpha(\beta+\gamma)\hat{\bsx}^\top_k\bsE\Big(\bm{v}_k+\frac{1}{\gamma}\bsg_k^0\Big)\nonumber\\
		&\quad + \big(\alpha^2\gamma(\beta+\gamma)\rho(L)+\alpha^2\big)\|\bm{x}_k-\hat{\bm{x}}_k\|^2
		\nonumber\\
		&\quad +\Big\|\bm{v}_k+\frac{1}{\gamma}\bsg_{k}^0\Big\|^2_{\frac{\alpha\gamma}{4}\bsF} +\|\bm{x}_k\|^2_{\alpha^2\gamma(\beta+\gamma)\bsL + \alpha^2\bsE} \nonumber\\
		&\quad + \alpha^2 \tau_2\ell^2\|\bar{\bsg}_k\|^2 + \frac{\alpha\beta}{\gamma}\hat{\bm{x}}_k^\top\bsE(\bsg_{k+1}^0-\bsg_{k}^0),\label{nonconvex:v2k:deterministic}
	\end{align}
	where the second and third equalities hold due to \eqref{nonconvex:kia-algo-dc-compact-v} and \eqref{nonconvex:lemma-eq3}, respectively;  
	the first and second inequalities hold due to the Cauchy--Schwarz inequality, $\rho(\bsE)=1$ and \eqref{nonconvex:lemma-eq5};
	and  the last inequality holds due to \eqref{nonconvex:gg}, \eqref{nonconvex:KL-L-eq2}, and $\rho(\bsE)=1$.
	% Then, since  $\mathcal{C}_k$ is independent of $\bsx_k$ and $\bsv_k$, taking the expectation with respect to $\mathcal{C}_k$ on both sides of \eqref{nonconvex:v2k:deterministic} yields \eqref{nonconvex:v2k}.

	\noindent {\bf (iii)}  This step shows the relation between $e_{3,k+1}$ and $e_{3,k}$.	
	\begin{align}
		&e_{3,k+1}=\bsx_{k+1}^\top\bsE\bsF\Big(\bm{v}_{k+1}+\frac{1}{\gamma}\bsg_{k+1}^0\Big)\nonumber\\
		&=(\bm{x}_k-\alpha(\beta\bsL\hat{\bsx}_k+\gamma\bm{v}_k+\bsg_k^0+\bsg_k-\bsg_k^0))^\top
		\bsE\bsF\nonumber\\
		&\quad\times\Big(\bm{v}_k+\frac{1}{\gamma}\bsg_{k}^0+\alpha\gamma\bsL\hat{\bsx}_k+\frac{1}{\gamma}(\bsg_{k+1}^0-\bsg_{k}^0)\Big)\nonumber\\
		&=(\bm{x}_k^\top\bsE\bsF-\alpha(\beta+\alpha\gamma^2)\hat{\bsx}_k^\top\bsE)
		\Big(\bm{v}_k+\frac{1}{\gamma}\bsg_{k}^0\Big)\nonumber\\
		&\quad+\alpha\gamma\bm{x}_k^\top\bsE\hat{\bsx}_k-\|\hat{\bsx}_k\|^2_{\alpha^2\beta\gamma\bsL}\nonumber\\
		&\quad
		+\frac{1}{\gamma}(\bm{x}_k^\top\bsE\bsF-\alpha\beta\hat{\bsx}_k^\top\bsE)(\bsg_{k+1}^0-\bsg_{k}^0)\nonumber\\
		&\quad-\alpha(\gamma\bm{v}_k+\bsg_{k}^0+\bsg_k-\bsg_k^0-\bar{\bsg}_k)^\top\bsF
		\Big(\bm{v}_k+\frac{1}{\gamma}\bsg_{k}^0\Big)\nonumber\\%%%%
		&\quad
		-\alpha\Big(\bm{v}_k+\frac{1}{\gamma}\bsg_{k}^0\Big)^\top\bsE\bsF(\bsg_{k+1}^0-\bsg_{k}^0)\nonumber\\
		&\quad-\alpha(\bsg_k-\bsg_k^0)^\top
		\Big(\alpha\gamma\bsE\hat{\bm{x}}_k+\frac{1}{\gamma}\bsE\bsF(\bsg_{k+1}^0-\bsg_{k}^0)\Big)\nonumber\\
		&\le(\bm{x}_k^\top\bsE\bsF-\alpha\beta\hat{\bsx}_k^\top\bsE)
		\Big(\bm{v}_k+\frac{1}{\gamma}\bsg_{k}^0\Big)
		+\|\hat{\bsx}_k\|^2_{\frac{\alpha^2\gamma^2}{2}\bsE}\nonumber\\
		&\quad+\frac{\alpha^2\gamma^2}{2}\Big\|\bm{v}_k+\frac{1}{\gamma}\bsg_{k}^0\Big\|^2
		+\|\bm{x}_k\|^2_{\frac{\alpha\gamma}{2}\bsE}+\|\hat{\bm{x}}_k\|^2_{\frac{\alpha\gamma}{2}(\bsE-2\alpha\beta\bsL)}\nonumber\\
		&\quad+\|\bm{x}_k\|^2_{\frac{\alpha}{2}\bsE}+\|\bsg_{k+1}^0-\bsg_{k}^0\|^2_{\frac{1}{2\alpha\gamma^2}\bsF^2}
		\nonumber\\
		&\quad-\frac{\alpha\beta}{\gamma}\hat{\bm{x}}_k^\top\bsE(\bsg_{k+1}^0-\bsg_{k}^0)
		-\Big\|\bm{v}_k+\frac{1}{\gamma}\bsg_{k}^0\Big\|^2_{\alpha\gamma\bsF}\nonumber\\
		&\quad+\frac{\alpha}{2}\|\bsg_k-\bsg_k^0\|^2 + \Big\|\bm{v}_k+\frac{1}{\gamma}\bsg_{k}^0\Big\|^2_{\frac{\alpha}{2}\bsF^2} \nonumber\\
		&\quad  + \frac{\alpha}{8}\|\bar{\bsg}_k\|^2 + \Big\|\bm{v}_k+\frac{1}{\gamma}\bsg_{k}^0\Big\|^2_{2\alpha\bsF^2} + \frac{\alpha^2}{2}\Big\|\bm{v}_k+\frac{1}{\gamma}\bsg_{k}^0\Big\|^2 \nonumber\\%%%%%
		&\quad 
		+\|\bsg_{k+1}^0-\bsg_{k}^0\|^2_{\frac{1}{2}\bsF^2}
		+\frac{\alpha^2}{2}\|\bsg_k-\bsg_k^0\|^2\nonumber\\
		&\quad+\|\hat{\bm{x}}_k\|^2_{\frac{\alpha^2\gamma^2}{2}\bsE}
		+\frac{\alpha^2}{2}\|\bsg_k-\bsg_k^0\|^2
		+\|\bsg_{k+1}^0-\bsg_{k}^0\|^2_{\frac{1}{2\gamma^2}\bsF^2}\nonumber\\
		&=(\bm{x}_k^\top\bsE\bsF-\alpha\beta\hat{\bsx}_k^\top\bsE)\Big(\bm{v}_k+\frac{1}{\gamma}\bsg_{k}^0\Big)
		+\|\bm{x}_k\|^2_{\frac{\alpha(\gamma+1)}{2}\bsE}\nonumber\\
		&\quad+\|\hat{\bm{x}}_k-\bm{x}_k+\bm{x}_k\|^2_{\frac{\alpha\gamma}{2}\bsE
			+\alpha^2(\gamma^2\bsE-\beta\gamma\bsL)} \nonumber\\
		&\quad +\frac{\alpha}{2}(1+2\alpha)\|\bsg_k-\bsg_k^0\|^2 + \|\bsg_{k+1}^0-\bsg_{k}^0\|^2_{(\frac{\alpha+1}{2\alpha\gamma^2}+\frac{1}{2})\bsF^2} \nonumber\\
		&\quad -\frac{\alpha\beta}{\gamma}\hat{\bm{x}}_k^\top\bsE(\bsg_{k+1}^0-\bsg_{k}^0) + \frac{\alpha}{8}\|\bar{\bsg}_k\|^2\nonumber\\
		&\quad
		-\Big\|\bm{v}_k+\frac{1}{\gamma}\bsg_{k}^0\Big\|^2_{\alpha\gamma\bsF-\frac{5}{2}\alpha\bsF^2 - \frac{\alpha^2(\gamma^2+1)}{2}{\bf I}_{np}}\nonumber\\
		&\le(\bm{x}_k^\top\bsE\bsF-\alpha\beta\hat{\bsx}_k^\top\bsE)\Big(\bm{v}_k+\frac{1}{\gamma}\bsg_{k}^0\Big) \nonumber\\
		&\quad +\|\bm{x}_k\|^2_{\frac{\alpha(3\gamma+1)}{2}\bsE + 2\alpha^2(\gamma^2\bsE-\beta\gamma\bsL)} +\frac{\alpha}{8}\|\bar{\bsg}_k\|^2 \nonumber\\
		&\quad+\|\bm{x}_k-\hat{\bm{x}}_k\|^2_{\alpha\gamma\bsE
			+ 2\alpha^2(\gamma^2\bsE-\beta\gamma\bsL)}  + \tau_3\|\bsg_{k+1}^0-\bsg_{k}^0\|^2
		\nonumber\\
		&\quad  +\frac{\alpha}{2}(1+2\alpha)\|\bsg_k-\bsg_k^0\|^2 - \frac{\alpha\beta}{\gamma}\hat{\bm{x}}_k^\top
		\bsE(\bsg_{k+1}^0-\bsg_{k}^0)\nonumber\\
		&\quad-\Big\|\bm{v}_k+\frac{1}{\gamma}\bsg_{k}^0\Big\|^2_{\alpha(\gamma-\frac{5}{2}\rho^{-1}_2(L))\bsF
			-\frac{\alpha^2}{2}(\gamma^2+1)\rho(L)\bsF}
		\nonumber\\%\label{nonconvex:v3k-zero}\\
		&\le\bm{x}_k^\top\bsE\bsF\Big(\bm{v}_k+\frac{1}{\gamma}\bsg_{k}^0\Big)
		-\alpha\beta\hat{\bsx}_k^\top\bsE\Big(\bm{v}_k+\frac{1}{\gamma}\bsg_{k}^0\Big)\nonumber\\
		&\quad +\|\bm{x}_k\|^2_{\frac{\alpha(3\gamma+1)}{2}\bsE + 2\alpha^2(\gamma^2\bsE-\beta\gamma\bsL)+\frac{\alpha}{2}(1+2\alpha)\ell^2\bsE} \nonumber\\
		&\quad +\big(\alpha\gamma + 2\alpha^2(\gamma^2-\beta\gamma\rho_2(L))\big)\|\bm{x}_k-\hat{\bm{x}}_k\|^2 \nonumber\\
		&\quad-\frac{\alpha\beta}{\gamma}\hat{\bm{x}}_k^\top
		\bsE(\bsg_{k+1}^0-\bsg_{k}^0)+\Big(\alpha^2\tau_3\ell^2+\frac{\alpha}{8}\Big)\|\bar{\bsg}_k\|^2\nonumber\\
		&\quad-\Big\|\bm{v}_k+\frac{1}{\gamma}\bsg_{k}^0\Big\|^2_{\alpha(\gamma-\frac{5}{2}\rho^{-1}_2(L))\bsF
			-\frac{\alpha^2}{2}(\gamma^2+1)\rho(L)\bsF},\label{nonconvex:v3k:deterministic}
	\end{align}
	where the second equality holds due to \eqref{nonconvex:kia-algo-dc-compact-x} and \eqref{nonconvex:kia-algo-dc-compact-v}; 
	the third equality holds due to \eqref{nonconvex:KL-L-eq}, \eqref{nonconvex:lemma-eq3}, \eqref{nonconvex:vkn} and $\bsE={\bf I}_{nd}-\bsH$; 
	the first inequality holds due the Cauchy--Schwarz inequality and $\rho(\bsE)=1$;  
	the second inequality holds due to the Cauchy--Schwarz inequality and \eqref{nonconvex:lemma-eq5};
	and the last  inequality holds due to \eqref{nonconvex:gg1}, \eqref{nonconvex:gg}, \eqref{nonconvex:KL-L-eq2} and $\rho(\bsE)=1$.

	\noindent {\bf (iv)}  This step shows the relation between $e_{4,k+1}$ and $e_{4,k}$.	
		\begin{align}
		&e_{4,k+1}=n(f(\bar{x}_{k+1})-f^*)=\tilde{f}(\bar{\bsx}_{k+1})-nf^*\nonumber\\
		&=\tilde{f}(\bar{\bsx}_k)-nf^*+\tilde{f}(\bar{\bsx}_{k+1})-\tilde{f}(\bar{\bsx}_k)\nonumber\\
		&\le\tilde{f}(\bar{\bsx}_k)-nf^*
		-\alpha\bar{\bsg}_{k}^\top\bsg^0_k
		+\frac{\alpha^2\ell}{2}\|\bar{\bsg}_{k}\|^2\nonumber\\
		&=\tilde{f}(\bar{\bsx}_k)-nf^*
		-\alpha\bar{\bsg}_{k}^\top\bar{\bsg}^0_k
		+\frac{\alpha^2\ell}{2}\|\bar{\bsg}_{k}\|^2\nonumber\\
		&=n(f(\bar{x}_k)-f^*)
		-\frac{\alpha}{2}\bar{\bsg}_{k}^\top(\bar{\bsg}_k+\bar{\bsg}^0_k-\bar{\bsg}_k)\nonumber\\
		&\quad-\frac{\alpha}{2}(\bar{\bsg}_{k}-\bar{\bsg}^0_k+\bar{\bsg}^0_k)^\top\bar{\bsg}^0_k
		+\frac{\alpha^2\ell}{2}\|\bar{\bsg}_{k}\|^2\nonumber\\
		&\le n(f(\bar{x}_k)-f^*)-\frac{\alpha}{4}\|\bar{\bsg}_{k}\|^2
		+\frac{\alpha}{4}\|\bar{\bsg}^0_k-\bar{\bsg}_k\|^2
		-\frac{\alpha}{4}\|\bar{\bsg}_{k}^0\|^2\nonumber\\
		&\quad+\frac{\alpha}{4}\|\bar{\bsg}^0_k-\bar{\bsg}_k\|^2
		+\frac{\alpha^2\ell}{2}\|\bar{\bsg}_{k}\|^2\nonumber\\
		&=n(f(\bar{x}_k)-f^*)-\frac{\alpha}{4}(1-2\alpha \ell)\|\bar{\bsg}_{k}\|^2
		+\frac{\alpha}{2}\|\bar{\bsg}^0_k-\bar{\bsg}_k\|^2\nonumber\\
		&\quad-\frac{\alpha}{4}\|\bar{\bsg}_{k}^0\|^2\nonumber\\%\label{nonconvex:v4k-zero}\\
		&\le n(f(\bar{x}_k)-f^*)-\frac{\alpha}{4}(1-2\alpha \ell)\|\bar{\bsg}_{k}\|^2
		+\|\bsx_k\|^2_{\frac{\alpha}{2}\ell^2\bsE}\nonumber\\
		&\quad-\frac{\alpha}{4}\|\bar{\bsg}_{k}^0\|^2,\label{nonconvex:v4k:deterministic}
	\end{align}
	where the first inequality holds due to the smoothness of $\tilde{f}$, \eqref{nonconvex:lemma:lipschitz} and \eqref{nonconvex:xbardynamic};
	the fourth equality holds due to $\bar{\bsg}_{k}^\top\bsg^0_k=\bsg_{k}^\top\bsH\bsg^0_k=\bsg_{k}^\top\bsH\bsH\bsg^0_k=\bar{\bsg}_{k}^\top\bar{\bsg}^0_k$;
	the second inequality holds due to the Cauchy--Schwarz inequality;
	and the last inequality holds due to \eqref{nonconvex:gg2}.
\bibliographystyle{IEEEtran}
\bibliography{ref}

\end{document}